\newlength{\vslength}
\newcommand{\iid}{{\it i.i.d.}}
\newcommand{\Frechet}{Fr\'{e}chet}
\newcommand{\Hajek}{H\'{a}jek}
\newcommand{\Holder}{H\"{o}lder}
\newtheorem{theorem}{Theorem}[section]
\newtheorem{lemma}{Lemma}[section]
\newtheorem{proposition}{Proposition}[section]
\newtheorem{corollary}{Corollary}[section]
\theoremstyle{remark}
\newtheorem{example}{\bf Example}[section]
\newenvironment{Exmp}{\begin{example}}{\hfill$\Box$\end{example}}
\renewenvironment{proof}{\noindent{\bf Proof}\;\;}{\qed}
\newcommand{\score}{\dot{\ell}}
\newcommand{\effscore}{\tilde{\ell}}
\newcommand{\effFI}{{\tilde{I}}}
\newcommand{\effI}{{\effFI}}
\newcommand{\scrP}{{\mathscr P}}
\def\cA{\mathcal A}
\def\cB{\mathcal B}
\def\cD{\mathcal D}
\def\cF{\mathcal F}
\def\cG{\mathcal G}
\def\cH{\mathcal H}
\def\cL{\mathcal L}
\def\cM{\mathcal M}
\def\cR{\mathcal R}
\def\cX{\mathcal X}
\newcommand{\bfc}{{\bf c}}
\newcommand{\bfe}{{\bf e}}
\newcommand{\bh}{{\bf h}}
\newcommand{\bfs}{{\bf s}}
\newcommand{\bx}{{\bf x}}
\newcommand{\bX}{{\bf X}}
\newcommand{\by}{{\bf y}}
\newcommand{\bY}{{\bf Y}}
\newcommand{\bz}{{\bf z}}
\newcommand{\bepsilon}{\mbox{\boldmath{$\epsilon$}}}
\newcommand{\btheta}{\bm{\theta}}
\newcommand{\bmu}{\mbox{\boldmath{$\mu$}}}
\newcommand{\bbG}{{\mathbb G}}
\newcommand{\bbP}{{\mathbb P}}
\newcommand{\bbR}{{\mathbb R}}
\newcommand{\bbX}{{\mathbb X}}
\newcommand{\bbZ}{{\mathbb Z}}
\newcommand{\bc}{\begin{center}}
\newcommand{\ec}{\end{center}}
\newcommand{\be}{\begin{equation}}
\newcommand{\ee}{\end{equation}}
\newcommand{\ba}{\begin{array}}
\newcommand{\ea}{\end{array}}
\newcommand{\bean}{\setlength\arraycolsep{2pt}\begin{eqnarray*}}
\newcommand{\eean}{\end{eqnarray*}}
\newcommand{\bea}{\setlength\arraycolsep{2pt}\begin{eqnarray}}
\newcommand{\eea}{\end{eqnarray}}
\newcommand{\ben}{\begin{enumerate}}
\newcommand{\een}{\end{enumerate}}
\newcommand{\bed}{\begin{itemize}}
\newcommand{\eed}{\end{itemize}}
\DeclareMathOperator*{\argmax}{argmax}
\DeclareMathOperator*{\argmin}{argmin}
\begin{document}

\thispagestyle{empty}
  \begin{center}
        \hyphenpenalty=10000
        \Large\bf\expandafter{The Semiparametric Bernstein--von Mises Theorem for Models with Symmetric Error}
  \end{center}

    \vskip1.5in

  \begin{center}
    \large\bf{By} \\ \vskip0.2in
    {\large\bf Minwoo Chae}
  \end{center}

    \vskip1.5in

  \begin{center}
    \bf A Thesis \\
        Submitted in fulfillment of the
        requirements\\ for the degree of \\
    \expandafter{Doctor of Philosophy} \\
        in
    \expandafter{Statistics} \\
  \end{center}

    \vskip0.5in
    \begin{center}
        \bf Department of \expandafter{Statistics}\\
       College of Natural Sciences \\
        \expandafter{Seoul National University}\\
        \expandafter{February, 2015}
    \end{center}

\vfil\eject

\setcounter{page}{1} %
\pagenumbering{roman}

\addcontentsline{toc}{chapter}{Abstract} 
\chapter*{Abstract} 

In a smooth semiparametric model, the marginal posterior distribution of the finite dimensional parameter of interest
is expected to be asymptotically equivalent to the sampling distribution of frequentist's efficient estimators.
This is the assertion of the so-called Bernstein-von Mises theorem, and recently,
it has been proved in many interesting semiparametric models.
In this thesis, we consider the semiparametric Bernstein-von Mises theorem in some models which have symmetric errors.
The simplest example of these models is the symmetric location model that has 1-dimensional location parameter
and unknown symmetric error. 
Also, the linear regression and random effects models are included provided the error distribution is symmetric.
The condition required for nonparametric priors on the error distribution is very mild, and 
the most well-known Dirichlet process mixture of normals works well.
As a consequence, Bayes estimators in these models satisfy frequentist criteria of optimality
such as \Hajek-Le Cam convolution theorem.
The proof of the main result requires that the expected log likelihood ratio has a certain quadratic expansion,
which is a special property of symmetric densities.
One of the main contribution of this thesis is to provide an efficient estimator of regression coefficients
in the random effects model, in which it is unknown to estimate the coefficients efficiently 
because the full likelihood inference is difficult.
Our theorems imply that the posterior mean or median is efficient, and the result from
numerical studies also shows the superiority of Bayes estimators.
For practical use of our main results,
efficient Gibbs sampler algorithms based on symmetrized Dirichlet process mixtures are provided.

\bigskip

\noindent{\bf Keywords:}
Semiparametric Bernstein-von Mises theorem,
Linear regression with symmetric error,
mixture of normal densities,
Dirichlet process mixture  \\

\tableofcontents
\listoftables\addcontentsline{toc}{chapter}{List of tables}
\listoffigures\addcontentsline{toc}{chapter}{List of figures}

\newpage

\pagenumbering{arabic}

\chapter{Introduction}

It is a fundamental problem in statistics to make an optimal decision for a given statistical problem.
Every statistical inference is based on the observed data, but
we rarely know about the sampling distribution of a given estimator with finite samples.
As a result, it is extremely restrictive in actual exercises to find an optimal estimator.
In many interesting examples, however, the sampling distribution of an estimator converges to
a specific distribution as the number of observations increases, and it is possible to estimate this limit.
Therefore statistical inferences and theories on optimality are usually based on these asymptotic properties.
For example, Fisher conjectured that the maximum likelihood estimator would be efficient,
and in the middle of the 20th century many statisticians solved this problem under different assumptions.

In this thesis, we prove that statistical inferences based on Bayesian posterior distributions
are efficient in some semiparametric problems.
More specifically, we prove the semiparametric Bernstein-von Mises (BvM) theorem in some models which have
symmetric errors.
In theses models, the observation $\bX = (X_{1}, \ldots, X_{n})^T$ can be represented by
\be \label{eq:general_model}
	\bX = \bmu + \bepsilon,
\ee
where $\bmu = (\mu_{1}, \ldots, \mu_{n})^T$ and $\bepsilon = (\epsilon_{1}, \ldots, \epsilon_{n})^T$.
Here $\bmu$ is non-random and can be parametrized by the location parameter $\theta \in \bbR$ or
the regression coefficient $\beta\in\bbR^p$ with explanatory variables.
The error distribution is assumed to be symmetric in the sense that $\bepsilon \stackrel{d}{=} -\bepsilon$,
where $\stackrel{d}{=}$ means that two distributions of both sides are the same.
Since the error distribution is completely unknown except its symmetricity,
these are semiparametric estimation problems.
Symmetric location model, linear regression with unknown error, and
random effects model are included in these models, all of them give very useful implication.
The assertion of the semiparametric BvM theorem is, roughly speaking, that
the marginal posterior distribution for the parameter of interest is asymptotically normal
centered on an efficient estimator with variance the inverse of Fisher information matrix.
As a result statistical inferences based on the posterior distribution satisfy
frequentist criteria of optimality.

Even before the 1970s, putting a prior, which is always a delicate and difficult problem in Bayesian analysis,
posed conceptual, mathematical, and practical difficulties in infinite dimensional models.
A discovery of Dirichlet processes by \citet{ferguson1973bayesian} was a breakthrough.
This prior is easy to elicit, has a large support, and the posterior distribution is analytically tractable.
After this discovery, there have been a growing interest on Bayesian nonparametric statistics,
and for the last few decades there was remarkable development in many fields science and industry.
Useful models, priors and efficient computational algorithms has been developed in broad areas,
and convenient statistical softwares have been provided to analyze data of various forms.
Especially the development of Markov chain Monte Carlo algorithms,
along with the improvement of computing technologies, boosts Bayesian methodologies
because they are very flexible and can be applied complex and highly structured data, while
frequentist methods may have some difficulties to analyze such data.
More recently, there was considerable progress on asymptotic behavior of posterior distributions.

While the BvM theorem for parametric Bayesian models is well established 
(e.g. \citet{le1986asymptotic, kleijn2012bernstein}),
the non- or semiparametric BvM theorem has been actively studied recently 
after \citet{cox1993analysis} and \citet{freedman1999wald} 
gave negative examples on the non- or semiparametric BvM theorem.
The BvM theorems for various models including survival models (\citet{kim2004bernstein, kim2006bernstein}), 
Gaussian regression models with increasing number of parameters 
(\citet{bontemps2011bernstein, johnstone2010high, ghosal1999asymptotic}), 
discrete probability measures (\citet{boucheron2009bernstein})
have been proved.
In addition, general sufficient conditions for non- or semiparametric BvM theorems are given by
\citet{shen2002asymptotic, castillo2012semiparametric, bickel2012semiparametric, castillo2013general}.
Those sufficient conditions, however, are rather abstract and not easy to verify.
In particular, it is difficult to apply these general theories to models with unknown errors
in which the quadratic expansion of the likelihood ratio is not straightforward.
More recently, \citet{castillo2013nonparametric, castillo2014bernstein} have established
fully infinite-dimmensional BvM theorems by considering weaker topologies than the classical $L_p$ spaces.

We consider the semiparametric BvM theorem in models of the form \eqref{eq:general_model}.
There is a vast amount of literature about the frequentist's efficient estimation in these models.
For example, for the symmetric location model, where $X_i$'s are i.i.d. with mean $\theta$,
we refer to \citet{beran1978efficient, stone1975adaptive, sacks1975asymptotically} and references therein.
More elegant and practical method using kernel density estimation can be found in \citet{park_lecturenote}.
This approach can be easily extended for estimating the regression coefficient in the linear regression model.
\citet{bickel1982adaptive} also provide an efficient estimator for the linear regression model.

Bayesian analysis of the symmetric location model has also received much attention since
\citet{diaconis1986inconsistent} showed that a careless choice of a prior on $P$ leads to an inconsistent posterior.
Posterior consistency of the symmetric location model with Polya tree prior is proved by \citet{ghosal1999consistent}, 
posterior consistency of more general regression model has been studied by \citet{amewou2003posterior, tokdar2006posterior},
and posterior convergence rate with Dirichlet process mixture prior has been derived by \citet{ghosal2007convergence}.
But the efficiency of the Bayes estimators, the semiparametric BvM theorem, in such models has not been proved yet.
We prove that this is true when the error distribution is endowed with a Dirichlet process mixture of normals prior.
Furthermore, we have shown that the Bayes estimators in random effect models,
where the error and random effects distributions are unknown except that they are symmetric about the origin,
are also efficient.
In the random effects model, it is known that the full likelihood inference is difficult because it can be obtained
by integrating out the random effects.

The remainder of the thesis is organized as follows.
In Chapter \ref{chap:review}, we review three topics in asymptotic statistics which are
prerequisites for our main results.
In Section \ref{sec:lan}, we introduce the local asymptotic normality and
associated frequentist's optimality theories.
Some empirical processes techniques are given in Section \ref{sec:empirical},
and the last section provides asymptotic theories on nonparametric Bayesian statistics.
The main results are given in Chapter \ref{chap:main}.
The first section proves a general semiparametric BvM theorem which requires two conditions:
the integral local asymptotic normality and convergence of the marginal posterior at parametric rate.
These two conditions are studied in more depth in following subsections.
In these two subsections, it is required that the expectation of the log likelihood ratio 
allows a certain quadratic expansion, and Section \ref{sec:quadratic} proves this condition
using the property of symmetric densities.
The last section of this chapter provides three examples mentioned above: the location,
linear regression and random intercept models.
Some numerical studies, which show the superiority of Bayes estimators in random effects models,
are provided in Chapter \ref{chap:numerical}.
A useful Gibbs sampler algorithm is given in the first section of this chapter.
A real dataset is also analyzed in Section \ref{sec:realdata}.
There are concluding remarks and future works in Chapter \ref{chap:conclusion},
and miscellanies that are required for main theorems and examples are given in Appendix.
Section \ref{sec:consistency} is devoted to prove posterior consistency when the model is slightly misspecified
and observations are independent but not identically distributed.
Some technical lemmas for semiparametric mixture models, such as bounded entropy and prior positivity conditions,
are given in Section \ref{sec:smixture}.
The last Section presents properties of symmetrized Dirichlet processes and
Gibbs sampler algorithms using symmetrized Dirichlet process mixtures.

Before going further, we introduce notations used in this thesis.
For a real-valued function $g$ defined on a subset of $\bbR$,
the first, second and third derivatives are denoted by $g^\prime$, $g^{\prime\prime}$
and $g^{\prime\prime\prime}$, respectively.
If the domain of $g$ is a subset of $\bbR^d$ for $d > 1$, then
$\nabla g$ and $\nabla^2 g$ denotes the $d\times 1$ gradient vector and $d\times d$ Hessian matrix.
Also, $\nabla_j g$ and $\nabla_{jk} g$ denote the first and second order partial derivatives of $g$
with respect to the corresponding indices.
The Euclidean norm is denoted by $|\cdot|$.
For a matrix $A$, $\|A\|$ represents the operator norm, defined as $\sup_{|x|\leq 1} |Ax|$, of $A$, and
if $A$ is a square matrix, $\rho_{\rm min} (A)$ and $\rho_{\rm max}(A)$ denotes 
the minimum and maximum eigenvalues of $A$.
The capital letters $P_\eta, P_{\theta,\eta},$ etc are the corresponding probability measures
of densities denoted by lower letters $p_\eta, p_{\theta,\eta}$, etc and vise versa.
The corresponding log densities are written by the letter $\ell_\eta, \ell_{\theta,\eta}$, etc.
The Hellinger and total variation metrics between two probability measures $P_1$ and $P_2$
are defined by 
$$
	h(P_1,P_2) =\left( \int \big(\sqrt{p_1} - \sqrt{p_2}\big)^2 d\mu \right)^{1/2}
$$
and $d_V(P_1, P_2) = 2\sup_A |P_1(A) - P_2(A)| = \int|p_1 - p_2| d\mu$, respectively,
where $\mu$ is a measure dominating both $P_1$ and $P_2$.
Let $K_{P_1}(P_2) = -\int\log (dP_2/dP_1) dP_1$ be the Kullback-Leibler divergence.
The metrics and Kullback-Leibler divergence are sometimes denoted like,
for example, $h(p_1,p_2)$ using the corresponding densities.
The expectation of a random variable $X$ under a probability measure $P$ is denoted by $P X$.
The notation $P_0$ always represents the true probability which generates the observation.
Finally, $N_{\theta,\Sigma}$ is the probability measure of the multivariate normal distribution 
with mean $\theta$ and variance $\Sigma$,
and $\phi_\sigma$ denotes the univariate normal density with mean 0 and variance $\sigma^2$.

\chapter{Literature reviews}
\label{chap:review}

This chapter briefly reviews three topics in asymptotic statistics.
Each topic is closely related to our main results and
essential techniques for the proofs in this thesis.
Section \ref{sec:lan} introduces some results derived from the local asymptotic normality
which is a key property of classical asymptotic theory.
In Section \ref{sec:empirical}, modern empirical processes theories are provided.
The last section is devoted to introduce Bayesian asymptotics including 
the parametric BvM theorem and theories for infinite dimensional models.

\section{Local asymptotic normality}
\label{sec:lan}

A sequence of statistical models is locally asymptotically normal if, roughly speaking,
the likelihood ratio behaves like that for a normal location parameter.
This implies that the likelihood ratio admits a certain quadratic expansion.
An important example is a smooth parametric model, so-called the regular parametric model.
If a model is locally asymptotically normal, estimating the model parameter can be
understood as a problem of estimating the normal mean in an asymptotic sense.
As a result, it satisfies some asymptotic optimality criteria such as
the convolution theorem and locally asymptotic minimax theorem.
There are much literature about the local asymptotic normality and related asymptotic theories.
Here we refer to two well-known books: \citet{bickel1998efficient} and \citet{van1998asymptotic}
which contain a lot of references and examples.

In this section, we only consider \iid~models because it contains all essentials
about the local asymptotic normality.
For \iid\; models, a sequence of statistical models can be represented as 
a collection of probability measures for a single observation.
An extension to non-\iid~models, including both finite and infinite dimensional models,
is well-established in \citet{mcneney2000application}.
Consider a statistical model $\{P_\theta: \theta \in \Theta\}$ parametrized by
finite dimensional parameter $\theta\in\Theta$ and assume that $\Theta$ is an open subset of $\bbR^p$.
The model is called \emph{locally asymptotic normal}, or simply \emph{LAN}, at $\theta$
if there exists a function $\score_{\theta}$
such that $\int|\score_\theta|^2 dP_\theta < \infty$ and
for every converging sequence $h_n \rightarrow h$ in $\bbR^p$, 
\be \label{eq:parametric_LAN}
  \sum_{i=1}^n \log\frac{p_{\theta+h_n/\sqrt{n}}}{p_{\theta}}(X_i) =
  \frac{h^T}{\sqrt{n}} \sum_{i=1}^n \score_{\theta}(X_i) - \frac{1}{2} h^T\, 
  I_{\theta}\,h + o_{P_{\theta}}(1)
\ee
as $n \rightarrow \infty$, where $I_\theta = P_\theta [\score_\theta \score_\theta^T]$.
The function $\score_\theta$ and matrix $I_\theta$ are called by the \emph{score function} and
\emph{Fisher information matrix}, respectively.
Le Cam formulated the first version of LAN property as early as 1953 in his thesis.
This original version can be found, for example, in \citet{le1990locally}.
Note that the likelihood ratio of the normal location model $\{N(h,\Sigma): h \in \bbR^p\}$
with single observation $X$ is given by
$$
	\frac{dN_{h,\Sigma}}{dN_{0,\Sigma}}(X) = h^T \Sigma^{-1} X - \frac{1}{2} h^T \Sigma^{-1}h
$$
where $dN_{h, \Sigma}$ is the multivariate normal density with mean $h$ and variance $\Sigma$.
Since the term $n^{-1/2} \sum_i \score_\theta(X_i)$ in \eqref{eq:parametric_LAN}
converges in distribution to the normal distribution $N(0, I_\theta)$,
it is clear that the local log likelihood ratio \eqref{eq:parametric_LAN} converges in distribution
to the log likelihood ratio of the normal location model in which $\Sigma^{-1} = I_\theta$.
The name LAN originated from this fact.

One important result is that every smooth parametric model is LAN.
Here the smoothness of a model can be expressed in quadratic mean differentiability.
A model $\{P_\theta: \theta \in \Theta\}$ is called \emph{differentiable in quadratic mean} at $\theta$
if it is dominated by a $\sigma$-finite measure $\mu$ and 
there exists an $L_2(P_\theta)$-function $\score_\theta$ such that
$$
	\int \left[ \sqrt{p_{\theta+h}} - \sqrt{p_\theta} - \frac{1}{2} h^T
	\score_\theta \sqrt{p_\theta} \right]^2 d\mu = o(|h|^2)
$$
as $h \rightarrow 0$.
This is actually the Hadamard (equivalently \Frechet) differentiability of the root density
$\theta \mapsto \sqrt{p_\theta}: \bbR^p \rightarrow L_2(\mu)$ which
can be established by pointwise differentiability plus a convergence theorem for integrals.
A proof of the following theorem can be found in Theorem 7.2 of \citet{van1998asymptotic}.

\bigskip

\begin{theorem}
Assume that $\Theta$ is open in $\bbR^p$ and $\{P_\theta: \theta\in\Theta\}$ is differentiable 
in quadratic mean at $\theta$.
Then, $P_\theta \score_\theta = 0$, $I_\theta$ exists,
and the LAN assertion \eqref{eq:parametric_LAN} holds.
\end{theorem}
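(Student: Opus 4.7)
The plan is to derive both the score identity $P_\theta \score_\theta = 0$ and the LAN expansion from the quadratic-mean-differentiability (QMD) assumption, by combining the normalization $\int p_{\theta+h}\,d\mu = 1$ with a second-order Taylor expansion of $\log(1+\cdot)$ applied to the square roots of the local likelihood ratios.

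First I would establish the score-mean identity and note finiteness of $I_\theta$. Finiteness is already built into the QMD definition, which requires $\score_\theta \in L_2(P_\theta)$. Writing QMD as $\sqrt{p_{\theta+h}} - \sqrt{p_\theta} = \tfrac{1}{2} h^T \score_\theta \sqrt{p_\theta} + r_h$ with $\|r_h\|_{L_2(\mu)} = o(|h|)$, I would square, integrate, and use $\int p_{\theta+h}\,d\mu = \int p_\theta\,d\mu = 1$. Estimating the cross terms in $r_h$ by Cauchy--Schwarz against $\sqrt{p_\theta}$ and $\score_\theta\sqrt{p_\theta}$ yields
\[
  0 = h^T P_\theta \score_\theta + \tfrac{1}{4} h^T I_\theta h + o(|h|).
\]
Dividing by $|h|$ and letting $|h|\to 0$ along an arbitrary direction forces $P_\theta \score_\theta = 0$.

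Next I set $W_{ni} = \sqrt{p_{\theta+h_n/\sqrt n}/p_\theta}(X_i) - 1$, so that the $i$-th log-likelihood ratio equals $2\log(1+W_{ni})$, and apply QMD at the step $h_n/\sqrt n$ together with $h_n \to h$. Three estimates drive the argument. (i) The Hellinger affinity identity $P_\theta(1+W_{n1}) = 1 - \tfrac{1}{2}h^2(p_{\theta+h_n/\sqrt n},p_\theta)$ combined with the QMD expansion $n\,h^2(p_{\theta+h_n/\sqrt n},p_\theta) \to \tfrac{1}{4} h^T I_\theta h$ pins down $n P_\theta W_{n1} \to -\tfrac{1}{8} h^T I_\theta h$. (ii) A Chebyshev bound on $\sum_i [\,2W_{ni} - n^{-1/2}h^T \score_\theta(X_i)\,]$, whose summands have variance $o(1/n)$ directly from QMD, then yields $2\sum_i W_{ni} = n^{-1/2}h^T \sum_i \score_\theta(X_i) - \tfrac{1}{4} h^T I_\theta h + o_{P_\theta}(1)$. (iii) Since $2\sqrt n\,W_{n1}$ converges in $L_2(P_\theta)$ to $h^T \score_\theta(X_1)$, one has $n P_\theta W_{n1}^2 \to \tfrac{1}{4} h^T I_\theta h$, and the family $\{n W_{n1}^2\}$ is uniformly integrable; these together give $\sum_i W_{ni}^2 = \tfrac{1}{4} h^T I_\theta h + o_{P_\theta}(1)$.

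The main obstacle is controlling the third-order Taylor remainder $\sum_i R(W_{ni})$, where $R(x) := 2\log(1+x) - 2x + x^2 = O(|x|^3)$ for $|x|\le 1/2$, uniformly over the sample. The standard device is to show $\max_{i\le n}|W_{ni}| \to 0$ in $P_\theta$-probability: the union bound $P_\theta(\max_i|W_{ni}|>\varepsilon) \le n P_\theta(|W_{n1}|>\varepsilon) \le \varepsilon^{-2}\, n P_\theta[W_{n1}^2 \mathbf{1}_{\{|W_{n1}|>\varepsilon\}}]$ tends to zero by the uniform integrability in (iii). On the event $\{\max_i|W_{ni}|\le 1/2\}$, $|\sum_i R(W_{ni})| \le C \max_i|W_{ni}| \cdot \sum_i W_{ni}^2 = o_{P_\theta}(1)\cdot O_{P_\theta}(1) = o_{P_\theta}(1)$. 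Combining (i)--(iii) with the identity $\sum_i 2\log(1+W_{ni}) = 2\sum_i W_{ni} - \sum_i W_{ni}^2 + \sum_i R(W_{ni})$ then produces \eqref{eq:parametric_LAN}.
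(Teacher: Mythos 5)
Your proof is correct and is essentially the classical argument the paper itself defers to (Theorem 7.2 of van der Vaart's \emph{Asymptotic Statistics}): the score identity and finiteness of $I_\theta$ from squaring the quadratic-mean expansion and using $\int p_{\theta+h}\,d\mu=1$, followed by the Taylor expansion of $2\log(1+W_{ni})$ with the Hellinger-affinity control of the mean, the Chebyshev linearization, the triangular-array law of large numbers for $\sum_i W_{ni}^2$, and the $\max_i|W_{ni}|\to 0$ bound for the cubic remainder. The paper gives no proof of its own, so there is nothing further to compare.
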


\bigskip

More general statement of LAN can be found in \citet{strasser1985mathematical}.
With the help of the LAN property, Fisher's early concept of efficiency can be sharpened and elaborated upon.
We state three optimality theorems by Le Cam and \Hajek, which can be derived from the LAN property.
Besides the original reference, we refer to Chapter 8 of \citet{van1998asymptotic}
as a nice text.
An estimator sequence $\hat\theta_n$ is called \emph{regular} at $\theta$ if, for every $h$,
\be \label{eq:lim_dist}
	\cL\left(\sqrt{n}\Big(\hat\theta_n - \theta - \frac{h}{\sqrt{n}}\Big) \Big| P_{\theta+h/\sqrt{n}} \right)
	\stackrel{d}{\rightarrow} L_\theta
\ee
for some probability distribution $L_\theta$.
Here $\cL(T|P)$ denotes the distribution of $T=T(X)$ when $X$ follows the probability measure $P$
and $\stackrel{d}{\rightarrow}$ represents convergence in distribution.
Note that the limit distribution $L_\theta$ does not depend on $h$
and this is the key assumption for regularity of an estimator.
Let $*$ be the convolution operator.
The most important theorem about asymptotic optimality is definitely \Hajek-Le Cam convolution theorem
(\citet{hajek1970characterization, le1986asymptotic}) stated as follows.

\bigskip

\begin{theorem}[\bf Convolution] \label{thm:convolution}
Assume that $\Theta$ is open in $\bbR^p$ and $\{P_\theta: \theta\in\Theta\}$ is LAN at $\theta$
with the nonsingular Fisher information matrix $I_\theta$.
Then for any regular estimator sequence $\hat\theta_n$ for $\theta$,
there exist probability distributions $M_\theta$ such that
$$L_\theta = N_{0, I_\theta^{-1}} * M_\theta,$$
where $L_\theta$ is the limit distribution in \eqref{eq:lim_dist}.
\end{theorem}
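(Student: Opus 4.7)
The plan is to work directly with the joint limiting distribution of $\sqrt{n}(\hat\theta_n - \theta)$ and the normalized score $\Delta_{n,\theta} := n^{-1/2}\sum_{i=1}^n \score_\theta(X_i)$, and to extract the convolution structure from the interplay between LAN and the regularity condition. First, I would observe that under $P_\theta^n$ the pair $(\sqrt{n}(\hat\theta_n-\theta),\Delta_{n,\theta})$ is uniformly tight: the first marginal converges to $L_\theta$ by regularity with $h=0$, while the second converges to $N(0,I_\theta)$ by the classical central limit theorem, using $\int|\score_\theta|^2 dP_\theta < \infty$ and $P_\theta\score_\theta = 0$. Passing to a subsequence by Prokhorov's theorem, assume the joint convergence
\[
\bigl(\sqrt{n}(\hat\theta_n-\theta),\, \Delta_{n,\theta}\bigr) \stackrel{d}{\longrightarrow} (T,\Delta^*), \qquad T\sim L_\theta,\ \Delta^*\sim N(0,I_\theta),
\]
under $P_\theta^n$; uniqueness of the limit will later show the representation is independent of the subsequence chosen.

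Next, I would exploit the LAN assumption via Le Cam's third lemma. Because the log likelihood ratio of $P_{\theta+h/\sqrt{n}}^n$ with respect to $P_\theta^n$ equals $h^T\Delta_{n,\theta} - \tfrac12 h^T I_\theta h + o_{P_\theta}(1)$ by \eqref{eq:parametric_LAN}, the third lemma implies that under $P_{\theta+h/\sqrt{n}}^n$ the statistic $\sqrt{n}(\hat\theta_n-\theta)$ converges in distribution to the law on $\bbR^p$ with characteristic function $t\mapsto \E\bigl[e^{it^T T} e^{h^T \Delta^* - \frac12 h^T I_\theta h}\bigr]$. Regularity, on the other hand, asserts that this limit is $L_\theta$ translated by $h$, whose characteristic function at $t$ is $e^{it^T h}\phi_L(t)$, with $\phi_L$ the characteristic function of $L_\theta$. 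Equating the two gives the fundamental identity
\[
\E\bigl[e^{it^T T + h^T \Delta^*}\bigr] = e^{\frac12 h^T I_\theta h + it^T h}\phi_L(t), \qquad h\in\bbR^p,\ t\in\bbR^p.
\]

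The main technical point is now to convert this into a statement about the joint characteristic function of $(T,\Delta^*)$ by analytic continuation in $h$. Since $\Delta^*$ is Gaussian, $\E[e^{(\mathrm{Re}\,h)^T\Delta^*}]$ is finite for every complex $h$, so a dominated-convergence/Morera argument shows that the left side extends to an entire function of $h\in\mathbb{C}^p$ while the right side is manifestly entire. Uniqueness of holomorphic continuation lets me substitute $h=is$, yielding the joint characteristic function
\[
\psi(t,s) := \E\bigl[e^{it^T T + is^T \Delta^*}\bigr] = e^{-\frac12 s^T I_\theta s - s^T t}\,\phi_L(t).
\]
I expect this analytic-continuation step to be the main obstacle: one has to justify the interchange of integration and limits using Gaussian tail bounds on $\Delta^*$ and verify that the analytically continued expression is indeed a bona fide characteristic function along the imaginary $h$-axis (which, by construction, it must be).

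The conclusion then reduces to completing the square. Setting $W := T - I_\theta^{-1}\Delta^*$, the joint characteristic function of $(W,\Delta^*)$ at $(u,v)$ equals $\psi(u,\,v-I_\theta^{-1}u)$, and a short calculation shows this factors as
\[
\bigl[e^{\frac12 u^T I_\theta^{-1} u}\phi_L(u)\bigr]\cdot e^{-\frac12 v^T I_\theta v}.
\]
Hence $W$ is independent of $\Delta^*$, and since $I_\theta^{-1}\Delta^*\sim N(0,I_\theta^{-1})$ while $T = W + I_\theta^{-1}\Delta^*$, the distribution $L_\theta$ of $T$ is the convolution of the law $M_\theta$ of $W$ with $N_{0,I_\theta^{-1}}$, which is the claim. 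A final remark is that the entire argument applies along every subsequence, so $M_\theta$ is well-defined independently of the subsequence extracted at the outset.
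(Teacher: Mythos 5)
Your argument is correct. Note that the thesis does not prove Theorem \ref{thm:convolution} at all --- it is quoted in the literature-review chapter with pointers to \citet{hajek1970characterization}, \citet{le1986asymptotic} and Chapter 8 of \citet{van1998asymptotic} --- and what you have written is precisely the standard proof from those sources: joint tightness of $(\sqrt{n}(\hat\theta_n-\theta),\Delta_{n,\theta})$, Le Cam's third lemma combined with regularity to get the identity $\E[e^{it^TT+h^T\Delta^*}]=e^{\frac12 h^TI_\theta h+it^Th}\phi_L(t)$, analytic continuation to $h=is$ (legitimate because $\Delta^*$ is Gaussian so the left side is entire in $h$), and the completion of the square showing $W=T-I_\theta^{-1}\Delta^*$ is independent of $\Delta^*$; your closing remark that $\E e^{iu^TW}=e^{\frac12 u^TI_\theta^{-1}u}\phi_L(u)$ depends only on $L_\theta$ and $I_\theta$ correctly disposes of the subsequence issue.
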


\bigskip

Theorem \ref{thm:convolution} says that for a class of all regular estimators,
the normal distribution $N(0, I_\theta^{-1})$ is the best possible limit distribution.
However, some estimator sequences of interest, such as shrinkage estimators, are not regular.
A typical example is the Hodges superefficient estimator 
\bean
	\hat\theta_n = \left\{ \begin{array}{cc}
	\bar{X}_n & \textrm{if} ~ |\bar{X}_n | \geq n^{-1/4}\\
	\alpha \bar{X}_n & \textrm{o.w.}
	\end{array} \right.
\eean
for the normal location parameter.
Here $\alpha$ is an arbitrary positive constant which is strictly smaller than 1.
In this case, $\hat\theta_n$ is $n^{-1/2}$-consistent, that is $\sqrt{n}(\hat\theta_n - \theta) = O_{P_\theta}(1)$,
and asymptotically normal, but superefficient at 0 (variance is smaller than that of MLE).
Interestingly, the set of superefficiency is of Lebesgue measure zero and
this can be proved in general situations (\citet{le1953some}).

\bigskip

\begin{theorem} \label{thm:asconvolution}
Assume that $\Theta$ is open in $\bbR^p$ and $\{P_\theta: \theta\in\Theta\}$ is LAN at $\theta$
with the nonsingular Fisher information matrix $I_\theta$.
Let $\hat\theta_n$ be an estimator sequence such that $\sqrt{n}(\hat\theta_n-\theta)$ converges
to a limit distribution $L_\theta$ under every $\theta$.
Then, there exist probability distributions $M_\theta$ such that 
$$
	L_\theta = N_{0, I_\theta^{-1}} * M_\theta
$$
for Lebesgue almost every $\theta$.
\end{theorem}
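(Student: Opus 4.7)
The overall strategy is to reduce the theorem to the Convolution Theorem (Theorem~\ref{thm:convolution}) by establishing that $\hat\theta_n$ is a regular estimator sequence at Lebesgue almost every $\theta$. At every point of regularity, Theorem~\ref{thm:convolution} immediately delivers the representation $L_\theta = N_{0, I_\theta^{-1}} * M_\theta$, so the real content of Theorem~\ref{thm:asconvolution} is that failure of regularity---Hodges-type superefficiency---can occur only on a Lebesgue null set.

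To analyse regularity at a fixed $\theta$, I would first exploit the LAN expansion \eqref{eq:parametric_LAN} through contiguity. Le Cam's first lemma yields mutual contiguity of $P_\theta^n$ and $P_{\theta+h/\sqrt{n}}^n$, and hence tightness of $\sqrt{n}(\hat\theta_n-\theta)$ under both measures. After passing to a subsequence one may assume the joint convergence
\[
\Bigl(\sqrt{n}(\hat\theta_n - \theta),\ n^{-1/2}\sum_{i=1}^n \score_\theta(X_i)\Bigr) \stackrel{d}{\rightarrow} (T_\theta, Z_\theta) \quad \text{under } P_\theta^n,
\]
with $Z_\theta \sim N_{0, I_\theta}$. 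Combining this joint limit with \eqref{eq:parametric_LAN}, Le Cam's third lemma forces $\sqrt{n}(\hat\theta_n - \theta - h/\sqrt{n})$ under $P_{\theta+h/\sqrt{n}}^n$ to converge to a law $Q_{\theta,h}$ whose characteristic function has the explicit form
\[
\widehat Q_{\theta,h}(t) = e^{-it^T h}\, \E\!\left[\exp\!\bigl(it^T T_\theta + h^T Z_\theta - \tfrac{1}{2} h^T I_\theta h\bigr)\right].
\]
Regularity at $\theta$ amounts precisely to the identity $Q_{\theta,h} = L_\theta$ for every $h$.

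The remaining step---and the technical heart of the argument---is to upgrade this to an almost-everywhere statement in $\theta$. The hypothesis $\sqrt{n}(\hat\theta_n - \theta')\stackrel{d}{\rightarrow} L_{\theta'}$ holds at every $\theta'$, and in particular at $\theta' = \theta + h/\sqrt{n}$, but the limit $L_{\theta+h/\sqrt{n}}$ has no \emph{a priori} continuity in its index. I would bridge this gap by a Fubini/Lebesgue differentiation argument: the map $\theta\mapsto\phi_\theta(t) = \int e^{it^T y}\,dL_\theta(y)$ is Borel measurable and bounded by $1$, so at every Lebesgue density point of this map one can integrate the third-lemma identity over small balls of $h$'s and pass to the limit to extract the information-type inequality $|\phi_\theta(t)|^2 \leq \exp(-t^T I_\theta^{-1} t)$. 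Together with the explicit form of $\widehat Q_{\theta,h}$, this forces $\phi_\theta(t)\exp(\tfrac{1}{2} t^T I_\theta^{-1} t)$ to be the characteristic function of a probability measure $M_\theta$, yielding $L_\theta = N_{0, I_\theta^{-1}} * M_\theta$.

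The main obstacle will be exactly this measure-theoretic passage: the pointwise hypothesis does not interact cleanly with the $1/\sqrt{n}$-scale perturbations supplied by LAN, and one must select density points carefully while controlling tightness uniformly over small $h$. The Hodges superefficient estimator mentioned earlier shows that the Lebesgue null exceptional set cannot, in general, be eliminated, so the ``almost every'' qualifier is sharp.
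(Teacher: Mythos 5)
You should first note that the thesis does not actually prove this theorem: it is stated as a classical result of Le Cam (cited as \citet{le1953some}; see also Theorem 8.9 of \citet{van1998asymptotic}), so there is no in-paper proof to compare against. Your overall strategy in the first two paragraphs is exactly the standard route: contiguity from LAN, subsequential joint convergence of $\bigl(\sqrt{n}(\hat\theta_n-\theta),\, n^{-1/2}\sum_i \score_\theta(X_i)\bigr)$, Le Cam's third lemma to identify the local limit laws $Q_{\theta,h}$, and a Fubini/Lebesgue-density-point argument in $\theta$ to reconcile the pointwise hypothesis $\sqrt{n}(\hat\theta_n-\theta')\stackrel{d}{\rightarrow}L_{\theta'}$ with the $n^{-1/2}$-scale perturbations. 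You are also right that this is the technical heart and that Hodges' example shows the null set cannot be removed. One small overstatement: the conclusion one actually extracts is not that $\hat\theta_n$ is regular for a.e.\ $\theta$, but that for a.e.\ $\theta$, along subsequences, $Q_{\theta,h}=L_\theta$ for Lebesgue-a.e.\ $h$; this weaker ``almost regularity'' is what the argument delivers and it suffices.

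The genuine gap is in your closing step. From the density-point argument you extract only the modulus bound $|\phi_\theta(t)|^2\leq\exp(-t^T I_\theta^{-1}t)$, and you then assert that, ``together with the explicit form of $\widehat Q_{\theta,h}$,'' this forces $\phi_\theta(t)\exp(\tfrac{1}{2}t^T I_\theta^{-1}t)$ to be a characteristic function. That implication does not hold: a bounded-modulus ratio of a characteristic function by a Gaussian one need not be positive definite, so the modulus inequality is a necessary but not sufficient condition for $L_\theta$ to admit $N_{0,I_\theta^{-1}}$ as a convolution factor. The step that actually closes the argument is different: once you know $Q_{\theta,h}=L_\theta$ for $h$ in a set of full measure (along the subsequence), observe that for fixed $t$ the map $h\mapsto\widehat Q_{\theta,h}(t)=e^{-it^Th}\,\E\bigl[\exp\bigl(it^TT_\theta+h^TZ_\theta-\tfrac{1}{2}h^TI_\theta h\bigr)\bigr]$ is real-analytic in $h$, so equality with $\phi_\theta(t)$ for a.e.\ $h$ upgrades to all $h$; then the proof of Theorem \ref{thm:convolution} applies verbatim and produces $M_\theta$ \emph{constructively} as the law of $T_\theta-I_\theta^{-1}Z_\theta$, which the identity in $h$ shows to be independent of $I_\theta^{-1}Z_\theta\sim N_{0,I_\theta^{-1}}$. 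Positive definiteness of $\widehat M_\theta$ is then automatic because $M_\theta$ is exhibited as a law, not inferred from an inequality. Without this replacement your final paragraph does not yield the convolution representation.
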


\bigskip

Though the set of superefficiency is a null set, the above theorem may not be fully satisfactory
because there is no information about parameters which may be important as in the \Hajek's example.
Furthermore, an estimator sequence is required to be $n^{-1/2}$-consistent in Theorem \ref{thm:asconvolution}.
The following theorem, which can be found in Theorem 8.11 of \citet{van1998asymptotic}, 
is a refined version of the so-called local asymptotic minimax theorem
(\citet{hajek1972local, le1972limits}).
A function $l: \bbR^p \rightarrow [0,\infty)$ is called a \emph{bowl-shaped loss} if the sublevel sets
$\{x: l(x) \leq c\}$ are convex and symmetric about the origin.
It is called \emph{subconvex} if, moreover, these sets are closed.

\bigskip

\begin{theorem}[\bf Local asymptotic minimax]
Assume that $\Theta$ is open in $\bbR^p$ and $\{P_\theta: \theta\in\Theta\}$ is LAN at $\theta$
with the nonsingular Fisher information matrix $I_\theta$.
Then, for any estimator sequence $\hat\theta_n$ and bowl-shaped loss function $\ell$,
$$
	\sup_I \liminf_{n\rightarrow\infty}\sup_{h\in I} P_{\theta+h/\sqrt{n}}
	l\left( \sqrt{n}\Big(\hat\theta_n - \theta - \frac{h}{\sqrt{n}}\Big) \right)
	\geq \int l \; dN_{0, I_\theta^{-1}},
$$
where the first supremum is taken over all finite subsets $I$ of $\bbR^p$.
\end{theorem}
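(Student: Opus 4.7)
The plan is to deduce the bound from a minimax analysis of the Gaussian shift experiment $\{N(h, I_\theta^{-1}) : h \in \bbR^p\}$, using the LAN property to reduce the problem to this limit experiment. The strategy follows the classical \Hajek--Le Cam route: first replace the local maximum risk by a Bayes risk under a uniform prior $\pi_I$ on the finite set $I$, then use LAN to relate this prelimit Bayes risk to a Bayes risk in the limit Gaussian experiment, and finally apply Anderson's lemma together with an approximation-of-Lebesgue argument in that limit experiment.

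First I would reduce to a bounded loss by truncation $l_M = l \wedge M$ and recover the unbounded case via monotone convergence as $M\to\infty$. For finite $I\subset\bbR^p$ with uniform prior $\pi_I$, the trivial inequality
$$\sup_{h \in I} P_{\theta+h/\sqrt{n}}\, l_M\!\bigl(\sqrt{n}(\hat\theta_n - \theta - h/\sqrt{n})\bigr) \;\geq\; \frac{1}{|I|}\sum_{h \in I} P_{\theta+h/\sqrt{n}}\, l_M\!\bigl(\sqrt{n}(\hat\theta_n - \theta - h/\sqrt{n})\bigr)$$
converts the problem into a Bayes risk. By the LAN expansion, the log-likelihood-ratio vector $\bigl(\log dP_{\theta+h/\sqrt{n}}^n/dP_\theta^n\bigr)_{h\in I}$ converges in distribution under $P_\theta^n$ to the corresponding vector in the Gaussian shift experiment $\{N(h, I_\theta^{-1}):h\in I\}$, so the localized experiments converge in the sense of Le Cam. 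Combining this with tightness of $\sqrt{n}(\hat\theta_n-\theta)$ under $P_\theta^n$ and extracting a subsequence, the asymptotic representation theorem identifies $\liminf_n$ of the averaged risk with the $\pi_I$-Bayes risk, in the Gaussian shift experiment, of some (possibly randomized) estimator $T$ of $h$ based on a single observation $X\sim N(h, I_\theta^{-1})$.

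Second, I would take the supremum over finite $I$ and pass to the generalized Bayes problem with Lebesgue prior on $h$. Choosing $I=I_m = m^{-1}\bbZ^p\cap[-m,m]^p$ with the normalized uniform law, the discrete priors $\pi_{I_m}$ approximate Lebesgue measure, and the corresponding generalized Bayes estimator in the Gaussian shift experiment is, by translation equivariance, the identity map $T(X)=X$. Anderson's lemma---that a centered Gaussian puts more mass in any symmetric convex set centered at the origin than in any of its translates---then gives
$$\int l_M\bigl(T(X) - h\bigr)\, dN_{h, I_\theta^{-1}}(X) \;\geq\; \int l_M\, dN_{0, I_\theta^{-1}}$$
for every $h\in\bbR^p$ and every measurable $T$; averaging this pointwise bound over any prior preserves it, so in particular it passes through the limit as $m\to\infty$. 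Letting $M\to\infty$ and using monotone convergence yields the stated inequality.

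The main obstacle will be the middle step that transfers the Bayes risk from the prelimit sequence to the Gaussian limit experiment, because rigorously capturing the limiting randomized estimator $T$ requires Le Cam's theory of convergence of experiments and the asymptotic representation theorem rather than a simple continuous-mapping argument; the lower semicontinuity of $l_M$ must also be handled carefully via a Portmanteau-type step. A secondary subtlety is that Anderson's lemma only directly lower-bounds the risk of translation-equivariant estimators in the Gaussian model, so one genuinely needs the Bayesian averaging device to handle arbitrary estimators---this is precisely why the theorem statement takes a supremum over finite subsets $I$ rather than a maximum over a single fixed neighborhood.
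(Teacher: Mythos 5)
The paper itself does not prove this theorem: it appears in the literature-review chapter and is quoted from Theorem 8.11 of van der Vaart (1998), so there is no in-paper argument to compare against. Your outline is the standard \Hajek--Le Cam proof of that result (truncate the loss, pass from the local maximum risk to a Bayes risk over a finite uniform prior, use LAN and the asymptotic representation theorem to move to the Gaussian shift experiment, then drive the prior toward Lebesgue measure and invoke Anderson's lemma), and at the level of strategy it is the right one.

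There is, however, a concrete error in your second paragraph. The displayed inequality
$\int l_M\bigl(T(X)-h\bigr)\,dN_{h,I_\theta^{-1}}(X) \geq \int l_M\,dN_{0,I_\theta^{-1}}$
is claimed ``for every $h\in\bbR^p$ and every measurable $T$,'' and this is false: for fixed $h$ the constant estimator $T\equiv h$ has risk $l_M(0)$, which is typically $0$. Anderson's lemma yields this bound only for translation-equivariant estimators $T(X)=X+c$. If the pointwise claim were true, the entire finite-$I$ and Bayes-averaging machinery would be superfluous (one could take $I=\{0\}$), so the error is not cosmetic. The correct step conditions the other way around: under a Gaussian prior $N(0,\Lambda)$ on $h$, the posterior of $h$ given $X$ is Gaussian with covariance $\Sigma_\Lambda\to I_\theta^{-1}$ as $\Lambda\to\infty$, and Anderson's lemma applied to the \emph{posterior} gives $\inf_t \int l_M(t-h)\,d\Pi(h\,|\,X)\geq \int l_M\,dN_{0,\Sigma_\Lambda}$ for every $x$, hence the same lower bound for the Bayes risk of an arbitrary $T$; one then approximates these Gaussian (or flat) priors by the discrete uniform priors on the finite sets $I_m$. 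Your closing paragraph shows you know that equivariance is the issue and that the Bayesian device is what handles general $T$, but the proof as written asserts the false pointwise bound and then ``averages'' it, which is circular. A secondary gap: you invoke tightness of $\sqrt{n}(\hat\theta_n-\theta)$ under $P_\theta^n$, which is not assumed; the standard fix is to compactify $[-\infty,\infty]^p$ (automatic tightness) and extend $l_M$ lower-semicontinuously before applying the representation theorem along subsequences.
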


\bigskip

According to the three theorems above we conclude that the normal distribution $N(0, I_\theta^{-1})$
is the best possible limit distribution.
An estimator sequence $\hat\theta_n$ is called \emph{efficient} or \emph{best regular}
if it is regular and 
$$
	\cL(\sqrt{n}(\hat\theta_n - \theta)|P_\theta) \stackrel{d}{\rightarrow} N_{0, I_\theta^{-1}}
$$
as $n\rightarrow \infty$.
A well-known (see, for example, \citet{van1998asymptotic}) fact is that 
every efficient estimator is asymptotically linear estimator as stated in the following theorem.

\bigskip

\begin{theorem} \label{thm:efficient_parameteric}
An estimator sequence $\hat\theta_n$ is efficient if and only if
$$
	\sqrt{n}(\hat\theta_n - \theta) = \frac{1}{\sqrt{n}} \sum_{i=1}^n I_\theta^{-1}
	\score_\theta(X_i) + o_{P_\theta}(1).
$$
\end{theorem}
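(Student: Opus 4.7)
The plan is to prove each direction separately via Le Cam's third lemma. Abbreviate $T_n := \sqrt{n}(\hat\theta_n - \theta)$, $S_n := n^{-1/2}\sum_{i=1}^n \score_\theta(X_i)$, and $\Delta_n(h) := \sum_{i=1}^n \log(p_{\theta+h/\sqrt{n}}/p_\theta)(X_i)$. By the LAN expansion \eqref{eq:parametric_LAN} one has $\Delta_n(h) = h^T S_n - \tfrac{1}{2} h^T I_\theta h + o_{P_\theta}(1)$, and by the multivariate CLT $S_n \stackrel{d}{\rightarrow} N_{0, I_\theta}$ under $P_\theta$.

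For the ``if'' direction, suppose $T_n = I_\theta^{-1} S_n + o_{P_\theta}(1)$. The CLT immediately yields $T_n \stackrel{d}{\rightarrow} N_{0, I_\theta^{-1}}$ under $P_\theta$. For regularity, the pair $(T_n, \Delta_n(h))$ is asymptotically a linear function of $S_n$ under $P_\theta$, hence jointly Gaussian in the limit with asymptotic covariance $\text{Cov}(I_\theta^{-1} S_\infty, h^T S_\infty) = I_\theta^{-1} I_\theta h = h$. Le Cam's third lemma then gives $T_n \stackrel{d}{\rightarrow} N_{h, I_\theta^{-1}}$ under $P_{\theta+h/\sqrt{n}}$, i.e.\ $\sqrt{n}(\hat\theta_n - \theta - h/\sqrt{n}) \stackrel{d}{\rightarrow} N_{0, I_\theta^{-1}}$, which is \eqref{eq:lim_dist} with $L_\theta = N_{0, I_\theta^{-1}}$.

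For the converse, assume $\hat\theta_n$ is efficient and set $U_n := T_n - I_\theta^{-1} S_n$. Both coordinates of $(T_n, S_n)$ are tight under $P_\theta$, so by Prohorov any subsequence has a weakly convergent sub-subsequence with limit $(T^*, S^*)$; it suffices to show $T^* = I_\theta^{-1} S^*$ a.s.\ for every such limit. Regularity of $\hat\theta_n$ gives $T_n \stackrel{d}{\rightarrow} N_{h, I_\theta^{-1}}$ under $P_{\theta+h/\sqrt{n}}$; combined with the LAN form of $\Delta_n(h)$, Le Cam's third lemma yields
\begin{equation*}
E\bigl[\exp(i t^T T^* + h^T S^*)\bigr] = \exp\Bigl( \tfrac{1}{2} h^T I_\theta h + i t^T h - \tfrac{1}{2} t^T I_\theta^{-1} t \Bigr)
\end{equation*}
for every $t, h \in \bbR^p$. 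This identifies $(T^*, S^*)$ as a centered bivariate Gaussian with blocks $\V(T^*) = I_\theta^{-1}$, $\V(S^*) = I_\theta$ and $\text{Cov}(T^*, S^*) = I_p$, so $\V(T^* - I_\theta^{-1} S^*) = I_\theta^{-1} - I_\theta^{-1} - I_\theta^{-1} + I_\theta^{-1} = 0$, hence $T^* = I_\theta^{-1} S^*$ a.s.\ and therefore $U_n \stackrel{d}{\rightarrow} 0$, equivalently $U_n = o_{P_\theta}(1)$.

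The main obstacle is this converse implication: marginal weak convergence of $T_n$ and $S_n$ is by itself too weak to force asymptotic linearity, so joint information about $(T_n, S_n)$ must be extracted from the model. LAN plus regularity supplies exactly this through local alternatives, since the likelihood ratio $\exp(\Delta_n(h))$ acts as a change of measure whose limit ties $E_{P_\theta}[\cdot]$ to the limit behaviour of $T_n$ under every $P_{\theta+h/\sqrt{n}}$, thereby pinning down the joint mixed MGF--characteristic function of any subsequential limit. Once joint Gaussianity with $\text{Cov}(T^*, S^*) = I_p$ is established, the vanishing of the limiting variance of $U_n$ is immediate and the conclusion follows.
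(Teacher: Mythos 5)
The paper gives no proof of this theorem; it is stated as a known fact with a pointer to \citet{van1998asymptotic}, and your argument is, in substance, the standard proof of Lemma 8.14 there: the easy direction via joint asymptotic linearity in $S_n$ plus Le Cam's third lemma, and the converse via tightness, subsequential limits, and the change-of-measure identity that pins down the joint law of $(T^*,S^*)$. Both directions are correct. The only step you state rather than justify is passing from the mixed transform identity $E[\exp(it^TT^*+h^TS^*)]=\exp(\tfrac12 h^TI_\theta h+it^Th-\tfrac12 t^TI_\theta^{-1}t)$, valid for real $h$, to the conclusion that $(T^*,S^*)$ is jointly Gaussian with $\mathrm{Cov}(T^*,S^*)=I_p$; this needs the observation that both sides are analytic in $h$ (the MGF of $S^*$ is finite everywhere), so one may substitute imaginary $h$ to recover the joint characteristic function — exactly the analytic-continuation device used in the cited proof. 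With that remark supplied, the computation $\V(T^*-I_\theta^{-1}S^*)=0$ and the subsequence principle finish the converse as you wrote.
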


\bigskip

So far we have studied asymptotic optimality of an estimator sequence in a smooth parametric model.
The two theorems, the convolution theorem and local asymptotic minimax theorem,
have natural extensions in infinite dimensional models.
Typically an infinite dimensional parameter is not estimable at $n^{-1/2}$ rate (\citet{van1991differentiable}).
It is possible, however, to estimate some finite dimensional parameters at this rate 
even in an infinite dimensional model.
The central limit theorem, by which mean parameters are estimable at parametric rate,
is a representative example.
Under regularity conditions, moreover, some estimators can be shown to be asymptotically optimal 
in the sense of the convolution theorem and local asymptotic minimax theorem as in parametric models.

We first define the tangent set and tangent space.
For a given statistical model $\scrP$ containing $P_0$, consider a
one-dimensional submodel $t \mapsto P_t$ passing through $P_0$ at $t=0$ and
differentiable in quadratic mean.
By the differentiability we get the score function $g$ at $P_0$ from this submodel.
Letting $t \mapsto P_t$ range over the collection of all such submodels,
we obtain the collection of score functions,
which is called the \emph{tangent set} of the model $\scrP$ at $P_0$.
The closed linear span of the tangent set in $L_2(P_0)$, denoted by $\dot\scrP$, 
is called the \emph{tangent space} of $\scrP$ at $P_0$.

Since our main interest in Chapter \ref{chap:main} is to estimate a finite dimensional parameter in a semiparametric model,
we only consider the information bound for a semiparametric model
$\scrP = \{P_{\theta,\eta}: \theta\in\Theta,\eta\in\cH\}$,
$\theta$ is the finite dimensional parameter of interest and
$\eta$ is the infinite dimensional nuisance parameter.
For more general theory, readers are referred to two books: \citet{van1998asymptotic, bickel1998efficient}.
Fix $(\theta_0,\eta_0) \in \Theta \times \cH$, and define two submodels
$\scrP_1 = \{P_{\theta,\eta_0}: \theta \in\Theta\}$ and $\scrP_2 = \{P_{\theta_0,\eta}: \eta \in\cH\}$.
Assume that $\scrP_1$ is differentiable in quadratic mean and
let $\score_{\theta_0,\eta_0}$ be the score function at $\theta_0$.
Then it is easy to show that $\dot\scrP_1$ is equal to the set of all 
$h^T\score_{\theta_0,\eta_0}$, where $h$ ranges over $\bbR^p$.
The function defined by
$$
	\effscore_{\theta_0,\eta_0} = \score_{\theta_0,\eta_0} - \Pi_{\theta_0,\eta_0} \score_{\theta_0,\eta_0}
$$
is called the \emph{efficient score function} and the matrix 
$\effI_{\theta_0,\eta_0} = P_{\theta_0,\eta_0} \effscore_{\theta_0,\eta_0} \effscore_{\theta_0,\eta_0}^T$
is the \emph{efficient information matrix}, where
$\Pi_{\theta_0,\eta_0}$ is the orthogonal projection onto $\dot\scrP_2$ in $L_2(P_{\theta_0,\eta_0})$.
For defining the information for estimating $\theta$,
if $\dot\scrP = \dot\scrP_1 + \dot\scrP_2$, then 
it is enough to consider one-dimensional smooth (differentiable in quadratic mean) submodels of type 
\be \label{eq:submodel}
	t \mapsto P_{\theta_0 + th, \eta_t}
\ee
for $h \in \bbR^p$.
An estimator sequence $\hat\theta_n$ is \emph{regular} for estimating $\theta$
if it is regular in every such submodel, that is
$$
	\cL\left(\sqrt{n}\Big(\hat\theta_n - \theta_0 - \frac{t}{\sqrt{n}}h\Big)
	\Big| P_{\theta_0+th/\sqrt{n}, \eta_{t/\sqrt{n}}} \right)
	\stackrel{d}{\rightarrow} L_{\theta_0}
$$
for some $L_{\theta_0}$ which does not depend on $h$.
The following two theorems are extensions of the convolution theorem and local asymptotic minimax theorem,
respectively, to semiparametric models.

\bigskip

\begin{theorem}[\bf Convolution]
Assume that $\dot\scrP = \dot\scrP_1 + \dot\scrP_2$, $\dot\scrP$ is convex and
$\effI_{\theta_0,\eta_0}$ is nonsingular.
Then, every limit distribution $L_{\theta_0}$ of a regular sequence of estimators  can be written
$L_{\theta_0} = N_{0, \effI_{\theta_0,\eta_0}^{-1}}*M_{\theta_0}$ for some probability distribution $M_{\theta_0}$.
\end{theorem}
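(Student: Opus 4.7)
The plan is to reduce the semiparametric convolution claim to the parametric H\'ajek--Le Cam theorem, applied inside a one-dimensional least favourable submodel whose score is the efficient score direction $h^T\effscore_{\theta_0,\eta_0}$. Regularity of $\hat\theta_n$ is transferred from the full model to this scalar subproblem, where the one-dimensional convolution theorem already does the heavy lifting.

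First, for each $h\in\bbR^p$ I would construct a quadratic-mean differentiable submodel $s\mapsto P_{\theta_0+sh,\eta_s}$ of the form \eqref{eq:submodel} whose score at $s=0$ equals $h^T\effscore_{\theta_0,\eta_0}$. Since $\effscore_{\theta_0,\eta_0}=\score_{\theta_0,\eta_0}-\Pi_{\theta_0,\eta_0}\score_{\theta_0,\eta_0}$ with $\Pi_{\theta_0,\eta_0}\score_{\theta_0,\eta_0}\in\dot\scrP_2$, the hypotheses $\dot\scrP=\dot\scrP_1+\dot\scrP_2$ together with convexity of $\dot\scrP$ guarantee a nuisance path $\eta_s$ whose score at $s=0$ equals $-h^T\Pi_{\theta_0,\eta_0}\score_{\theta_0,\eta_0}$; composing it with the linear translation in $\theta$ produces the desired submodel, with Fisher information $h^T\effI_{\theta_0,\eta_0}h$ at $s=0$.

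In this submodel the LAN expansion reads
\begin{equation*}
\log\prod_{i=1}^n\frac{p_{\theta_0+s_0 h/\sqrt{n},\,\eta_{s_0/\sqrt{n}}}}{p_{\theta_0,\eta_0}}(X_i)
= s_0\Delta_{n,h}-\frac{s_0^2}{2}\,h^T\effI_{\theta_0,\eta_0}h+o_{P_{\theta_0,\eta_0}}(1),
\end{equation*}
where $\Delta_{n,h}=n^{-1/2}\sum_i h^T\effscore_{\theta_0,\eta_0}(X_i)\stackrel{d}{\rightarrow}h^T W$ under $P^n_{\theta_0,\eta_0}$ with $W\sim N_{0,\effI_{\theta_0,\eta_0}}$. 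Regularity of $\hat\theta_n$ along this submodel forces $\sqrt{n}(\hat\theta_n-\theta_0-s_0h/\sqrt{n})\stackrel{d}{\rightarrow}L_{\theta_0}$ under $P^n_{\theta_0+s_0h/\sqrt{n},\eta_{s_0/\sqrt{n}}}$. Denoting by $T$ the weak limit of $\sqrt{n}(\hat\theta_n-\theta_0)$ under $P^n_{\theta_0,\eta_0}$, Le Cam's third lemma converts the preceding two displays into the moment identity
\begin{equation*}
E\big[e^{iu^T T}\,e^{s_0 h^T W}\big]=e^{\frac{s_0^2}{2}h^T\effI_{\theta_0,\eta_0}h}\,e^{is_0 u^T h}\,E\big[e^{iu^T T}\big]
\end{equation*}
for every $u,h\in\bbR^p$ and every $s_0\in\bbR$. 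A direct check (the right-hand side factors as the characteristic function of $\effI_{\theta_0,\eta_0}^{-1}W$ evaluated at $u$, jointly with a shift of $Z_h=h^T W$) shows that this identity is satisfied by, and in fact forces, the decomposition $T=\effI_{\theta_0,\eta_0}^{-1}W+R$ with $R$ independent of $W$. Since $\effI_{\theta_0,\eta_0}^{-1}W\sim N_{0,\effI_{\theta_0,\eta_0}^{-1}}$, this gives $L_{\theta_0}=N_{0,\effI_{\theta_0,\eta_0}^{-1}}*M_{\theta_0}$ with $M_{\theta_0}=\cL(R)$.

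The main obstacle is the first step: producing an \emph{honest} quadratic-mean differentiable nuisance path whose score is a prescribed element of $\dot\scrP_2$, rather than merely a sequence of paths whose scores approximate that direction in $L_2(P_{\theta_0,\eta_0})$. Convexity of $\dot\scrP$ is precisely the ingredient that promotes each element of $\dot\scrP_2$ from a limit of scores to an attained tangent direction, so that the LAN expansion holds with the exact efficient-score coefficient needed. Without convexity one would have to insert a diagonal-sequence argument and invoke continuity of the LAN shift in the score to carry the expansion to the limiting direction, which is the technical subtlety that the convexity hypothesis is designed to avoid.
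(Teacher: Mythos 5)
The paper states this theorem without proof --- it appears in the literature-review chapter with pointers to \citet{van1998asymptotic} and \citet{bickel1998efficient} --- so your argument has to be measured against the standard \Hajek--Le Cam proof. Your architecture is that proof: reduce to one-dimensional submodels of type \eqref{eq:submodel}, combine LAN, Le Cam's third lemma and regularity to obtain the identity $E[e^{iu^TT}e^{z^TW}] = e^{\frac{1}{2}z^T\effI_{\theta_0,\eta_0}z+iu^Tz}\,E[e^{iu^TT}]$ with $z=s_0h$, and read off the convolution. Two remarks on the final step: you need the \emph{joint} weak convergence of $\sqrt{n}(\hat\theta_n-\theta_0)$ and $n^{-1/2}\sum_i\effscore_{\theta_0,\eta_0}(X_i)$ in order to define $(T,W)$ on a common probability space (pass to a subsequence via Prohorov's theorem), and the ``direct check'' that the identity forces $T=\effI_{\theta_0,\eta_0}^{-1}W+R$ with $R$ independent of $W$ is really an analytic-continuation argument: both sides are entire in $z$, agree on $\bbR^p$, hence agree on $\bbC^p$, and substituting $z=iv$ yields the joint characteristic function of $(T,W)$, which then factors. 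These steps are standard but not free.

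The genuine gap is in your first step. You assert that convexity of the tangent set ``promotes each element of $\dot\scrP_2$ from a limit of scores to an attained tangent direction.'' It does not. The projection $\Pi_{\theta_0,\eta_0}\score_{\theta_0,\eta_0}$ lies in $\dot\scrP_2$, the \emph{closed linear span} of the nuisance tangent set; the closed linear span of a convex set is in general far larger than the set of scores actually attained by submodels, and the paper itself notes, in the discussion of least favorable submodels just below this statement, that a submodel with score $\effscore_{\theta_0,\eta_0}$ ``may not exist.'' As written, your proof silently assumes the existence of a least favorable submodel, which is not among the hypotheses. The repair is precisely the approximation argument you tried to sidestep: establish the moment identity for every attained tangent direction $g$, observe that both sides are continuous in $g$ with respect to the $L_2(P_{\theta_0,\eta_0})$ norm (the left side by uniform integrability, since $E\,e^{\Delta_g-\|g\|^2/2}\leq 1$), and pass to the limit along a sequence of attained directions converging to the efficient one. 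The convexity hypothesis is used elsewhere --- to ensure the local limit experiment is a Gaussian shift indexed by a convex cone, so that scalar multiples $tg$ remain admissible directions and the abstract convolution theorem applies --- not to close the gap between $\dot\scrP_2$ and the set of attained scores.
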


\bigskip

\begin{theorem}[\bf Local asymptotic minimax]
Assume that $\dot\scrP = \dot\scrP_1 + \dot\scrP_2$, $\dot\scrP$ is convex and
$\effI_{\theta_0,\eta_0}$ is nonsingular.
Then for any estimator sequence $\hat\theta_n$ and subconvex loss function $l$,
$$
	\sup_I \liminf_{n\rightarrow\infty}\sup_{i\in I} P_{1/\sqrt{n},i}
	l\left( \sqrt{n}\Big(\hat\theta_n - \theta - \frac{h}{\sqrt{n}}\Big) \right)
	\geq \int l \; dN_{0, I_{\theta_0,\eta_0}^{-1}},
$$
where the first supremum is taken over all finite index sets $I$ of one-dimensional
smooth submodels, denoted by $P_{t,i}$, of type \eqref{eq:submodel}.
\end{theorem}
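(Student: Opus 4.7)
The strategy is to reduce the semiparametric lower bound to the parametric local asymptotic minimax theorem stated earlier in this section. The supremum over finite collections of smooth one-dimensional submodels in the theorem gradually exhausts the tangent information accessible within $\scrP$ for estimating $\theta$, and the corresponding parametric bounds converge upward to $\int l\,dN_{0,\effI^{-1}_{\theta_0,\eta_0}}$.

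First I would fix a finite index set $I=\{1,\ldots,k\}$ with corresponding one-dimensional submodels $t\mapsto P_{\theta_0+th_i,\eta_t^{(i)}}$; each such path is differentiable in quadratic mean with score $h_i^T\score_{\theta_0,\eta_0}+g_i\in\dot\scrP$ for some nuisance score $g_i\in\dot\scrP_2$. I would assemble these into a $k$-parameter parametric submodel $\tau\mapsto Q_\tau\in\scrP$ with $\tau\in\bbR^k$, passing through $P_{\theta_0,\eta_0}$ at $\tau=0$, whose score in direction $e_i$ is precisely $h_i^T\score_{\theta_0,\eta_0}+g_i$; such an assembly is possible since $\dot\scrP$ is a linear space and one can construct a differentiable-in-quadratic-mean parametric family realizing any finite set of prescribed scores, for example by an exponential tilting of $P_{\theta_0,\eta_0}$ followed by a perturbation argument. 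The parameter of interest in this submodel becomes the linear functional $\psi(\tau)=\theta_0+\sum_{i=1}^k\tau_ih_i$, and the submodel is LAN with Fisher information matrix $J_k$ whose $(i,j)$ entry equals $P_{\theta_0,\eta_0}[(h_i^T\score_{\theta_0,\eta_0}+g_i)(h_j^T\score_{\theta_0,\eta_0}+g_j)]$; we may assume $J_k$ is nonsingular by dropping linearly dependent directions.

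Next I would apply the parametric local asymptotic minimax theorem to this $k$-dimensional submodel, viewing $\hat\theta_n$ as an estimator of the linear functional $\psi(\tau)$. A standard delta-method argument for minimax bounds under linear reparametrizations yields
\begin{equation*}
\liminf_{n\to\infty}\sup_{\tau\in C}Q_{\tau/\sqrt{n}}\,l\bigl(\sqrt{n}(\hat\theta_n-\psi(\tau/\sqrt{n}))\bigr)\;\geq\;\int l\,dN_{0,HJ_k^{-1}H^T},
\end{equation*}
where $H=[h_1,\ldots,h_k]$ is $p\times k$ and the supremum is over finite subsets $C\subset\bbR^k$. Specializing $C$ to the coordinate vectors $e_1,\ldots,e_k$ shows that the left-hand side in the theorem is at least $\int l\,dN_{0,HJ_k^{-1}H^T}$. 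Taking the supremum over $I$ then reduces, via the monotonicity of $\Sigma\mapsto\int l\,dN_{0,\Sigma}$ in the positive semidefinite order (a consequence of subconvexity of $l$ and Anderson's lemma applied to the convolution $N_{0,\Sigma_1}=N_{0,\Sigma_2}*N_{0,\Sigma_1-\Sigma_2}$), to verifying
\begin{equation*}
\sup_I HJ_k^{-1}H^T\;=\;\effI^{-1}_{\theta_0,\eta_0}
\end{equation*}
in the positive semidefinite order.

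The main obstacle is this final identification. It follows from a Hilbert space projection argument in $L_2(P_{\theta_0,\eta_0})$: for fixed $h_1,\ldots,h_k$ spanning $\bbR^p$, the matrix $HJ_k^{-1}H^T$ coincides with the inverse of the Gram matrix of the residuals of $h_i^T\score_{\theta_0,\eta_0}$ after orthogonal projection onto $\mathrm{span}\{g_1,\ldots,g_k\}$, so letting the $g_i$ range over an increasing sequence whose linear span is dense in $\dot\scrP_2$ drives these residuals to $h_i^T\effscore_{\theta_0,\eta_0}$ in $L_2(P_{\theta_0,\eta_0})$ and hence the supremum to $\effI^{-1}_{\theta_0,\eta_0}$. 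This step relies essentially on the hypotheses $\dot\scrP=\dot\scrP_1+\dot\scrP_2$ and convexity of $\dot\scrP$, which together guarantee that the nuisance tangent space is rich enough to realize the orthogonal projection $\Pi_{\theta_0,\eta_0}\score_{\theta_0,\eta_0}$ that defines $\effscore_{\theta_0,\eta_0}$.
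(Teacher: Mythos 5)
The paper does not prove this theorem: it appears in the literature-review chapter as a known result, cited to \citet{van1998asymptotic} and \citet{bickel1998efficient} (it is essentially Theorem 25.21 there), so there is no in-paper argument to compare against. Your overall plan --- assemble the finite collection $I$ into a finite-dimensional differentiable submodel, apply the parametric local asymptotic minimax bound to the induced linear functional $\psi(\tau)=\theta_0+H\tau$, and then let the nuisance directions exhaust $\dot\scrP_2$ so that the bounds $\int l\,dN_{0,HJ_k^{-1}H^T}$ increase to $\int l\,dN_{0,\effI^{-1}_{\theta_0,\eta_0}}$ via Anderson's lemma and lower semicontinuity of $l$ --- is the standard reduction and is sound in outline.

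There is, however, one genuine logical slip in the middle step. The parametric local asymptotic minimax theorem gives
$$
	\sup_C \liminf_{n\rightarrow\infty}\sup_{\tau\in C} Q_{\tau/\sqrt{n}}\,
	l\Big(\sqrt{n}\big(\hat\theta_n-\psi(\tau/\sqrt{n})\big)\Big)
	\;\geq\; \int l\, dN_{0,\,HJ_k^{-1}H^T},
$$
with the supremum over \emph{all} finite subsets $C\subset\bbR^k$ on the outside; it does \emph{not} assert the inequality for a single fixed $C$. So ``specializing $C$ to the coordinate vectors $e_1,\ldots,e_k$'' does not deliver the bound for the fixed index set $I$ you started from --- for that particular $I$ the $\liminf$ may well fall short of $\int l\,dN_{0,HJ_k^{-1}H^T}$. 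The correct way to import the parametric bound is to observe that every one-dimensional ray $t\mapsto Q_{t\tau}$, $\tau\in\bbR^k$, through your $k$-parameter submodel is itself a smooth submodel of type \eqref{eq:submodel} (with $h=H\tau$ and the corresponding nuisance path), so each finite $C\subset\bbR^k$ corresponds to a finite index set $I_C$ of admissible submodels, and the semiparametric left-hand side, being a supremum over \emph{all} finite $I$, majorizes the full parametric supremum over $C$. With that repair the argument goes through. A second, more minor, imprecision: the identity ``$HJ_k^{-1}H^T$ equals the inverse Gram matrix of the projected residuals'' holds for the cleanly separated $(p+k)$-dimensional parametrization with scores $\score_{\theta_0,\eta_0}$ and $g_1,\ldots,g_k$, not for an arbitrary collection in which each coordinate mixes a $\theta$-direction with a nuisance direction; since you only need a supremum, it suffices (and is cleaner) to take $h_i=e_i$ and $g_i$ an $L_2(P_{\theta_0,\eta_0})$-approximation of $-\Pi_{\theta_0,\eta_0}(e_i^T\score_{\theta_0,\eta_0})$, which makes $J_k\rightarrow\effI_{\theta_0,\eta_0}$ directly.
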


\bigskip

As in parametric models, the normal distribution $N(0, \effI_{\theta_0,\eta_0}^{-1})$ 
can be considered as the best possible limit distribution.
A regular estimator sequence $\hat\theta_n$ is called \emph{efficient}
or \emph{best regular} if it is regular and its limit distribution is $N(0, \effI^{-1}_{\theta_0,\eta_0})$.
An efficient estimator is asymptotically linear as in Theorem \ref{thm:efficient_parameteric},
replacing the score function and information matrix by
the efficient score function and efficient information matrix.

\bigskip

\begin{theorem}
An estimator sequence $\hat\theta_n$ is efficient if and only if
$$
	\sqrt{n}(\hat\theta_n - \theta_0) = \frac{1}{\sqrt{n}} \sum_{i=1}^n \effI_{\theta_0,\eta_0}^{-1}
	\effscore_{\theta_0,\eta_0}(X_i) + o_{P_{\theta_0,\eta_0}}(1).
$$
\end{theorem}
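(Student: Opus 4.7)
The plan is to mirror the parametric argument of Theorem \ref{thm:efficient_parameteric} with $\score_{\theta_0,\eta_0}$ and $I_{\theta_0,\eta_0}$ replaced by their efficient counterparts $\effscore_{\theta_0,\eta_0}$ and $\effI_{\theta_0,\eta_0}$. Both directions are driven by the semiparametric LAN expansion along one-dimensional smooth submodels of type \eqref{eq:submodel} and by Le Cam's third lemma, which transports joint weak limits computed under $P^n_{\theta_0,\eta_0}$ to the local alternatives $P^n_{\theta_0+h/\sqrt n,\,\eta_{1/\sqrt n}}$.

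The first step is to record the algebraic identities
\begin{equation*}
P_{\theta_0,\eta_0}\bigl[\effscore_{\theta_0,\eta_0}\score_{\theta_0,\eta_0}^T\bigr] = \effI_{\theta_0,\eta_0}, \qquad P_{\theta_0,\eta_0}\bigl[\effscore_{\theta_0,\eta_0}\,g\bigr] = 0 \quad \text{for every } g\in\dot\scrP_2,
\end{equation*}
obtained from the decomposition $\score_{\theta_0,\eta_0} = \effscore_{\theta_0,\eta_0} + \Pi_{\theta_0,\eta_0}\score_{\theta_0,\eta_0}$ together with the observations that each coordinate of $\Pi_{\theta_0,\eta_0}\score_{\theta_0,\eta_0}$ lies in $\dot\scrP_2$ while $\effscore_{\theta_0,\eta_0}\perp\dot\scrP_2$. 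These identities pin down the asymptotic covariance between $n^{-1/2}\sum_{i=1}^n\effI^{-1}_{\theta_0,\eta_0}\effscore_{\theta_0,\eta_0}(X_i)$ and the log-likelihood ratio of any submodel as exactly the mean shift $h$ that Le Cam's third lemma requires.

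For the ``if'' direction, suppose the stated asymptotic linear expansion holds. Along a submodel with score $h^T\score_{\theta_0,\eta_0}+g$ for some $g\in\dot\scrP_2$, the semiparametric LAN expansion gives
\begin{equation*}
\log\frac{dP^n_{\theta_0+h/\sqrt n,\,\eta_{1/\sqrt n}}}{dP^n_{\theta_0,\eta_0}}
= \frac{1}{\sqrt n}\sum_{i=1}^n\bigl(h^T\score_{\theta_0,\eta_0}(X_i)+g(X_i)\bigr) - \tfrac12\bigl\|h^T\score_{\theta_0,\eta_0}+g\bigr\|^2_{L_2(P_{\theta_0,\eta_0})} + o_{P_{\theta_0,\eta_0}}(1).
\end{equation*}
Substituting the assumed linear expansion for $\hat\theta_n$ and invoking the bivariate CLT yields joint asymptotic normality of $(\sqrt n(\hat\theta_n-\theta_0),\log\mathrm{LR})$ under $P^n_{\theta_0,\eta_0}$ with asymptotic cross-covariance equal to $h$. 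Le Cam's third lemma then delivers $\sqrt n(\hat\theta_n-\theta_0)\to N(h,\effI^{-1}_{\theta_0,\eta_0})$ under the alternative; equivalently $\sqrt n(\hat\theta_n-\theta_0-h/\sqrt n)\to N(0,\effI^{-1}_{\theta_0,\eta_0})$, so $\hat\theta_n$ is regular and efficient.

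For the ``only if'' direction, suppose $\hat\theta_n$ is efficient. By tightness, along any subsequence we extract a further subsequence along which $(\sqrt n(\hat\theta_n-\theta_0),\,n^{-1/2}\sum_i\score_{\theta_0,\eta_0}(X_i),\,n^{-1/2}\sum_i g(X_i))$ converges weakly under $P^n_{\theta_0,\eta_0}$, and the standard contiguity-based argument identifies the limit as Gaussian. Regularity with limit $N(0,\effI^{-1}_{\theta_0,\eta_0})$ under every local alternative, combined with Le Cam's third lemma, forces the asymptotic cross-covariances of the limit law to satisfy
\begin{equation*}
\mathrm{Cov}\Bigl(\sqrt n(\hat\theta_n-\theta_0),\,\tfrac{1}{\sqrt n}\textstyle\sum_i\score_{\theta_0,\eta_0}(X_i)\Bigr)\to I_p, \qquad \mathrm{Cov}\Bigl(\sqrt n(\hat\theta_n-\theta_0),\,\tfrac{1}{\sqrt n}\textstyle\sum_i g(X_i)\Bigr)\to 0
\end{equation*}
for every $g\in\dot\scrP_2$. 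Via $\score=\effscore+\Pi\score$ this gives asymptotic cross-covariance $\effI^{-1}_{\theta_0,\eta_0}$ between $\sqrt n(\hat\theta_n-\theta_0)$ and $n^{-1/2}\sum_i\effI^{-1}_{\theta_0,\eta_0}\effscore_{\theta_0,\eta_0}(X_i)$. Since each of these has asymptotic variance $\effI^{-1}_{\theta_0,\eta_0}$, the difference
\begin{equation*}
T_n := \sqrt n(\hat\theta_n-\theta_0) - \frac{1}{\sqrt n}\sum_{i=1}^n\effI_{\theta_0,\eta_0}^{-1}\effscore_{\theta_0,\eta_0}(X_i)
\end{equation*}
has a degenerate Gaussian limit, so $T_n\to 0$ in $P_{\theta_0,\eta_0}^n$-probability, which is the claimed expansion.

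The main obstacle will be the ``only if'' direction: joint asymptotic Gaussianity of the estimator with the score averages is not automatic from the marginal efficient limit, and converting regularity into the precise cross-covariance statements via Le Cam's third lemma requires careful bookkeeping with the joint distributions. Under the standing hypotheses---convex tangent set $\dot\scrP=\dot\scrP_1+\dot\scrP_2$, nonsingular $\effI_{\theta_0,\eta_0}$, and LAN along all smooth submodels of type \eqref{eq:submodel}, which also underpin the preceding convolution and local asymptotic minimax theorems---these steps follow by now-standard semiparametric arguments (see Chapter 25 of \citet{van1998asymptotic}), and the remaining linear-algebraic manipulations are routine.
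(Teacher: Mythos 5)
The paper states this theorem in its literature-review chapter without proof, citing \citet{van1998asymptotic} and \citet{bickel1998efficient}, so there is no in-paper argument to compare against; your outline is the standard one that those references supply. Your ``if'' direction is correct and complete in spirit: the identities $P_{\theta_0,\eta_0}[\effscore_{\theta_0,\eta_0}\,g]=0$ for $g\in\dot\scrP_2$ and $P_{\theta_0,\eta_0}[\effscore_{\theta_0,\eta_0}\score_{\theta_0,\eta_0}^T]=\effI_{\theta_0,\eta_0}$ give asymptotic cross-covariance exactly $h$ with the log likelihood ratio along any submodel of type \eqref{eq:submodel}, and Le Cam's third lemma then yields regularity with limit $N(0,\effI_{\theta_0,\eta_0}^{-1})$.

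The one step that does not close as written is the end of the ``only if'' direction. You conclude that $T_n=\sqrt n(\hat\theta_n-\theta_0)-n^{-1/2}\sum_i\effI_{\theta_0,\eta_0}^{-1}\effscore_{\theta_0,\eta_0}(X_i)$ has a degenerate Gaussian limit from the two marginal variances plus a cross-covariance identity, but (a) the joint subsequential limit of the estimator and the score average need not be Gaussian---only the marginals are---and (b) identifying asymptotic cross-covariances with covariances of the limit requires convergence of second moments, i.e.\ uniform integrability, which you have not established. The standard repair, and the argument behind the corresponding lemma in \citet{van1998asymptotic}, bypasses covariances entirely: the proof of the convolution theorem shows that along any weakly convergent subsequence the limit of $\sqrt n(\hat\theta_n-\theta_0)$ decomposes as $\Delta+W$ with $W$ \emph{independent} of $\Delta\sim N(0,\effI_{\theta_0,\eta_0}^{-1})$, where $\Delta$ is the limit of the efficient-score average. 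Best regularity forces $\Delta+W\sim N(0,\effI_{\theta_0,\eta_0}^{-1})$, so the characteristic function of $W$ is identically one, $W=0$ almost surely, and every subsequential weak limit of $T_n$ is the constant $0$; hence $T_n\to 0$ in $P_{\theta_0,\eta_0}$-probability. With that substitution your proof is the standard, correct argument.
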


\bigskip

Roughly speaking, the information bound $\effI_{\theta_0, \eta_0}$ of a semiparametric model
is equal to the infimum of information bounds of all smooth parametric submodels.
If there is a smooth parametric submodel whose information bound achieves this infimum,
it is the hardest submodel.
Formally in a smooth semiparametric model, if there exists a submodel 
$\{P_{\theta, \eta^*(\theta)}: \theta\in\Theta\}$
which has $\effscore_{\theta_0,\eta_0}$ as the score function at $\theta_0$,
it is called a \emph{least favorable submodel} at $(\theta_0,\eta_0)$.
There may be more than two least favorable submodels, or it may not exist.
Typically, if a maximizer of the map
$\eta \mapsto P_{\theta_0,\eta_0} \log \big(p_{\theta,\eta}/p_{\theta_0,\eta_0}\big)$
is smooth in $\theta$, it constitutes a least favorable submodel
(\citet{severini1992profile}; \citet{murphy2000profile}).

We finish this section with the notion of adaptiveness.
A smooth semiparametric model $\scrP$ is called (locally) \emph{adaptive} (at $P_{\theta_0,\eta_0}$)
if $\dot\scrP_1 \bot \dot\scrP_2$ in $L_2(P_{\theta_0,\eta_0})$.
By definition the efficient score function and information matrix is 
equal to the ordinary score function and information matrix in adaptive models.
Therefore the information bound for the semiparametric model $\scrP$ and the parametric model $\scrP_1$,
in which the true nuisance parameter $\eta_0$ is known, are the same.

\bigskip

\section{Empirical processes}
\label{sec:empirical}

In this section we review modern empirical process theories that play important roles
for the proofs given in Chapter \ref{chap:main}.
We assume that readers are familiar to weak convergence of probability measures in metric spaces.
Also, we do not state any measurability conditions,
because the formulation of these would require too many digressions.
For all details about this section and further reading including historical stories, examples and so on,
we refer to the monograph \citet{van1996weak}.

Consider a sample of random elements $X_1, \ldots, X_n$ in a measurable space $(\cX, \cA)$,
where $\cX$ is endowed with a semimetric\footnote{$d(x,y)$ can be equal to 0 when $x \neq y$.} $d$.
Let $\bbP_n = n^{-1} \sum_{i=1}^n \delta_{X_i}$ be the \emph{empirical measure} and 
$\bbG_n = \sqrt{n}(\bbP_n - P)$ be \emph{empirical process}, where $\delta_x$ denotes the
Dirac measure at point $x$.
Consider a collection $\cF$ of measurable functions $f:\cX \rightarrow \bbR$.
With the notation $\|P\|_\cF = \sup_{f\in\cF} |Pf|$, if 
$$
	\|\bbP_n - P\|_\cF \rightarrow 0
$$
in $P$-probability, $\cF$ is called a $P-$\emph{Glivenko-Cantelli class}, or simply
Glivenko-Cantelli class.
Under the condition $\sup_{f\in\cF} |f(x) - Pf| < \infty$ for every $x\in\cX$, the empirical process
$f \mapsto \bbG_n f$ can be viewed as an $\ell^\infty(\cF)$-valued random element.
If this map converges weakly to a tight Borel measurable element in $\ell^{\infty}(\cF)$,
it is called a \emph{Donsker class}, or $P$-Donsker to be more complete.

The Donsker property is very important and closely related to the notion of tightness.
Before going further, we introduce some definitions and theorems about
stochastic processes in spaces of bounded functions.
A sequence of $\cX$-valued stochastic processes $X_n$ is \emph{asymptotically tight} if for every $\epsilon > 0$
there exists a compact set $K$ such that
$$\liminf_{n\rightarrow\infty} P(X_n \in K^\delta) \geq 1-\epsilon$$
for every $\delta > 0$.
Here $K^\delta$ is defined by the set $\{x \in \cX: d(x, K) < \delta\}$.
This is slightly weaker than uniform tightness but enough to assure the weak convergence.
For an index set $T$, weak convergence in $\ell^\infty(T)$ is characterized as
asymptotic tightness plus convergence of marginals as stated in the following theorem.

\bigskip

\begin{theorem} \label{thm:weak_tight}
A sequence of $\ell^\infty(T)$-valued stochastic processes $X_n$ converges weakly to a tight limit 
if and only if $X_n$ is asymptotically tight and the marginals
$(X_n(t_1), \ldots, X_n(t_k))$ converge weakly to a limit for every finite subset $t_1, \ldots, t_k$ of $T$.
\end{theorem}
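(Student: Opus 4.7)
The plan is to prove the two implications separately, using Prokhorov-type arguments on the reverse direction and the continuous mapping theorem on the forward direction.

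For the forward direction, assume $X_n \Rightarrow X$ for some tight Borel limit $X$ in $\ell^\infty(T)$. Asymptotic tightness of $\{X_n\}$ is a general consequence of weak convergence to a tight limit: tightness of $X$ gives, for each $\varepsilon>0$, a compact $K \subset \ell^\infty(T)$ with $P(X \in K) \geq 1-\varepsilon/2$; the Portmanteau inequality applied to the open $\delta$-enlargement $K^\delta$ yields $\liminf_n P(X_n \in K^\delta) \geq P(X \in K^\delta) \geq 1-\varepsilon$. Marginal convergence is immediate from the continuous mapping theorem: for any finite set $t_1,\ldots,t_k \in T$, the coordinate projection $\pi_{t_1,\ldots,t_k}: \ell^\infty(T) \to \mathbb{R}^k$ defined by $x \mapsto (x(t_1),\ldots,x(t_k))$ is continuous (in fact 1-Lipschitz), hence $(X_n(t_1),\ldots,X_n(t_k)) \Rightarrow (X(t_1),\ldots,X(t_k))$.

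For the reverse direction, I would first invoke a Prokhorov-type theorem for $\ell^\infty(T)$: asymptotic tightness implies that every subsequence of $\{X_n\}$ has a further subsequence converging weakly to some tight Borel limit. This is standard (cf.\ Theorem 1.3.9 of \citet{van1996weak}) and relies only on asymptotic tightness together with the fact that weakly convergent subsequential limits can be extracted from any asymptotically tight sequence in a general metric space setting. Given this, let $X$ and $Y$ be any two subsequential weak limits along subsequences $\{n_k\}$ and $\{n_k'\}$. Applying the forward direction to each subsequence, both $X$ and $Y$ must have the same finite-dimensional distributions, namely the ones prescribed by the assumed marginal limits.

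The crux of the argument is to upgrade equality of finite-dimensional distributions to equality of laws on $\ell^\infty(T)$. Since $X$ is a tight Borel element, its law is concentrated on a $\sigma$-compact, hence separable, subset $S \subset \ell^\infty(T)$; the same holds for $Y$ on some separable $S'$. On the separable set $S \cup S'$, the Borel $\sigma$-algebra coincides with the $\sigma$-algebra generated by the coordinate projections $\{\pi_t : t \in T\}$, because a separable subset of $\ell^\infty(T)$ is determined pointwise by a countable dense subset of $T$ restricted to the relevant separable support. Thus a tight Borel law is determined by its finite-dimensional marginals, giving $X \stackrel{d}{=} Y$. Since every subsequence contains a further subsequence converging to the same limit law, the whole sequence $X_n$ converges weakly to that limit.

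The main obstacle is the last step: establishing that a tight Borel probability measure on the non-separable space $\ell^\infty(T)$ is determined by its finite-dimensional distributions. This is the delicate point that forces the ``tight limit'' hypothesis into the statement; without tightness the claim is false, since the Borel $\sigma$-algebra of $\ell^\infty(T)$ is strictly larger than the cylinder $\sigma$-algebra in general. The argument rests on the measure-theoretic fact that tight measures live on separable subsets, where the two $\sigma$-algebras agree, and this is where care is needed in citing or reproducing the supporting lemma from \citet{van1996weak}.
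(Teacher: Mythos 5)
The paper itself gives no proof of this statement: it appears in the literature-review chapter and is quoted from \citet{van1996weak} (it is Theorem 1.5.4 there), so there is no in-paper argument to compare against. Your proof reproduces the standard argument from that reference and is correct in outline: the forward direction via Portmanteau on enlargements of compacts and the continuous mapping theorem for coordinate projections, and the reverse direction via Prohorov, uniqueness of tight Borel laws with given marginals, and the subsequence principle. The identification step is also handled correctly --- you rightly isolate the fact that a tight Borel law lives on a separable subset of $\ell^\infty(T)$, on which the sup-distance can be computed over a countable subset of $T$, so the Borel and cylinder $\sigma$-fields agree there and the law is pinned down by its finite-dimensional distributions.

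One point deserves explicit mention. In the outer-probability framework that both the paper and \citet{van1996weak} work in, Prohorov's theorem (Theorem 1.3.9 of that reference) requires asymptotic \emph{measurability} in addition to asymptotic tightness before convergent subsequences with tight Borel limits can be extracted; asymptotic tightness alone does not suffice. This hypothesis is not free, but it is supplied by your other assumption: for an asymptotically tight sequence in $\ell^\infty(T)$, asymptotic measurability of the process is equivalent to asymptotic measurability of each marginal $X_n(t)$ (Lemma 1.5.2 of the same reference), and the latter follows from the assumed weak convergence of the finite-dimensional marginals. Since the paper explicitly suppresses measurability issues, this is a presentational rather than a substantive gap, but the subsequence-extraction step is where it must be patched if the argument is to be made rigorous.
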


\bigskip

Asymptotic tightness is a quite complicate concept and it is closely related to
equicontinuity of sample paths of stochastic processes.
For a semimetric space $(T, \rho)$, 
a sequence of $\ell^\infty(T)$-valued stochastic process $Z_n$ is said to be \emph{asymptotically uniformly 
$\rho$-equicontinuous} in probability if for every $\epsilon_1, \epsilon_2 > 0$ there exists a
$\delta > 0$ such that
$$\limsup_n P \bigg( \sup_{\rho(s,t) < \delta} |Z_n(s)-Z_n(t)| > \epsilon_1 \bigg) < \epsilon_2.$$
The following theorem represents the relationship between asymptotic tightness and 
asymptotic unifomrly equicontinuity of sample paths.

\bigskip

\begin{theorem} \label{thm:tight_equiconti}
A sequence of stochastic processes $X_n$ indexed by $T$ is asymptotically tight if and only if
$X_n(t)$ is asymptotically tight in $\bbR$ for every $t\in T$ and there exists a semimetric
$\rho$ on $T$ such that $(T,\rho)$ is totally bounded and $X_n$ is asymptotically uniformly
$\rho$-equicontinuous in probability.
If, moreover, $X_n$ converges weakly to $X$, then almost all paths $t \mapsto X(t)$
are uniformly $\rho$-continuous and the semimetric $\rho$ can be taken equal to any semimetric
for which this is true and $(T,\rho)$ is totally bounded.
\end{theorem}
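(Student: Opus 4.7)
The plan is to reduce both directions to an Arzel\`a--Ascoli-type characterization of relatively compact subsets of $\ell^\infty(T)$: a set $K \subset \ell^\infty(T)$ is totally bounded if and only if there is a semimetric $\rho$ on $T$ with $(T,\rho)$ totally bounded, every coordinate functional bounded on $K$, and the functions in $K$ uniformly $\rho$-equicontinuous. This structural lemma is the engine for both implications.

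For sufficiency, assume marginal tightness and asymptotic uniform $\rho$-equicontinuity with $(T,\rho)$ totally bounded. Fix $\epsilon > 0$. Pick $\delta_k \downarrow 0$, $\eta_k \downarrow 0$, and a finite $\delta_k$-net $T_k \subset T$ from total boundedness so that, from the equicontinuity hypothesis,
\[
    \limsup_n P\Big(\sup_{\rho(s,t)<\delta_k}|X_n(s)-X_n(t)| > \eta_k\Big) < \epsilon\,2^{-k-1},
\]
and, from marginal tightness on the finite set $T_k$, choose $M_k$ with $\limsup_n P(\max_{t\in T_k}|X_n(t)| > M_k) < \epsilon\,2^{-k-1}$. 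Set
\[
    K_\epsilon = \{ z \in \ell^\infty(T) : \max_{t\in T_k}|z(t)| \leq M_k \text{ and } \sup_{\rho(s,t)<\delta_k}|z(s)-z(t)| \leq \eta_k, \; \forall k \}.
\]
Arzel\`a--Ascoli shows $\overline{K_\epsilon}$ is compact, and a union bound gives $\liminf_n P(X_n \in \overline{K_\epsilon}) \geq 1-\epsilon$, hence asymptotic tightness.

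For necessity, marginal tightness follows because each coordinate map $\pi_t : \ell^\infty(T) \to \bbR$ is $1$-Lipschitz and continuous maps preserve asymptotic tightness. To produce $\rho$, use asymptotic tightness to pick, for every $m \geq 1$, a compact $K_m \subset \ell^\infty(T)$ with $\liminf_n P(X_n \in K_m^{1/m}) \geq 1 - 1/m$. Apply Arzel\`a--Ascoli to each $K_m$ to obtain a semimetric $\rho_m$ making $(T,\rho_m)$ totally bounded and $K_m$ uniformly $\rho_m$-equicontinuous, and combine them by
\[
    \rho(s,t) = \sum_{m=1}^\infty 2^{-m}\big(\rho_m(s,t) \wedge 1\big).
\]
One then verifies that $(T,\rho)$ is totally bounded by covering with finite-partial-sum nets and absorbing the tail, and that $X_n$ is asymptotically uniformly $\rho$-equicontinuous by approximating $X_n$ within $1/m$ of some element of $K_m$ on the high-probability event $\{X_n \in K_m^{1/m}\}$. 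The final claim follows from Portmanteau: $P(X \in \bigcup_m K_m^{1/m}) = 1$, and any function lying within $1/m$ of $K_m$ for arbitrarily large $m$ is uniformly $\rho$-continuous by the same approximation, so almost every sample path of the weak limit is uniformly $\rho$-continuous.

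The main obstacle is the Arzel\`a--Ascoli characterization itself in the abstract setting where $T$ carries no a priori topology, together with the diagonalization that fuses the infinite sequence of semimetrics $\rho_m$ into a single $\rho$ without destroying total boundedness of $(T,\rho)$ or the joint equicontinuity needed for the whole sequence $X_n$. The tail-absorption estimates and the simultaneous control of $X_n$ against every $K_m$ are what make the argument technical; everything else is a union-bound bookkeeping exercise.
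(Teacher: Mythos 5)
The paper does not actually prove this statement: it appears in the literature-review chapter as a quoted result (Theorem 1.5.7 of \citet{van1996weak}), so there is no in-paper proof to compare against. Your sketch follows the standard textbook argument --- the Arzel\`a--Ascoli characterization of totally bounded subsets of $\ell^\infty(T)$ driving both directions, and the fusion $\rho=\sum_m 2^{-m}(\rho_m\wedge 1)$ of the semimetrics extracted from a sequence of compacts for necessity --- and that outline is the right one.

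Two points need repair. First, in the sufficiency direction, the claim that ``a union bound gives $\liminf_n P(X_n\in\overline{K_\epsilon})\geq 1-\epsilon$'' does not follow as written: $K_\epsilon$ is a \emph{countable} intersection of constraint events $A_{n,k}$, and you only control $\limsup_n P(A_{n,k}^c)$ for each fixed $k$; that gives no lower bound on $\liminf_n P(\bigcap_k A_{n,k})$, since for each $n$ some late constraint could fail with probability close to one. This is precisely why asymptotic tightness is phrased in terms of the enlargements $K^\delta$: the correct step is to show that for every $\delta>0$ the set $\overline{K_\epsilon}^{\,\delta}$ contains the intersection of only \emph{finitely many} of the constraint sets, and then apply a finite union bound. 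Second, the last clause of the theorem --- that $\rho$ may be taken equal to \emph{any} semimetric for which $(T,\rho)$ is totally bounded and almost all limit paths are uniformly continuous --- is a separate assertion (asymptotic equicontinuity must hold for every such semimetric, not merely the one you constructed) and your sketch does not address it. The remaining steps, including uniform continuity of the limit paths via the portmanteau bound $P(X\in\overline{K_m^{1/m}})\geq 1-1/m$ and the tail-absorption argument for total boundedness of $(T,\rho)$, are the standard argument and are fine.
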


\bigskip

A stochastic process $X$ is called \emph{Gaussian} if each of its finite-dimensional marginals
$(X(t_1), \ldots, X(t_k))$ has a multivariate normal distribution on Euclidean space.
For a given stochastic process $X$, define a semimetric $\rho_2$ on $T$ by
$$
	\rho_2(s,t) = \Big( P|X(s) - X(t)|^2 \Big)^{1/2}.
$$
When the limit process $X$ in Theorem \ref{thm:tight_equiconti} is Gaussian,
$\rho_2$ can always be used to establish asymptotic equicontinuity of a sequence $X_n$.

\bigskip

\begin{theorem}
A Gaussian process $X$ in $\ell^\infty(T)$ is tight if and only if $(T, \rho_2)$
is totally bounded and almost all paths $t \mapsto X(t)$ are uniformly $\rho_2$-continuous.
\end{theorem}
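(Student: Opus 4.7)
The plan is to invoke the preceding Theorem~\ref{thm:tight_equiconti} with the constant sequence $X_n \equiv X$, together with its ``moreover'' clause. This characterizes tightness of $X$ as: each marginal $X(t)$ is tight in $\bbR$, and there exists a semimetric $\rho$ on $T$ such that $(T, \rho)$ is totally bounded and almost all paths are $\rho$-uniformly continuous. The first condition is automatic for a Gaussian process, so the content of the present theorem is that the generic $\rho$ may be replaced by the intrinsic $\rho_2$.

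The \emph{if} direction is immediate: if $(T, \rho_2)$ is totally bounded and almost all paths are uniformly $\rho_2$-continuous, then $\rho_2$ itself is a valid choice in Theorem~\ref{thm:tight_equiconti}, and tightness follows. For the \emph{only if} direction, assume $X$ is tight and let $\rho$ be a semimetric furnished by Theorem~\ref{thm:tight_equiconti}. The key Gaussian input is that centered Gaussian variables satisfy convergence in probability $\Leftrightarrow$ convergence in $L^2$. Hence whenever $\rho(s_n, t_n) \to 0$, a.s.\ $\rho$-uniform continuity of paths gives $X(s_n) - X(t_n) \to 0$ almost surely, and Fernique's theorem (providing $\E \sup_T |X(t)|^2 < \infty$ for any tight Gaussian) upgrades this via dominated convergence to $\rho_2(s_n, t_n) \to 0$. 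A contradiction argument then yields the uniform modulus $\sup_{\rho(s,t) \leq \delta} \rho_2(s,t) \to 0$ as $\delta \downarrow 0$, so every $\rho$-net is a small $\rho_2$-net and $(T, \rho_2)$ is totally bounded.

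\textbf{Main obstacle.} The hardest step is the remaining claim that almost all paths are $\rho_2$-uniformly continuous, because the domination $\rho_2 \leq \omega(\rho)$ runs in the wrong direction to transport continuity from $\rho$ to $\rho_2$. I would close this gap via Gaussian concentration: with $(T, \rho_2)$ totally bounded and $\sup_T |X| < \infty$ a.s., Borell's (or Fernique's) inequality applied to the centered Gaussian supremum $\sup_{\rho_2(s,t) < \delta} |X(s) - X(t)|$ produces exponential tail bounds, and hence a deterministic sequence $\delta_k \downarrow 0$ along which this supremum tends to $0$ almost surely. Combined with the separability of $(T, \rho_2)$, this yields a single full-probability event on which the paths are uniformly $\rho_2$-continuous.
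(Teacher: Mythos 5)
The paper itself gives no proof of this statement; it is quoted in the literature-review chapter from \citet{van1996weak} (their Example 1.5.10), so I can only assess your proposal on its own terms. Your ``if'' direction and the total-boundedness half of the ``only if'' direction are correct (though the detour through Fernique's theorem and dominated convergence is unnecessary: if the centered Gaussian variables $X(s_n)-X(t_n)$ tend to $0$ in probability, then their variances $\rho_2(s_n,t_n)^2$ tend to $0$ automatically). The genuine gap is exactly where you locate it, and the proposed repair does not close it. Borell's inequality applied to $Z_\delta:=\sup_{\rho_2(s,t)<\delta}|X(s)-X(t)|$ says that $Z_\delta$ concentrates within $O(\delta)$ of its median $m_\delta$; it gives no information about $m_\delta$ itself. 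To extract a sequence $\delta_k\downarrow 0$ with $Z_{\delta_k}\to 0$ a.s.\ you would need $m_{\delta_k}\to 0$, and --- precisely because of the concentration you invoke --- $m_\delta\to 0$ is \emph{equivalent} to $Z_\delta\to 0$ in probability, i.e.\ to the statement being proved. The argument is therefore circular at its crucial point, and separability of $(T,\rho_2)$ does not rescue it.

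The step can be repaired without any Gaussian concentration. Let $\rho$ be the semimetric supplied by Theorem \ref{thm:tight_equiconti}. You have already shown that $\rho(s_n,t_n)\to 0$ implies $\rho_2(s_n,t_n)\to 0$, i.e.\ $\rho_2$ is uniformly continuous on $(T\times T,\rho\times\rho)$; hence $\rho_2$ and (almost surely) the sample path extend continuously to the $\rho$-completion $\bar T$, which is compact. The set $D=\{(s,t)\in\bar T\times\bar T:\rho_2(s,t)=0\}$ is closed, hence separable, and $X(s)=X(t)$ a.s.\ for each fixed $(s,t)\in D$ because the $L_2$-distance is zero; by path continuity this holds a.s.\ simultaneously for all $(s,t)\in D$. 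Fix $\omega$ in this full-probability event and suppose the path were not uniformly $\rho_2$-continuous: there would be $\epsilon>0$ and pairs with $\rho_2(s_n,t_n)\to 0$ but $|X(s_n)-X(t_n)|\geq\epsilon$; passing to a $\rho$-convergent subsequence $(s_n,t_n)\to(s,t)$, the limit lies in $D$ by continuity of the extended $\rho_2$, while $\rho$-continuity of the path forces $|X(s)-X(t)|\geq\epsilon$, a contradiction. In this route the Gaussian structure enters only through the elementary implication ``convergence in probability implies convergence in $L_2$'' for Gaussian variables, which you already use for total boundedness.
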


\bigskip

Now we return to empirical processes $\bbG_n$ on $\cF$.
By the central limit theorem, a marginal distribution $(\bbG_n f_1, \ldots, \bbG_n f_k)$
converges weakly to a normal distribution.
Therefore if the stochastic process $f \mapsto \bbG_nf$ is asymptotically tight,
then $\cF$ is a Donsker class by Theorem \ref{thm:weak_tight}.
Since asymptotic tightness is conceptually equivalent to the uniform equicontinuity of sample paths
by Theorem \ref{thm:tight_equiconti},
we can expect from Arzel\`{a}-Ascoli theorem that the Donsker property can be determined
by the covering number.
The \emph{covering number} $N(\epsilon, \cF, \rho)$ of $\cF$ with respect to a semimetric $\rho$
is the minimal number of balls
$\{g:\rho(f,g) < \epsilon\}$ of radius $\epsilon$ needed to cover the set $\cF$.
For given two functions $l$ and $u$, the bracket $[l,u]$ is the set of all functions $f$
with $l \leq f \leq u$. An $\epsilon$-\emph{bracket} is a bracket $[l,u]$ with $\rho(u,l) < \epsilon$.
The \emph{bracketing number} $N_{[]}(\epsilon,\cF,\rho)$ is the minimum number of $\epsilon$-brackets
needed to cover $\cF$.
Then it is easy to show that 
$$N(\epsilon, \cF, \rho) \leq N_{[]}(2\epsilon,\cF,\rho)$$
for every $\epsilon > 0$.
Define
$$
	J_{[]}(\delta, \cF, L_2(P)) = \int_0^\delta \sqrt{\log N_{[]}(\epsilon, \cF, L_2(P))} d\epsilon
$$
for $\delta > 0$.
A collection $\cF$ of functions is a Donsker class if the covering number or bracketing number
is suitably bounded.
We only introduce conditions on bracketing numbers and refer to Section 2.6 of \citet{van1996weak}
for conditions on covering numbers.

\bigskip

\begin{theorem} \label{thm:donsker_bracketing}
$\cF$ is $P$-Donsker if $J_{[]}(1, \cF, L_2(P)) < \infty$.
\end{theorem}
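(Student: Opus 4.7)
The plan is to verify the two requirements of Theorem \ref{thm:weak_tight} for the empirical process $\bbG_n$ indexed by $\cF$. Finite-dimensional convergence follows from the classical multivariate central limit theorem once one observes that any $f\in\cF$ lies in some $1$-bracket $[l,u]$ with $\|u-l\|_{P,2}<1$, so $Pf^{2}<\infty$ for every $f\in\cF$. For asymptotic tightness I would apply Theorem \ref{thm:tight_equiconti} with the $L_2(P)$ semimetric $\rho_2(f,g)=(P(f-g)^2)^{1/2}$. Total boundedness of $(\cF,\rho_2)$ is immediate: $J_{[]}(1,\cF,L_2(P))<\infty$ forces $N_{[]}(\epsilon,\cF,L_2(P))<\infty$ for every $\epsilon>0$, and the midpoints of a minimal $\epsilon$-bracketing form a $2\epsilon$-net in $\rho_2$. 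Thus the substance of the theorem is asymptotic uniform $\rho_2$-equicontinuity of $\bbG_n$ in probability.

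Equicontinuity would follow from a maximal inequality of the form
\[
\mathrm{E}^{*}\!\!\sup_{\substack{f,g\in\cF\\ \|f-g\|_{P,2}<\delta}}\!\!|\bbG_n(f-g)| \;\lesssim\; J_{[]}(\delta,\cF,L_2(P)) + \sqrt{n}\,P\bigl[F_\delta\,\bone\{F_\delta>\sqrt{n}\,a(\delta)\}\bigr],
\]
where $F_\delta$ is a square-integrable envelope of the difference class and $a(\delta)$ is a truncation level chosen below; Markov's inequality then converts this bound into equicontinuity, since $J_{[]}(\delta,\cF,L_2(P))\downarrow 0$ as $\delta\downarrow 0$ by dominated convergence and the tail term vanishes. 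I would prove this bound by the chaining technique. Fix $\epsilon_k=2^{-k}$ and minimal bracketings $\{[l_{k,j},u_{k,j}]\}$ of $\cF$ at level $\epsilon_k$; for each $f\in\cF$ let $\pi_k f$ denote its lower bracket at stage $k$ and set $\Delta_k f=u_{k,j(k,f)}-l_{k,j(k,f)}$. Telescoping yields $f-\pi_0 f=\sum_{k\ge 1}(\pi_k f-\pi_{k-1}f)$ with $\|\pi_k f-\pi_{k-1}f\|_{P,2}\le 3\epsilon_{k-1}$, and the number of distinct links at stage $k$ is at most $N_{[]}(\epsilon_{k-1},\cF,L_2(P))^{2}$.

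Splitting each link at a level $a_k$ of order $\epsilon_{k-1}\sqrt{n/\log N_{[]}(\epsilon_{k-1},\cF,L_2(P))}$, I would apply Bernstein's inequality to the bounded part to obtain a contribution of order $\epsilon_{k-1}\sqrt{\log N_{[]}(\epsilon_{k-1},\cF,L_2(P))}$ per stage, and dominate the unbounded part pointwise by $\Delta_{k-1}f\,\bone\{\Delta_{k-1}f>a_k\}$, whose mean is controlled by the bracket widths. Summing the Bernstein contributions in $k$ reproduces the entropy integral $J_{[]}(\delta,\cF,L_2(P))$, while the truncation remainders assemble into the tail term above. Localization to the difference class $\{f-g:\|f-g\|_{P,2}<\delta\}$ follows because any $\epsilon$-bracketing of $\cF$ induces a $2\epsilon$-bracketing of this class of size at most $N_{[]}(\epsilon,\cF,L_2(P))^{2}$, so the chaining bound transfers with only a universal constant change.

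The principal obstacle is the Bernstein-plus-truncation balance: the levels $a_k$ must be chosen so precisely that the bounded Bernstein maxima sum to exactly the entropy integral, while the discarded tails remain uniformly negligible as $n\to\infty$ and are controlled by bracket widths rather than by a crude envelope bound. Granting the maximal inequality, Theorem \ref{thm:tight_equiconti} yields asymptotic tightness of $\bbG_n$ in $\ell^{\infty}(\cF)$, and Theorem \ref{thm:weak_tight} combined with finite-dimensional convergence concludes the proof.
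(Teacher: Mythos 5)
The paper does not prove this statement: it is quoted as a known result in the literature-review chapter, with the reader referred to Section 2.5--2.6 of \citet{van1996weak} (where it appears as Theorem 2.5.6, proved via the bracketing maximal inequality of Lemma 2.5.5). Your sketch is precisely that standard proof --- finite-dimensional CLT plus asymptotic equicontinuity for $\rho_2$, reduced to a maximal inequality for $\sup|\bbG_n(f-g)|$ over the difference class, established by chaining over nested bracketings with Bernstein's inequality and stagewise truncation --- so the route is the right one and the outline is sound. Two small remarks: the number of distinct links at stage $k$ is bounded by $N_{[]}(\epsilon_{k-1},\cF,L_2(P))\cdot N_{[]}(\epsilon_k,\cF,L_2(P))\leq N_{[]}(\epsilon_k,\cF,L_2(P))^2$ rather than $N_{[]}(\epsilon_{k-1},\cF,L_2(P))^2$ (harmless, since the entropy integral absorbs the reindexing); and the ``Bernstein-plus-truncation balance'' you flag is genuinely where all the work lies --- the truncations must be nested (one keeps a link at stage $k$ only on the event that all earlier bracket widths $\Delta_0 f,\ldots,\Delta_{k-1}f$ stayed below their respective levels, and sends the chain to the remainder term the first time a width exceeds its level), which is what lets the discarded tails be controlled by bracket widths rather than by a single crude envelope. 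As a proposal this is correct and faithful to the cited source; to be a complete proof it would need that sequential-truncation bookkeeping written out.
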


\bigskip

The condition of Theorem \ref{thm:donsker_bracketing} is very simple and 
is satisfied for many interesting examples
For classes of smooth functions on a Euclidean space, we can find an upper bound for bracketing numbers.
To define such classes let, for a given function $f: I \subset \bbR^d \rightarrow \bbR$ and $\alpha >0$,
$$
  \|f\|_\alpha = \max_{k_\cdot \leq \lfloor\alpha\rfloor} \sup_x |D^k f(x)| \vee
  \max_{k_\cdot = \lfloor\alpha\rfloor} \sup_{x,y} \frac{|D^kf(x) - D^kf(y)|}{|x-y|^{\alpha - \lfloor\alpha\rfloor}}
$$
where the suprema are taken over all $x,y$ in the interior of $I$ with $x\neq y$,
the value $\lfloor\alpha\rfloor$ is the greatest integer strictly smaller than $\alpha$,
and for each vector $k$ of $d$ integers $D^k$ is the differential operator
$$
D^k = \frac{\partial^{k_\cdot}}{\partial x_1^{k_1} \cdots \partial x_k^{k_k}},
~~~~~ k_\cdot = \sum_i k_i.
$$
These are well-known $\alpha$-\emph{\Holder} norms.
Let $C_M^\alpha(I)$ be the set of all continuous functions $f: I \rightarrow \bbR$ with $\|f\|_\alpha \leq M$.

\bigskip

\begin{theorem} \label{thm:bracketing_bound}
Let $\bbR^d = \cup_{j=1}^\infty I_j$ be a partition into cubes of uniformly bounded size,
and $\cF$ be a class of functions $f:\bbR^d \rightarrow \bbR$ such that the restrictions of
$f$ onto $I_j$ belong to $C^\alpha_{M_j}$ for every $j$ and some fixed $\alpha > d/2$.
Then, there exists a constant $K$ depending only on $\alpha, V, r, d$
and the uniform bound on the diameter of the sets $I_j$ such that
\be \label{eq:bracket_bound}
  \log N_{[]}(\epsilon, \cF, L_r(P)) \leq \frac{K}{\epsilon^V} 
  \left(\sum_{j=1}^\infty M_j^{Vr/(V+r)} P(I_j)^{V/(V+r)}\right)^{(V+r)/r}
\ee
for $V \geq d/\alpha$
\end{theorem}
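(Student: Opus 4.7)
The plan is to combine the classical Kolmogorov--Tikhomirov bracketing estimate for a \Holder\ ball on a bounded cube with an optimal allocation of bracketing error across the pieces of the partition. The single-cube estimate is the only non-routine ingredient; the rest is bookkeeping plus a one-line constrained optimization.

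First I would record a per-cube bound. For a cube $I$ of uniformly bounded diameter, the classical Kolmogorov--Tikhomirov estimate (e.g.\ Corollary 2.7.2 of \citet{van1996weak}) gives
$$
\log N_{[]}\bigl(\eta,\, C_M^\alpha(I),\, \|\cdot\|_\infty\bigr) \;\leq\; K_1 \left(\frac{M}{\eta}\right)^{d/\alpha},
$$
with $K_1$ depending only on $\alpha$, $d$ and the diameter bound. Since any $g$ supported on $I$ satisfies $\|g\|_{L_r(P)} \leq \|g\|_\infty\, P(I)^{1/r}$, a sup-norm $\eta$-bracket on $I$ is automatically an $L_r(P)$-bracket of width at most $\eta\,P(I)^{1/r}$; taking $\eta = \epsilon/P(I)^{1/r}$ yields, with $\tilde a_I := M\,P(I)^{1/r}$,
$$
\log N_{[]}\bigl(\epsilon,\, C_M^\alpha(I),\, L_r(P)\bigr) \;\leq\; K_1 (\tilde a_I/\epsilon)^{d/\alpha}.
$$
When $\tilde a_I < \epsilon$ a single bracket of width at most $\epsilon$ suffices, so at the cost of a larger constant $K_2$ this estimate extends to any exponent $V \geq d/\alpha$.

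Next I would glue the per-cube brackets together. Given $f \in \cF$, select on each $I_j$ a bracket $[l_j, u_j]$ for the restriction of $f$ to $I_j$ of $L_r(P)$-width at most $\eta_j$, and form $l := \sum_j l_j \bone_{I_j}$ and $u := \sum_j u_j \bone_{I_j}$. Disjointness of $\{I_j\}$ yields $l \leq f \leq u$ on $\bbR^d$ and
$$
\|u - l\|_{L_r(P)}^r \;=\; \sum_j \int_{I_j} |u_j - l_j|^r\, dP \;\leq\; \sum_j \eta_j^r.
$$
Imposing $\sum_j \eta_j^r \leq \epsilon^r$ therefore gives an $\epsilon$-bracket, and the total log cardinality of the resulting product family is bounded by $K_2 \sum_j (a_j/\eta_j)^V$ with $a_j := M_j\,P(I_j)^{1/r}$.

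Finally I would optimize the allocation $(\eta_j)$. Minimizing $\sum_j (a_j/\eta_j)^V$ subject to $\sum_j \eta_j^r = \epsilon^r$ is a standard Lagrange problem --- equivalently a \Holder\ inequality with conjugate exponents $(V+r)/V$ and $(V+r)/r$ --- whose stationary point is $\eta_j \propto a_j^{V/(V+r)}$. Substituting back produces
$$
\log N_{[]}\bigl(\epsilon,\, \cF,\, L_r(P)\bigr) \;\leq\; \frac{K}{\epsilon^V}\left(\sum_{j=1}^\infty M_j^{Vr/(V+r)}\, P(I_j)^{V/(V+r)}\right)^{(V+r)/r},
$$
which is the bound \eqref{eq:bracket_bound}. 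There is no substantial obstacle beyond invoking the classical per-cube estimate and carrying out the Lagrange optimization; the main conceptual point is simply that the interior exponents $V/(V+r)$ and $Vr/(V+r)$ are exactly those dictated by stationarity of the resource-allocation problem.
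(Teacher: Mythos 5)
Your proof is correct, and it is essentially the standard argument behind Corollary 2.7.4 of \citet{van1996weak}: a Kolmogorov--Tikhomirov sup-norm bracketing bound on each cube, conversion to $L_r(P)$-brackets via $\|g\|_{L_r(P)}\leq\|g\|_\infty P(I_j)^{1/r}$, and a \Holder/Lagrange allocation of the widths $\eta_j$ that produces exactly the exponents $Vr/(V+r)$ and $V/(V+r)$. The thesis states this theorem as a literature-review item and gives no proof of its own, so there is nothing to contrast with; your reconstruction matches the cited source's route, including the observation that cells with $M_jP(I_j)^{1/r}$ small need only a single bracket, which is what keeps the product of the per-cell bracket counts finite.
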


\bigskip

Theorem \ref{thm:donsker_bracketing} concern the empirical process for different $n$,
but each time with the same indexing class $\cF$.
This is enough for many applications, but sometimes it may be necessary to allow the class $\cF$
to change with $n$.
The following theorem is a modification of Theorem \ref{thm:donsker_bracketing} for this purpose.

\bigskip

\begin{theorem} \label{thm:donsker_varying_class}
Let $\cF_n=\{f_{n,t}: t\in T\}$ be a class of measurable functions 
indexed by a totally bounded semimetric space $(T, \rho)$ satisfying
$$
	\sup_{\rho(s,t) < \delta_n} P(f_{n,s} - f_{n,t})^2 \rightarrow 0, ~~\textrm{for all}~ \delta_n \downarrow 0
$$
and assume that there exists a function $F_n$ such that 
$\sup_{t\in T} |f_{n,t}| < F_n$, $PF_n^2 = O(1)$ and 
$PF_n^2 I(F_n > \epsilon \sqrt{n}) \rightarrow 0$ for all $\epsilon > 0$.
If $J_{[]}(\delta_n, \cF_n, L_2(P)) \rightarrow 0$ for every $\delta_n \downarrow 0$
and $Pf_{n,s}f_{n,t} - Pf_{n,s}Pf_{n,t}$ converges pointwise on $T\times T$,
then the sequence $\{\bbG_n f_{n,t}: t \in T\}$ converges weakly to a tight Gaussian process.
\end{theorem}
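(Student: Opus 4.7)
The plan is to apply Theorem \ref{thm:weak_tight}, which reduces weak convergence in $\ell^\infty(T)$ to two ingredients: (i) convergence of finite-dimensional marginals and (ii) asymptotic tightness of $\{\bbG_n f_{n,t}\}$. The limit will be the centered Gaussian process on $T$ whose covariance kernel is the pointwise limit $c(s,t) = \lim_n\big[Pf_{n,s}f_{n,t} - Pf_{n,s}Pf_{n,t}\big]$ guaranteed by hypothesis.

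For the marginals, fix $t_1,\ldots,t_k \in T$ and apply the multivariate Lindeberg--Feller CLT to the triangular array of centered vectors $(f_{n,t_j}(X_i) - Pf_{n,t_j})_{j=1}^k$, $i=1,\ldots,n$, scaled by $1/\sqrt{n}$. The covariance of $(\bbG_n f_{n,t_1},\ldots,\bbG_n f_{n,t_k})$ converges to $(c(t_i,t_j))_{i,j}$ by the pointwise-convergence hypothesis. The Lindeberg condition
$$
	\frac{1}{n}\sum_{i=1}^n P\big[(f_{n,t_j}(X_i)-Pf_{n,t_j})^2 I(|f_{n,t_j}(X_i)-Pf_{n,t_j}|>\epsilon\sqrt{n})\big] \longrightarrow 0
$$
follows from the envelope bound $|f_{n,t_j}|\le F_n$ together with the uniform square-integrability assumption $PF_n^2 I(F_n>\epsilon\sqrt{n})\to 0$. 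This yields the desired joint Gaussian limit.

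For asymptotic tightness, by Theorem \ref{thm:tight_equiconti} it suffices to verify total boundedness of $(T,\rho)$ (assumed) and asymptotic uniform $\rho$-equicontinuity in probability of the process. The standard tool is a bracketing chaining argument: for each $k\ge 0$, cover $\cF_n$ by $N_{[]}(2^{-k},\cF_n,L_2(P))$ brackets, pick a representative from each, and telescope $f_{n,s}-f_{n,t}$ along successively finer bracketing approximations. A Bernstein-type maximal inequality applied to the chained increments, with the envelopes truncated at level $\eta\sqrt{n}$, bounds each chaining layer; summing over layers yields a bound controlled by $J_{[]}(\delta_n,\cF_n,L_2(P))$ together with a remainder governed by $PF_n^2 I(F_n>\epsilon\sqrt{n})$. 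The hypothesis
$$
	\sup_{\rho(s,t)<\delta_n} P(f_{n,s}-f_{n,t})^2 \longrightarrow 0
$$
is exactly what connects the $\rho$-metric on $T$ to the $L_2(P)$-semimetric in which the bracketing is done, so that $\rho$-close indices yield $L_2(P)$-close functions and the chaining bound translates into $\rho$-equicontinuity.

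The main obstacle is obtaining a maximal inequality sharp enough to accommodate the fact that $\cF_n$ depends on $n$: the Bernstein truncation must deploy $PF_n^2 I(F_n>\epsilon\sqrt{n})\to 0$ in such a way that the contribution of the untruncated tails is $o(1)$, while the truncated part is handled by a sub-Gaussian/sub-exponential moment bound. Matching the chaining depth to the rate of decay of $J_{[]}(\delta_n,\cF_n,L_2(P))$ is the delicate technical step; once this is carried out, combining (i) and (ii) through Theorem \ref{thm:weak_tight} delivers weak convergence to the tight Gaussian process with covariance $c(s,t)$.
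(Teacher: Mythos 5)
This theorem appears in the literature-review chapter and the paper offers no proof of its own, deferring to \citet{van1996weak} (it is their bracketing CLT for classes changing with $n$, Theorem 2.11.23). Your sketch---finite-dimensional convergence via the Lindeberg--Feller CLT using the envelope condition, plus asymptotic $\rho$-equicontinuity via a bracketing chaining argument with Bernstein-type maximal inequalities and truncation at level $\sqrt{n}$, assembled through Theorems \ref{thm:weak_tight} and \ref{thm:tight_equiconti}---is exactly the standard argument in that reference, so it is correct in outline and takes the same route, with the acknowledged gap being only the technical maximal inequality that the cited monograph supplies.
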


\bigskip

Theorems \ref{thm:donsker_bracketing} and \ref{thm:donsker_varying_class} only consider 
empirical processes of \iid\;observations.
We finish this section with an extension of Donsker theorem to the case of
independent but not identically distributed processes.
The following theorem is an extension of Jain-Marcus's central limit theorem (\citet{jain1975central}),
and the proof can be found in Theorems 2.11.9 and 2.11.11 of \citet{van1996weak}.

\bigskip

\begin{theorem} \label{thm:jain_marcus}
For each $n$, let $Z_{n1}, \ldots, Z_{nm_n}$ be independent stochastic processes
indexed by an arbitrary index set $\cF$.
Suppose that there exist independent random variables $M_{n1}, \ldots, M_{nm_n}$,
and a semimetric $\rho$ such that
$$
	|Z_{ni}(f) - Z_{ni}(g)| \leq M_{ni} \rho(f,g)
$$
for every $f,g\in\cF$, 
$$
	\int_0^\infty \sqrt{\log N(\epsilon, \cF, \rho)} d\epsilon < \infty,
$$
and
$$
	\sum_{i=1}^{m_n} P M_{ni}^2 = O(1).
$$
Furthermore assume that
$$
	\sum_{i=1}^{m_n} P \|Z_{ni}\|^2_\cF 1_{\{\|Z_{ni}\|_\cF > \delta\}} \rightarrow 0
$$
for every $\delta > 0$.
Then the sequence $\sum_{i=1}^{m_n} Z_{ni} - PZ_{ni}$ is asymptotically uniformly $\rho$-equicontinuous
in $P$-probability.
Moreover, it converges to a tight Gaussian process provided the sequence of covariance functions converges
pointwise on $\cF\times\cF$.
\end{theorem}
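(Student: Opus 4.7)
The plan is to prove the two conclusions separately. Write $S_n(f) := \sum_{i=1}^{m_n} (Z_{ni}(f) - PZ_{ni}(f))$. I would first establish the asymptotic uniform $\rho$-equicontinuity in $P$-probability of the process $f \mapsto S_n(f)$, and then combine this with convergence of finite-dimensional marginals and apply Theorems \ref{thm:weak_tight} and \ref{thm:tight_equiconti} to conclude weak convergence to a tight Gaussian process.

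\textbf{Equicontinuity via symmetrization and chaining.} The natural approach is symmetrization followed by a Dudley-type chaining. Because the $Z_{ni}$ are not symmetric and need not have uniformly bounded sup norm, I would first truncate: for $\eta > 0$ split each $Z_{ni}$ into $Z_{ni}^{(1)} = Z_{ni} \mathbf{1}_{\{\|Z_{ni}\|_\cF \leq \eta\}}$ and a remainder. The Lindeberg-type hypothesis $\sum_i P\|Z_{ni}\|_\cF^2 \mathbf{1}_{\{\|Z_{ni}\|_\cF > \delta\}} \to 0$ ensures that the remainder contributes negligibly to the modulus of continuity. For the truncated pieces, introduce independent Rademacher signs $\varepsilon_i$ independent of $(Z_{ni})$ and apply the standard symmetrization inequality for means, obtaining
$$
P\sup_{\rho(f,g)<\delta} \bigg| \sum_i \big( Z_{ni}^{(1)}(f) - Z_{ni}^{(1)}(g) - P[Z_{ni}^{(1)}(f) - Z_{ni}^{(1)}(g)] \big) \bigg| \leq 2 P\sup_{\rho(f,g)<\delta} \bigg| \sum_i \varepsilon_i \big( Z_{ni}^{(1)}(f) - Z_{ni}^{(1)}(g) \big) \bigg|.
$$
Conditional on $(Z_{ni}, M_{ni})$, the Rademacher average $f \mapsto \sum_i \varepsilon_i Z_{ni}^{(1)}(f)$ is sub-Gaussian with respect to the random semimetric $d_n(f,g)^2 = \sum_i (Z_{ni}^{(1)}(f) - Z_{ni}^{(1)}(g))^2$, and the Lipschitz hypothesis yields $d_n(f,g) \leq M_n \rho(f,g)$ with $M_n^2 := \sum_i M_{ni}^2$. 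Dudley's entropy bound then gives
$$
P\Big[\sup_{\rho(f,g)<\delta} \Big| \sum_i \varepsilon_i \big( Z_{ni}^{(1)}(f) - Z_{ni}^{(1)}(g) \big) \Big| \,\Big|\, (Z_{ni}) \Big] \lesssim M_n \int_0^\delta \sqrt{\log N(\varepsilon, \cF, \rho)}\, d\varepsilon.
$$
Taking outer expectation and invoking $PM_n^2 = \sum_i PM_{ni}^2 = O(1)$ together with the finite entropy integral $\int_0^\infty \sqrt{\log N(\varepsilon, \cF, \rho)} d\varepsilon < \infty$ produces a bound that tends to $0$ as $\delta \downarrow 0$ uniformly in $n$. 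Total boundedness of $(\cF, \rho)$ follows from finiteness of the entropy integral at $0$.

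\textbf{Finite-dimensional convergence and conclusion.} For fixed $f_1, \ldots, f_k \in \cF$, convergence of $(S_n(f_1), \ldots, S_n(f_k))$ to a multivariate normal distribution follows from the Lindeberg--Feller CLT applied to the independent triangular array $\{(Z_{ni}(f_1), \ldots, Z_{ni}(f_k))\}_{i}$: the Lindeberg condition is a direct consequence of the hypothesis on $\|Z_{ni}\|_\cF$, while the limiting covariance is supplied by the assumed pointwise convergence of $\sum_i \mathrm{Cov}(Z_{ni}(f), Z_{ni}(g))$ on $\cF \times \cF$. Combining marginal convergence with the equicontinuity established above, Theorem \ref{thm:tight_equiconti} delivers asymptotic tightness of $S_n$, and Theorem \ref{thm:weak_tight} then yields weak convergence of $S_n$ in $\ell^\infty(\cF)$ to a tight Gaussian process.

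\textbf{Expected obstacle.} The most delicate step is the truncation-symmetrization argument. Since the $Z_{ni}$ are only controlled in second moment through $\|Z_{ni}\|_\cF$, one must carefully choose the truncation level $\eta$ and verify that (a) the Lipschitz bound with constant $M_{ni}$ is preserved (or only inflated by a harmless factor) for the truncated processes $Z_{ni}^{(1)}$, and (b) the discarded tails $\sum_i (Z_{ni}^{(2)} - PZ_{ni}^{(2)})$ do not contribute to the modulus of continuity. Balancing these two requirements against the Lindeberg-type hypothesis is the crux of the argument; everything else is a routine application of chaining and the Lindeberg--Feller CLT.
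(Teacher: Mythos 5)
This is a quoted literature result: the paper gives no proof of its own, stating only that ``the proof can be found in Theorems 2.11.9 and 2.11.11 of \citet{van1996weak}.'' Your sketch is correct and is essentially the standard truncation--symmetrization--chaining argument underlying that cited proof (including the key points that the truncation $Z_{ni}\mathbf{1}_{\{\|Z_{ni}\|_\cF\leq\eta\}}$ preserves the Lipschitz bound since the indicator does not depend on $f$, that the random semimetric satisfies $d_n\leq M_n\rho$ so the conditional Dudley integral is at most $M_n\int_0^\delta\sqrt{\log N(\epsilon,\cF,\rho)}\,d\epsilon$, and that the discarded tails are $o(1)$ in $L^1$ by the Lindeberg hypothesis), so there is nothing to compare beyond noting agreement with the source.
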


\bigskip

\section{Bayesian asymptotics}

For the last few decades, there were remarkable activities in the development of nonparametric Bayesian statistics.
This section reviews some frequentist properties of Bayesian procedures in infinite dimensional models.
There are books for nonparametric Bayesian statistics like \citet{ghosh2003bayesian} and \citet{hjort2010bayesian},
but they are not fully satisfactory because a lot of important theories and examples 
are developed quite recently.
Here we focus on asymptotic behaviors of posterior distributions when
\iid\;observations are given.

Let $X_1, \ldots, X_n$ be a random sample in a metric space $\cX$ with the Borel $\sigma$-algebra $\cB(\cX)$.
Consider a statistical model $\scrP = \{P_\theta: \theta \in \Theta\}$,
where the parameter space $\Theta$ is equipped with a metric $d$.
Let $\Pi$ be a \emph{prior} on $\Theta$, that is, a probability measure on
the Borel $\sigma$-algebra $\cB(\Theta)$ of $\Theta$.
Any version of the conditional distribution of $\theta$ given $X_1, \ldots, X_n$
is called a \emph{posterior distribution} and denoted by $\Pi(\cdot|X_1, \ldots, X_n)$.
We assume that there exists a $\sigma$-finite measure $\mu$ on $\cB(\cX)$ dominating
all $P_\theta$.
In this case, using Bayes' rule, the posterior distribution is given by
$$
	\Pi(A | X_1, \ldots, X_n) = \frac{\int_A \prod_{i=1}^n p_\theta(X_i) d\Pi(\theta)}
	{\int_\Theta \prod_{i=1}^n p_\theta(X_i) d\Pi(\theta)}
$$
for all $A \in \cB(\Theta)$.

A prior and data yield the posterior and the subjectiveness of this strategy
does not need the idea of what happens if further data arise.
However, one may be interested in asymptotic behavior of the posterior distribution
which can be seen as a frequentist viewpoint.
Frequentist typically assumes that there exists the true distribution $P_0$
which generates the observations $X_1, \ldots, X_n$.
Throughout this section, we assume that $P_0=P_{\theta_0}$ for some $\theta_0\in\Theta$, and under this assumption
the posterior distribution is expected to concentrate around the true parameter $\theta_0$.

Before going to infinite-dimensional models, we begin with parametric models.
In a smooth parametric, the posterior distribution is asymptotically normal centered on
a best regular estimator with the variance the inverse of Fisher information matrix.
This is the so-called BvM theorem which was proved by many authors.
The following theorem is considerably more elegant than the results by early authors
and proofs can be found, for example, in \citet{le1986asymptotic}, \citet{le2000asymptotics}.

\bigskip

\begin{theorem}[\bf Bernstein-von Mises] \label{thm:parametric_bvm}
Assume that a parametric model $\{P_\theta: \theta\in\Theta\}$ is differentiable in quadratic mean
at $\theta_0$ with nonsingular Fisher information matrix $I_{\theta_0}$.
Furthermore suppose that for every $\epsilon > 0$ there exists a sequence of tests $\varphi_n$ such that
$$
	P_{\theta_0}^n \varphi_n \rightarrow 0, ~~~~~ 
	\sup_{|\theta-\theta_0| > \epsilon} P_\theta^n (1-\varphi_n) \rightarrow 0.
$$
If the prior has continuous and positive density in a neighborhood of $\theta_0$,
then the corresponding posterior distributions satisfy
$$
	\sup_A \left| \Pi(\sqrt{n}(\theta-\theta_0) \in A | X_1, \ldots, X_n)
	- \int_A dN_{\Delta_{n}, I_{\theta_0}^{-1}}\right| \rightarrow 0	
$$
in $P_{\theta_0}^n$-probability, where $\Delta_n$ is a best regular estimator and 
the supremum is taken over all Borel sets.
\end{theorem}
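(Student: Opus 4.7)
The plan is to reduce the posterior distribution of $\sqrt{n}(\theta-\theta_0)$ to a shifted Gaussian by combining the testing hypothesis, which confines the posterior to a shrinking neighborhood of $\theta_0$, with the LAN expansion, which identifies the local shape of the log-likelihood as a quadratic form.

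First I would establish posterior concentration at the parametric rate, namely $\Pi(|\theta-\theta_0|>M_n/\sqrt{n}\mid X_1,\ldots,X_n)\to 0$ in $P_{\theta_0}^n$-probability for every $M_n\to\infty$. This proceeds in two substeps. A Schwartz-type argument first yields macroscopic consistency $\Pi(|\theta-\theta_0|>\epsilon\mid X_1,\ldots,X_n)\to 0$: the assumed tests $\varphi_n$ bound $\int_{|\theta-\theta_0|>\epsilon}\prod_{i=1}^n p_\theta(X_i)\,d\Pi(\theta)$ by a quantity that is $o(e^{-cn})$ under $P_{\theta_0}^n$, while DQM together with positivity of the prior density near $\theta_0$ provides the matching $e^{-cn}$ lower bound on the denominator via Kullback--Leibler support. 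To sharpen to rate $n^{-1/2}$, I would partition the annulus $\{M_n/\sqrt{n}<|\theta-\theta_0|\leq\epsilon\}$ into shells of width proportional to $k/\sqrt{n}$ and use LAN-based tests on each shell to bound the integrated likelihood ratio there by $\exp(-ck^2)$; summing over shells produces the required tail.

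Next I would perform the local analysis. Changing variables to $h=\sqrt{n}(\theta-\theta_0)$, the LAN assumption on any compact $|h|\leq M$ yields
\[
\sum_{i=1}^n\log\frac{p_{\theta_0+h/\sqrt{n}}}{p_{\theta_0}}(X_i)=h^T\Delta_{n,0}-\tfrac{1}{2}h^T I_{\theta_0}h+r_n(h),
\]
with $\Delta_{n,0}=n^{-1/2}\sum_{i=1}^n\score_{\theta_0}(X_i)$ and $r_n(h)\to 0$ in $P_{\theta_0}^n$-probability uniformly in $h$ on compacts (upgraded from the pointwise LAN statement via DQM and a standard compactness argument). The prior density is $\pi(\theta_0)(1+o(1))$ on this set by continuity. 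Setting $\Delta_n=I_{\theta_0}^{-1}\Delta_{n,0}$ and completing the square rewrites the exponent as $-\tfrac{1}{2}(h-\Delta_n)^T I_{\theta_0}(h-\Delta_n)$ plus a term independent of $h$. Normalizing the posterior density of $h$ by the corresponding Gaussian integral over $|h|\leq M_n$ yields a ratio that tends to one, while the Gaussian and posterior masses outside $|h|\leq M_n$ are negligible by the previous concentration step. This produces convergence in total variation of the posterior of $\sqrt{n}(\theta-\theta_0)$ to $N_{\Delta_n,I_{\theta_0}^{-1}}$; since $\Delta_n$ is asymptotically linear in $\score_{\theta_0}$, it is a best regular estimator by Theorem \ref{thm:efficient_parameteric}.

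The main obstacle is bootstrapping from macroscopic consistency to parametric-rate concentration, because the tests assumed in the theorem are only asserted for fixed alternatives bounded away from $\theta_0$. One must manufacture tests on $n^{-1/2}$-scale shells from the LAN structure itself, which in turn requires upgrading the pointwise quadratic expansion to a uniform quadratic domination over shrinking annuli; the nonsingularity of $I_{\theta_0}$ is essential here to guarantee the $\exp(-ck^2)$ separation. A secondary subtlety is that the LAN assumption is stated only along converging sequences $h_n\to h$, so the uniform-on-compacts remainder bound needed in the local step must be deduced separately, typically from DQM via a standard subsequence or equicontinuity argument.
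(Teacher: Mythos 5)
Your proposal is essentially the classical Le Cam--van der Vaart proof and it is sound; note, however, that the thesis does not prove Theorem \ref{thm:parametric_bvm} itself but defers to \citet{le1986asymptotic} and \citet{le2000asymptotics}. The closest in-paper analogue is the general Theorem \ref{thm:BvM_general} combined with Theorem \ref{thm:sqrt_n_general}, and the comparison is instructive. You share the two pillars --- $\sqrt{n}$-rate posterior concentration via shell-wise tests, and a uniform-on-compacts LAN expansion --- and the paper's Lemmas \ref{lem:sqrtn_lower_bound} and \ref{lem:sqrtn_upper_bound} carry out exactly your peeling argument. Where you diverge is the local step: you complete the square in $h$ and normalize the posterior density of $h$ against the corresponding Gaussian integral directly, whereas the paper conditions both the posterior and the target normal on a compact set $K$, forms the ratio $\psi_n(h)s_n(g)\pi_n(g)/\big(\psi_n(g)s_n(h)\pi_n(h)\big)$, and bounds the total variation distance of the two conditioned measures by Jensen's inequality applied to $x\mapsto(1-x)_+$, before letting $K$ grow along a sequence of balls. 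The paper's device never requires normalizing the posterior in the local step and transfers verbatim to integrated likelihoods, which is why it is the right tool for the semiparametric chapters; your direct normalization is more elementary in the parametric case but leans harder on uniformity of the LAN remainder. One caution: under differentiability in quadratic mean alone, Kullback--Leibler neighborhoods of $\theta_0$ can carry zero prior mass (the divergence may be infinite even when the Hellinger distance is of order $|\theta-\theta_0|$), so the lower bound on the denominator should be obtained from the LAN expansion on $\{|h|\le M\}$ --- exactly as in Lemma \ref{lem:sqrtn_lower_bound} --- rather than from KL support; since you already invoke that expansion for the shells, this is a repair available within your own toolkit rather than a structural gap.
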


\bigskip

Since best regular estimators are asymptotically equivalent up to $o_{P_0}(n^{-1/2})$ terms,
the centering sequence $\Delta_n$ in the BvM theorem can be any best regular estimator sequence.
An important application of the BvM theorem is that
the posterior mean is an efficient estimator and Bayesian credible sets are asymptotically equivalent
to frequentists' confidence intervals.
This implies that statistical inferences based on the posterior distribution is equally optimal
to that based on the maximum likelihood estimators.

A sequence of tests $\varphi_n$ is called \emph{uniformly consistent} for testing $H_0: \theta = \theta_0$
versus $H_1: \theta \in U$ if
\bean
	P_{\theta_0}^n \varphi_n \rightarrow 0 \\
	\sup_{\theta\in U} P_\theta^n(1-\varphi_n) \rightarrow 0
\eean
as $n \rightarrow \infty$.
Le Cam's version of the BvM theorem requires the existence of uniformly consistent tests
for testing $H_0: \theta=\theta_0$ versus $H_1:|\theta-\theta_0| > \epsilon$ for every $\epsilon > 0$.
Such tests certainly exist if there exist estimators $\hat\theta_n$ that are uniformly consistent, that is,
$$\sup_\theta P_\theta (|\hat\theta_n - \theta| > \epsilon) \rightarrow 0$$
for every $\epsilon > 0$.

Theorem \ref{thm:parametric_bvm} is quite general so it can be applied for most smooth parametric models.
As frequentist theory, however, Theorem \ref{thm:parametric_bvm} does not 
generalize fully to nonparametric estimation problems.
Actually many nonparametric priors do not work well in the sense that
the posterior mass does not concentrate around the true parameter.
An important counterexample was found by \citet{diaconis1986inconsistent, diaconis1986consistency}
which proves that the posterior distribution may be inconsistent even if a very natural nonparametric prior is used.
\citet{doss1984bayesian, doss1985bayesian1, doss1985bayesian2} found similar phenomena for median estimation problem.
Before introducing this example, we define the posterior consistency rigorously and
state an important theorem about consistency proved by \citet{doob1949application}.
The sequence of posteriors $\Pi(\cdot|X_1, \ldots, X_n)$ is said to be \emph{consistent} at $\theta_0$
(with respect to a metric $d$) if for every $\epsilon > 0$
$$
	P_{\theta_0}^n \Pi\big(\{\theta\in\Theta: d(\theta,\theta_0) > \epsilon | X_1, \ldots, X_n\}\big)
	\rightarrow 0
$$
as $n \rightarrow \infty$.
The definition of consistency may be different in some texts in which consistency is defined
using almost-sure convergence, not convergence in probability.
More precisely, we call the posterior is \emph{almost-surely consistent} at $\theta_0$, if for every $\epsilon > 0$
$$
	\Pi\big(\{\theta\in\Theta: d(\theta,\theta_0) > \epsilon | X_1, \ldots, X_n\}\big)
	\rightarrow 0
$$
$P_{\theta_0}^\infty$-almost-surely.
Furthermore we say that a sequence $\epsilon_n \rightarrow 0$ is the \emph{convergence rate of the posterior distribution}
at $\theta_0$ (with respect to a metric $d$) if for any $M_n \rightarrow \infty$, we have that
$$
	\Pi\Big(\{\theta\in\Theta: d(\theta,\theta_0) \geq M_n \epsilon_n\} | X_1, \ldots, X_n\Big) \rightarrow 0
$$
in $P_{\theta_0}^n$-probability.
As the definition of posterior consistency, the convergence rate of the posterior also can be defined
using almost-sure convergence.
Now we state the theorem by \citet{doob1949application}.

\bigskip

\begin{theorem}
Suppose that $\Theta$ and $\cX$ are both complete and separable metric spaces, 
and the model is identifiable.
Then there exists $\Theta_0 \subset \Theta$, with $\Pi(\Theta_0)=1$ such that
$\Pi(\cdot|X_1, \ldots, X_n)$ is consistent at every $\theta \in \Theta_0$.
\end{theorem}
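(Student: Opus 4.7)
The plan is to carry out the classical martingale-based argument on the product space $\Theta \times \cX^\infty$ equipped with the joint distribution $Q(d\theta, d\bx) = \Pi(d\theta) P_\theta^\infty(d\bx)$, and to reduce consistency to the fact that under identifiability the parameter $\theta$ is measurable with respect to the tail of the observations.

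First I would fix a countable generating collection $\{A_k\}$ for $\cB(\Theta)$ (possible because $\Theta$ is complete separable, hence Polish). For each fixed $A_k$, the sequence $M_n^{(k)} := \Pi(A_k \mid X_1, \ldots, X_n)$ is a bounded $Q$-martingale in the filtration $\cF_n = \sigma(X_1,\ldots,X_n)$, since it equals $\E_Q[\mathbf{1}_{A_k}(\theta) \mid \cF_n]$. By L\'evy's upward theorem, $M_n^{(k)} \to \E_Q[\mathbf{1}_{A_k}(\theta) \mid \cF_\infty]$ almost surely under $Q$, where $\cF_\infty = \sigma(X_1, X_2, \ldots)$.

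Next I would establish that $\theta$ itself is $\cF_\infty$-measurable (up to $Q$-null sets). Because $\scrP$ is identifiable, the map $\theta \mapsto P_\theta$ is injective, and by the Glivenko--Cantelli theorem (the empirical measure $\bbP_n$ converges weakly to $P_\theta$ under $P_\theta^\infty$-almost every sequence) the true $P_\theta$ can be recovered measurably from $(X_1, X_2, \ldots)$. Combining this with the Borel-isomorphism theorem for Polish spaces, identifiability promotes the map $\bx \mapsto P_\theta$ to a measurable inverse $\bx \mapsto \theta$ on a set of full $Q$-measure. Hence $\E_Q[\mathbf{1}_{A_k}(\theta) \mid \cF_\infty] = \mathbf{1}_{A_k}(\theta)$ almost surely, so $M_n^{(k)} \to \mathbf{1}_{A_k}(\theta)$ almost surely for every $k$.

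By intersecting over the countable family $\{A_k\}$, outside a $Q$-null set we have $\Pi(A_k \mid X_1, \ldots, X_n) \to \delta_\theta(A_k)$ for all $k$ simultaneously, which upgrades (on Polish $\Theta$) to weak convergence of the posteriors to $\delta_\theta$. Disintegrating $Q$ as $\Pi(d\theta) P_\theta^\infty(d\bx)$, Fubini produces a set $\Theta_0 \in \cB(\Theta)$ with $\Pi(\Theta_0) = 1$ such that for every $\theta \in \Theta_0$ the posterior is $P_\theta^\infty$-almost surely consistent at $\theta$, which clearly implies consistency in $P_\theta^n$-probability for any metric $d$ on $\Theta$. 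The main obstacle is the measurable-inversion step: one needs identifiability plus the Polish structure to guarantee that the Borel map $\theta \mapsto P_\theta$ admits a measurable left inverse on a full-measure set, so that $\theta = f(X_1, X_2, \ldots)$ almost surely; everything else is routine martingale theory and a Fubini decomposition.
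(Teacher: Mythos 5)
The paper does not prove this statement at all --- it is quoted in the literature-review chapter as Doob's theorem with a citation to Doob (1949) --- so there is no in-paper argument to compare against. Your proposal is the standard and correct proof of Doob's theorem (the one in, e.g., Ghosh and Ramamoorthi): the L\'evy upward martingale convergence of $\Pi(A\mid X_1,\ldots,X_n)=\E_Q[\mathbf{1}_A(\theta)\mid\cF_n]$, the tail-measurability of $\theta$ via recovery of $P_\theta$ from the empirical measures plus measurable inversion of the injective map $\theta\mapsto P_\theta$, and the final Fubini disintegration. Two small points worth tightening: the inversion step is the Lusin--Suslin theorem (an injective Borel map between Polish spaces has Borel range and Borel inverse) rather than a Borel-isomorphism theorem, and it presupposes that $\theta\mapsto P_\theta$ is Borel measurable into $M(\cX)$ with the weak topology --- which holds here because the model is dominated with jointly measurable densities, but is not literally in the hypotheses as stated. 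Likewise, to pass from convergence on a countable class to consistency at $\theta$, take the class to be the balls $B(x,r)$ with $x$ in a countable dense set and $r$ rational; then for any $\epsilon>0$ pick such a ball containing $\theta$ and contained in $B(\theta,\epsilon)$, and its posterior mass tends to $1$. With those clarifications the argument is complete.
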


\bigskip

Doob's theorem looks very useful bet it does not tell about the posterior consistency at a specific $\theta_0$.
Although the set of inconsistency is a $\Pi$-null set, it may not be ignorable when $\Theta$ is
an infinite-dimensional parameter space.
As mentioned above the Diaconis-Freedman's counterexample was a surprising discovery
in Bayesian nonparametric society as the case of Hodges supperefficient estimator.
Before the discovery of this counterexample, it was believed that most prior works well
except some abnormal examples.
To explain the Diaconis-Freedman example, we need to mention the Dirichlet process (\citet{ferguson1973bayesian}) prior
which is often considered as a starting point of Bayesian nonparametrics.
Dirichlet processes are widely used in many fields of science and industry 
for the prior of unknown probability distributions.
The definition of Dirichlet processes and its symmetrized version is given in Section \ref{sec:sdp}.
In the statement of the following theorem, we slightly abuse notations for $\theta$
which is used for the location parameter, not the whole parameter, in a semiparametric location problem.

\bigskip

\begin{theorem} \label{thm:diaconis}
Consider an \iid\;observations $X_1, \ldots, X_n$ from well-specified model
$$X_i = \theta + \epsilon_i,$$
where $\epsilon_i$ follows an unknown distribution $P$.
For the prior, $\theta$ has the standard normal density, and $P$ is independently drawn
from the symmetrized Dirichlet process with mean the standard Cauchy distribution.
Then the posterior is inconsistent at $\theta=0$ and $P=P_0$ for some $P_0$
which has infinitely differentiable density $p_0$, which is compactly supported and symmetric about 0,
with a strict maximum at $0$.
\end{theorem}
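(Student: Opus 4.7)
The plan is to exhibit a specific $p_{0}$ (infinitely differentiable, symmetric, compactly supported in $[-r,r]$, strictly peaked at $0$) and an alternative location $\theta^{*}\neq 0$ such that, under $P_{\theta_{0}=0,P_{0}}^{\infty}$, the marginal posterior for $\theta$ puts asymptotically no mass on any fixed neighborhood of $\theta_{0}=0$. The inconsistency is a manifestation of the tension between a sharply peaked, bounded-support truth centered at $0$ and the heavy-tailed Cauchy base of the symmetrized Dirichlet process, which prefers explanations of the data involving atoms placed \emph{away} from $0$ in symmetric pairs.

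I would begin by exploiting the structural fact that draws from a symmetrized Dirichlet process are almost surely discrete measures supported on symmetric pairs $\{\pm y\}$. The P\'olya urn representation permits integrating the random $P$ out of the likelihood to obtain a marginal density
$$
m_{n}(\theta)=\int\prod_{i=1}^{n}p(X_{i}-\theta)\,d\Pi(P),
$$
expressible as a weighted sum over set partitions of $\{1,\ldots,n\}$ together with $\pm$ sign assignments: each block contributes a combinatorial factor involving the Dirichlet concentration parameter and a Cauchy base density evaluated at the symmetrized average of the shifted observations $X_{i}-\theta$ in the block. The posterior odds between neighborhoods of $(\theta_{0},P_{0})$ and $(\theta^{*},P^{*})$ are then controlled by the ratio $m_{n}(0)/m_{n}(\theta^{*})$ times bounded prior factors.

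Next, I would pick $\theta^{*}>r$ and compare the two marginal likelihoods. For the alternative, dominant contributions come from ``cross'' pairings $\{i,j\}$ with $X_{i}-\theta^{*}$ and $X_{j}-\theta^{*}$ of opposite signs (necessarily assigned as $\pm y_{ij}$); because the Cauchy density is bounded below on any bounded region, a law-of-large-numbers argument on the empirical mass of matchable pairs produces $m_{n}(\theta^{*})\ge e^{-c_{1}n}$. At $\theta=0$, the shifted data lie in $[-r,r]$ and no partition can place atoms outside this bounded region, where the Cauchy base density is essentially flat; a Jensen-type upper bound, combined with the strict peak of $p_{0}$ at $0$ (which makes the empirical distribution of $|X_{i}|$ put most mass very close to zero, where symmetric pairings cost rather than gain), yields $m_{n}(0)\le e^{-c_{0}n}$. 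With $p_{0}$ chosen sufficiently concentrated at $0$ and $\theta^{*}$ chosen to make the cross-pairings cheap under the Cauchy base, one arranges $c_{0}>c_{1}$, so the $N(0,1)$ prior factor (which is $\Theta(1)$ at both points) cannot compensate and the posterior odds against $\theta_{0}$ tend to $0$ exponentially.

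The main obstacle will be the third step: turning the heuristic comparison into a genuine bound, where the exponential rates $c_{0}$ and $c_{1}$ are extracted from \emph{all} partitions, not just the hand-picked pairing families. This requires (i) a uniform combinatorial control on the partition sum, most cleanly obtained by bounding it by a product of single-block integrals via a convexity argument, and (ii) coordinating the choice of $p_{0}$ (the degree of its peak at $0$) with the choice of $\theta^{*}$ so that the inequality $c_{0}>c_{1}$ is strict. Once this ordering is established, the posterior mass on $\{|\theta|<\varepsilon\}$ vanishes for every $\varepsilon<\theta^{*}$ under $P_{\theta_{0},P_{0}}^{\infty}$, establishing inconsistency at $(\theta_{0},P_{0})=(0,P_{0})$.
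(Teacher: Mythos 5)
The paper does not actually prove this statement; it is quoted from Diaconis and Freedman, and the text only records the conclusion (the posterior mass for $\theta$ piles up near two symmetric points $\pm\gamma$). So the comparison has to be with the standard argument, and against that your plan has two genuine gaps. First, the likelihood you integrate is the wrong one. You describe each block of the partition as contributing ``a Cauchy base density evaluated at the symmetrized average of the shifted observations in the block''; that is the structure of a Dirichlet process \emph{mixture} (a shared latent atom smoothed by a kernel). In the model of the theorem the symmetrized Dirichlet process is the law of the errors themselves, so grouping $i$ and $j$ into one block forces $X_i-\theta=\pm(X_j-\theta)$ exactly; all blocks of size at least two contribute only to a singular part of the prior predictive, which is null for generic data and for Lebesgue-almost-every $\theta$. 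Consequently the marginal likelihood collapses to the singleton partition, $m_n(\theta)\propto\prod_{i=1}^n c(X_i-\theta)$ with $c$ the standard Cauchy density, and the entire ``cross-pairings versus no-pairings'' comparison, including both exponential rates $c_0$ and $c_1$, analyzes an object that is not present in this model.

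Second, once the reduction to the Cauchy location likelihood is made, the whole content of the theorem is the construction of $p_0$: one must produce a symmetric, smooth, compactly supported density with a strict maximum at $0$ for which the functional $\theta\mapsto\int p_0(x)\log c(x-\theta)\,dx$ is maximized at some $\pm\gamma\neq0$ rather than at $0$. This exploits the non-concavity of $u\mapsto-\log(1+u^2)$: placing most of the mass of $p_0$ near $\pm a$ for large $a$ makes a shift of $\theta$ toward $a$ gain roughly $\log(1+a^2)$ on one shoulder while losing only about $\log 4$ on the other. Your prescription goes in the opposite direction: taking $p_0$ ``sufficiently concentrated at $0$'' makes $\int p_0\log c(\cdot-\theta)$ approximately $\log c(-\theta)$, which is uniquely maximized at $\theta=0$ and hence \emph{restores} consistency. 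Likewise, choosing $\theta^*$ outside the support of $p_0$ is not what happens; the spurious maximizers $\pm\gamma$ sit where $p_0$ carries its shoulder mass. So the key idea of the counterexample is missing, and the step that is supposed to deliver $c_0>c_1$ would fail as stated.
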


\bigskip

An example of inconsistent $p_0$ in Theorem \ref{thm:diaconis} is illustrated in Figure \ref{fig:diaconis}.
With this $p_0$, the posterior mass for $\theta$ concentrate around two distinct points $\pm\gamma$
for some $\gamma>0$.
To prove the posterior consistency at a specific point $\theta_0$,
the condition by \citet{schwartz1965bayes} can be a very useful tool.
It requires that the prior mass of every Kullback-Leibler neighborhood of the true parameter is positive.
Furthermore a uniformly consistent sequence of tests are required.

\begin{figure}
\begin{center}
\includegraphics[width=80 mm, height=50 mm]{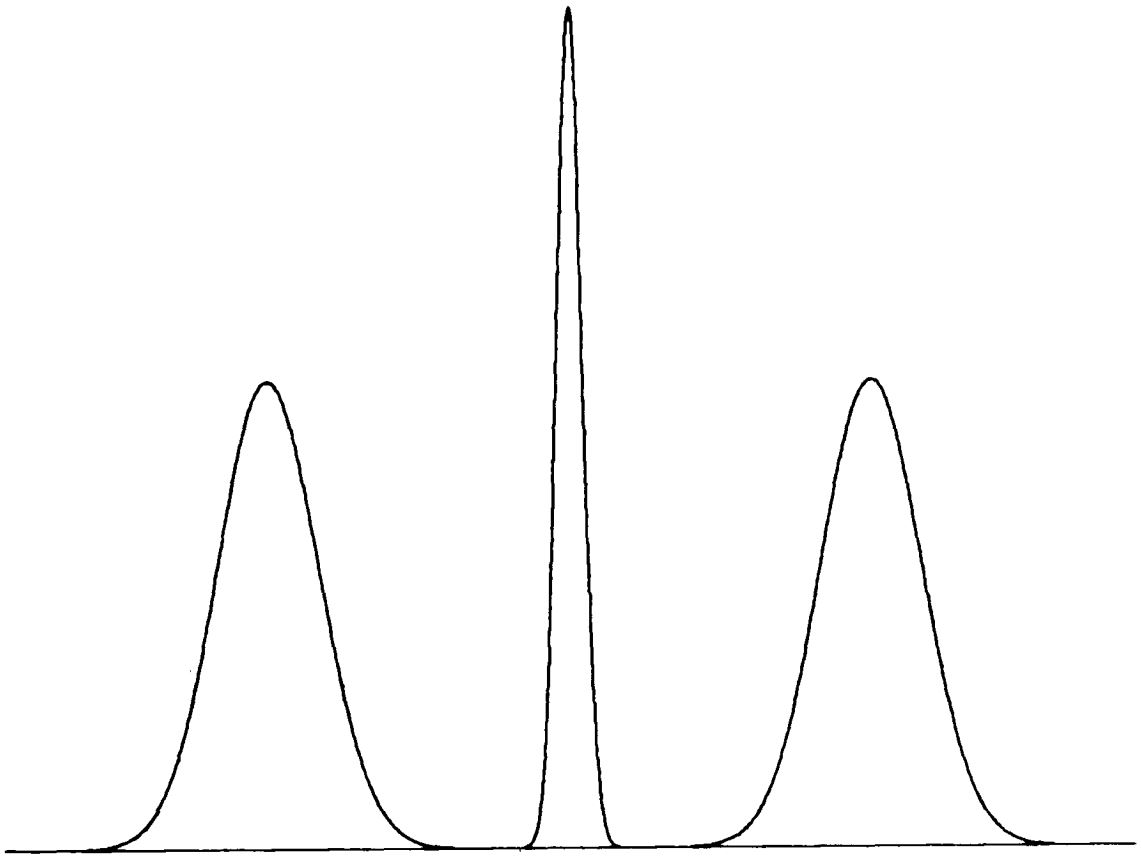}
\end{center}
\caption{Diaconis-Freedman's counterexample \label{fig:diaconis}}
\end{figure}

\bigskip

\begin{theorem}
Let $\Pi$ be a prior on $\Theta$, and assume that the model is dominated by a common $\sigma$-finite measure.
If for every $\epsilon > 0$,
\be \label{eq:kl_positive}
	\Pi\left( \big\{\theta\in\Theta: K_{P_{\theta_0}}(P_\theta) < \epsilon \big\} \right) > 0
\ee
and there exists a uniformly consistent sequence of tests 
for testing $H_0: \theta = \theta_0$ versus $H_1: d(\theta,\theta_0) > \epsilon$, then
the posterior is almost-surely consistent.
\end{theorem}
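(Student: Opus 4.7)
Proof plan. The standard strategy is to write the posterior mass of the complement $U_\epsilon = \{\theta \in \Theta : d(\theta,\theta_0) > \epsilon\}$ as a likelihood-ratio quotient and then to bound numerator and denominator separately. Specifically, set
$$
D_n = \int_\Theta \prod_{i=1}^n \frac{p_\theta}{p_{\theta_0}}(X_i)\,d\Pi(\theta),
\qquad
N_n = \int_{U_\epsilon} \prod_{i=1}^n \frac{p_\theta}{p_{\theta_0}}(X_i)\,d\Pi(\theta),
$$
so that $\Pi(U_\epsilon \mid X_1,\ldots,X_n) = N_n/D_n$. Using the test $\varphi_n$ we further split
$$
\Pi(U_\epsilon \mid X_1,\ldots,X_n) \leq \varphi_n + (1-\varphi_n)\,\frac{N_n}{D_n}.
$$
I need to show that each piece on the right tends to $0$ almost surely under $P_{\theta_0}^\infty$.

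For the denominator, I would use condition \eqref{eq:kl_positive}. Fix any $\beta > 0$ and let $B_\beta = \{\theta : K_{P_{\theta_0}}(P_\theta) < \beta\}$; the hypothesis gives $\Pi(B_\beta) > 0$. Restricting the integral to $B_\beta$ and applying Jensen's inequality to the probability measure $\Pi(\cdot \cap B_\beta)/\Pi(B_\beta)$,
$$
\frac{1}{n}\log D_n \geq \frac{\log \Pi(B_\beta)}{n} + \frac{1}{\Pi(B_\beta)} \int_{B_\beta} \frac{1}{n}\sum_{i=1}^n \log \frac{p_\theta}{p_{\theta_0}}(X_i)\, d\Pi(\theta).
$$
By Fubini together with the strong law of large numbers (and a dominated-convergence argument exploiting $K_{P_{\theta_0}}(P_\theta) < \beta$ on $B_\beta$), the inner average converges almost surely to $-\int_{B_\beta} K_{P_{\theta_0}}(P_\theta)\,d\Pi(\theta)/\Pi(B_\beta) \geq -\beta$. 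Hence $\liminf_n n^{-1}\log D_n \geq -\beta$ almost surely, i.e.\ $e^{n\beta}D_n \to \infty$ $[P_{\theta_0}^\infty]$-a.s.

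For the numerator and test piece, the key difficulty is that the hypothesis only provides $P_{\theta_0}^n\varphi_n \to 0$ and $\sup_{\theta \in U_\epsilon} P_\theta^n(1-\varphi_n) \to 0$, which are not a priori summable, so Borel--Cantelli does not apply directly. This is where I would invoke the standard Le Cam sample-splitting and majority-voting construction: from any uniformly consistent sequence of tests one can manufacture a new sequence $\tilde\varphi_n$ with exponentially small error probabilities, say $P_{\theta_0}^n\tilde\varphi_n \leq e^{-cn}$ and $\sup_{\theta \in U_\epsilon} P_\theta^n(1-\tilde\varphi_n) \leq e^{-cn}$ for some $c > 0$. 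After replacing $\varphi_n$ by $\tilde\varphi_n$, Borel--Cantelli gives $\tilde\varphi_n \to 0$ almost surely, and Fubini yields
$$
P_{\theta_0}^n\bigl[(1-\tilde\varphi_n) N_n\bigr] = \int_{U_\epsilon} P_\theta^n(1-\tilde\varphi_n)\,d\Pi(\theta) \leq e^{-cn},
$$
so by Markov's inequality and Borel--Cantelli, $e^{cn/2}(1-\tilde\varphi_n)N_n \to 0$ almost surely. Choosing $\beta < c/2$ in the denominator bound then gives $(1-\tilde\varphi_n)N_n/D_n \to 0$ almost surely, completing the proof.

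The main obstacle is the upgrade step from uniform to exponential consistency of the tests; this is the nontrivial input needed to convert the in-probability statement (which falls out essentially for free from Markov's inequality and Fubini) into the desired almost-sure statement. The denominator bound is routine once one remembers to take the logarithm before applying the law of large numbers, and to use $\Pi(B_\beta) > 0$ from \eqref{eq:kl_positive}.
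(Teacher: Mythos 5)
The paper does not actually prove this statement: it appears in the literature-review chapter as Schwartz's theorem, attributed to \citet{schwartz1965bayes}, so there is no in-paper proof to compare against. Your proposal is the standard argument for that theorem and its structure is sound: the decomposition $\Pi(U_\epsilon\mid X_1,\ldots,X_n)\leq \varphi_n+(1-\varphi_n)N_n/D_n$, the almost-sure denominator bound $e^{n\beta}D_n\to\infty$ derived from the Kullback--Leibler prior-positivity condition, and the treatment of the numerator via Fubini, Markov and Borel--Cantelli are exactly the right ingredients. You also correctly isolate the one genuinely nontrivial input, namely the upgrade from uniformly consistent to exponentially consistent tests; the thesis itself invokes precisely this fact elsewhere (in the proof of Theorem \ref{thm:consist_general}, citing \citet{lecam1973convergence} and Lemma 7.2 of \citet{ghosal2000convergence}), so it is a legitimate black box here. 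One technical point in your denominator step deserves care: after applying Jensen's inequality you pass the limit through the integral against the renormalized prior $\Pi(\cdot\cap B_\beta)/\Pi(B_\beta)$, but the averages $n^{-1}\sum_{i}\log(p_\theta/p_{\theta_0})(X_i)$ need not be dominated uniformly in $n$ by an integrable envelope in $\theta$, so the "dominated-convergence argument" you allude to is not automatic. The cleaner and standard route is to apply Fatou's lemma directly to $e^{2n\beta}\int_{B_\beta}\prod_{i=1}^n(p_\theta/p_{\theta_0})(X_i)\,d\Pi(\theta)/\Pi(B_\beta)$: by the pointwise strong law and Fubini the integrand tends to $+\infty$ for almost every $\theta$ in $B_\beta$, almost surely under $P_{\theta_0}^\infty$, which yields $\liminf_n e^{2n\beta}D_n=\infty$ with no domination hypothesis. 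With that substitution your argument is complete and correct.
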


\bigskip

There are many interesting examples satisfying the Scwartz's condition.
\citet{barron1999consistency} founds a sufficient condition using bracketing number
for consistency with respect to Hellinger distance..
Some extensions to semiparametric models and non-\iid\; models can be found,
for example, in \citet{amewou2003posterior} and \citet{wu2008posterior}.
More recently \citet{walker2004new} founds a new sufficient condition for posterior consistency.

Many statisticians do not fully satisfy posterior consistency and they want to know
how fast it converges to the true parameter.
As an extension of Scwartz's theorem, \citet{ghosal2000convergence} found sufficient conditions
which assures a certain rate of posterior consistency.
Let $D(\epsilon, \Theta, d)$ denote the $\epsilon$-\emph{packing number} of $\Theta$, that is,
the maximal number of points in $\Theta$ such that the distance between every pair is at least $\epsilon$.
This is related to the covering number by the inequalities
$$N(\epsilon, \Theta, d) \leq D(\epsilon, \Theta, d) \leq N(\epsilon/2, \Theta, d).$$
The following general theorem given in \citet{ghosal2000convergence} is 
very intuitive and interpretable.

\bigskip

\begin{theorem} \label{thm:ghosal2000}
Let $d$ be the metric on $\Theta$ defined by $d(\theta_1, \theta_2) = h(p_{\theta_1}, p_{\theta_2})$
or $d(\theta_1, \theta_2) = d_V(p_{\theta_1}, p_{\theta_2})$.
Suppose that for a sequence $\epsilon_n$ with $\epsilon_n \rightarrow 0$ and $n\epsilon_n^2 \rightarrow \infty$,
a constant $C > 0$ and sets $\Theta_n \subset \Theta$, we have
\bea
	\log D(\epsilon_n, \Theta_n, d) \leq n\epsilon_n^2, \label{eq:rate_entropy}
	\\
	\Pi(\Theta \backslash \Theta_n) \leq \exp ( -n\epsilon_n^2 (C+4) ) \nonumber,
\eea
and
\be
	\Pi \bigg( \Big\{\theta: -P_{\theta_0} \log\Big(\frac{p_\theta}{p_{\theta_0}}\Big) \leq \epsilon_n^2, ~~
	\log\Big(\frac{p_\theta}{p_{\theta_0}}\Big)^2 \leq \epsilon_n^2 \Big\} \; \Big| \; X_1, \ldots, X_n \bigg) \geq
	\exp(-n\epsilon_n^2 C). \label{eq:kl_rate}
\ee
Then for sufficiently large $M$, we have that
$$
	\Pi\Big(\{\theta\in\Theta: d(\theta,\theta_0) \geq M \epsilon_n\} | X_1, \ldots, X_n\Big) \rightarrow 0
$$
in $P_0^n$-probability.
\end{theorem}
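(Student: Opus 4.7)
The plan is to follow the standard testing-plus-prior-mass strategy, writing the posterior probability of the ``bad set'' $A_n = \{\theta : d(\theta,\theta_0) \geq M\epsilon_n\}$ as
\[
\Pi(A_n \mid X_1,\ldots,X_n) = \frac{N_n}{D_n}, \qquad N_n = \int_{A_n} \prod_{i=1}^n \frac{p_\theta}{p_{\theta_0}}(X_i)\, d\Pi(\theta), \quad D_n = \int_\Theta \prod_{i=1}^n \frac{p_\theta}{p_{\theta_0}}(X_i)\, d\Pi(\theta),
\]
and showing that on events of $P_{\theta_0}^n$-probability tending to one, $D_n$ is bounded below and $N_n$ is bounded above by exponentials whose rates can be separated by choosing $M$ large.

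First, I would lower bound the denominator using the prior mass condition \eqref{eq:kl_rate}. Writing $B$ for the KL-type neighborhood appearing there, an application of Jensen's inequality to $\log(\int_B \prod p_\theta/p_{\theta_0}\, d\Pi/\Pi(B))$ together with Chebyshev's inequality (the so-called evidence lemma of Ghosal--Ghosh--van der Vaart) would give, for any fixed $C' > 0$,
\[
P_{\theta_0}^n\Bigl( D_n \geq \Pi(B)\, e^{-(1+C')n\epsilon_n^2}\Bigr) \to 1,
\]
so that on this event $D_n \geq \exp(-n\epsilon_n^2(C+1+C'))$.

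Second, I would split $N_n = N_n^{(1)} + N_n^{(2)}$ with $N_n^{(1)} = \int_{A_n \cap \Theta_n^c} \cdots$ and $N_n^{(2)} = \int_{A_n \cap \Theta_n} \cdots$. By Fubini and $P_\theta^n(\prod p_\theta/p_{\theta_0}) = 1$, we have $P_{\theta_0}^n N_n^{(1)} \leq \Pi(\Theta \setminus \Theta_n) \leq e^{-n\epsilon_n^2(C+4)}$, so by Markov's inequality $N_n^{(1)}/D_n \to 0$ in $P_{\theta_0}^n$-probability. For $N_n^{(2)}$ I would invoke the Le Cam--Birg\'e construction: the entropy bound \eqref{eq:rate_entropy} lets me cover $A_n \cap \Theta_n$ by at most $e^{n\epsilon_n^2}$ balls of radius $M\epsilon_n/2$ around points $\theta_j$; for each $\theta_j$ there exists a test $\phi_{n,j}$ with $P_{\theta_0}^n \phi_{n,j} \leq e^{-K n d(\theta_0,\theta_j)^2}$ and $\sup_{d(\theta,\theta_j) \leq d(\theta_0,\theta_j)/2} P_\theta^n (1-\phi_{n,j}) \leq e^{-K n d(\theta_0,\theta_j)^2}$ for a universal $K > 0$. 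Setting $\Phi_n = \max_j \phi_{n,j}$, a union bound and the entropy estimate give $P_{\theta_0}^n \Phi_n \leq e^{n\epsilon_n^2 - K M^2 n\epsilon_n^2/4} \to 0$ provided $M$ is large, and uniformly on $A_n \cap \Theta_n$ the type-II error satisfies $P_\theta^n(1-\Phi_n) \leq e^{-K M^2 n\epsilon_n^2/4}$.

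Finally, I would combine these pieces via
\[
\Pi(A_n \cap \Theta_n \mid X_1,\ldots,X_n) \leq \Phi_n + (1-\Phi_n)\frac{N_n^{(2)}}{D_n},
\]
and take $P_{\theta_0}^n$-expectation of the last term: by Fubini it is at most $\Pi(A_n \cap \Theta_n)\sup_{\theta \in A_n \cap \Theta_n} P_\theta^n(1-\Phi_n) \leq e^{-K M^2 n\epsilon_n^2/4}$, which divided by the denominator bound $e^{-n\epsilon_n^2(C+1+C')}$ still tends to zero once $M$ is chosen so that $KM^2/4 > C + 1 + C'$. The main obstacle in this plan is Step~3, the construction of the exponentially powerful tests $\phi_{n,j}$ against Hellinger or total-variation balls; this is where the assumption that $d$ is one of those two metrics (rather than an abstract metric) is essential, since the tests are built from the ratio test between two convex hulls of likelihoods and their error exponents are precisely controlled by the Hellinger affinity.
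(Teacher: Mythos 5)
Your proposal is correct and is essentially the canonical Ghosal--Ghosh--van der Vaart argument (evidence lower bound on the denominator via Jensen and Chebyshev over the Kullback--Leibler neighborhood, Fubini plus the prior-mass bound on the sieve complement, and Le Cam--Birg\'e exponentially consistent tests built from the entropy condition on the sieve); the paper states this theorem as a cited result from \citet{ghosal2000convergence} and gives no proof of its own, so there is nothing in-paper to diverge from. The same two ingredients you use reappear in the paper's Appendix (Lemmas \ref{lem:lower_bdd_general} and \ref{lem:exp_test_general}) for its non-i.i.d.\ extensions, so your route is also the one the thesis relies on elsewhere; the only bookkeeping worth making explicit is that the constant $C'$ in your denominator bound must be taken smaller than $3$ (e.g.\ $C'=1$) so that the sieve-complement term $e^{-(C+4)n\epsilon_n^2}$ is still dominated.
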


A sequence $\Theta_n$ is a sieve for $\Theta$.
Condition \eqref{eq:rate_entropy} requires that the model is not too big.
The log of covering number is called \emph{entropy} and this is often interpreted
as the complexity of the model (\citet{birge1983approximation, lecam1973convergence}).
Under certain conditions a rate satisfying \eqref{eq:rate_entropy} gives the optimal rate of convergence
relative to the Hellinger metric.
Condition \eqref{eq:rate_entropy} ensures the existence of certain tests and could be replaced by a testing condition.
Condition \eqref{eq:kl_rate} requires that the prior mass around the true parameter is not too small,
and this is a refined version of condition \eqref{eq:kl_positive}.
Roughly speaking condition \eqref{eq:kl_rate} tells that the prior mass should be uniformly spread
on the support of the prior.

An important application of Theorem \ref{thm:ghosal2000} is Dirichlet process mixture priors
for density estimation problems.
\citet{ghosal2001entropies} found a tight entropy bound for classes of mixtures of normal densities
and got Hellinger convergence rate $\epsilon_n = n^{-1/2} \log n$ when the true density is a mixture of normals.
Note that this is nearly parametric rate.
Although the true density is not a mixture of normal densities, a Dirichlet process mixture of normals prior
works well if the prior mass for standard deviance of normal is concentrated around zero as $n \rightarrow \infty$.
When the true density is twice continuously differentiable,
\citet{ghosal2007posterior} proved that a Dirichlet process mixture of normals prior gives
Hellinger convergence rate $\epsilon_n = n^{-2/5} \log n$ which is almost same to the optimal 
rate $n^{-2/5}$ of kernel density estimation.

Conditions in Theorem \ref{thm:ghosal2000} may be slightly strong than required,
and more refined versions are given in \citet{ghosal2000convergence}.
\citet{shen2001rates} independently found similar sufficient conditions 
for posterior convergence rate around the same time.
More recently \citet{walker2007rates} developed new conditions as an extension of \citet{walker2004new}
and provided an example which gives a better convergence rate than previous works.
When the model is misspecified, \citet{kleijn2006misspecification} proved that
the posterior converges to the parameter in the support at minimal Kullback-Leibler divergence to the true parameter,
at rate as if it were in the support.

\chapter{Main results}
\label{chap:main}

\section{Semiparametric Bernstein-von Mises theorem}

Consider a sequence of statistical models 
$\scrP^{(n)} = \{P_{\theta,\eta}^{(n)}: \theta \in \Theta, \eta \in \cH\}$
parametrized by finite dimensional $\theta$ of interest and 
infinite dimensional $\eta$ which is usually considered as a nuisance parameter.
Assume that $\Theta$ is an open subset of $\bbR^p$
and $P_{\theta,\eta}^{(n)}$ has the density $x \mapsto p_{\theta,\eta}^{(n)}(x)$
with respect to a $\sigma$-finite measure $\mu_n$.
Let $X^{(n)}$ be a random element which follows $P_0^{(n)}$ and assume that
$P_0^{(n)} = P_{\theta_0,\eta_0}^{(n)}$ for some $\theta_0 \in \Theta$ and $\eta_0 \in \cH$.
We consider a product prior $\Pi_\Theta \times \Pi_\cH$ on $\Theta \times \cH$ and
denote the posterior distribution by $\Pi(\cdot|X^{(n)})$.
Assume that $\Pi_\Theta$ is \emph{thick} at $\theta_0$, that is, 
it has a positive continuous Lebesgue density in a neighborhood of $\theta_0$.
Also $\Pi_\cH$ is allowed to depend on $n$, but we abbreviate the notation $n$
in $\Pi_\cH$ for notational simplicity.
For a given prior distribution $\Pi_\cH$ on $\cH$, let 
\be \label{eq:i_lik}
	s_n(h) = \int_\cH \frac{p^{(n)}_{\theta_n(h),\eta}}{p^{(n)}_{\theta_0,\eta_0}}(X^{(n)}) d\Pi_\cH(\eta)
\ee
be the \emph{integrated likelihood}, where $\theta_n(h) = \theta_0 + h/\sqrt{n}$.
We begin this section with the statement of general BvM theorem.
The proof is almost identical to that of Theorem 2.1 in \citet{kleijn2012bernstein}
upon replacement of parametric likelihoods with integrated likelihoods.
Hereafter, some quantities in proofs may not be measurable, and in this case
the expectation can be understood by the outer integral and measurable majorants.
We refer to Part I of \citet{van1996weak} for details about this.

\bigskip

\begin{theorem} \label{thm:BvM_general}
Assume that the model $\{P^{(n)}_{\theta,\eta}: \theta \in \Theta, \eta \in \cH\}$
is endowed with the product prior $\Pi=\Pi_\Theta \times \Pi_\cH$,
where $\Pi_\Theta$ is thick at $\theta_0$, and
\be \label{eq:sqrtn_general}
	P_0^{(n)} \Pi\big(\sqrt{n}|\theta-\theta_0| > M_n | X^{(n)}\big) \rightarrow 0
\ee
for every real sequence $(M_n)$ with $M_n \rightarrow \infty$.
Furthermore, suppose that for given sequences of uniformly tight random vectors $(\Delta_n)$
and non-random positive definite matrices $(V_n)$ satisfying
$\liminf_{n\rightarrow\infty}\rho_{\min} (V_n) > 0$,
the integrated likelihood \eqref{eq:i_lik} satisfies
\be \label{eq:ilan_general}
	\sup_{h \in K} \bigg| \log \frac{s_n(h)}{s_n(0)}
	- h^T V_n \Delta_n + \frac{1}{2} h^T V_n h \bigg| = o_{P_0}(1)
\ee
for any compact $K\subset\bbR^p$.
Then,
\be \label{eq:bvm_general}
	\sup_B \left|\Pi\big(\sqrt{n}(\theta-\theta_0) \in B | X^{(n)}\big) - N_{\Delta_n, V_n^{-1}}(B)
	\right| \rightarrow 0
\ee
in $P_0^{(n)}$-probability.
\end{theorem}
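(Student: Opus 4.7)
The plan is to mirror the proof of Theorem 2.1 of \citet{kleijn2012bernstein}, with parametric likelihoods replaced by the integrated likelihood $s_n(h)$. I reparametrize via $h = \sqrt{n}(\theta-\theta_0)$, so that the marginal posterior of $h$ has Lebesgue density $\pi_n(h) \propto s_n(h)\,\pi_\Theta(\theta_0 + h/\sqrt{n})$, where $\pi_\Theta$ is the continuous prior density of $\Pi_\Theta$ near $\theta_0$. Writing $\phi_n$ for the density of $N_{\Delta_n, V_n^{-1}}$, the conclusion \eqref{eq:bvm_general} is equivalent to $\int |\pi_n - \phi_n|\,dh \to 0$ in $P_0^{(n)}$-probability, so I would bound this $L^1$ distance by a compact--tail decomposition.

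Fix $\epsilon > 0$. Because $(\Delta_n)$ is uniformly tight and $\|V_n^{-1}\|$ is bounded above (from $\liminf \rho_{\min}(V_n) > 0$), one may choose $M$ so large that $\int_{|h|>M} \phi_n(h)\,dh < \epsilon$ for every $n$. Hypothesis \eqref{eq:sqrtn_general} gives $\int_{|h|>M_n}\pi_n(h)\,dh \to 0$ in probability for any $M_n \to \infty$; in particular $\int_{|h|>M}\pi_n(h)\,dh < 2\epsilon$ with probability tending to one. It therefore remains to control $\int_{K_M} |\pi_n - \phi_n|\,dh$ on the compact ball $K_M = \{|h| \leq M\}$.

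On $K_M$, continuity of $\pi_\Theta$ at $\theta_0$ yields $\pi_\Theta(\theta_0 + h/\sqrt{n}) = \pi_\Theta(\theta_0)(1+o(1))$ uniformly in $h\in K_M$. Combined with the integral LAN expansion \eqref{eq:ilan_general}, this produces
\be
\pi_n(h) = c_n\, e^{h^T V_n \Delta_n - \frac{1}{2} h^T V_n h}\,(1 + r_n(h)),
\ee
with $\sup_{h\in K_M}|r_n(h)| \to 0$ in probability, where $c_n$ is a (random) constant independent of $h$ absorbing $s_n(0)\pi_\Theta(\theta_0)$ together with the posterior normalizer. Since $\phi_n$ has the form $c_n'\exp(h^T V_n \Delta_n - \tfrac12 h^T V_n h)$, the ratio $\pi_n/\phi_n$ on $K_M$ equals $(c_n/c_n')(1+r_n)$. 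Integrating both sides over $K_M$, the tail estimates from the previous paragraph force $c_n/c_n' = 1 + O(\epsilon) + o_{P_0}(1)$, from which $\int_{K_M}|\pi_n - \phi_n|\,dh = O(\epsilon) + o_{P_0}(1)$.

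Assembling the two pieces gives $\int |\pi_n - \phi_n|\,dh \leq O(\epsilon) + o_{P_0}(1)$, and letting $\epsilon \downarrow 0$ concludes, since $\sup_B|\Pi(\sqrt{n}(\theta-\theta_0)\in B|X^{(n)}) - N_{\Delta_n,V_n^{-1}}(B)|$ is bounded by $\tfrac12\int|\pi_n - \phi_n|\,dh$. The principal technical obstacle is the bookkeeping of normalizing constants: the hypothesis \eqref{eq:ilan_general} controls $s_n(h)$ uniformly only on compacts, so one cannot directly evaluate the posterior denominator $\int s_n(h)\,\pi_\Theta(\theta_0+h/\sqrt n)\,dh$ over the whole of $\bbR^p$. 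Hypothesis \eqref{eq:sqrtn_general} is tailor-made to bridge this gap, certifying that the denominator is dominated by its contribution from $K_M$ up to negligible error, which is exactly what permits the transfer from uniform-on-compact LAN to an unconditional $L^1$ approximation.
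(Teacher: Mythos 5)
Your argument is correct, but on the central step it takes a genuinely different route from the paper. Both proofs share the same skeleton: restrict to a ball, apply the integral LAN expansion \eqref{eq:ilan_general} there, and dispose of the tails using \eqref{eq:sqrtn_general} together with the uniform tightness of $(\Delta_n)$ and the lower bound on $\rho_{\min}(V_n)$. The difference is in how the two measures are compared on the ball. The paper follows \citet{kleijn2012bernstein}: it forms the conditioned measures $\Pi_n^K$ and $\Psi_n^K$, writes their total variation as an expectation of $\big(1-d\Psi_n^K/d\Pi_n^K\big)_+$, and applies Jensen's inequality to the convex map $x\mapsto(1-x)_+$, so that only two-point ratios of the form $\psi_n(h)s_n(g)\pi_n(g)/\big(\psi_n(g)s_n(h)\pi_n(h)\big)$ ever appear and the normalizing constants cancel identically, never needing to be estimated. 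You instead compare unconditioned densities and pin down the ratio of normalizers $c_n/c_n'$ by integrating over $K_M$ and invoking the tail bounds; this is the more classical route, and it works at the cost of exactly the bookkeeping you flag. Relatedly, the paper upgrades the fixed-compact statement to slowly growing balls $K_n$ by a diagonal argument, whereas you keep $M$ fixed and send $\epsilon\downarrow 0$ at the end; for that you need the fixed-$M$ bound $\int_{|h|>M}\pi_n\,dh<2\epsilon$ with probability tending to one, and your ``in particular'' deduces this from \eqref{eq:sqrtn_general} without justification. The deduction is valid --- the hypothesis quantified over all sequences $M_n\to\infty$ is equivalent, by the standard contradiction/diagonal argument, to $\lim_{M\to\infty}\limsup_n P_0^{(n)}\big(\Pi_n(\{|h|>M\})>\epsilon\big)=0$ --- but it deserves a line, since for fixed $M$ the posterior mass outside the ball does not itself tend to zero (its limit is the normal tail mass). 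Two minor cautions: thickness of $\Pi_\Theta$ only gives a Lebesgue density near $\theta_0$, so the $L^1$ comparison of densities should be confined to $K_M$ (where $\theta_0+K_M/\sqrt n$ eventually lies in that neighborhood) with any singular part of $\Pi_n$ absorbed into the tail term, which your decomposition already accommodates; and the identification $c_n/c_n'=1+O(\epsilon)+o_{P_0}(1)$ requires both the lower bound $\int_{K_M}\pi_n\geq 1-2\epsilon$ and the upper bound $\int_{K_M}\pi_n\leq 1$, which you have.
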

\begin{proof}
We first prove the assertion conditional on an arbitrary compact set $K \subset \bbR^p$
and then we use this to prove \eqref{eq:bvm_general}.
Let $\Psi_n$ be the normal distribution $N_{\Delta_n,V_n^{-1}}$
and $\Pi_n(B) = \Pi (\sqrt{n}(\theta-\theta_0) \in B | X^{(n)})$.
For any set $K \subset \bbR^p$ with $\Pi_n(K) > 0$, 
we define a conditional version $\Pi_n^K$ by
$\Pi_n^K(B) = \Pi_n(B\cap K)/\Pi_n(K)$.
Similarly we define a conditional measure $\Psi_n^K$ corresponding to $\Psi_n$

Let $K \subset \bbR^p$ be a compact set containing a neighborhood of $0$,
and $\Xi_n$ be the event that $\Pi_n(K) > 0$.
Then, for any open neighborhood $U \subset \Theta$ of $\theta_0$,
$\theta_0 +K/\sqrt{n} \subset U$ for large enough $n$.
Since $\theta_0$ is an interior point of $\Theta$, for large enough $n$, the random functions
$f_n:K\times K\rightarrow \bbR$,
$$f_n(g,h) = \left( 1-\frac{\psi_n(h)s_n(g)\pi_n(g)}{\psi_n(g)s_n(h)\pi_n(h)} \right)_+,$$
are well defined, where $\psi_n$ is the density of $\Psi_n$, $\pi_n$ is the density of the prior
for the centered and rescaled parameter $h=\sqrt{n}(\theta-\theta_0)$, and $x_+ = x \vee 0$.
Note that $\sup_{h,g \in K}\pi_n(g)/\pi_n(h) \rightarrow 1$ as $n\rightarrow\infty$.
Therefore,
$$
	\sup_{h,g\in K}\left|\log\frac{\psi_n(h)s_n(g)\pi_n(g)}{\psi_n(g)s_n(h)\pi_n(h)}\right| = o_{P_0}(1)
$$
by \eqref{eq:ilan_general} and we conclude that
$$\sup_{g,h\in K} f_n(g,h) \stackrel{P_0}{\rightarrow} 0$$
as $n \rightarrow \infty$.

Let $\epsilon > 0$ be given and define $\Omega_n = \left\{ \sup_{g,h\in K} f_n(g,h) \leq \epsilon\right\}$.
Since the total variation is bounded by 2,
$$
	P_0^{(n)} d_V\left(\Pi_n^K, \Psi_n^K\right) 1_{\Xi_n} \leq 
	P_0^{(n)} d_V\left(\Pi_n^K, \Psi_n^K\right) 1_{\Omega_n\cap\Xi_n}
	+2P_0^{(n)}(\Xi_n\backslash\Omega_n).
$$
Note that
\bean
	\frac{1}{2} P_0^{(n)} d_V\left(\Pi_n^K, \Psi_n^K\right) 1_{\Omega_n\cap\Xi_n}
	&=& P_0^{(n)} \int \left(1 - \frac{d\Psi_n^K}{d\Pi_n^K}\right)_+ d\Pi_n^K 1_{\Omega_n\cap\Xi_n}\\
	&=& P_0^{(n)} \int \left(1 - \int \frac{\psi^K_n(h)s_n(g)\pi_n(g)}
	{\psi_n^K(g)s_n(h)\pi_n(h)} d\Psi_n^K(g)\right)_+ d\Pi_n^K(h) 1_{\Omega_n\cap\Xi_n}
\eean
and $\psi_n^K(h) / \psi_n^K(g) = \psi_n(h)/\psi_n(g)$ for all $g,h\in K$.
Therefore, by the Jensen's inequality on the function $x \mapsto (1-x)_+$, we have
\bean
	\frac{1}{2} P_0^{(n)} d_V\left(\Pi_n^K, \Psi_n^K\right) 1_{\Omega_n\cap\Xi_n}
	&\leq& P_0^{(n)} \int\int \left(1 - \frac{\psi_n(h)s_n(g)\pi_n(g)}
	{\psi_n(g)s_n(h)\pi_n(h)}\right)_+ d\Psi_n^K(g) d\Pi_n^K(h) 1_{\Omega_n\cap\Xi_n} \\
	&\leq& P_0^{(n)} \int\int \sup_{g,h\in K} f_n(g,h) 1_{\Omega_n\cap\Xi_n} d\Psi_n^K(g) d\Pi_n^K(h) \leq \epsilon.
\eean
Since $P_0^{(n)}(\Omega_n^c) \rightarrow 0$, we conclude
$P_0^{(n)} d_V\left(\Pi_n^K,\Psi_n^K\right) 1_{\Xi_n}\rightarrow 0$.

Now, we can choose a sequence of balls $(K_n)$ centered at 0 with radii $M_n \rightarrow \infty$
and satisfying $P_0^{(n)} d_V\left(\Pi_n^{K_n},\Psi_n^{K_n}\right) 
1_{\Xi_n}\rightarrow 0$, where $\Xi_n$ is redefined by the event $\{\Pi_n(K_n) > 0\}$.
Note that
$$
	d_V\left(\Pi_n,\Psi_n\right) \leq 2 \cdot 1_{\Xi_n^c}
	+ d_V\left(\Pi_n,\Pi_n^{K_n}\right) 1_{\Xi_n}
	+ d_V\left(\Pi_n^{K_n},\Psi_n^{K_n}\right) 1_{\Xi_n}
	+ d_V\left(\Psi_n,\Psi_n^{K_n}\right)
$$
and $P_0^{(n)}(\Xi_n^c) \rightarrow 0$ by \eqref{eq:sqrtn_general}.
We also have $P_0^{(n)} d_V\left(\Pi_n,\Pi_n^{K_n}\right)1_{\Xi_n} \leq 
2 P_0^{(n)} \Pi_n(K_n^c) \rightarrow 0$ by \eqref{eq:sqrtn_general}.

It only remains to prove $P_0^{(n)} d_V\left(\Psi_n,\Psi_n^{K_n}\right) \rightarrow 0$.
For that, it is sufficient to show that 
$\Psi_n(K_n^c) = N_{\Delta_n, V_n^{-1}}(K_n^c) \rightarrow 0$ converges in $P_0^{(n)}$-probability.
This follows by the fact that $(\Delta_n)$ is uniformly tight and
$\liminf_{n\rightarrow \infty} \rho_{\min}(V_n) > 0$.
\end{proof}

\bigskip

Note that if $h \mapsto s_n(h)$ is continuous $P_0^{(n)}$-almost-surely, then
\eqref{eq:ilan_general} is equivalent to
$$
	\log \frac{s_n(h_n)}{s_n(0)}
	- h_n^T V_n \Delta_n + \frac{1}{2} h_n^T V_n h_n = o_{P_0}(1)
$$
for every bounded random sequence $(h_n)$.

Conditions in Theorem \ref{thm:BvM_general} are quite intuitive, but not easy to prove.
In the following two subsections, we provide sufficient conditions for the conditions
\eqref{eq:sqrtn_general} and \eqref{eq:ilan_general} for models
in which there is no information loss.
These conditions are given as follows.

There exist a positive number $\epsilon_0 > 0$,
$L_2(P_0^{(n)})$-functions $\score^{(n)}_{\theta,\eta}$,
a sequence $(\cH_n)$ of subsets of $\cH$ containing $\eta_0$, and  
matrices $V_{n,\eta}$ satisfying
\bea \label{eq:zero_score_condition}
	\sup_{\eta\in\cH_n} |P_0^{(n)} \score^{(n)}_{\theta_0,\eta}| &=& o(n^{1/2})
	\\
	\label{eq:score_conti_condition}
	\sup_{\eta\in\cH_n} \left| \score_{\theta_0,\eta}^{(n)}(X^{(n)})
	- \score_{\theta_0,\eta_0}^{(n)}(X^{(n)}) \right| &=& o_{P_0}(n^{1/2})
	\\
	\label{eq:donsker_condition}
	\sup_{|\theta-\theta_0| < \epsilon_0} \sup_{\eta\in\cH_n} \left|\score_{\theta,\eta}^{(n)}(X^{(n)})
	- P_0^{(n)}\score_{\theta,\eta}^{(n)}\right| &=& O_{P_0}(n^{1/2})
	\\
	\label{eq:V_conti_condition}
	\sup_{\eta\in\cH_n} \|V_{n,\eta} - V_{n,\eta_0}\| &=& o(1)
	\\
	\label{eq:V_positive}
	0 < \liminf_{n\rightarrow\infty}\rho_{\min}(V_{n,\eta_0}) < 
	\limsup_{n\rightarrow\infty}\rho_{\max}(V_{n,\eta_0}) &<& \infty
\eea
and for large enough $N$
\be
	\label{eq:quadratic_condition}
	\sup_{n\geq N} \sup_{\eta\in\cH_n} \left| \frac{1}{n}P_0^{(n)}
	\Big( \ell^{(n)}_{\theta,\eta} - \ell^{(n)}_{\theta_0,\eta}\Big)
	+ \frac{1}{2} (\theta-\theta_0)^T V_{n,\eta} (\theta-\theta_0) \right| = o(|\theta-\theta_0|^2)
\ee
as $\theta\rightarrow\theta_0$.
Furthermore,
\be \label{eq:lr_apprx_small_condition}
	\sup_{|h| \leq M_n} \sup_{\eta\in\cH_n}
	\left|\left( \ell^{(n)}_{\theta_n(h),\eta}(X^{(n)}) - \ell^{(n)}_{\theta_0,\eta}(X^{(n)})
	- \frac{h^T}{\sqrt{n}} \score_{\theta_0,\eta}^{(n)}(X^{(n)}) \right)^o\right|
	\cdot (1 \vee |h|^2)^{-1} =  o_{P_0}(1)
\ee
and
\be \label{eq:lr_apprx_large_condition}
	\sup_{M_n < |h| < \epsilon_0 \sqrt{n}} \sup_{\eta\in\cH_n}
	\left|\left( \ell^{(n)}_{\theta_n(h),\eta}(X^{(n)}) - \ell^{(n)}_{\theta_0,\eta}(X^{(n)})
	\right)^o\right| \cdot |h|^{-2}
	= o_{P_0}(1)
\ee
for every $(M_n)$, $M_n \rightarrow \infty$ and $M_n/\sqrt{n}\rightarrow 0$,
where $X^o = X - P_0^{(n)} X$ is the centered random variable of $X$.

These conditions are highly related to those of \citet{van1996efficient}
which prove the efficiency of maximum likelihood estimators in semiparametric models.
The most important condition in \citet{van1996efficient} is that 
a class of score functions is Donsker,
which implies uniformly asymptotic equicontinuity or asymptotic tightness of the stochastic processes.
This corresponds to conditions \eqref{eq:score_conti_condition} and \eqref{eq:donsker_condition}.
Condition \eqref{eq:score_conti_condition} is related to the asymptotic equicontinuity
of the stochastic process and \eqref{eq:donsker_condition} is a direct result of asymptotic tightness.
Both properties can be proved by showing that the stochastic process
$$
	(\theta,\eta) \mapsto \frac{1}{\sqrt{n}} \left(
	\score^{(n)}_{\theta,\eta}(X^{(n)}) - P_0^{(n)} \score^{(n)}_{\theta,\eta}\right)
$$
indexed by a neighborhood of $(\theta_0,\eta_0)$ is asymptotically tight.
Modern empirical process theory is an useful tool for proving this property.
Once \eqref{eq:donsker_condition} is shown to be true,
\eqref{eq:lr_apprx_small_condition} and \eqref{eq:lr_apprx_large_condition}
can be easily checked by Taylor expansion of $\theta \mapsto \ell_{\theta,\eta}^{(n)}(x)$
provided it is smooth.
Condition \eqref{eq:zero_score_condition} implies that
the expectation of the ordinary score function vanishes near $\eta_0$ at order $n^{-1/2}$
and this is similar to condition (2.9) of \citet{van1996efficient}.
Condition \eqref{eq:quadratic_condition} is that the expectation of the log likelihood ratio
is approximated by a quadratic function near $\theta_0$.
Therefore if the model is smooth, \eqref{eq:V_conti_condition}, \eqref{eq:V_positive} and
\eqref{eq:quadratic_condition} imply \eqref{eq:zero_score_condition}.
Note that conditions \eqref{eq:V_conti_condition} and \eqref{eq:V_positive} are natural,
so \eqref{eq:quadratic_condition} is the most stringent to prove.
For models considered in this thesis,
the symmetricity of densities make an important role to prove \eqref{eq:quadratic_condition}.

\subsection{Integral local asymptotic normality}

In this subsection, we prove the integral LAN condition \eqref{eq:ilan_general}
using conditions mentioned above.
A key requirement is the uniform LAN \eqref{eq:uLAN} which can be proved by
the quadratic expansion \eqref{eq:quadratic_condition} and application of the empirical process theory.
Another important condition is \eqref{eq:conv_perturb_general}
which is the consistency of nuisance posterior under $n^{-1/2}$-perturbation of $\theta$.
For \iid~models, a well-established theory is given in Theorem 3.1 of \citet{bickel2012semiparametric}.
An extension to non-\iid~independent models can be found in Theorem \ref{thm:consist_perturb_general} of
Section \ref{sec:consistency}.

\bigskip

\begin{theorem}[Integral LAN] \label{thm:ILAN_general}
Suppose that \eqref{eq:zero_score_condition}, \eqref{eq:score_conti_condition}, 
\eqref{eq:V_conti_condition}, \eqref{eq:quadratic_condition} 
and \eqref{eq:lr_apprx_small_condition} hold for some $\score_{\theta,\eta}^{(n)}$, $(\cH_n)$ and $(V_{n,\eta})$.
Furthermore, assume that
\be \label{eq:conv_perturb_general}
	P_0^{(n)} \Pi\big(\cH_n | \theta=\theta_n(h_n), X^{(n)}\big) \rightarrow 1
\ee
for every bounded random sequence $(h_n)$.
Then,
$$
	\log \frac{s_n(h_n)}{s_n(0)} = 
	\frac{h_n^T}{\sqrt{n}} \score_{\theta_0,\eta_0}^{(n)} (X^{(n)}) 
	- \frac{1}{2} h_n^T V_{n,\eta_0}h_n + o_{P_0}(1)
$$
holds for every bounded random sequence $(h_n)$.
\end{theorem}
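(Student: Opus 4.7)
The plan is to rewrite $s_n(h_n)/s_n(0)$ so that the $h_n$-dependence is isolated inside a log-likelihood ratio which the structural hypotheses of the theorem can uniformly control. Multiplying and dividing the integrand in $s_n(h_n)$ by $p^{(n)}_{\theta_0,\eta}(X^{(n)})$ gives the Bayes identity
\begin{equation*}
\frac{s_n(h_n)}{s_n(0)} = \int_\cH \exp\Big( \ell^{(n)}_{\theta_n(h_n),\eta}(X^{(n)}) - \ell^{(n)}_{\theta_0,\eta}(X^{(n)}) \Big) \, d\Pi\big(\eta \,\big|\, \theta_0, X^{(n)}\big),
\end{equation*}
in which $\Pi(\cdot\mid\theta_0,X^{(n)})$ is the conditional posterior for $\eta$ at $\theta=\theta_0$. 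The whole game is now to approximate the integrand on the ``good'' nuisance set $\cH_n$ and to dismiss $\cH_n^c$.

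First I would establish the uniform LAN expansion
\begin{equation*}
\sup_{\eta\in\cH_n}\Big|\,\ell^{(n)}_{\theta_n(h_n),\eta}(X^{(n)}) - \ell^{(n)}_{\theta_0,\eta}(X^{(n)}) - \frac{h_n^T}{\sqrt{n}}\,\score^{(n)}_{\theta_0,\eta_0}(X^{(n)}) + \frac{1}{2} h_n^T V_{n,\eta_0} h_n\,\Big| = o_{P_0}(1).
\end{equation*}
Picking any $M_n\to\infty$ with $P_0^{(n)}(|h_n|\le M_n)\to 1$, condition \eqref{eq:lr_apprx_small_condition} supplies an $o_{P_0}(1)$ bound on the centered component of the log-likelihood ratio minus $h_n^T\score^{(n)}_{\theta_0,\eta}/\sqrt{n}$, uniformly in $\eta\in\cH_n$. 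To restore the mean I invoke \eqref{eq:quadratic_condition} at $\theta=\theta_n(h_n)$ (so $|\theta-\theta_0|^2 = |h_n|^2/n$ and $n\cdot o(|h_n|^2/n)=o(1)$ for bounded $h_n$) together with \eqref{eq:zero_score_condition}, which gives $|h_n^T P_0^{(n)}\score^{(n)}_{\theta_0,\eta}/\sqrt{n}|\le|h_n|\cdot o(1)=o(1)$. This yields the displayed expansion, but still with $\score^{(n)}_{\theta_0,\eta}$ and $V_{n,\eta}$ at $\eta$; two final swaps, sanctioned by \eqref{eq:score_conti_condition} and \eqref{eq:V_conti_condition}, move these to $\eta_0$.

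Setting $a_n := h_n^T\score^{(n)}_{\theta_0,\eta_0}(X^{(n)})/\sqrt{n} - h_n^T V_{n,\eta_0} h_n/2$, I would then split the Bayes identity into integrals over $\cH_n$ and $\cH_n^c$. On $\cH_n$ the uniform expansion combined with the $h_n\equiv 0$ instance of \eqref{eq:conv_perturb_general} yields
\begin{equation*}
\int_{\cH_n} \exp\big(\ell^{(n)}_{\theta_n(h_n),\eta}-\ell^{(n)}_{\theta_0,\eta}\big)\, d\Pi(\eta\mid\theta_0,X^{(n)}) = e^{a_n}\big(1+o_{P_0}(1)\big).
\end{equation*}
For $\cH_n^c$, a direct Bayes computation identifies the integral with a posterior mass under a perturbed $\theta$,
\begin{equation*}
\int_{\cH_n^c} \frac{p^{(n)}_{\theta_n(h_n),\eta}}{p^{(n)}_{\theta_0,\eta}}(X^{(n)})\, d\Pi(\eta\mid\theta_0,X^{(n)}) = \Pi\big(\cH_n^c \,\big|\, \theta_n(h_n),X^{(n)}\big)\cdot \frac{s_n(h_n)}{s_n(0)},
\end{equation*}
and \eqref{eq:conv_perturb_general} applied to $(h_n)$ itself makes the posterior mass $o_{P_0}(1)$. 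Summing the two pieces gives the implicit relation $s_n(h_n)/s_n(0) = e^{a_n}(1+o_{P_0}(1)) + o_{P_0}(1)\cdot s_n(h_n)/s_n(0)$; solving it and taking logarithms delivers the theorem.

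The hard step is the uniform LAN expansion: \eqref{eq:lr_apprx_small_condition} controls only the stochastic fluctuation, so one must splice it with the deterministic expansion \eqref{eq:quadratic_condition} and with \eqref{eq:zero_score_condition}, then carefully shepherd the $\eta$-uniformity through the two swaps to $\eta_0$ using \eqref{eq:score_conti_condition} and \eqref{eq:V_conti_condition}. The complementary-set contribution, by contrast, is easy once the Bayes identity above is spotted: it converts an unwieldy likelihood-ratio integral into exactly the posterior mass that hypothesis \eqref{eq:conv_perturb_general} was designed to control.
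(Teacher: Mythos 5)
Your proposal is correct and follows essentially the same route as the paper: a uniform LAN expansion over $\cH_n$ (from \eqref{eq:lr_apprx_small_condition}, \eqref{eq:quadratic_condition} and \eqref{eq:zero_score_condition}), the swap from $\eta$ to $\eta_0$ via \eqref{eq:score_conti_condition} and \eqref{eq:V_conti_condition}, and the use of \eqref{eq:conv_perturb_general} at both $h_n$ and $0$ to localize the nuisance integral to $\cH_n$. Your handling of the $\cH_n^c$ contribution through the implicit relation $s_n(h_n)/s_n(0) = e^{a_n}(1+o_{P_0}(1)) + o_{P_0}(1)\cdot s_n(h_n)/s_n(0)$ is just a repackaging of the paper's two-sided exponential sandwich \eqref{eq:upper_bound}--\eqref{eq:lower_bound}.
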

\begin{proof}
For a given compact set $K \subset \bbR^p$, let
\bean
	R_{1,n}(h) &=& \sup_{h \in K}\sup_{\eta \in \cH_n} \bigg|
	\log \frac{p^{(n)}_{\theta_n(h), \eta}}{p^{(n)}_{\theta_0, \eta}}(X^{(n)})
	-G_n(h, \eta) \bigg| \\
	R_{2,n}(h) &=& \sup_{h \in K} \sup_{\eta \in \cH_n} \big| G_n(h, \eta) 
	- G_n(h, \eta_0) \big|,
\eean
where 
$$
	G_n(h,\eta) = \frac{h^T}{\sqrt{n}} \score_{\theta_0,\eta}^{(n)} (X^{(n)}) - \frac{1}{2} h^T V_{n,\eta}h.
$$
Then, $R_{1,n}(h) = o_{P_0}(1)$ by Lemma \ref{lem:uLAN} and
$R_{2,n}(h) = o_{P_0}(1)$ by \eqref{eq:score_conti_condition} and \eqref{eq:V_conti_condition}.
Let $\epsilon, \delta > 0$ and a random sequence $(h_n)$ in $K$ be given,
and let $M_n$ be the maximum of $-\log \Pi\big[\cH_n| \theta =\theta_n(h_n), X^{(n)}\big]$, 
$-\log \Pi\big[\cH_n| \theta =\theta_0, X^{(n)}\big]$, $R_{1,n}(h_n)$ and $R_{2,n}(h_n)$.
If we define $A_n$ by the event $\{ M_n < \epsilon / 3\}$,
then $P_0^{(n)}(A_n) \geq 1-\delta$ for large enough $n$.
On $A_n$, we have
\bea
	\int_\cH \frac{p^{(n)}_{\theta_n(h_n), \eta}}{p^{(n)}_{\theta_0, \eta_0}}(X^{(n)}) d\Pi_\cH(\eta)
	&\leq& e^{\epsilon/3} \int_{\cH_n} 
	\frac{p^{(n)}_{\theta_n(h_n), \eta}}{p^{(n)}_{\theta_0, \eta_0}}(X^{(n)}) d\Pi_\cH(\eta) \nonumber \\
	&\leq& e^{2\epsilon/3}\int_{\cH_n} e^{G_n(h_n, \eta)} 
	\frac{p^{(n)}_{\theta_0, \eta}}{p^{(n)}_{\theta_0, \eta_0}}(X^{(n)}) d\Pi_\cH(\eta) \nonumber \\
	&\leq& e^{\epsilon + G_n(h_n, \eta_0)} \int_{\cH_n}  
	\frac{p^{(n)}_{\theta_0, \eta}}{p^{(n)}_{\theta_0, \eta_0}}(X^{(n)}) d\Pi_\cH(\eta) \nonumber \\
	&\leq& e^{\epsilon + G_n(h_n, \eta_0)} 
	\int_\cH \frac{p^{(n)}_{\theta_0, \eta}}{p^{(n)}_{\theta_0, \eta_0}}(X^{(n)}) d\Pi_\cH(\eta) \label{eq:upper_bound}
\eea
and
\bea
	\int_\cH \frac{p^{(n)}_{\theta_n(h_n), \eta}}{p^{(n)}_{\theta_0, \eta_0}}(X^{(n)}) d\Pi_\cH(\eta)
	&\geq& \int_{\cH_n} 
	\frac{p^{(n)}_{\theta_n(h_n), \eta}}{p^{(n)}_{\theta_0, \eta_0}}(X^{(n)}) d\Pi_\cH(\eta) \nonumber \\	
	&\geq& e^{-\epsilon/3}\int_{\cH_n} e^{G_n(h_n, \eta)} 
	\frac{p^{(n)}_{\theta_0, \eta}}{p^{(n)}_{\theta_0, \eta_0}}(X^{(n)}) d\Pi_\cH(\eta) \nonumber \\
	&\geq& e^{-2\epsilon/3 + G_n(h_n, \eta_0)} \int_{\cH_n}
	\frac{p^{(n)}_{\theta_0, \eta}}{p^{(n)}_{\theta_0, \eta_0}}(X^{(n)}) d\Pi_\cH(\eta) \nonumber \\
	&\geq& e^{-\epsilon + G_n(h_n, \eta_0)} 
	\int_\cH \frac{p^{(n)}_{\theta_0, \eta}}{p^{(n)}_{\theta_0, \eta_0}}(X^{(n)}) d\Pi_\cH(\eta) \label{eq:lower_bound},
\eea
where the last inequality of \eqref{eq:lower_bound} holds
by the consistency of the posterior of $\eta$ given $\theta = \theta_0$.
The inequalities \eqref{eq:upper_bound} and \eqref{eq:lower_bound} can be summarized by
$$
	P_0^n \left[ \left| \log \frac{s_n(h_n)}{s_n(0)} - 
	G_n (h_n, \eta_0) \right| \geq \epsilon \right] < \delta
	$$
and this yields the desired result.
\end{proof}

\bigskip

\begin{lemma}[Uniform LAN] \label{lem:uLAN}
Assume that \eqref{eq:zero_score_condition}, \eqref{eq:quadratic_condition} 
and \eqref{eq:lr_apprx_small_condition} hold for some $\score_{\theta,\eta}^{(n)}$, $(\cH_n)$ and $(V_{n,\eta})$.
Then, the uniform LAN assertion 
\be \label{eq:uLAN}
	\sup_{h \in K}\sup_{\eta \in \cH_n} \bigg|
	\log \frac{p^{(n)}_{\theta_n(h), \eta}}{p^{(n)}_{\theta_0, \eta}}(X^{(n)})
	- \frac{h^T}{\sqrt{n}} \score_{\theta_0,\eta}^{(n)} (X^{(n)}) + \frac{1}{2} h^T V_{n,\eta}h
	\bigg| = o_{P_0}(1)
\ee
holds.
\end{lemma}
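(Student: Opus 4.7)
The plan is to reduce \eqref{eq:uLAN} to a decomposition in which each hypothesis \eqref{eq:zero_score_condition}, \eqref{eq:quadratic_condition}, \eqref{eq:lr_apprx_small_condition} contributes exactly one of the three terms on the right-hand side. Define
$$
A_n(h,\eta) = \log \frac{p^{(n)}_{\theta_n(h),\eta}}{p^{(n)}_{\theta_0,\eta}}(X^{(n)}) - \frac{h^T}{\sqrt{n}} \score^{(n)}_{\theta_0,\eta}(X^{(n)}) + \frac{1}{2} h^T V_{n,\eta} h
$$
and split $A_n = A_n^o + P_0^{(n)} A_n$, where $A_n^o = A_n - P_0^{(n)} A_n$ is the centered part. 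It suffices to bound each piece by $o_{P_0}(1)$ uniformly over $h\in K$ and $\eta\in\cH_n$.

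For the centered part, since the quadratic term $\tfrac12 h^T V_{n,\eta} h$ is non-random, $A_n^o$ coincides with
$$
\left( \ell^{(n)}_{\theta_n(h),\eta}(X^{(n)}) - \ell^{(n)}_{\theta_0,\eta}(X^{(n)}) - \frac{h^T}{\sqrt n}\score^{(n)}_{\theta_0,\eta}(X^{(n)}) \right)^{o}.
$$
Because $K$ is compact, I can pick any sequence $M_n\to\infty$ with $M_n/\sqrt n\to 0$ such that eventually $K\subset\{|h|\le M_n\}$; then \eqref{eq:lr_apprx_small_condition}, together with the fact that $(1\vee|h|^2)$ is bounded on $K$, delivers $\sup_{h\in K}\sup_{\eta\in\cH_n}|A_n^o| = o_{P_0}(1)$.

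For the deterministic part, I compute
$$
P_0^{(n)} A_n(h,\eta) = \left\{ P_0^{(n)}\bigl(\ell^{(n)}_{\theta_n(h),\eta} - \ell^{(n)}_{\theta_0,\eta}\bigr) + \tfrac12 h^T V_{n,\eta} h \right\} - \frac{h^T}{\sqrt n} P_0^{(n)}\score^{(n)}_{\theta_0,\eta}.
$$
The first bracket equals $n \cdot o(|\theta_n(h)-\theta_0|^2) = o(|h|^2)$ uniformly in $\eta\in\cH_n$ by \eqref{eq:quadratic_condition} applied at $\theta = \theta_n(h)$, which is $o(1)$ on $K$. The second term is bounded in absolute value by $|h|\cdot n^{-1/2}\sup_{\eta\in\cH_n}|P_0^{(n)}\score^{(n)}_{\theta_0,\eta}| = o(1)$ on $K$ by \eqref{eq:zero_score_condition}. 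Adding the two pieces yields $\sup_{h\in K}\sup_{\eta\in\cH_n}|P_0^{(n)} A_n| = o(1)$, and combined with the previous paragraph this proves \eqref{eq:uLAN}.

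There is no real obstacle here: once the three hypotheses are read as controlling (i) the bias of the score, (ii) the quadratic approximation of the mean log-likelihood ratio, and (iii) the stochastic remainder of the log-likelihood expansion, the lemma is a pure bookkeeping exercise. The only point deserving some care is that \eqref{eq:lr_apprx_small_condition} is stated with a sequence $M_n\to\infty$ rather than a fixed compact set, which is why I first choose $M_n$ eventually dominating $\sup_{h\in K}|h|$ before invoking it.
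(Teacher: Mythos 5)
Your proof is correct and follows essentially the same route as the paper: the same decomposition of the left-hand side into the centered stochastic remainder (controlled by \eqref{eq:lr_apprx_small_condition}), the score-bias term (controlled by \eqref{eq:zero_score_condition}), and the quadratic-approximation term (controlled by \eqref{eq:quadratic_condition}). Your additional remark about choosing $M_n\to\infty$ eventually dominating $\sup_{h\in K}|h|$, and the boundedness of $(1\vee|h|^2)$ on $K$, merely makes explicit a step the paper leaves implicit.
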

\begin{proof}
We can rewrite the left hand side of \eqref{eq:uLAN} by
\bean
	&& \left(\ell_{\theta_n(h),\eta}^{(n)} (X^{(n)}) - \ell_{\theta_0,\eta}^{(n)} (X^{(n)})
	- \frac{h^T}{\sqrt{n}} \score_{\theta_0,\eta}^{(n)} (X^{(n)})\right)^o \\
	&& ~~~~ - \frac{h^T}{\sqrt{n}} P_0^{(n)} \score^{(n)}_{\theta_0,\eta}
	+ P_0^{(n)} \Big( \ell^{(n)}_{\theta_n(h),\eta} - \ell^{(n)}_{\theta_0,\eta}\Big) 
	+ \frac{1}{2} h^T V_{n,\eta}h,
\eean
and for $h$ in a compact set $K\subset \bbR$ and $\eta\in\cH_n$,
the supremum of the first term converges to 0 in $P_0^{(n)}$-probability
by \eqref{eq:lr_apprx_small_condition}.
The last three terms also converges uniformly to 0 by
\eqref{eq:zero_score_condition} and \eqref{eq:quadratic_condition}.
\end{proof}

\bigskip

\subsection{Parametric convergence rate of the marginal posterior}

In this subsection, the marginal posterior of $\theta$ is shown to converge at parametric rate $n^{-1/2}$.
It looks very natural but the proof is not easy as mentioned in \citet{bickel2012semiparametric}.
We apply the second approach given in Section 6 of \citet{bickel2012semiparametric}.
The proof is quite technical and we motivated from the proofs of
Theorem 2.4 in \citet{ghosal2000convergence} and Theorem 3.1 in \citet{kleijn2012bernstein}.

There are extensive literatures about posterior consistency, condition \eqref{eq:consist}.
The version that adapts to our examples is given in Theorem \ref{thm:consist_general}.

\bigskip

\begin{theorem} \label{thm:sqrt_n_general}
Suppose that
\eqref{eq:donsker_condition}--\eqref{eq:lr_apprx_large_condition} hold for some 
$\score_{\theta,\eta}^{(n)}$, $(\cH_n)$, $(V_{n,\eta})$
and sufficiently small $\epsilon_0 > 0$.
Also, the posterior is consistent in the sense that
\be \label{eq:consist}
	\Pi\left(|\theta-\theta_0| < \epsilon, \eta\in\cH_n | X^{(n)}\right) \rightarrow 1
\ee
in $P_0^{(n)}$-probability for every $\epsilon > 0$.
Then, \eqref{eq:sqrtn_general} holds for every $M_n \rightarrow \infty$
provided $\Pi_\Theta$ is thick at $\theta_0$.
\end{theorem}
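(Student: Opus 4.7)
The plan is to adapt the shell-peeling argument of Kleijn--van der Vaart (Theorem 3.1 in \citet{kleijn2012bernstein}) to the semiparametric setting. By the consistency condition \eqref{eq:consist}, it suffices to control the posterior mass of the set $S_n = \{\theta : M_n \le \sqrt{n}|\theta-\theta_0| \le \epsilon_0\sqrt{n}/2,\ \eta \in \cH_n\}$, since the complement outside this annulus already has vanishing posterior mass. I partition $S_n$ into shells $S_{n,j} = \{\theta : 2^j M_n \le \sqrt{n}|\theta-\theta_0| \le 2^{j+1} M_n\}$ for $j=0,1,\ldots, J_n$ with $2^{J_n} M_n \lesssim \epsilon_0 \sqrt n$, and write
$$
\Pi\bigl(S_{n,j},\eta\in\cH_n\bigm|X^{(n)}\bigr) = \frac{N_{n,j}}{D_n},\qquad N_{n,j}=\int_{S_{n,j}}\int_{\cH_n}\frac{p^{(n)}_{\theta,\eta}}{p^{(n)}_{\theta_0,\eta_0}}(X^{(n)})\,d\Pi_\cH\,d\Pi_\Theta,
$$
with $D_n$ the full normalizing integral.

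The central trick is the factorization $p^{(n)}_{\theta,\eta}/p^{(n)}_{\theta_0,\eta_0} = (p^{(n)}_{\theta,\eta}/p^{(n)}_{\theta_0,\eta})\cdot(p^{(n)}_{\theta_0,\eta}/p^{(n)}_{\theta_0,\eta_0})$, which isolates a $\theta$-dependent factor from a common nuisance factor. For the numerator on $S_{n,j}$, I would combine the quadratic expansion \eqref{eq:quadratic_condition} with the large-$h$ likelihood approximation \eqref{eq:lr_apprx_large_condition} and the uniform eigenvalue control \eqref{eq:V_conti_condition}--\eqref{eq:V_positive} to obtain, uniformly over $\theta\in S_{n,j}$ and $\eta\in\cH_n$,
$$
\log\frac{p^{(n)}_{\theta,\eta}}{p^{(n)}_{\theta_0,\eta}}(X^{(n)}) \le -\tfrac{c_0}{4}\cdot 4^j M_n^2
$$
with probability tending to one (for some $c_0>0$). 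This yields $N_{n,j} \le e^{-c_0 4^j M_n^2/4}\cdot\Pi_\Theta(S_{n,j})\cdot \int_{\cH_n} (p^{(n)}_{\theta_0,\eta}/p^{(n)}_{\theta_0,\eta_0})(X^{(n)})\,d\Pi_\cH$. For the denominator, I restrict the $\theta$-integration to $|h|\le 1$ (with $h=\sqrt{n}(\theta-\theta_0)$), where the small-$h$ approximation \eqref{eq:lr_apprx_small_condition}, the quadratic expansion \eqref{eq:quadratic_condition}, and the Donsker-type bound \eqref{eq:donsker_condition} (applied at $\theta=\theta_0$) jointly imply $\log(p^{(n)}_{\theta_n(h),\eta}/p^{(n)}_{\theta_0,\eta})(X^{(n)}) \ge -C$ with high probability, uniformly in $|h|\le 1$ and $\eta\in\cH_n$. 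Thickness of $\Pi_\Theta$ at $\theta_0$ gives $\int_{|h|\le 1} d\Pi_\Theta(\theta_0+h/\sqrt n) \gtrsim n^{-p/2}$, so $D_n \gtrsim n^{-p/2} e^{-C}\int_{\cH_n}(p^{(n)}_{\theta_0,\eta}/p^{(n)}_{\theta_0,\eta_0})(X^{(n)})\,d\Pi_\cH$.

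Crucially, the common nuisance integral cancels in the ratio $N_{n,j}/D_n$, leaving
$$
\Pi\bigl(S_{n,j},\eta\in\cH_n\bigm|X^{(n)}\bigr) \lesssim n^{p/2}\,\Pi_\Theta(S_{n,j})\, e^{-c_0 4^j M_n^2/4} \lesssim (2^{j+1}M_n)^p\, e^{-c_0 4^j M_n^2/4}
$$
on an event of probability tending to one, using $\Pi_\Theta(S_{n,j}) \lesssim (2^{j+1}M_n/\sqrt n)^p$ from thickness of the prior. Summing over $j\ge 0$, the Gaussian decay dominates the polynomial factor and the total is $o(1)$ since $M_n\to\infty$. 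Combined with \eqref{eq:consist}, this gives $\Pi(\sqrt n|\theta-\theta_0|>M_n\mid X^{(n)}) \to 0$ in $P_0^{(n)}$-probability, whence \eqref{eq:sqrtn_general} follows by bounded convergence.

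The main obstacle is the uniform control of the parametric likelihood ratio $\log(p^{(n)}_{\theta,\eta}/p^{(n)}_{\theta_0,\eta})$ over $\eta\in\cH_n$ simultaneously with $\theta$ in each shell; without the cancellation afforded by the decomposition above, one would be forced to control ratios of the nuisance integrals at different $\theta$'s, which is precisely the difficulty emphasized in \citet{bickel2012semiparametric}. The decomposition plus the uniform-in-$\eta$ approximations \eqref{eq:lr_apprx_small_condition} and \eqref{eq:lr_apprx_large_condition} are what make the argument go through; care must also be taken that the $o_{P_0}(\cdot)$ terms in those approximations are uniform in $\eta\in\cH_n$ (as stated) and that the $o(|\theta-\theta_0|^2)$ error in \eqref{eq:quadratic_condition} really is negligible compared to $c_0|h|^2/2$, which requires choosing $\epsilon_0$ small enough from the outset.
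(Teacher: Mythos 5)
Your proposal is correct and follows essentially the same route as the paper: the paper also peels the annulus $M_n/\sqrt{n}<|\theta-\theta_0|\leq\epsilon_0$ into shells, obtains the uniform-in-$\eta$ upper bound $\log(p^{(n)}_{\theta_n(h),\eta}/p^{(n)}_{\theta_0,\eta})\leq -C|h|^2$ from \eqref{eq:quadratic_condition}, \eqref{eq:lr_apprx_large_condition} and \eqref{eq:V_conti_condition}--\eqref{eq:V_positive} (Lemma \ref{lem:sqrtn_upper_bound}), lower-bounds the same-$\eta$ denominator via \eqref{eq:lr_apprx_small_condition}, \eqref{eq:donsker_condition} and thickness (Lemma \ref{lem:sqrtn_lower_bound}), and achieves your cancellation of the common nuisance factor by working with the conditional posterior $\Pi(\theta\in\Theta_n\mid\eta,X^{(n)})$ and taking the supremum over $\eta\in\cH_n$, with consistency \eqref{eq:consist} disposing of $|\theta-\theta_0|>\epsilon_0$ and $\eta\notin\cH_n$. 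The only (harmless) differences are cosmetic: dyadic rather than linear shells, and a denominator restricted to $|h|\leq 1$ giving an $e^{-C}$ deficit instead of the paper's $|h|\leq CM_n$ with an $e^{-C_1M_n^2}$ deficit that must then be balanced against the shell decay.
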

\begin{proof}
It is sufficient to show that \eqref{eq:sqrtn_general} holds 
for sufficiently slowly increasing $(M_n)$ so that $M_n/\sqrt{n} \rightarrow 0$.
For given such $(M_n)$, we can choose $C > C_1 > 0$ and $C_2 > 0$ satisfying the assertions
of Lemmas \ref{lem:sqrtn_lower_bound} and \ref{lem:sqrtn_upper_bound}.
Let $\Omega_n$ be the intersection of two events whose probabilities are tending to 1 in the both Lemmas.
For a given $\epsilon_0 > 0$ (see below),
let $\Theta_n = \{\theta\in\Theta: M_n/\sqrt{n} < |\theta-\theta_0| \leq \epsilon_0\}$,
$\Theta_{n,j} = \{\theta_n(h) \in \Theta_n: j M_n \leq |h| < (j+1) M_n\}$
and $J$ be the minimum among $j$'s satisfying $(j+1)M_n/\sqrt{n} > \epsilon_0$.
Since $\Pi_\Theta$ is thick at $\theta_0$, $\epsilon_0$ can be chosen sufficiently small so that
$\Pi_\Theta(\Theta_{n,j}) \leq D \cdot \big((j+1)M_n/\sqrt{n}\big)^p$ for some constant $D > 0$.
Then on $\Omega_n$, 
\bean
	\sup_{\eta\in\cH_n} \Pi(\theta \in \Theta_n | \eta, X^{(n)})
	&\leq& \frac{e^{C_1 M_n^2}}{C_2 (M_n/\sqrt{n})^p} 
	\sup_{\eta\in\cH_n} \int_{\Theta_n} 
	\frac{p^{(n)}_{\theta, \eta}}{p^{(n)}_{\theta_0, \eta}}(X^{(n)}) d\Pi_\Theta(\theta) \\
	&\leq& \frac{e^{C_1 M_n^2}}{C_2 (M_n/\sqrt{n})^p}
	\sum_{j=1}^J \Pi_\Theta(\Theta_{n,j}) \sup_{\theta \in \Theta_{n,j}} \sup_{\eta\in\cH_n} 
	\frac{p^{(n)}_{\theta, \eta}}{p^{(n)}_{\theta_0, \eta}}(X^{(n)}).
\eean
Since
\bean
	\sup_{\theta \in \Theta_{n,j}} \sup_{\eta\in\cH_n} 
	\frac{p^{(n)}_{\theta, \eta}}{p^{(n)}_{\theta_0, \eta}}(X^{(n)})	\leq \exp(-C j^2 M_n^2)
\eean
on $\Omega_n$, we have on this set
\bean
	\sup_{\eta\in\cH_n} \Pi(\theta \in \Theta_n | \eta, X^{(n)}) 
	&\leq& C_2^{-1} D e^{C_1M_n^2} \sum_{j=1}^J (j+1)^p e^{-Cj^2M_n^2} \rightarrow 0
\eean
as $n \rightarrow \infty$, by the choice of $C > C_1$.
We conclude that
$$\sup_{\eta\in\cH_n} \Pi\big(\theta\in\Theta_n | \eta,X^{(n)}\big) \rightarrow 0$$
in $P_0^{(n)}$-probability because $P_0^{(n)}(\Omega_n) \rightarrow 1$.
Now, we can write
\bean
	&& ~ \Pi\left(\sqrt{n}|\theta-\theta_0| > M_n | X^{(n)}\right)\\
	&&= \Pi\left(|\theta-\theta_0| > \epsilon_0 | X^{(n)}\right)
	+ \Pi\left(\theta \in \Theta_n | X^{(n)}\right) \\
	&&= \Pi\left(|\theta-\theta_0| > \epsilon_0 | X^{(n)}\right)
	+ \int \Pi\left(\theta \in \Theta_n | \eta, X^{(n)}\right) d\Pi(\eta|X^{(n)}) \\
	&&\leq \Pi\left(|\theta-\theta_0| > \epsilon_0 | X^{(n)}\right)
	+ \sup_{\eta\in\cH_n}\Pi\left(\theta \in \Theta_n | \eta, X^{(n)}\right)
	+ \Pi(\eta\in\cH_n^c| X^{(n)})
\eean
and each term converges in $P_0^{(n)}$-probability to 0.
\end{proof}

\bigskip

\begin{lemma} \label{lem:sqrtn_lower_bound}
For given $(M_n)$, $M_n \rightarrow\infty$ and $M_n /\sqrt{n}\rightarrow 0$,
suppose that \eqref{eq:donsker_condition}--\eqref{eq:lr_apprx_small_condition}
hold for some $\epsilon_0$, $\score_{\theta,\eta}^{(n)}$, $(\cH_n)$ and $(V_{n,\eta})$.
Then, for every $C_1 > 0$, there exists $C_2 > 0$ such that
$$
	P_0^{(n)} \bigg[\inf_{\eta\in\cH_n} \int_\Theta 
	\frac{p^{(n)}_{\theta, \eta}}{p^{(n)}_{\theta_0, \eta}}(X^{(n)})
	d\Pi_\Theta(\theta) \geq C_2 \left(\frac{M_n}{\sqrt{n}}\right)^p e^{-C_1 M_n^2} \bigg]
	\rightarrow 1
$$
provided $\Pi_\Theta$ is thick at $\theta_0$.
\end{lemma}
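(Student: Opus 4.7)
The plan is to lower-bound the integrated likelihood by restricting $\theta$ to a small ball around $\theta_0$, rescaling by $h = \sqrt{n}(\theta-\theta_0)$, and controlling the log-likelihood ratio from below by a quadratic form, uniformly in $\eta \in \cH_n$ on a single event of $P_0^{(n)}$-probability tending to one. Concretely, for a $\delta > 0$ to be chosen, I restrict the prior integral to $\{\theta: |\theta-\theta_0| \le \delta M_n/\sqrt{n}\}$; since $\Pi_\Theta$ is thick at $\theta_0$ and $\delta M_n/\sqrt{n} \to 0$, the prior density is bounded below by some $c_\Theta > 0$ on this ball for large $n$, so after the change of variable $h = \sqrt{n}(\theta-\theta_0)$,
$$
\int_\Theta \frac{p^{(n)}_{\theta,\eta}}{p^{(n)}_{\theta_0,\eta}}(X^{(n)})\, d\Pi_\Theta(\theta) \;\ge\; c_\Theta\, n^{-p/2} \int_{|h| \le \delta M_n} \exp\!\bigl(\ell^{(n)}_{\theta_n(h),\eta}(X^{(n)}) - \ell^{(n)}_{\theta_0,\eta}(X^{(n)})\bigr)\, dh.
$$

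For the integrand I would combine \eqref{eq:lr_apprx_small_condition} with \eqref{eq:quadratic_condition}. The $P_0^{(n)}$-expectation of the log-likelihood ratio equals $-\tfrac12 h^T V_{n,\eta} h + \tfrac{h^T}{\sqrt n}P_0^{(n)}\score^{(n)}_{\theta_0,\eta} + o(|h|^2)$ uniformly in $\eta \in \cH_n$ for $|h|\le M_n$, and \eqref{eq:lr_apprx_small_condition} produces an event $A_n$ with $P_0^{(n)}(A_n)\to 1$ on which, simultaneously for all $|h|\le M_n$ and $\eta\in\cH_n$,
$$
\ell^{(n)}_{\theta_n(h),\eta}(X^{(n)}) - \ell^{(n)}_{\theta_0,\eta}(X^{(n)}) \;\ge\; \tfrac{h^T}{\sqrt n}\bigl(\score^{(n)}_{\theta_0,\eta}(X^{(n)}) - P_0^{(n)}\score^{(n)}_{\theta_0,\eta}\bigr) - \tfrac12 h^T V_{n,\eta} h - \epsilon(1\vee|h|^2),
$$
for $\epsilon > 0$ as small as desired. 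By \eqref{eq:donsker_condition} the centered score process is $O_{P_0}(\sqrt n)$ uniformly in $\eta \in \cH_n$, so on (a further subset of) $A_n$ the linear term is at least $-C|h|$, and by \eqref{eq:V_conti_condition}--\eqref{eq:V_positive} we have $h^T V_{n,\eta} h \le \Lambda|h|^2$ with $\Lambda := 1+\limsup_n \rho_{\max}(V_{n,\eta_0})$.

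Bounding the integrand by its infimum over $|h| \le \delta M_n$ then yields, on $A_n$ and uniformly in $\eta \in \cH_n$,
$$
\int_{|h| \le \delta M_n} e^{\ell^{(n)}_{\theta_n(h),\eta} - \ell^{(n)}_{\theta_0,\eta}}\, dh \;\ge\; \omega_p (\delta M_n)^p \exp\!\bigl(-C\delta M_n - \tfrac{\Lambda}{2}\delta^2 M_n^2 - \epsilon(1+\delta^2 M_n^2)\bigr),
$$
where $\omega_p$ denotes the volume of the unit ball in $\bbR^p$. Given $C_1 > 0$, choose $\delta, \epsilon > 0$ so small that $(\Lambda/2 + \epsilon)\delta^2 < C_1$; since $M_n \to \infty$, the terms $C\delta M_n$ and the additive constant are $o(M_n^2)$, so the right-hand side eventually dominates $C_2 (M_n/\sqrt{n})^p e^{-C_1 M_n^2}$ with $C_2 = \tfrac12 c_\Theta \omega_p \delta^p$. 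The main obstacle is ensuring that all the approximations hold on a \emph{single} event $A_n$ that is simultaneous over $\eta \in \cH_n$; this uniformity is the substantive content of the Donsker-type hypothesis \eqref{eq:donsker_condition} and the uniform remainder bound in \eqref{eq:lr_apprx_small_condition}, so once these empirical-process inputs are granted, the remaining integration is routine.
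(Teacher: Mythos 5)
Your proposal is correct and follows essentially the same route as the paper's proof: restrict the prior integral to a ball of radius proportional to $M_n/\sqrt{n}$, decompose the log-likelihood ratio into the centered remainder from \eqref{eq:lr_apprx_small_condition}, the centered score controlled by \eqref{eq:donsker_condition}, and the expectation controlled by \eqref{eq:quadratic_condition} together with \eqref{eq:V_conti_condition}--\eqref{eq:V_positive}, then shrink the radius so the quadratic term beats $C_1M_n^2$. One cosmetic point: \eqref{eq:quadratic_condition} gives $P_0^{(n)}\big(\ell^{(n)}_{\theta_n(h),\eta}-\ell^{(n)}_{\theta_0,\eta}\big)=-\tfrac12 h^TV_{n,\eta}h+o(|h|^2)$ with no linear term, so the sentence inserting $\tfrac{h^T}{\sqrt{n}}P_0^{(n)}\score^{(n)}_{\theta_0,\eta}$ there is not quite what the condition says, but your displayed lower bound is exactly the correct decomposition and nothing downstream is affected.
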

\begin{proof}
For given $C > 0$ and $(M_n)$ with $M_n \rightarrow \infty$ and $M_n/\sqrt{n} \rightarrow 0$,
we have
\bean
	\int_\Theta \frac{p^{(n)}_{\theta, \eta}}{p^{(n)}_{\theta_0, \eta_0}}(X^{(n)}) d\Pi_\Theta(\theta) 
	&\geq& \int_{|h| \leq CM_n}
	\exp\bigg[ \left( \ell^{(n)}_{\theta_n(h),\eta}(X^{(n)}) - \ell^{(n)}_{\theta_0,\eta}(X^{(n)})
	- \frac{h^T}{\sqrt{n}}\score_{\theta_0,\eta}^{(n)}(X^{(n)}) \right)^o \\
	&& ~~~~~~~~~~~~~~~~~~~~ + \frac{h^T}{\sqrt{n}} \left(\score_{\theta_0,\eta}^{(n)}(X^{(n)})
	- P_0^{(n)} \score_{\theta_0,\eta}^{(n)}\right) \\
	&& ~~~~~~~~~~~~~~~~~~~~ - \frac{1}{2} h^T V_{n,\eta_0} h \\
	&& ~~~~~~~~~~~~~~~~~~~~ + \frac{1}{2}h^T (V_{n,\eta_0} - V_{n,\eta})h \\
	&& ~~~~~~~~~~~~~~~~~~~~ + P_0^{(n)} \Big( \ell^{(n)}_{\theta_n(h),\eta}	
	- \ell^{(n)}_{\theta_0,\eta}\Big) + \frac{1}{2} h^T V_{n,\eta} h \bigg] d\Pi_n(h),
\eean
where $\Pi_n$ is the prior for the centered and rescaled parameter $h = \sqrt{n}(\theta-\theta_0)$.
For $|h| \leq C M_n$ and $\eta\in\cH_n$,
the exponent is uniformly bounded below by
$$ M_n^2 \left(-\frac{C^2}{2} \cdot\|V_{n,\eta_0}\| + o_{P_0}(1)\right) $$
by \eqref{eq:donsker_condition}, \eqref{eq:V_conti_condition}, \eqref{eq:quadratic_condition}
and \eqref{eq:lr_apprx_small_condition}.
Since $\limsup_{n\rightarrow\infty}\rho_{\max}(V_{n,\eta_0}) < \infty$ by \eqref{eq:V_positive},
$\Pi_\Theta$ is thick at $\theta_0$, and $C$ is arbitrary, we have the desired result.
\end{proof}

\bigskip

\begin{lemma} \label{lem:sqrtn_upper_bound}
For given $(M_n)$, $M_n \rightarrow\infty$ and $M_n /\sqrt{n}\rightarrow 0$,
suppose that \eqref{eq:V_conti_condition}--\eqref{eq:quadratic_condition}
and \eqref{eq:lr_apprx_large_condition} holds for some $\score_{\theta,\eta}^{(n)}$, $(\cH_n)$, $(V_{n,\eta})$
and sufficiently small $\epsilon_0 > 0$.
Then, there exists a constant $C > 0$ such that
$$
	P_0^{(n)} \bigg[\sup_{\eta\in\cH_n} \log
	\frac{p^{(n)}_{\theta_n(h), \eta}}{p^{(n)}_{\theta_0, \eta}}(X^{(n)})
	\leq -C |h|^2, ~ \textrm{for} ~ M_n < |h| < \epsilon_0 \sqrt{n} \bigg]
	\rightarrow 1
$$
as $n \rightarrow \infty$.
\end{lemma}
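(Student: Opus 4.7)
The plan is to split the random log-likelihood ratio into its centered part and its mean, control each one in the shell $M_n < |h| < \epsilon_0 \sqrt{n}$, and combine the bounds. Write
\[
\log \frac{p^{(n)}_{\theta_n(h),\eta}}{p^{(n)}_{\theta_0,\eta}}(X^{(n)})
= \bigl(\ell^{(n)}_{\theta_n(h),\eta} - \ell^{(n)}_{\theta_0,\eta}\bigr)^o(X^{(n)})
+ P_0^{(n)}\bigl(\ell^{(n)}_{\theta_n(h),\eta} - \ell^{(n)}_{\theta_0,\eta}\bigr).
\]

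For the deterministic (mean) term, I would use condition \eqref{eq:quadratic_condition} with $\theta - \theta_0 = h/\sqrt{n}$, obtaining
\[
P_0^{(n)}\bigl(\ell^{(n)}_{\theta_n(h),\eta} - \ell^{(n)}_{\theta_0,\eta}\bigr)
= -\tfrac{1}{2}\, h^T V_{n,\eta} h + o(|h|^2),
\]
where the $o(|h|^2)$ is uniform in $\eta \in \cH_n$ and in $n \geq N$ as $|h|/\sqrt{n} \to 0$, i.e., as $|\theta - \theta_0|\to 0$. By \eqref{eq:V_positive} and \eqref{eq:V_conti_condition}, there exists $c_0 > 0$ with $\rho_{\min}(V_{n,\eta}) \geq 2c_0$ for all large $n$ and all $\eta \in \cH_n$. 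Shrinking $\epsilon_0$ if necessary so that the $o(|\theta-\theta_0|^2)$ term is bounded in absolute value by $\tfrac{c_0}{2}|\theta-\theta_0|^2$ whenever $|\theta - \theta_0|<\epsilon_0$, we get, uniformly in $\eta \in \cH_n$ and in $h$ with $|h|<\epsilon_0\sqrt{n}$,
\[
P_0^{(n)}\bigl(\ell^{(n)}_{\theta_n(h),\eta} - \ell^{(n)}_{\theta_0,\eta}\bigr) \leq -\tfrac{3c_0}{2}\,|h|^2.
\]

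For the centered (stochastic) term, condition \eqref{eq:lr_apprx_large_condition} says that
\[
\sup_{M_n<|h|<\epsilon_0\sqrt{n}}\sup_{\eta\in\cH_n}
\bigl|\bigl(\ell^{(n)}_{\theta_n(h),\eta} - \ell^{(n)}_{\theta_0,\eta}\bigr)^o(X^{(n)})\bigr| \cdot |h|^{-2}
= o_{P_0}(1),
\]
so on an event $\Omega_n$ with $P_0^{(n)}(\Omega_n) \to 1$, this quantity is below $c_0/2$. On $\Omega_n$, combining both bounds yields
\[
\sup_{\eta\in\cH_n}\log\frac{p^{(n)}_{\theta_n(h),\eta}}{p^{(n)}_{\theta_0,\eta}}(X^{(n)})
\leq -\tfrac{3c_0}{2}|h|^2 + \tfrac{c_0}{2}|h|^2 = -c_0\,|h|^2
\]
for all $h$ with $M_n < |h| < \epsilon_0\sqrt{n}$, and so setting $C = c_0$ yields the claim.

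The main obstacle is the choice of $\epsilon_0$: \eqref{eq:quadratic_condition} gives only a \emph{pointwise-as-}$\theta\to\theta_0$ approximation, so one must ensure that the remainder $o(|\theta-\theta_0|^2)$ is actually dominated by a definite fraction of the quadratic leading term, uniformly over $\eta \in \cH_n$ and $n \geq N$. This is precisely what the uniform-in-$n$ and uniform-in-$\eta$ formulation of \eqref{eq:quadratic_condition} permits, together with the eigenvalue lower bound from \eqref{eq:V_positive} and \eqref{eq:V_conti_condition}; all of the rest is bookkeeping.
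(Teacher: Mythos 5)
Your proposal is correct in substance and follows essentially the same route as the paper: decompose the log-likelihood ratio into its centered part and its mean, bound the mean by the uniform quadratic expansion \eqref{eq:quadratic_condition} after shrinking $\epsilon_0$, control the centered part on a high-probability event via \eqref{eq:lr_apprx_large_condition}, and invoke \eqref{eq:V_conti_condition}--\eqref{eq:V_positive} for the eigenvalue lower bound (the paper keeps $V_{n,\eta_0}$ explicit and absorbs $V_{n,\eta}-V_{n,\eta_0}$ into an $o(1)$ term, which is only a presentational difference). One slip in the bookkeeping: with $\rho_{\min}(V_{n,\eta})\geq 2c_0$ and remainder at most $\tfrac{c_0}{2}|h|^2$, the mean term is bounded by $-c_0|h|^2+\tfrac{c_0}{2}|h|^2=-\tfrac{c_0}{2}|h|^2$, not $-\tfrac{3c_0}{2}|h|^2$, so after adding the stochastic term your final bound degenerates to $0$; this is fixed by taking both error terms at most $\tfrac{c_0}{4}|h|^2$ (yielding $C=\tfrac{c_0}{2}$), exactly as the paper does by letting $\delta$ be arbitrarily small.
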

\begin{proof}
Let a real sequence $(M_n)$, $M_n \rightarrow \infty$ and $M_n/\sqrt{n} \rightarrow 0$, be given.
For given $\delta > 0$, if $\epsilon_0 > 0$ is sufficiently small, then
$$
	\left|P_0^{(n)} \Big( \ell^{(n)}_{\theta_n(h),\eta}	
	- \ell^{(n)}_{\theta_0,\eta}\Big) + \frac{1}{2} h^T V_{n,\eta} h\right| < \delta \cdot |h|^2
$$
for large enough $n$ and every $h$ with $|h| < \sqrt{n}\epsilon_0$ by \eqref{eq:quadratic_condition}.
Write
\bean
	\log\frac{p^{(n)}_{\theta_n(h), \eta}}{p^{(n)}_{\theta_0, \eta_0}}(X^{(n)})
	&=&  \left( \ell^{(n)}_{\theta_n(h),\eta}(X^{(n)}) - \ell^{(n)}_{\theta_0,\eta}(X^{(n)}) \right)^o \\
	&& ~~ + P_0^{(n)} \Big( \ell^{(n)}_{\theta_n(h),\eta}	
	- \ell^{(n)}_{\theta_0,\eta}\Big) + \frac{1}{2} h^T V_{n,\eta} h \\
	&& ~~ + \frac{1}{2}h^T (V_{n,\eta_0} - V_{n,\eta})h - \frac{1}{2} h^T V_{n,\eta_0} h.
\eean
Then, for $M_n < |h| < \epsilon_0\sqrt{n}$ and $\eta\in\cH_n$, the right hand side is uniformly bounded above by
$$ |h|^2 \cdot \left( -\frac{1}{2} \rho_{\min}(V_{n,\eta_0}) + \delta + o_{P_0}(1) \right) $$
by \eqref{eq:V_conti_condition} and \eqref{eq:lr_apprx_large_condition}.
Since $\delta > 0$ can be arbitrarily small and
$\liminf_{n\rightarrow\infty}\rho_{\min}(V_{n,\eta_0}) > 0$ by \eqref{eq:V_positive},
we have the desired result.
\end{proof}

\bigskip

\section{Quadratic expansion of the expected log likelihood ratio}
\label{sec:quadratic}

This section is devoted to study about uniform quadratic expansion of the expected log likelihood ratio 
\eqref{eq:quadratic_condition} in models with symmetric densities.
Typically in a smooth parametric model it is expected that
$$
	P_{\theta_0} \log\frac{p_\theta}{p_{\theta_0}} = -\frac{1}{2} (\theta-\theta_0)^T 
	I_{\theta_0} (\theta-\theta_0) + o(|\theta-\theta_0|^2)
$$
as $\theta\rightarrow\theta_0$ by use of Taylor expansion.
Here $I_{\theta_0}$ is the
Fisher information matrix at $\theta_0$.
In this expansion, the linear term is equal to zero because the model is well specified so
$\theta \mapsto P_{\theta_0} \ell_\theta$ is maximized at $\theta_0$.
To satisfy the condition \eqref{eq:quadratic_condition}, this quadratic expansion 
should be satisfied when the nuisance parameter is slightly misspecified.
This is not generally true, even in models without information loss.
Consider, for example, the Gaussian model $N(\mu, \sigma^2)$.
When $\sigma^2$ is misspecified the log likelihood ratio satisfies
\bean
	P_{\mu_0,\sigma_0} \log\frac{p_{\mu,\sigma}}{p_{\mu_0,\sigma}}
	= -\frac{(\mu-\mu_0)^2}{2\sigma^2}
\eean
so the quadratic expansion \eqref{eq:quadratic_condition} is satisfied.
In contrast, when $\mu$ is misspecified, the expected log likelihood ratio is given by
\bean
	P_{\mu_0,\sigma_0} \log\frac{p_{\mu,\sigma}}{p_{\mu,\sigma_0}}
	= -\frac{1}{2} \log \frac{\sigma^2}{\sigma_0^2} - \frac{1}{2} \Big(\sigma_0^2 + (\mu-\mu_0)^2 \Big)
	\left(\frac{1}{\sigma^2} - \frac{1}{\sigma_0^2} \right)
\eean
so it does not allow the desired quadratic expansion.
Note that the linear term of the Taylor expansion with respect to $\sigma^2$ is given by
$$
	\frac{(\mu-\mu_0)^2}{2\sigma_0^2} (\sigma^2 - \sigma_0^2),
$$
and the map $\sigma^2 \mapsto P_{\mu_0,\sigma_0}\ell_{\mu,\sigma}$
is maximized at $\sigma^2 = \sigma_0^2 + (\mu - \mu_0)^2$.
This implies that the condition \eqref{eq:quadratic_condition} may be difficult to be satisfied in general.
Fortunately, many interesting models satisfy this condition,
and we establish a sufficient condition for condition \eqref{eq:quadratic_condition}
in models with symmetric error.

We consider univariate and multivariate models with symmetric errors
in the following two subsections, respectively.
These models, like the Gaussian location model in which $\sigma^2$ is considered as nuisance parameter,
allow the desired quadratic expansion when nuisance parameter is misspecified.
Condition \eqref{eq:quadratic_condition} requires that this quadratic expansion happens uniformly
around the true parameter $\eta_0$.
We will provide sufficient conditions for uniform quadratic expansions
and prove a class of mixtures of normal densities satisfies these conditions.

\subsection{Univariate symmetric densities}

We first consider one-dimensional location problem
$$X = \theta + \epsilon,$$
where the error distribution is parametrized by $\eta\in\cH$ for some infinite dimensional $\cH$.
Write the density of error distribution by $p_\eta$ and let $p_{\theta,\eta}(x) = p_\eta(x-\theta)$.
A density $p_\eta$ is assumed to be symmetric about 0 and continuously differentiable for every $\eta \in \cH$.
Fix $(\theta_0,\eta_0)\in\bbR\times \cH$ which can be considered as the true parameter.
Define
$$
	V_\eta = P_{\theta_0,\eta_0} \big[ \ell^\prime_{\theta_0,\eta} \ell^\prime_{\theta_0,\eta_0}\big]
	= P_{\eta_0} \big[ \ell^\prime_{\eta} \ell^\prime_{\eta_0}\big]
$$
if it exists.
The following lemma is the key identity for our result
so we mention it before stating the main theorem.

\bigskip

\begin{lemma} \label{lem:hf}
If $f > 0$ and $f(x+\theta/2) \cdot f(-x + \theta/2) = 1$ for all $x,\theta\in\bbR$, then
\be \label{eq:hf}
	\int_{-\infty}^\infty h(x) \log f(x) dx =  - \int_0^\infty
	\left[h\left(-x+ \frac{\theta}{2} \right) - h\left(x+\frac{\theta}{2} \right)\right]
	\log f\left(x+\frac{\theta}{2} \right) dx
\ee
for any suitably integrable function $h$.
\end{lemma}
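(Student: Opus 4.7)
The plan is straightforward: interpret the hypothesis as an antisymmetry statement about $\log f$, then manipulate the right-hand side by a change of variables. The hypothesis $f(x+\theta/2)f(-x+\theta/2)=1$ (for the fixed $\theta$ appearing in the conclusion) is equivalent, after taking logarithms, to
$$\log f\left(\tfrac{\theta}{2}+x\right)=-\log f\left(\tfrac{\theta}{2}-x\right) \quad\text{for every } x\in\bbR,$$
i.e.\ $\log f$ is odd about the point $\theta/2$. This is the only structural fact I need.

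Next I split the right-hand side of \eqref{eq:hf} as
$$-\int_0^\infty h\!\left(-x+\tfrac{\theta}{2}\right)\log f\!\left(x+\tfrac{\theta}{2}\right)dx+\int_0^\infty h\!\left(x+\tfrac{\theta}{2}\right)\log f\!\left(x+\tfrac{\theta}{2}\right)dx,$$
and in the first integral replace $\log f(x+\theta/2)$ by $-\log f(-x+\theta/2)$ using the antisymmetry. This turns the first integral into $\int_0^\infty h(-x+\theta/2)\log f(-x+\theta/2)\,dx$.

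Then I perform two changes of variable: in the first integral set $y=-x+\theta/2$ (so $x:0\to\infty$ corresponds to $y:\theta/2\to-\infty$), and in the second set $y=x+\theta/2$ (so $x:0\to\infty$ corresponds to $y:\theta/2\to\infty$). After reversing the orientation in the first, the two pieces combine to
$$\int_{-\infty}^{\theta/2} h(y)\log f(y)\,dy+\int_{\theta/2}^\infty h(y)\log f(y)\,dy=\int_{-\infty}^\infty h(y)\log f(y)\,dy,$$
which is the left-hand side. There is no real obstacle: the only thing to be careful with is the sign bookkeeping when flipping the limits in the first substitution, and the tacit integrability hypothesis on $h\log f$, which justifies splitting the integral at $\theta/2$ and recombining the pieces. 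No appeal to the earlier machinery of the thesis is needed.
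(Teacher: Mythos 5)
Your proof is correct and is essentially the paper's argument run in reverse: both rest on the antisymmetry $\log f(\theta/2+x)=-\log f(\theta/2-x)$ and the same splitting of the real line at $\theta/2$, with the paper going from left side to right side and you going from right to left. No gap; the sign bookkeeping in your substitutions checks out.
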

\begin{proof}
The left hand side of \eqref{eq:hf} is equal to
\bean
&&\int_{-\infty}^0 h\left(x + \frac{\theta}{2}\right) \log f\left(x + \frac{\theta}{2}\right) dx + 
\int_0^\infty h\left(x + \frac{\theta}{2}\right) \log f\left(x + \frac{\theta}{2}\right) dx \\
&=& \int_{-\infty}^0 h\left(x + \frac{\theta}{2}\right) \log f\left(x + \frac{\theta}{2}\right) dx - 
\int_{-\infty}^0 h\left(-x+\frac{\theta}{2} \right) \log f\left(x+\frac{\theta}{2}\right) dx \\
&=& -\int_0^\infty \left[h\left(-x+ \frac{\theta}{2} \right) - h\left(x+\frac{\theta}{2} \right)\right]
\log f\left(x+\frac{\theta}{2} \right) dx,
\eean
so the proof is complete.
\end{proof}

\bigskip

\begin{theorem} \label{thm:quad_genaral}
Suppose that for a subset $\cH_0\subset\cH$ there exist $\epsilon > 0$ and a function $Q$ such that
$P_{\eta_0} Q^2 < \infty$, $\sup_{|x|<\epsilon}Q(x) < \infty$, and 
\be \label{eq:location_Q}
	\sup_{|\theta|<\epsilon}\sup_{\eta\in\cH_0}\left|\frac{\ell_\eta(x+\theta)
	- \ell_\eta(x)}{\theta}\right| \leq  Q(x)
\ee
for all $x$. 
Furthermore, assume that
\be \label{eq:tech_cond1}
	\int \sup_{|\theta|<\epsilon} \left|\frac{p_{\eta_0}(x+\theta) 
	- p_{\eta_0}(x)}{\theta}\right| \cdot Q(x) \; dx < \infty
\ee
and
\be \label{eq:tech_cond2}
	\int \sup_{\eta\in\cH_0} \left|\frac{\ell_{\eta}(x+\theta) 
	- \ell_{\eta}(x)}{\theta} - \ell^\prime_\eta(x) \right| 
	\cdot |p_{\eta_0}^\prime(x)| \; dx = o(1)
\ee
as $\theta \rightarrow 0$.
Then
\be \label{eq:uql}
	\sup_{\eta \in \cH_0} \Big| P_{\theta_0,\eta_0} 
	\log \frac{p_{\theta, \eta}}{p_{\theta_0, \eta}} + 
	\frac{(\theta-\theta_0)^2}{2}  V_\eta \Big| = o((\theta-\theta_0)^2)
\ee
as $\theta \rightarrow \theta_0$.
\end{theorem}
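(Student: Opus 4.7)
The plan is to assume without loss of generality that $\theta_0 = 0$ (by translation invariance of the error density), and to rewrite the expected log likelihood ratio as
\[
  P_{\theta_0,\eta_0}\log\frac{p_{\theta,\eta}}{p_{\theta_0,\eta}} = \int p_{\eta_0}(x)\bigl[\ell_\eta(x-\theta) - \ell_\eta(x)\bigr]\,dx.
\]
The key observation is that the ratio $f(x) = p_\eta(x-\theta)/p_\eta(x)$ satisfies the hypothesis of Lemma~\ref{lem:hf}: indeed, using symmetry of $p_\eta$,
\[
  f\!\left(x + \tfrac{\theta}{2}\right) f\!\left(-x + \tfrac{\theta}{2}\right) = \frac{p_\eta(x - \theta/2)}{p_\eta(x + \theta/2)} \cdot \frac{p_\eta(-x - \theta/2)}{p_\eta(-x + \theta/2)} = 1.
\]
Applying Lemma~\ref{lem:hf} with $h = p_{\eta_0}$ (which is itself even) converts the one-sided linear Taylor-type expression into a symmetric-difference form:
\[
  \int p_{\eta_0}\log f\,dx = -\int_0^\infty \bigl[p_{\eta_0}(x-\tfrac{\theta}{2}) - p_{\eta_0}(x+\tfrac{\theta}{2})\bigr]\bigl[\ell_\eta(x-\tfrac{\theta}{2}) - \ell_\eta(x+\tfrac{\theta}{2})\bigr]\,dx.
\]
Both bracketed factors are centered first differences, so each is $\theta$ times a derivative plus a higher-order error; this is what produces the $\theta^2$ scaling with \emph{no} linear term — exactly the point where symmetry buys an order of vanishing.

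Next I would divide both sides by $\theta^2$ and aim to show, uniformly in $\eta \in \cH_0$,
\[
  \frac{1}{\theta^2}\int p_{\eta_0}\log f\,dx \longrightarrow -\int_0^\infty p'_{\eta_0}(x)\,\ell'_\eta(x)\,dx \quad\text{as }\theta\to 0.
\]
I would write the integrand as a product of $[p_{\eta_0}(x-\theta/2) - p_{\eta_0}(x+\theta/2)]/\theta$ and $[\ell_\eta(x-\theta/2) - \ell_\eta(x+\theta/2)]/\theta$, and then split the difference from the limit product $p'_{\eta_0}(x)\ell'_\eta(x)$ into two pieces: one where the $\ell_\eta$-factor is replaced by $\ell'_\eta$ (handled by \eqref{eq:tech_cond2}, using the bound $|p_{\eta_0}(x-\theta/2) - p_{\eta_0}(x+\theta/2)|/|\theta| \leq \sup_{|s|<\epsilon}|p_{\eta_0}(x+s) - p_{\eta_0}(x)|/|s|$ which is uniformly dominated) and one where the $p_{\eta_0}$-factor is replaced by $p'_{\eta_0}$ (handled by \eqref{eq:tech_cond1} combined with \eqref{eq:location_Q}, which furnishes the uniform-in-$\eta$ dominator $Q$ on the $\ell_\eta$ side). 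Dominated convergence then yields the claimed limit with the uniformity coming for free since all the envelopes are $\eta$-free.

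Finally I would identify the limit with $V_\eta/2$. Since $p_{\eta_0}$ is even and smooth, $\ell_{\eta_0}$ is even and so $\ell'_{\eta_0}$ is odd; similarly $\ell'_\eta$ is odd, making $\ell'_\eta\ell'_{\eta_0}$ even. Therefore
\[
  V_\eta = \int_{-\infty}^\infty p_{\eta_0}\ell'_\eta\ell'_{\eta_0}\,dx = 2\int_0^\infty p_{\eta_0}\ell'_\eta\ell'_{\eta_0}\,dx = 2\int_0^\infty p'_{\eta_0}\ell'_\eta\,dx,
\]
which gives the asserted identification of the quadratic coefficient and completes \eqref{eq:uql}.

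The main obstacle is the uniform-in-$\eta$ control of the remainder after applying Lemma~\ref{lem:hf}: a naive Taylor expansion of $\ell_\eta(x\pm\theta/2)$ around $x$ would pick up $\ell''_\eta$ terms that we have no hypothesis about, so the argument must stay at the level of first-order increments and be driven by dominated convergence using the three envelope conditions \eqref{eq:location_Q}--\eqref{eq:tech_cond2}. The symmetry-based reformulation is what makes this possible, because it reveals the integrand as a \emph{product} of two first-order increments rather than a lone second-order remainder.
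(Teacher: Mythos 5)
Your proposal is correct and follows essentially the same route as the paper's proof: apply Lemma \ref{lem:hf} with $f = p_{\theta,\eta}/p_{\theta_0,\eta}$ (whose hypothesis you verify explicitly, which the paper leaves implicit), rewrite the expectation as minus the integral over $(0,\infty)$ of a product of two first-order increments, split the error from $p'_{\eta_0}\ell'_\eta$ into the two pieces controlled by \eqref{eq:tech_cond1}--\eqref{eq:tech_cond2} together with the envelope $Q$ and dominated convergence, and identify the limit with $V_\eta/2$ via $V_\eta = 2\int_0^\infty \ell'_\eta\, p'_{\eta_0}\,dx$. The only cosmetic difference is that you keep the centered increments at $x \pm \theta/2$ whereas the paper shifts to one-sided increments $\ell_\eta(x+\theta)-\ell_\eta(x)$ at the cost of an explicit $o(\theta^2)$ term; both are equivalent.
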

\begin{proof}
Without loss of generality we assume that $\theta_0=0$ and $\theta > \theta_0$.
Then, applying Lemma \ref{lem:hf} with $f = p_{\theta,\eta}/p_{\theta_0,\eta}$, we get
\bean
	P_{\theta_0,\eta_0} \log \frac{p_{\theta, \eta}}{p_{\theta_0, \eta}} 
	&=& -\int_0^\infty \log \frac{p_{\theta, \eta}}{p_{\theta_0, \eta}} \left(x+\frac{\theta}{2}\right) \cdot 
	\left[ p_{\theta_0,\eta_0}\left(x-\frac{\theta}{2} \right) - 
	p_{\theta_0,\eta_0}\left(x+\frac{\theta}{2} \right) \right] dx 
	\\
	&=& -\int_0^\infty \left[ \ell_{\eta}\Big(x - \frac{\theta}{2}\Big) - 
	\ell_{\eta}\Big(x + \frac{\theta}{2}\Big) \right] \cdot
	\left[ p_{\eta_0}\Big(x-\frac{\theta}{2} \Big) - 
	p_{\eta_0}\Big(x+\frac{\theta}{2} \Big) \right] dx 
	\\
	&=& -\int_0^\infty \Big[\ell_\eta(x+\theta) - \ell_\eta(x)\Big] \cdot
	\Big[p_{\eta_0}(x+\theta) - p_{\eta_0}(x)\Big] dx + o(|\theta|^2)
\eean
as $\theta \rightarrow 0$, where the $o(|\theta|^2)$ term converges to 0 uniformly in $\eta\in\cH_0$.
Since 
$$V_\eta = 2\int_0^\infty \ell_\eta^\prime(x) \; p_{\eta_0}^\prime(x) dx,$$
the left hand side of \eqref{eq:uql} is bounded by
\bean
	&&\theta^2 \times \bigg[ \int_0^\infty \sup_{\eta\in\cH_0}\left|
	\frac{\ell_\eta(x+\theta)-\ell_\eta(x)}{\theta}\right|
	\cdot \left|\frac{p_{\eta_0}(x+\theta)-p_{\eta_0}(x)}{\theta} 
	- p_{\eta_0}^\prime(x) \right| dx
	\\
	&& ~~~~~~~~ + \int_0^\infty \sup_{\eta\in\cH_0} \left|\frac{\ell_\eta(x+\theta)-\ell_\eta(x)}{\theta}
	- \ell_\eta^\prime(x) \right| \cdot |p_{\eta_0}^\prime(x)| \;dx \bigg] + o(|\theta|^2)
	\\
	&& \leq \theta^2 \times \int_0^\infty Q(x) \cdot \left|\frac{p_{\eta_0}(x+\theta)-p_{\eta_0}(x)}{\theta} 
	- p_{\eta_0}^\prime(x) \right| dx + o(|\theta|^2)
	\\
	&& = o(|\theta|^2)
\eean
as $\theta \rightarrow 0$, where the last equality holds by the dominated convergence theorem.
\end{proof}

\bigskip

\begin{Exmp}[Mixtures of normal densities] \label{ex:mixnorm}
Let positive constants $\sigma_1, \sigma_2$ and $M$, with $\sigma_1 < \sigma_2$ be given.
Let $\cH$ be the set of all Borel probability measures $\eta$ supported on 
$[-M,M]\times[\sigma_1,\sigma_2]$ and satisfying $d\eta(z,\sigma) = d\eta(-z,\sigma)$.
Define mixtures of normal densities
$$p_\eta(x) = \int\phi_\sigma(x-z)d\eta(z,\sigma)$$
for every $\eta \in \cH$ and let $\cH_0=\cH$.
Then, by Lemma \ref{lem:quad_bound}, there exists a function $Q$ satisfying $P_{\eta_0} Q^2 < \infty$, 
$\sup_{|x| < \epsilon} Q(x) < \infty$ for every $\epsilon > 0$, and \eqref{eq:location_Q}.
Furthermore, we can choose $Q(x) = O(x)$ as $|x| \rightarrow \infty$.
Conditions \eqref{eq:tech_cond1} and \eqref{eq:tech_cond2} are satisfied by
(v) and (vi) of Lemma and \ref{lem:density}, respectively.
We conclude that the assertion of Theorem \ref{thm:quad_genaral} holds.
\end{Exmp}

\bigskip

A class of mixtures of normal densities is large enough to approximate every twice continuously differentiable density.
If $p$ is a twice continuously differentiable density,
it is well-known (see, for example \citet{ghosal2007posterior}) that 
$h(p, p*\phi_\sigma) = O(\sigma^2)$ as $\sigma \rightarrow 0$,
where $*$ denotes the convolution.
When $p$ is symmetric, it can be similarly approximated by symmetric normal mixtures, but in this case, 
there should be a restriction on mixing distribution to make a density symmetric.
In Example \ref{ex:mixnorm} we impose an assumption that $\eta$ is supported on
$[-M,M] \times [\sigma_1, \sigma_2]$ for some $M>0$ and $0 < \sigma_1 < \sigma_2 < \infty$.
This assumption is required just for technical convenience, and with some additional efforts,
the results could be extended to symmetric mixtures supported on $\bbR \times [0,\infty)$
as in \citet{ghosal2001entropies} and \citet{tokdar2006posterior}.
Furthermore, even when mixtures of normal densities are used for
modeling a smooth density (not necessarily a mixture of normal densities) as in \citet{ghosal2007posterior},
we believe that the results in this thesis could be fully generalized.

In the remainder of this subsection, we prove some elementary properties of mixtures of normal densities
required in Example \ref{ex:mixnorm}.
We follow the notations presented in Example \ref{ex:mixnorm}.
Note first that
\be \label{eq:fractional_bound}
	\inf_z \frac{g(z)}{f(z)} \leq \frac{\int g(z) d\eta(z)}{\int f(z) d\eta(z)}
	~~ \textrm{and} ~~~
	\frac{\int h(z) d\eta(z)}{\int f(z) d\eta(z)} \leq \sup_z \frac{|h(z)|}{f(z)}	
\ee
for any probability measure $\eta$ and integrable real-valued functions $f, g > 0$ and $h$.
These inequalities are very useful to bound the ratio of two mixtures of densities.

\bigskip

\begin{lemma} \label{lem:quad_bound}
There exists a function $x \mapsto Q_{\theta_0}(x)$ with $P_{\theta_0,\eta_0} Q_{\theta_0}^2 < \infty$
and an open neighborhood $U$ of $\theta_0$
such that
$$ \sup_{\eta\in\cH}|\ell_{\theta_1,\eta}(x) - \ell_{\theta_2,\eta}(x)| \leq Q_{\theta_0}(x) \cdot |\theta_1 - \theta_2| $$
for all $\theta_1, \theta_2 \in U$ and $x\in\bbR$.
Furthermore, $Q_{\theta_0}(x) = O(x)$ as $x \rightarrow \infty$.
\end{lemma}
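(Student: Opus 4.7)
The plan is to reduce the Lipschitz bound to a derivative bound via the mean value theorem and then to bound the score $\partial_\theta \ell_{\theta,\eta}$ uniformly in $\eta\in\cH$ using the structure of the normal mixture. Fix a bounded open neighborhood $U$ of $\theta_0$, say $U=(\theta_0-1,\theta_0+1)$. Since $\theta\mapsto \ell_{\theta,\eta}(x)=\log p_\eta(x-\theta)$ is differentiable, for any $\theta_1,\theta_2\in U$ the mean value theorem gives
\[
\bigl|\ell_{\theta_1,\eta}(x)-\ell_{\theta_2,\eta}(x)\bigr|
\;\le\; \sup_{\theta\in U}\bigl|\partial_\theta\ell_{\theta,\eta}(x)\bigr|\cdot|\theta_1-\theta_2|,
\qquad \partial_\theta\ell_{\theta,\eta}(x) = -\frac{p_\eta'(x-\theta)}{p_\eta(x-\theta)}.
\]
So it suffices to produce a single function $Q_{\theta_0}$ with $Q_{\theta_0}(x)=O(|x|)$ and $P_{\theta_0,\eta_0}Q_{\theta_0}^2<\infty$ such that $|p_\eta'(y)/p_\eta(y)|\le Q_{\theta_0}(x)$ whenever $y=x-\theta$ for $\theta\in U$, uniformly in $\eta\in\cH$.

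The key step is a pointwise bound on the score of a normal mixture. Differentiating under the integral,
\[
p_\eta'(y) \;=\; -\int \frac{y-z}{\sigma^2}\,\phi_\sigma(y-z)\,d\eta(z,\sigma).
\]
Because $\eta$ is supported on $[-M,M]\times[\sigma_1,\sigma_2]$, the factor $|y-z|/\sigma^2$ is bounded on the support by $(|y|+M)/\sigma_1^2$, which is independent of $(z,\sigma)$ and can therefore be pulled out of the integral. This yields the crucial uniform ratio bound
\[
\frac{|p_\eta'(y)|}{p_\eta(y)} \;\le\; \frac{|y|+M}{\sigma_1^{2}},
\qquad y\in\bbR,\ \eta\in\cH,
\]
where the denominator never vanishes since $p_\eta>0$ everywhere. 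Substituting $y=x-\theta$ with $|\theta-\theta_0|<1$ gives the bound $|\partial_\theta\ell_{\theta,\eta}(x)|\le (|x|+|\theta_0|+1+M)/\sigma_1^{2}$.

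It remains to set
\[
Q_{\theta_0}(x) \;=\; \frac{|x|+|\theta_0|+1+M}{\sigma_1^{2}},
\]
which is linear in $|x|$, hence $Q_{\theta_0}(x)=O(x)$ as $x\to\infty$, and $P_{\theta_0,\eta_0}Q_{\theta_0}^2<\infty$ since $p_{\theta_0,\eta_0}$ is a mixture of normals with $\sigma\le\sigma_2$ and locations in $[\theta_0-M,\theta_0+M]$, so it has finite second moment. I do not foresee a genuine obstacle: the only subtlety is ensuring the score bound is uniform in $\eta$, which the identity above handles cleanly thanks to the compact support of the mixing measures. One could, if a tighter estimate were desired, invoke the Cauchy-Schwarz-style fractional-bound inequality \eqref{eq:fractional_bound} stated in the paper, but the direct estimate is sharp enough for the stated conclusion.
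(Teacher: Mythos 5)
Your proof is correct. The core of the argument is the same as the paper's: the compact support $[-M,M]\times[\sigma_1,\sigma_2]$ of the mixing measures lets you bound a ratio of mixture integrals by the supremum of the corresponding ratio of Gaussian kernels, which produces the uniform linear envelope $(|x|+\mathrm{const})/\sigma_1^2$. The only difference is the route to that reduction: the paper applies the fractional bound \eqref{eq:fractional_bound} directly to $\log\big(p_{\theta_1,\eta}/p_{\theta_2,\eta}\big)(x)$ and then exploits the exact identity $\log\big(\phi_\sigma(a)/\phi_\sigma(b)\big)=-(a-b)(a+b)/(2\sigma^2)$, so no differentiation in $\theta$ is needed; you instead invoke the mean value theorem on $\theta\mapsto\ell_{\theta,\eta}(x)$ and bound the score $p_\eta'/p_\eta$ uniformly, which is the same estimate the paper uses later in part (ii) of Lemma \ref{lem:density}. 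Your version requires the (easy) justification of differentiating under the integral and the convexity of $U$, but in exchange it transfers verbatim to any smooth kernel family with a uniformly controlled score, whereas the paper's one-line computation is specific to the Gaussian log-ratio factoring so cleanly. Both yield the same $Q_{\theta_0}$, and your verification of square-integrability and of $Q_{\theta_0}(x)=O(x)$ is fine.
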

\begin{proof}
By \eqref{eq:fractional_bound}, we have
\bean
	\left| \log \frac{p_{\theta_1, \eta}}{p_{\theta_2, \eta}}(x) \right|
	&=& \left| \log \frac{\int \phi_\sigma(x-\theta_1-z)d\eta(z,\sigma)}{\int \phi_\sigma(x-\theta_2-z)d\eta(z,\sigma)} \right|
	\leq \sup_{z,\sigma} \left| \log\frac{\phi_\sigma(x-\theta_1-z)}{\phi_\sigma(x-\theta_2-z)}\right| \\
	&\leq& \sup_{z,\sigma} \frac{|\theta_1 - \theta_2|}{\sigma^2} \cdot \left| x-\frac{\theta_1+\theta_2}{2}-z\right| \\
	&\leq& |\theta_1-\theta_2| \cdot \Big(|x|+|\theta_1+\theta_2|/2+M\Big) \big/ \sigma_1^2
\eean
for every $\theta_1, \theta_2 \in \Theta$ and this assures
the existence of $Q_{\theta_0}$ and $\sup_{\eta\in\cH}|\score_{\theta_0, \eta}(x)| = O(x)$
as $x \rightarrow \infty$.
\end{proof}

\bigskip

\begin{lemma} \label{lem:density}
For $\theta_0=0$ and $0 < h < 1$, the density function satisfies
\bean
\begin{array}{ll}
(i)~~~p_{\theta_0,\eta_0}^\prime(x) = O\big(x \phi_{\sigma_2}(x-M)\big) & 
(ii)~\sup_{\eta \in \cH} | \ell_{\theta_0, \eta}^\prime(x) | = O(x) \\
(iii)~\sup_{\eta \in \cH}|\ell_{\theta_0,\eta}^{\prime\prime}(x)| = O\big(x^2\big) &
(iv)~\sup_{\eta \in \cH} | \ell_{\theta_0, \eta}^{\prime\prime\prime}(x) | = O(x^3) \\
(v)~~~|p_{\theta_0,\eta_0}^\prime(x+h) - p_{\theta_0,\eta_0}^\prime(x)|
	= h O\big(x^2 \phi_{\sigma_2}(x-M)\big)~~ & \\
(vi)~~\sup_{\eta \in \cH} | \ell_{\theta_0, \eta}^\prime(x+h) - \ell_{\theta_0, \eta}^\prime(x)| = h O(x^2 ) &
\end{array}
\eean
as $x \rightarrow \infty$.
\end{lemma}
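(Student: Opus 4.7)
The plan is to compute all required derivatives of $p_\eta$ by differentiating under the integral sign, reduce each derivative quotient $p_\eta^{(k)}/p_\eta$ to a ratio of mixtures of known Gaussians, and then invoke the elementary inequalities \eqref{eq:fractional_bound} to pass suprema inside the integrals. Explicitly, writing $\phi_\sigma^{(k)}(x-z) = H_k((x-z)/\sigma)\,\sigma^{-k}\phi_\sigma(x-z)$ with $H_k$ a polynomial of degree $k$ (essentially a Hermite polynomial), we get
\begin{equation*}
    p_\eta^{(k)}(x) = \int H_k\!\left(\tfrac{x-z}{\sigma}\right) \sigma^{-k} \phi_\sigma(x-z)\, d\eta(z,\sigma).
\end{equation*}

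For (i), since $H_1(u)=-u$, $|p_{\eta_0}'(x)| \le \int \sigma^{-2}|x-z|\phi_\sigma(x-z)\,d\eta_0$, and because $|z|\le M$, $\sigma\in[\sigma_1,\sigma_2]$, the factor $|x-z|/\sigma^2$ is $O(x)$ while $\phi_\sigma(x-z)\le (\sigma_2/\sigma_1)\,\phi_{\sigma_2}(x-M)$ once $x\ge M$ (using $(x-z)^2\ge(x-M)^2$ and monotonicity of $\sigma\mapsto \sigma^{-1}e^{-u^2/(2\sigma^2)}$ in the appropriate range); this gives the claimed decay. For (ii)--(iv), the quotients
\begin{equation*}
    \frac{p_\eta^{(k)}(x)}{p_\eta(x)} = \frac{\int H_k((x-z)/\sigma)\sigma^{-k}\phi_\sigma(x-z)\,d\eta}{\int \phi_\sigma(x-z)\,d\eta}
\end{equation*}
are bounded in absolute value by $\sup_{z,\sigma}|H_k((x-z)/\sigma)|/\sigma^{k} = O(x^k)$ via the second inequality of \eqref{eq:fractional_bound}, uniformly over $\eta\in\cH$. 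Expanding $\ell_\eta^{(k)}$ in terms of $p_\eta^{(j)}/p_\eta$ for $1\le j\le k$ by the logarithmic-derivative (Faà di Bruno) formulas then yields (ii)--(iv) immediately, since polynomial orders add as expected and the highest-order term dominates.

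For (v) and (vi), the plan is to apply the mean value theorem to $p_{\eta_0}'$ and $\ell_\eta'$ respectively. For $0<h<1$ and large $x$, there exists $\xi\in(x,x+h)$ with $|p_{\eta_0}'(x+h)-p_{\eta_0}'(x)| = h\,|p_{\eta_0}''(\xi)|$; the bound $|p_{\eta_0}''(\xi)|=O(\xi^2\phi_{\sigma_2}(\xi-M))$ is derived exactly as in (i) (polynomial $H_2$ of degree $2$, same Gaussian-envelope argument), and since $\xi\le x+1$ the estimate transfers to $h\cdot O(x^2\phi_{\sigma_2}(x-M))$. Similarly, $|\ell_\eta'(x+h)-\ell_\eta'(x)|\le h\,\sup_{\eta\in\cH}|\ell_\eta''(\xi)|=h\cdot O(x^2)$ by part (iii).

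I do not expect a real obstacle here: everything reduces to the Gaussian identity $\phi_\sigma^{(k)}/\phi_\sigma = H_k((x-z)/\sigma)\sigma^{-k}$ together with the ratio inequality \eqref{eq:fractional_bound}. The one step to handle with care is the Gaussian-envelope comparison in (i) and (v), where one must check that $\phi_\sigma(x-z)$ is dominated by a constant multiple of $\phi_{\sigma_2}(x-M)$ uniformly in $(z,\sigma)\in[-M,M]\times[\sigma_1,\sigma_2]$ for $x$ large; this follows from $(x-z)^2\ge (x-M)^2$ and the bound $\sigma^{-1}\le\sigma_1^{-1}$, but it must be verified separately on each side of the origin (using symmetry of $\eta_0$ for the $x\to-\infty$ case, although as stated the lemma only claims $x\to\infty$).
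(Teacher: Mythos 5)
Your proposal is correct and follows essentially the same route as the paper: explicit formulas for $\phi_\sigma^{(k)}/\phi_\sigma$ (the paper writes them out rather than invoking Hermite polynomials), the envelope bound $\phi_\sigma(x-z)\leq \sigma_1^{-1}\sigma_2\,\phi_{\sigma_2}(x-M)$ for (i) and (v), the ratio inequality \eqref{eq:fractional_bound} plus the logarithmic-derivative identities for (ii)--(iv), and a first-order Taylor/mean-value argument reducing (v) and (vi) to the second-derivative bounds. The only cosmetic difference is that the paper uses the integral form of the remainder, $p'(x+h)-p'(x)=h\int_0^1 p''(x+th)\,dt$, where you use the mean value theorem; both are fine.
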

\begin{proof}
Note that
\bean
	\phi_\sigma^\prime(x) &=& -\frac{x}{\sigma^2} \phi_\sigma(x) \\
	\phi_\sigma^{\prime\prime}(x) &=& \left( \frac{x^2}{\sigma^4} - \frac{1}{\sigma^2} \right) \phi_\sigma(x) \\
	\phi_\sigma^{\prime\prime\prime}(x) &=& \left( \frac{3x}{\sigma^4} - \frac{x^3}{\sigma^6} \right) \phi_\sigma(x)
\eean
and $\phi_\sigma(x) \leq \sigma_1^{-1} \sigma_2 \phi_{\sigma_2}(x)$ for every $\sigma_1 \leq \sigma \leq \sigma_2$.
First, (i) holds by
\bean
	p_{\theta_0,\eta_0}^\prime(x) &=& \int \phi_\sigma^\prime(x-z) d\eta_0(z,\sigma) \\
	&=& \int \frac{z \phi_\sigma(x-z)}{\sigma^2} d\eta_0(z,\sigma) 
		- x\int \frac{\phi_\sigma(x-z)}{\sigma^2} d\eta_0(z,\sigma) \\
	&\leq& \int \frac{\sigma_2 M \phi_{\sigma_2}(x-z)}{\sigma_1^3} d\eta_0(z,\sigma) 
		- x\int \frac{\sigma_2 \phi_{\sigma_2}(x-z)}{\sigma_1^3} d\eta_0(z,\sigma) \\
	&\leq& O\big(\phi_{\sigma_2}(x-M)\big) + O\big(x \phi_{\sigma_2}(x-M)\big) 
	= O\big(x \phi_{\sigma_2}(x-M)\big)
\eean
as $x \rightarrow \infty$.
Also, since we have
\bean
	\ell_{\theta_0,\eta}^\prime(x) &=& \frac{p_{\theta_0,\eta}^\prime}{p_{\theta_0,\eta}}(x)
	= \frac{\int \phi_\sigma^\prime(x-z)d\eta(z,\sigma)}{\int\phi_\sigma(x-z)d\eta(z,\sigma)}
	\leq \sup_{z,\sigma} \left|\frac{\phi_\sigma^\prime}{\phi_\sigma}(x-z)\right| \leq (x+M)/\sigma_1^2, \\
	\ell_{\theta_0,\eta}^{\prime\prime}(x)
	&=& \frac{p_{\theta_0,\eta} p_{\theta_0,\eta}^{\prime\prime} - 
	(p_{\theta_0,\eta}^\prime)^2}{(p_{\theta_0,\eta})^2}(x)
	\leq \sup_{z,\sigma} \left|\frac{\phi_\sigma^{\prime\prime}}{\phi_\sigma}(x-z)\right|
	+ \sup_{z,\sigma} \left|\frac{\phi_\sigma^\prime}{\phi_\sigma}(x-z)\right|^2 \\
	&\leq& \frac{1}{\sigma_1^2} + \frac{2(x+M)^2}{\sigma_1^4}, \\
	\ell_{\theta_0,\eta}^{\prime\prime\prime}(x) &=& 
	\frac{p_{\theta_0,\eta}^{\prime\prime\prime}}{p_{\theta_0,\eta}}(x) 
	- 3\frac{p_{\theta_0,\eta}^\prime p_{\theta_0,\eta}^{\prime\prime}}{(p_{\theta_0,\eta})^2}(x)
	+ 2\left(\frac{p_{\theta_0,\eta}^\prime}{p_{\theta_0,\eta}}\right)^3(x)
	\leq \frac{6(x+M)}{\sigma_1^4} + \frac{6(x+M)^3}{\sigma_1^6}
\eean
for $x > 0$, (ii), (iii) and (iv) are proved.
Next, (v) can be proved by 
\bean
	p_{\theta_0,\eta_0}^\prime(x+h) - p_{\theta_0,\eta_0}^\prime(x) &=& 
	h \int_0^1 p_{\theta_0,\eta_0}^{\prime\prime}(x+th) dt \\
	&=& h \int_0^1 \int \phi_\sigma^{\prime\prime}(x+th-z) d\eta_0(z,\sigma) dt \\
	&\leq& h \sigma_1^{-5} \sigma_2 (x+1+M)^2 \phi_{\sigma_2}(x-M) 
\eean
for $0 < h < 1$ and large enough $x$.
In the same way, combining
\bean
	\ell_{\theta_0,\eta}^\prime(x+h) - \ell_{\theta_0,\eta}^\prime(x) &=& 
	h \int_0^1 \ell_{\theta_0,\eta}^{\prime\prime}(x+th) dt	
\eean
and (iii), we can prove (vi).
\end{proof}

\bigskip

\subsection{Multivariate symmetric densities}

For $\bX = (X_1, \ldots, X_m)^T$, $\btheta = (\theta_1, \ldots, \theta_m)^T$
and $\bepsilon=(\epsilon_1, \ldots, \epsilon_m)^T$, consider a multivariate location problem
$$\bX = \btheta + \bepsilon,$$
where the distribution of $\bepsilon$ is parametrized by $\eta\in\cH$ for some infinite dimensional $\cH$.
Write the density of error distribution by $p_\eta$ and let $p_{\btheta,\eta}(\bx) = p_\eta(\bx-\btheta)$.
A density $p_\eta$ is assumed to be symmetric about the origin in the sense that 
$p_\eta(\bx) = p_\eta(-\bx)$, and continuously differentiable for every $\eta \in \cH$.
Fix $\btheta_0 \in \bbR^m$ and $\eta_0\in\cH$ which can be considered as the true parameter.
For a set $A \subset \bbR^m$ let $A^- = \{\by\in\bbR^m: -\by\in A\}$.
The following lemma and theorem are the multivariate correspondences of Lemma \ref{lem:hf}
and Theorem \ref{thm:quad_genaral}.

\bigskip

\begin{lemma} \label{lem:imp}
Assume that a positive function $f: \bbR^m \rightarrow (0,\infty)$
satisfies $f(\bx+\btheta/2) \cdot f(-\bx + \btheta/2) = 1$.
Then, for any suitably integrable function $h$,
\be \label{eq:hf_re} 
	\int h(\bx) \log f(\bx) d\bx = 
	- \int_A \left[h\left(-\bx+ \frac{\btheta}{2} \right) 
	- h\left(\bx+\frac{\btheta}{2} \right)\right]
	\log f\left(\bx+\frac{\btheta}{2} \right) d\bx,
\ee
where $A$ is any subset of $\bbR^m$ such that $A$ and $A^-$ are disjoint, and
$(A \cup A^-)^c$ has Lebesgue measure zero.
\end{lemma}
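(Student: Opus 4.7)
The plan is to mimic the one-dimensional argument in Lemma \ref{lem:hf}, replacing the half-line decomposition of $\mathbb{R}$ by the $A$/$A^-$ decomposition of $\mathbb{R}^m$. The only algebraic fact we shall use beyond a change of variables is the reformulation of the functional equation as the antisymmetry
$$
\log f\!\left(-\bx+\tfrac{\btheta}{2}\right) = -\log f\!\left(\bx+\tfrac{\btheta}{2}\right),
$$
which is immediate from $f>0$ and $f(\bx+\btheta/2)\,f(-\bx+\btheta/2)=1$.

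First I would translate by $\btheta/2$, writing
$$
\int_{\mathbb{R}^m} h(\bx)\log f(\bx)\,d\bx
= \int_{\mathbb{R}^m} h\!\left(\by+\tfrac{\btheta}{2}\right) \log f\!\left(\by+\tfrac{\btheta}{2}\right) d\by,
$$
which is a unimodular translation and so preserves integrability. Next, since $A$ and $A^-$ are disjoint and their union differs from $\mathbb{R}^m$ by a Lebesgue null set, I would split this integral as
$$
\int_{A} h\!\left(\by+\tfrac{\btheta}{2}\right) \log f\!\left(\by+\tfrac{\btheta}{2}\right) d\by
+\int_{A^-} h\!\left(\by+\tfrac{\btheta}{2}\right) \log f\!\left(\by+\tfrac{\btheta}{2}\right) d\by.
$$

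Then I would apply the linear change of variables $\by \mapsto -\by$ to the second integral; its Jacobian determinant has absolute value $1$ and it maps $A^-$ bijectively onto $A$, giving
$$
\int_{A^-} h\!\left(\by+\tfrac{\btheta}{2}\right) \log f\!\left(\by+\tfrac{\btheta}{2}\right) d\by
= \int_{A} h\!\left(-\by+\tfrac{\btheta}{2}\right) \log f\!\left(-\by+\tfrac{\btheta}{2}\right) d\by.
$$
Invoking the antisymmetry identity above to replace $\log f(-\by+\btheta/2)$ by $-\log f(\by+\btheta/2)$, and recombining the two pieces, yields
$$
\int h(\bx)\log f(\bx)\,d\bx
= \int_{A}\left[h\!\left(\by+\tfrac{\btheta}{2}\right) - h\!\left(-\by+\tfrac{\btheta}{2}\right)\right] \log f\!\left(\by+\tfrac{\btheta}{2}\right) d\by,
$$
which is exactly \eqref{eq:hf_re} after pulling out the minus sign.

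There is essentially no hard step here: the argument is a pure bookkeeping exercise combining translation invariance, reflection invariance, and the antisymmetry of $\log f$ about $\btheta/2$. The only thing that warrants a brief comment is the legitimacy of the splitting and the reflection change of variables, which follows from the hypothesis that $A$ and $A^-$ are disjoint with $(A\cup A^-)^c$ of measure zero and from the assumption that $h(\cdot)\log f(\cdot)$ is suitably integrable; the univariate proof in Lemma \ref{lem:hf} is just the case $m=1$, $A=(0,\infty)$, $A^-=(-\infty,0)$.
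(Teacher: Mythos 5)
Your proposal is correct and follows essentially the same route as the paper's own proof: translate by $\btheta/2$, split the integral over $A$ and $A^-$, reflect the $A^-$ piece back onto $A$, and use the antisymmetry $\log f(-\bx+\btheta/2) = -\log f(\bx+\btheta/2)$. You are merely more explicit about the translation and the Jacobian, which the paper leaves implicit.
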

\begin{proof}
The left hand side of \eqref{eq:hf_re} is equal to
\bean
	&&\int_A h\left(\bx + \frac{\btheta}{2}\right) \log f\left(\bx + \frac{\btheta}{2}\right) d\bx + 
	\int_{A^-} h\left(\bx + \frac{\btheta}{2}\right) \log f\left(\bx + \frac{\btheta}{2}\right) d\bx \\
	&=& \int_A h\left(\bx + \frac{\btheta}{2}\right) \log f\left(\bx + \frac{\btheta}{2}\right) d\bx - 
	\int_A h\left(-\bx+\frac{\btheta}{2} \right) \log f\left(\bx+\frac{\btheta}{2}\right) d\bx \\
	&=& -\int_A \left[h\left(-\bx+ \frac{\btheta}{2} \right) - h\left(\bx+\frac{\btheta}{2} \right)\right]
	\log f\left(\bx+\frac{\btheta}{2} \right) d\bx
\eean
which is the desired result.
\end{proof}

\bigskip

\begin{theorem} \label{thm:uniform_quadratic_lme}
Suppose that for a subset $\cH_0\subset\cH$ there exist $\epsilon >0$ and a function $Q$
such that $P_{\eta_0}Q^2 < \infty$, $\sup_{|\bx|<\epsilon} Q(\bx) < \infty$, and
\be \label{eq:vector_Q}
	\sup_{|\theta|<\epsilon}\sup_{\eta\in\cH_0} \frac{|\ell_\eta(\bx+\btheta)
	- \ell_\eta(\bx)|}{|\btheta|} \leq  Q(\bx)
\ee
for all $\bx$. 
Furthermore, assume that
\be \label{eq:vector_tech1}
	\int \sup_{|\btheta|<\epsilon} \frac{|p_{\eta_0}(\bx+\btheta) 
	- p_{\eta_0}(\bx)|}{|\btheta|} \cdot Q(\bx) \; d\bx < \infty
\ee
and
\be\label{eq:vector_tech2}
	\int \sup_{\eta\in\cH_0} \Big|\ell_{\eta}(\bx+\btheta) 
	- \ell_{\eta}(\bx) - \btheta^T \nabla\ell_\eta(\bx)\Big|
	\cdot |\nabla p_{\eta_0}(\bx)| \; d\bx = o(|\btheta|)
\ee
as $\btheta$ converges to the zero vector.
Then
\be \label{eq:mul_quad}
	\sup_{\eta\in\cH_0} \left|
	P_{\btheta_0,\eta_0} \log \frac{p_{\btheta,\eta}}{p_{\btheta_0,\eta}} 
	+ \frac{1}{2} (\btheta-\btheta_0)^T V_{\eta} (\btheta-\btheta_0) \right|
	= o(|\btheta-\btheta_0|^2)
\ee
as $\btheta \rightarrow \btheta_0$.
\end{theorem}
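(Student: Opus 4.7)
The approach is to mirror the univariate argument of Theorem \ref{thm:quad_genaral}, substituting Lemma \ref{lem:imp} for Lemma \ref{lem:hf} and performing a second-order expansion term by term while tracking uniformity in $\eta$. Assume without loss of generality $\btheta_0 = 0$ and take $A = \{\bx : x_1 > 0\}$, so that $A$ and $A^-$ are disjoint and cover $\bbR^m$ up to a null set. With $f = p_{\btheta,\eta}/p_{0,\eta}$, the symmetry $p_\eta(-\by) = p_\eta(\by)$ gives $f(\bx+\btheta/2)\,f(-\bx+\btheta/2)=1$, so Lemma \ref{lem:imp} applies with $h = p_{0,\eta_0}$; using also $p_{\eta_0}(-\by) = p_{\eta_0}(\by)$ I obtain
\[
P_{0,\eta_0}\log\frac{p_{\btheta,\eta}}{p_{0,\eta}}
= -\int_A \bigl[p_{\eta_0}(\bx-\tfrac{\btheta}{2})-p_{\eta_0}(\bx+\tfrac{\btheta}{2})\bigr]\bigl[\ell_\eta(\bx-\tfrac{\btheta}{2})-\ell_\eta(\bx+\tfrac{\btheta}{2})\bigr]\,d\bx.
\]

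Substituting $\by = \bx - \btheta/2$ and writing $\Delta_p(\by) = p_{\eta_0}(\by+\btheta)-p_{\eta_0}(\by)$ and $\Delta_\ell(\by;\eta) = \ell_\eta(\by+\btheta)-\ell_\eta(\by)$, the integral becomes $-\int_B \Delta_p(\by)\Delta_\ell(\by;\eta)\,d\by$ with $B = A - \btheta/2$. The symmetric difference $A\triangle B$ is a slab of thickness $O(|\btheta|)$ on which, by \eqref{eq:vector_Q}, $|\Delta_p\Delta_\ell|/|\btheta|^2$ is dominated by the $\bbR^m$-integrable majorant $\sup_{|\btheta'|<\epsilon}|p_{\eta_0}(\by+\btheta')-p_{\eta_0}(\by)|/|\btheta'|\cdot Q(\by)$ from \eqref{eq:vector_tech1}; since $\mathbf{1}_{A\triangle B}\to 0$ almost everywhere, dominated convergence shows the discrepancy is $o(|\btheta|^2)$ uniformly in $\eta\in\cH_0$, so I may replace $B$ by $A$ at the cost of $o(|\btheta|^2)$.

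Now perform the second-order expansion $\Delta_p = \btheta^T\nabla p_{\eta_0} + r_0$ and $\Delta_\ell = \btheta^T\nabla\ell_\eta + r_\eta$, and expand the product into four pieces. Condition \eqref{eq:vector_tech2} yields $\sup_{\eta\in\cH_0}\int|r_\eta|\,|\nabla p_{\eta_0}|\,d\by = o(|\btheta|)$, so the mixed term $\int |(\btheta^T\nabla p_{\eta_0})r_\eta|\,d\by$ is $o(|\btheta|^2)$ uniformly. Differentiability of $p_{\eta_0}$ together with \eqref{eq:vector_tech1} gives $\int|r_0|\,Q\,d\by = o(|\btheta|)$ by dominated convergence, and letting $\btheta'\to 0$ in \eqref{eq:vector_Q} gives $|\nabla\ell_\eta|\le Q$ a.e., whence $\int|r_0(\btheta^T\nabla\ell_\eta)|\,d\by$ and $\int|r_0\,r_\eta|\,d\by$ are both $o(|\btheta|^2)$ uniformly. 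The leading term integrates as
\[
-\int_A (\btheta^T\nabla p_{\eta_0}(\by))(\btheta^T\nabla\ell_\eta(\by))\,d\by
= -\tfrac12\,\btheta^T\!\!\int_{\bbR^m}\!\!\nabla p_{\eta_0}\,\nabla\ell_\eta^T\,d\by\,\btheta
= -\tfrac12\,\btheta^T V_\eta\,\btheta,
\]
where the factor $\tfrac12$ comes from oddness of $\nabla p_{\eta_0}$ and $\nabla\ell_\eta$ (their outer product is even, hence integrates over $A$ to half the integral over $\bbR^m$), and the last identity uses $\nabla p_{\eta_0} = p_{\eta_0}\nabla\ell_{\eta_0}$ together with the fact that $V_\eta$ and $V_\eta^T$ induce the same quadratic form. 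Combining the three displays yields \eqref{eq:mul_quad}.

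The main difficulty is maintaining uniformity in $\eta\in\cH_0$ in every remainder bound; this is precisely why hypotheses \eqref{eq:vector_Q}--\eqref{eq:vector_tech2} build in the envelope $Q$ and the uniform-in-$\eta$ integrability. The boundary correction from the change of variable is routine once the dominated-convergence majorant of \eqref{eq:vector_tech1} is in hand.
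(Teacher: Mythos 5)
Your proposal is correct and follows essentially the same route as the paper's proof: Lemma \ref{lem:imp} applied on the half-space $A=\{x_1>0\}$, the shift by $\btheta/2$, linearization of both the density and log-density increments, and dominated convergence through the envelope $Q$ using \eqref{eq:vector_tech1}--\eqref{eq:vector_tech2}, with the leading term identified as $-\tfrac12\btheta^T V_\eta\btheta$ via evenness of $\nabla\ell_\eta\,\nabla p_{\eta_0}^T$. The only cosmetic differences are that you expand the product into four pieces where the paper uses a three-term telescoping split, and you spell out the $A\triangle(A-\btheta/2)$ boundary correction that the paper absorbs silently into the $o(|\btheta|^2)$ term.
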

\begin{proof}
Without loss of generality, we may assume that $\btheta_0$ is the zero vector.
Define $A = \{\bx=(x_1,\ldots,x_m): x_1 > 0\}$, then $A$ satisfies the condition of Lemma \ref{lem:imp}.
Applying Lemma \ref{lem:imp} with $f = p_{\btheta,\eta}/p_{\btheta_0,\eta}$
\bean
	&P_{\btheta_0,\eta_0}& \log \frac{p_{\btheta, \eta}}{p_{\btheta_0,\eta}} 
	= -\int_A \log \frac{p_{\btheta, \eta}}{p_{\btheta_0,\eta}} 
	\Big(\bx+\frac{\btheta}{2}\Big) \cdot 
	\left[ p_{\btheta_0,\eta_0} \Big(\bx-\frac{\btheta}{2} \Big) - 
	p_{\btheta_0,\eta_0} \Big(\bx+\frac{\btheta}{2} \Big) \right] d\bx
	\\
	&=& -\int_A \left[ \ell_{\eta}\Big(\bx - \frac{\btheta}{2}\Big) - 
	\ell_{\eta}\Big(\bx + \frac{\btheta}{2}\Big) \right]\cdot 
	\left[ p_{\eta_0} \Big(\bx-\frac{\btheta}{2} \Big) - 
	p_{\eta_0}\Big(\bx+\frac{\btheta}{2} \Big) \right] d\bx
	\\
	&=& -\int_A \Big( \ell_\eta(\bx+\btheta) - \ell_\eta(\bx)\Big) \cdot 
	\Big(p_{\eta_0}(\bx+\btheta) - p_{\eta_0}(\bx)\Big) d\bx
	+ o(|\btheta|^2)
\eean
as $\btheta \rightarrow \btheta_0$, where the $o(|\btheta|^2)$ term converges to the zero vector
uniformly in $\eta\in\cH_0$.
Since 
$$V_\eta = 2\int_A \nabla\ell_\eta(\bx) \nabla p_{\eta_0}^T(\bx) d\bx,$$
the left hand side of \eqref{eq:mul_quad} is bounded by
\bean
	&& \int_0^\infty \sup_{\eta\in\cH_0}\big|
	\ell_\eta(\bx+\btheta)-\ell_\eta(\bx) \big| \cdot
	\Big| p_{\eta_0}(\bx+\btheta)-p_{\eta_0}(\bx) 
	- \btheta^T \nabla p_{\eta_0}(x) \Big| d\bx
	\\
	&& ~~~~~~~~ + \int_0^\infty \sup_{\eta\in\cH_0} 
	\big| \ell_\eta(\bx+\btheta)-\ell_\eta(\bx) 
	- \btheta^T \nabla \ell_\eta(\bx) \big| \cdot |\btheta^T \nabla p_{\eta_0}(\bx)| \;d\bx + o(|\btheta|^2)
	\\
	&\leq& |\btheta|^2 \times \int_0^\infty Q(x) \cdot 
	\left|\frac{p_{\eta_0}(\bx+\btheta)-p_{\eta_0}(\bx)}{|\btheta|}
	- \frac{\btheta^T}{|\btheta|} p_{\eta_0}(\bx) \right| d\bx + o(|\btheta|^2)
	\\
	&=& o(|\btheta|^2)
\eean
as $\btheta$ converges to zero vector, where the last equality holds by the dominated convergence theorem.
\end{proof}

\bigskip

An important application of Theorem \ref{thm:uniform_quadratic_lme} is to the random effects models
in which $\bepsilon$ is the sum of the random effects and errors.
Therefore the distribution of random effects, as well as the error distribution, should
be assumed to be symmetric about the origin.
In Section \ref{sec:lme}, two distributions $F$ and $G$ in Example \ref{ex:mixmnorm} will be considered as
the distributions of errors and of random effects, respectively.

\bigskip

\begin{Exmp} \label{ex:mixmnorm}
For $0 < \sigma_1 < \sigma_2 < \infty$ and $M > 0$,
we define $\cF$ by the set of all Borel probability measures $F$ 
supported on $[-M,M] \times [\sigma_1,\sigma_2]$ and satisfying $dF(z,\sigma) = dF(-z,\sigma)$.
Let $\cG$ be the set of all Borel probability measures on $[-M,M]$ with $dG(b) = dG(-b)$
and define $\cH = \cH_0 = \cF\times\cG$.
For $\bx = (x_1, \ldots, x_m)^T$, $\btheta = (\theta_1, \ldots, \theta_m)^T$,
and $\eta = (F,G) \in \cH$, let
$$
	p_{\eta}(\bx) = \int \prod_{j=1}^m p_{F} (x_j-b) dG(b),
$$
where $p_{F}(x) = \int \phi_\sigma(x-z) dF(z)$.
Then $p_{\eta}(\bx) = p_{\eta}(-\bx)$ for every $\bx\in\bbR^m$ and $\eta \in \cH$.
Also,
\bean
	|\ell_\eta(\bx+\btheta) - \ell_\eta(\bx)| &=&
	\left|\log \frac{\int \prod_j p_F(x_j+\theta_j-b) dG(b)}{\int \prod_j p_F(x_j-b) dG(b)}\right|
	\\
	&\leq& \sup_{b \in [-M,M]} \left| \sum_{j=1}^m\log\frac{p_F(x_j+\theta_j-b)}{p_F(x_j-b)} \right|
	\\
	&=& O(|\bx|)
\eean
as $|\bx| \rightarrow \infty$.
Therefore there exists $Q$ such that $P_{\eta_0} Q^2 < \infty$, 
$\sup_{|\bx| < \epsilon} Q(\bx) < \infty$ for every $\epsilon > 0$ and satisfies \eqref{eq:vector_Q}.
The fact that
$$
	p_{\eta_0}(\bx+\btheta) - p_{\eta_0}(\bx) = \btheta^T \int_0^1 \nabla p_{\eta_0} (\bx + t\btheta) dt
$$
and (ii) of Lemma \ref{lem:den_property} yield \eqref{eq:vector_tech1}.
In a similar way, \eqref{eq:grad} and (i) of Lemma \ref{lem:den_property} proves \eqref{eq:vector_tech2}.
Therefore the assertion of Theorem \ref{thm:uniform_quadratic_lme} holds.
\end{Exmp}

\bigskip

\begin{lemma} \label{lem:den_property}
With the notation presented in Example \ref{ex:mixmnorm}, we have
\ben
	\item[(i)] $\sup_{0 < |\bh| < 1} \sup_{\eta\in\cH} |\nabla \ell_\eta(\by+\bh)
	-\nabla \ell_\eta(\by)| / |\bh| = O(|\by|^2)$
	\item[(ii)] $\sup_{0< |\bh| < 1} |\nabla p_{\eta_0}(\by+\bh) -
	\nabla p_{\eta_0}(\by)| / |\bh| 
	= O\big((|\by|+1)^2 e^{m|\by|/\sigma_1^2} p_{\eta_0}(\by)\big)$
\een
as $|\by| \rightarrow \infty$.
\end{lemma}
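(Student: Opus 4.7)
The plan is to reduce both parts to bounds on the Hessian by a mean value argument and then estimate the Hessian using the structure of the normal-mixture density together with the fractional bound $\int g \, d\eta / \int f \, d\eta \leq \sup g/f$ already exploited in the univariate analysis.

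For part (i), I would start from the identity
\begin{equation*}
\nabla \ell_\eta(\by+\bh) - \nabla \ell_\eta(\by) = \int_0^1 \nabla^2 \ell_\eta(\by+t\bh)\,\bh\, dt,
\end{equation*}
so it suffices to show $\sup_{\eta\in\cH}\|\nabla^2\ell_\eta(\by)\| = O(|\by|^2)$ uniformly as $|\by|\to\infty$, because the shift by $t\bh$ with $|\bh|<1$ changes $|\by|$ by at most $1$. Differentiating the log of $p_\eta(\bx)=\int\prod_j p_F(x_j-b)\,dG(b)$ gives the familiar expression
\begin{equation*}
\nabla_{jk}\ell_\eta(\bx) = \frac{\nabla_{jk} p_\eta(\bx)}{p_\eta(\bx)} - \frac{\nabla_j p_\eta(\bx)\,\nabla_k p_\eta(\bx)}{p_\eta(\bx)^2}.
\end{equation*}
Each of $\nabla_j p_\eta$, $\nabla_{jk}p_\eta$ is an integral against $dG(b)$ of a product of $p_F$ factors in which one or two factors are replaced by $p_F'$ or $p_F''$. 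Dividing by $p_\eta$ and applying the fractional bound reduces each ratio to a supremum of the form $\sup_{b,z,\sigma}|\phi_\sigma^{(r)}(x_j-b-z)/\phi_\sigma(x_j-b-z)|$, which by the explicit formulas recorded in the proof of Lemma \ref{lem:density} are bounded by $(|x_j|+2M)/\sigma_1^2$ and $1/\sigma_1^2 + (|x_j|+2M)^2/\sigma_1^4$. Combining these gives the required $O(|\by|^2)$ bound, uniformly in $\eta$.

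For part (ii), the same mean value identity applied to $p_{\eta_0}$ reduces matters to
\begin{equation*}
\sup_{0 \le t \le 1}\bigl\|\nabla^2 p_{\eta_0}(\by+t\bh)\bigr\|.
\end{equation*}
Writing $\nabla_{jk}p_{\eta_0}(\bx) = \int p_{F_0}^{(r_j)}(x_j-b) p_{F_0}^{(r_k)}(x_k-b)\prod_{l\neq j,k} p_{F_0}(x_l-b)\, dG_0(b)$ (with $r_j,r_k\in\{1\}$ when $j\ne k$ and one factor replaced by $p_{F_0}''$ when $j=k$), I factor out $p_{F_0}$ from each factor to get
\begin{equation*}
|\nabla_{jk}p_{\eta_0}(\bx)| \le \sup_{b,z,\sigma}\Bigl|\tfrac{\phi_\sigma^{(r_j)}}{\phi_\sigma}(x_j-b-z)\Bigr|\cdot\sup_{b,z,\sigma}\Bigl|\tfrac{\phi_\sigma^{(r_k)}}{\phi_\sigma}(x_k-b-z)\Bigr|\, p_{\eta_0}(\bx),
\end{equation*}
which by the univariate estimates is $O((|\bx|+1)^2 p_{\eta_0}(\bx))$. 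The remaining step is to translate $p_{\eta_0}(\by+t\bh)$ back to $p_{\eta_0}(\by)$, and this is precisely where the exponential factor arises: using the fractional bound again,
\begin{equation*}
\frac{p_{F_0}(x+h)}{p_{F_0}(x)} \le \sup_{z,\sigma}\exp\!\Bigl(-\tfrac{h(2(x-z)+h)}{2\sigma^2}\Bigr) \le \exp\!\Bigl(\tfrac{|h|(|x|+M)+h^2/2}{\sigma_1^2}\Bigr),
\end{equation*}
and taking the product over the $m$ coordinates yields $p_{\eta_0}(\by+t\bh)\le p_{\eta_0}(\by)\exp(m|\by|/\sigma_1^2)$ up to a bounded constant when $|t\bh|\le 1$. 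Combining gives the stated bound.

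The main obstacle I anticipate is the last step of part (ii): transferring the bound from $p_{\eta_0}(\by+t\bh)$ to $p_{\eta_0}(\by)$ without losing more than the $e^{m|\by|/\sigma_1^2}$ factor. Everything else is a fairly mechanical repetition of the univariate computation of Lemma \ref{lem:density}, but one has to be careful to take all suprema before integrating so that the fractional bound can be applied coordinate by coordinate against the single mixing measure $dG_0(b)$.
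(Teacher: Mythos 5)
Your proposal is correct and follows essentially the same route as the paper: a mean-value/Taylor reduction to Hessian bounds, the identity $\nabla_{jk}\ell_\eta = \nabla_{jk}p_\eta/p_\eta - \nabla_j p_\eta \nabla_k p_\eta/p_\eta^2$ together with the fractional bound for (i), and the ratio bound $p_{\eta_0}(\by+t\bh)/p_{\eta_0}(\by) = O(e^{m|\by|/\sigma_1^2})$ to transfer the Hessian estimate back to $p_{\eta_0}(\by)$ for (ii). You in fact supply more detail than the paper, which simply records the gradient, Hessian and ratio bounds \eqref{eq:grad}--\eqref{eq:h_ratio} as known facts before applying the Taylor identities.
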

\begin{proof}
Note first that
\bea
	\sup_{\eta\in\cH} \frac{|\nabla_j p_\eta|}{p_\eta}(\by) &=& O(|\by|) \label{eq:grad}\\
	\sup_{\eta\in\cH} \frac{|\nabla_{jk} p_\eta|}{p_\eta}(\by) &=& O(|\by|^2) \label{eq:hessian}\\
	\sup_{|\bh| < 1} \sup_{\eta\in\cH} \frac{p_\eta(\by+\bh)}{p_\eta(\by)}
	&=& O\big(\exp(m\cdot |\by|/\sigma_1^2)\big) \label{eq:h_ratio}
\eea
as $|\by| \rightarrow \infty$.
Also, for any $\by$ and $\bh$
and
\bea
	\nabla p_{\eta}(\by+\bh) -\nabla p_{\eta}(\by) 
	&=& \int_0^1 \nabla^2 p_{\eta}(\by+t\bh) dt \cdot \bh \label{eq:taylor_grad_p}\\
	\nabla \ell_{\eta}(\by+\bh) -\nabla \ell_{\eta}(\by) 
	&=& \int_0^1 \nabla^2 \ell_{\eta}(\by+t\bh) dt \cdot \bh \label{eq:taylor_grad_l}
\eea
by Taylor's expansion.
Therefore, \eqref{eq:taylor_grad_p} combining with \eqref{eq:hessian} and \eqref{eq:h_ratio} yields (ii).
Also, (i) is satisfied by \eqref{eq:grad}, \eqref{eq:hessian} and the identity
$$
	\nabla_{jk} \ell_{\eta}(\by) = 
	\frac{p_{\eta} \cdot \nabla_{jk}p_{\eta} - \nabla_j p_{\eta} \cdot \nabla_k p_{\eta}}
	{p_{\eta}^2}(\by)
$$
for all $j$ and $k$.
\end{proof}

\bigskip

\section{Examples}

In this section, we apply the general semiparametric BvM theorem
for specific models and priors.
We consider three models which have symmetric errors: the location, linear regression
and random intercept models.
In each model, error densities are modeled by symmetric mixtures of normal densities.
Although the location model is contained in the regression model,
we begin the proof with the location model 
because the essentials of the proofs are similar for the other models.

\subsection{Location model}

Consider the symmetric location model, where
\iid~real-valued observations $X_1, \ldots, X_n$ are modeled as
\be \label{eq:err_den}
	X_i = \theta + \epsilon_i, ~~~ i=1, \ldots, n,
\ee
and $\epsilon_i$ follows a mixture of normal densities $p_\eta(x) = \int \phi_\sigma(x-z) d\eta(z,\sigma)$
for a mixing distribution $\eta$ that is symmetric in the sense $d\eta(z,\sigma) = d\eta(-z,\sigma)$.
The model can be parameterized by the location parameter $\theta \in \Theta$ and
the mixing distribution $\eta \in \cH$, where $\Theta$ is an open subset of $\bbR$ and
$\cH$ is defined as in Example \ref{ex:mixnorm}.
Assume that the true distribution $P_0$ which generates the observations 
is contained in the model, that is $P_0 = P_{\theta_0,\eta_0}$ for some $(\theta_0,\eta_0) \in \Theta\times\cH$.
Let $\score_{\theta, \eta}(x) = \partial\ell_{\theta,\eta}(x)/\partial\theta$
be the ordinary score function
and $I_{\theta,\eta} = P_{\theta,\eta}[\score_{\theta,\eta}^{\;2}]$ be the Fisher information.
Then it is obvious that $\score_{\theta, \eta}(x) = -\ell^\prime_{\theta, \eta}(x)$ 
and $I_{\theta,\eta} > 0$ for all $\theta \in \Theta$ and $\eta\in\cH$.
Let $V_\eta = P_{\theta_0,\eta_0}[\score_{\theta_0, \eta} \score_{\theta_0,\eta_0}]$
and $d_H(\eta_1, \eta_2) = h(P_{\theta_0,\eta_1}, P_{\theta_0, \eta_2})$.
A weak neighborhood of $\eta_0$ is defined by
\be \label{eq:weak_nbr}
	\left\{ \eta \in \cH : \bigcap_{i=1}^m \Big| \int f_i(z, \sigma) d\eta(z,\sigma)
	- \int f_i(z,\sigma) d\eta_0(z,\sigma) \Big| < \epsilon\right\}
\ee
for $\epsilon > 0$ and any finite collection $f_1, \ldots, f_m$ of bounded continuous functions
on $[-M, M] \times [\sigma_1, \sigma_2]$.

\bigskip

\begin{theorem}\label{thm:main_location}
Assume that $P_0$ is contained in the model $\scrP = \{P_{\theta,\eta}: \theta \in \Theta, \eta \in \cH\}$
which is endowed with the product prior $\Pi=\Pi_\Theta \times \Pi_\cH$,
where $\Pi_\Theta$ is thick at $\theta_0$ and $\Pi_\cH(U) > 0$ 
for every weak neighborhood $U$ of $\eta_0$.
Then,
\bean
	\sup_B \left|\Pi(\sqrt{n}(\theta-\theta_0) \in B | X_1, \ldots, X_n) - N_{\Delta_n, I^{-1}_{\theta_0,\eta_0}}(B)
	\right| \rightarrow 0,
\eean
in $P_0$-probability, where 
$$ 
  \Delta_n = \frac{1}{\sqrt{n}} \sum_{i=1}^n I_{\theta_0, \eta_0}^{-1} \score_{\theta_0, \eta_0}(X_i).
$$
\end{theorem}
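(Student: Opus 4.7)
The plan is to reduce the statement to Theorem 3.1.1 by verifying its two hypotheses---the $\sqrt{n}$-concentration \eqref{eq:sqrtn_general} and the integral LAN \eqref{eq:ilan_general}---via the sufficient conditions packaged in Theorems 3.1.3 and 3.1.4. In the i.i.d.\ setup we take $p_{\theta,\eta}^{(n)} = \prod_i p_{\theta,\eta}$, so the natural score is $\score_{\theta,\eta}(x) = -\ell_\eta^\prime(x-\theta)$ and the natural sequence of Fisher-information numbers is $V_{n,\eta} \equiv V_\eta = P_{\theta_0,\eta_0}[\score_{\theta_0,\eta}\score_{\theta_0,\eta_0}]$, with $V_{\eta_0} = I_{\theta_0,\eta_0}$. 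For the localization I would take $\cH_n$ to be a shrinking Hellinger ball $\{d_H(\eta,\eta_0)<\delta_n\}\cap\cH$ for some $\delta_n\downarrow 0$ chosen slowly enough that both the bracketing and posterior-consistency arguments below go through.

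Next I would verify the uniform-in-$\eta$ conditions \eqref{eq:zero_score_condition}--\eqref{eq:lr_apprx_large_condition}. The zero-mean condition \eqref{eq:zero_score_condition} is in fact identically zero: $\ell_\eta^\prime$ is odd and $p_{\eta_0}$ is even (by the symmetry of $\eta,\eta_0$), so $P_{\theta_0,\eta_0}\score_{\theta_0,\eta}=0$. Continuity \eqref{eq:score_conti_condition} and the matrix continuity \eqref{eq:V_conti_condition} follow from the Hellinger continuity $\eta\mapsto \ell_\eta^\prime$ on $\cH$, combined with the envelope from Lemma 3.2.7 (ii)--(iv). Positivity \eqref{eq:V_positive} holds because $V_{\eta_0}=I_{\theta_0,\eta_0}>0$. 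The stochastic-equicontinuity bound \eqref{eq:donsker_condition} is the key empirical-process input: the class $\{\score_{\theta,\eta}:|\theta-\theta_0|<\epsilon_0,\eta\in\cH\}$ has envelope $O(|x|+1)$ by Lemma 3.2.7 and, after restriction to a compact product, consists of locally H\"older functions of $x$ whose H\"older constants are controlled uniformly in $(\theta,\eta)$; Theorem 2.2.6 then gives a finite bracketing entropy integral \eqref{eq:bracket_bound}, and Theorem 2.2.5 yields the $P_0$-Donsker property, from which \eqref{eq:donsker_condition} and a Taylor expansion of $\theta\mapsto\ell_{\theta,\eta}$ (using Lemma 3.2.7 (vi)) deliver \eqref{eq:lr_apprx_small_condition} and \eqref{eq:lr_apprx_large_condition}. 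The crucial quadratic expansion \eqref{eq:quadratic_condition} is precisely Example 3.2.2, which is the symmetry-driven ingredient specific to this model.

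For the $\sqrt{n}$ rate of the marginal posterior I would apply Theorem 3.1.4: beyond the conditions already checked, it needs posterior consistency \eqref{eq:consist}. This will be obtained by a Schwartz-type argument, using the appendix Theorem in Section A.1 for slightly misspecified non-i.i.d.\ models applied here to the i.i.d.\ well-specified case. The KL prior-mass condition is the main thing to check: weak neighborhoods of $\eta_0$ together with the compactness of $[-M,M]\times[\sigma_1,\sigma_2]$ yield uniform Hellinger, and hence Kullback--Leibler, smallness of $P_{\theta_0,\eta}$ relative to $P_0$ (the lower bound $\sigma_1>0$ gives bounded log-densities), so the hypothesized weak support of $\Pi_\cH$ implies positive mass on every KL neighborhood. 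Existence of uniformly consistent tests against Hellinger complements follows from the bracketing entropy bound via Theorem 2.2.6. For the nuisance consistency under perturbation \eqref{eq:conv_perturb_general} required by Theorem 3.1.3, I would invoke the perturbation analogue Theorem A.1 in Section \ref{sec:consistency}, whose hypotheses reduce to the same KL positivity and bracketing entropy just established, since the local reparameterization $\theta=\theta_n(h_n)$ affects only the observations' location by $h_n/\sqrt{n}\to 0$ and so preserves all the testing/entropy quantities.

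Having verified all hypotheses of Theorems 3.1.3 and 3.1.4, both \eqref{eq:sqrtn_general} and \eqref{eq:ilan_general} hold, so Theorem 3.1.1 yields total variation convergence of the posterior of $\sqrt{n}(\theta-\theta_0)$ to $N(\Delta_n, I_{\theta_0,\eta_0}^{-1})$, where the centering is $\Delta_n = n^{-1/2}\sum_i I_{\theta_0,\eta_0}^{-1}\score_{\theta_0,\eta_0}(X_i)$ as claimed. I expect the main obstacle to be the posterior consistency/perturbation step: the translation from the hypothesized weak support of $\Pi_\cH$ to the Kullback--Leibler prior-mass bound, and the construction of tests uniformly in the perturbed model, must be handled carefully, which is exactly why the appendix is devoted to these issues; the uniform quadratic expansion \eqref{eq:quadratic_condition}, by contrast, is not an obstacle here since it is already the content of Example 3.2.2.
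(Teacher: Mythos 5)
Your proposal follows essentially the same route as the paper's proof: reduce to the general BvM theorem via the integral-LAN and parametric-rate theorems, verify conditions \eqref{eq:zero_score_condition}--\eqref{eq:lr_apprx_large_condition} using the symmetry-driven quadratic expansion of Example \ref{ex:mixnorm}, the bracketing-entropy Donsker argument, and Taylor expansion of the log-likelihood, and obtain \eqref{eq:consist} and \eqref{eq:conv_perturb_general} from the appendix consistency theorems with the KL prior-mass and entropy lemmas. The only substantive omission is that Theorem \ref{thm:consist_general} also requires a uniformly $\sqrt{n}$-consistent estimator of $\theta$ satisfying \eqref{eq:sqrtn_estimator} (the paper uses the sample median) to build tests separating $\theta$ from $\theta_0$ --- your testing argument handles only the Hellinger separation in $\eta$ --- while your direct oddness/evenness argument for \eqref{eq:zero_score_condition} is a legitimate simplification of the paper's route through Lemma \ref{lem:tht_star}.
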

\begin{proof}
By Theorems \ref{thm:BvM_general}, \ref{thm:ILAN_general}, and \ref{thm:sqrt_n_general},
it is sufficient to show that \eqref{eq:zero_score_condition}--\eqref{eq:conv_perturb_general},
and \eqref{eq:consist} hold for some $\score_{\theta,\eta}^{(n)}$, $(\cH_n)$ and $(V_{n,\eta})$.
First, the metric $d_H$ satisfies \eqref{eq:d_H_def}.
Also, \eqref{eq:h_unif_libschitz} holds by Lemma \ref{lem:Hellinger_ineq} and
$$
	h^2 \Big( N(z_1, \sigma^2), N(z_2, \sigma^2) \Big)
	= 1 - \exp\left(- \frac{|z_1-z_2|^2}{8\sigma^2 n} \right)
	\leq \frac{|z_1-z_2|^2}{8\sigma^2 n} \leq \frac{|z_1-z_2|^2}{8\sigma_1^2 n}.
$$
Condition \eqref{eq:quad_bdd_general} is satisfied by Lemma \ref{lem:quad_bound}.
Now, the assertions of Theorems \ref{thm:consist_perturb_general} and \ref{thm:consist_general} hold
by Lemmas \ref{lem:entropy_bound}, \ref{lem:KL_bound}, \ref{lem:Lipschitz} and
the fact that the median $\hat\theta_n$ of $X_1, \ldots, X_n$ satisfies \eqref{eq:sqrtn_estimator}.
This implies that there exists a sequence $\epsilon_n \downarrow 0$ such that
the sequence $(\cH_n)$ defined by $\cH_n = \{\eta\in\cH: d_H(\eta,\eta_0) < \epsilon_n\}$
satisfies \eqref{eq:conv_perturb_general} and \eqref{eq:consist}.

Let $\score_{\theta,\eta}^{(n)} = \sum_{i=1}^n \score_{\theta,\eta}(X_i)$ be the ordinary score function
and $V_{n,\eta} = V_\eta$.
Then, \eqref{eq:zero_score_condition} holds by Lemma \ref{lem:tht_star},
\eqref{eq:V_conti_condition} holds by Lemma \ref{lem:L2_conv}, \eqref{eq:V_positive} is trivial,
and \eqref{eq:quadratic_condition} holds by Example \ref{ex:mixnorm}.
Lemma \ref{lem:Donsker} directly implies \eqref{eq:donsker_condition}.
If we define an index set $T=\{\eta\in\cH: d_H(\eta,\eta_0) < \epsilon)$
for sufficiently small $\epsilon > 0$ and a semimetric
\be \label{eq:L2_metric}
	d_2(\eta_1, \eta_2) = \Big(P_0 (\score_{\theta_0, \eta_1} - \score_{\theta_0, \eta_2})^2\Big)^{1/2},
\ee
then the Donsker property also implies that $T$ is totally bounded with respect to $d_2$ and
the stochastic process $\bbG_n \score_{\theta_0, \eta}$ indexed by $\eta \in T$
is asymptotically uniformly $d_2$-equicontinuous in probability
\footnote{See problem 2 in page 93 of \citet{van1996weak}.}.
As a result, \eqref{eq:score_conti_condition} holds
because $d_H(\eta,\eta_0) \rightarrow 0$ implies $d_2(\eta,\eta_0) \rightarrow 0$.

For \eqref{eq:lr_apprx_small_condition}, by the Tayler expansion implies
\bean
	&&\ell^{(n)}_{\theta_n(h),\eta}(X^{(n)}) - \ell^{(n)}_{\theta_0,\eta}(X^{(n)})
	- \frac{h}{\sqrt{n}} \score_{\theta_0,\eta}^{(n)}(X^{(n)})
	\\
	&=& \frac{h}{\sqrt{n}} \int_0^1 \sum_{i=1}^n \Big( \score_{\theta_0+th/\sqrt{n},\eta}(X_i) 
	- \score_{\theta_0,\eta}(X_i) \Big) dt
\eean
and by the Jensen's inequality and Fubini's theorem,
the variance of the right hand side is bounded by
\bean
	\frac{h^2}{n} \int_0^1 \sum_{i=1}^n P_0 \Big( \score_{\theta_0+th/\sqrt{n},\eta}(X_i)
	- \score_{\theta_0,\eta}(X_i) \Big)^2 dt = O(h^4/n)
\eean
by (vi) of Lemma \ref{lem:density}.
Therefore, for each $(h, \eta)$ the term in the left hand side of \eqref{eq:lr_apprx_small_condition}
converges in probability to 0, and it converges uniformly by Donsker's theorem.
Finally, write
\bean
	\ell^{(n)}_{\theta_n(h),\eta}(X^{(n)}) - \ell^{(n)}_{\theta_0,\eta}(X^{(n)})
	= \frac{h}{\sqrt{n}} \int_0^1 \sum_{i=1}^n \score_{\theta_0 + th/\sqrt{n}, \eta} (X_i) dt
\eean
and apply the Donsker's theorem to prove \eqref{eq:lr_apprx_large_condition}.
\end{proof}

\bigskip

In Theorem \ref{thm:main_location}, the only nontrivial condition is in the prior $\Pi_\cH$.
However, we note that the condition is very weak compared to the usual conditions
\footnote{Prior positivity for every Kullback-Leibler type neighborhood}.
that are typically required for posterior consistency or convergence rate.
We provide an example which is widely used for nonparametric symmetric density estimation problems.

Recall that the support of a positive measure $\nu$ on a topological space is defined by
the complement of the largest open set of $\nu$-measure zero and denoted by ${\rm supp}(\nu)$.
If $\nu$ follows a Dirichlet process with base measure $\alpha$ over the Euclidean spaces,
then it is well-known \footnote{See, for example, \citet{ghosh2003bayesian}.} that
the support of $\nu$ with respect to the weak topology is given by
\bea
	{\rm supp}(\nu) &=& \{ P: \nu(U) > 0 ~ \textrm{for all weakly open set}~ U \ni P\}\nonumber\\
	&=& \{P: {\rm supp}(P) \subset {\rm supp}(\alpha)\}. \label{eq:DP_support}
\eea
Using this fact, we can easily check that under the Dirichlet mixture priors of normal densities
the BvM theorem holds.

\bigskip

\begin{example}
For the prior $\Pi_\cH$ for $\eta$, consider a symmetrized Dirichlet process prior defined by
$$d\eta(z,\sigma) = \frac{1}{2}\Big( d\tilde\eta(z,\sigma) + d\tilde\eta(-z,\sigma)\Big),$$
where $\tilde\eta$ follows a Dirichlet process with parameter $(\alpha, P)$ and
$P$ is supported on $[0,M]\times[\sigma_1,\sigma_2]$.
Since $f(z,\sigma) = \frac{1}{2}\big(f(z,\sigma) + f(-z,\sigma)\big) + 
\frac{1}{2}\big(f(z,\sigma) - f(-z,\sigma)\big)$ and
$\int \big(f(z,\sigma) - f(-z,\sigma) \big) d\eta(z,\sigma) = 0$
for all $f$ and $\eta$ with $d\eta(z,\sigma) = d\eta(-z,\sigma)$,
it is sufficient to consider weak neighborhoods of type \eqref{eq:weak_nbr} with bounded continuous $f_i$
satisfying $f_i(z,\sigma) = f_i(-z,\sigma)$.
If $f(z,\sigma) = f(-z,\sigma)$ and $d\eta(z,\sigma) = d\eta(-z,\sigma)$ then 
$\int f(z, \sigma) d\eta(z,\sigma) = \int f(z, \sigma) d\eta^+(z,\sigma)$, where
$\eta^+ = 2\times\eta \big|_{(0,M] \times [\sigma_1,\sigma_2]} + \eta(\{0\})\delta_0$ and
$\delta_0$ is the Dirac measure at zero.
Therefore, $U$ contains a weak neighborhood of $\eta_0$ in $\cH$ if and only if
$U^+$ contains a weak neighborhood of $\eta_0^+$ in $\cH^+$,
where $U^+ = \{\eta^+: \eta\in U\}$ and $\cH^+ = \{\eta^+: \eta\in\cH\}$.
We conclude that if the support of $P$ contains the support of $\eta_0$,
then $\Pi_\cH$ satisfies the condition in Theorem \ref{thm:main_location} by \eqref{eq:DP_support}
so the BvM theorem holds. \qed
\end{example}

\bigskip

In the following, we prove some technical lemmas for proving Theorem \ref{thm:main_location}.

\bigskip

\begin{lemma} \label{lem:L2_conv}
With the notation of Theorem \ref{thm:main_location}, we have
\bea
	\max\left\{-P_{\theta_0,\eta_0} \left(\log \frac{p_{\theta_0,\eta}}{p_{\theta_0,\eta_0}}\right), ~
	P_{\theta_0,\eta_0} \left(\log \frac{p_{\theta_0,\eta}}{p_{\theta_0,\eta_0}}\right)^2 \right\} 
	&\rightarrow& ~ 0 \label{eq:L2_conv_ll}\\
	P_{\theta_0,\eta_0}(\score_{\theta_0, \eta} - \score_{\theta_0,\eta_0})^2 =
	P_{\theta_0,\eta_0}(\ell^\prime_{\theta_0, \eta} - \ell^\prime_{\theta_0,\eta_0})^2 
	&\rightarrow& 0 \label{eq:L2_conv_score}
\eea
as $d_H(\eta, \eta_0) \rightarrow 0$.
That is, $\ell_{\theta_0, \eta}$ and $\score_{\theta_0, \eta}$ converge in $L_2$ to 
$\ell_{\theta_0,\eta_0}$ and $\score_{\theta_0,\eta_0}$, respectively,
as $\eta \rightarrow \eta_0$ with respect to $d_H$.
\end{lemma}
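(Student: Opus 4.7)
The plan is to establish both claims by combining pointwise convergence of the relevant integrands with uniform domination by a $P_{\theta_0,\eta_0}$-integrable majorant, so that the dominated convergence theorem delivers the $L_1$ and $L_2$ limits. Because $\score_{\theta_0,\eta}=-\ell_{\theta_0,\eta}^\prime$, the two expressions in \eqref{eq:L2_conv_score} are literally the same, and the lemma reduces to showing $P_{\theta_0,\eta_0}\log(p_{\theta_0,\eta_0}/p_{\theta_0,\eta})\to 0$, $P_{\theta_0,\eta_0}\bigl(\log(p_{\theta_0,\eta}/p_{\theta_0,\eta_0})\bigr)^2\to 0$, and $P_{\theta_0,\eta_0}(\ell_{\theta_0,\eta}^\prime-\ell_{\theta_0,\eta_0}^\prime)^2\to 0$ along any sequence $\eta_n\in\cH$ with $d_H(\eta_n,\eta_0)\to 0$.

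The first step is to upgrade Hellinger convergence to weak convergence of the mixing distributions. Every $\eta\in\cH$ is supported on the compact set $[-M,M]\times[\sigma_1,\sigma_2]$, so $(\eta_n)$ is automatically tight, and the symmetric class $\cH$ is closed under weak limits. For any weakly convergent subsequence $\eta_{n_k}\Rightarrow\eta^*$, bounded continuity of $(z,\sigma)\mapsto\phi_\sigma(x-z)$ yields $p_{\eta_{n_k}}(x)\to p_{\eta^*}(x)$ for every $x$; Scheff\'e's lemma promotes this to $L_1$ convergence of the densities and hence to Hellinger convergence, so by the triangle inequality $h(p_{\theta_0,\eta^*},p_{\theta_0,\eta_0})=0$. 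Identifiability of Gaussian location--scale mixtures (Teicher) then forces $\eta^*=\eta_0$, so $\eta_n\Rightarrow\eta_0$. Replacing $\phi_\sigma(x-z)$ by $\phi_\sigma^\prime(x-z)$, which is also bounded continuous on the compact mixing set, gives $p_{\eta_n}^\prime(x)\to p_{\eta_0}^\prime(x)$; dividing produces the pointwise limits $\ell_{\theta_0,\eta_n}^\prime(x)\to\ell_{\theta_0,\eta_0}^\prime(x)$ and $\log(p_{\theta_0,\eta_n}/p_{\theta_0,\eta_0})(x)\to 0$.

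For the uniform majorant, I apply \eqref{eq:fractional_bound} to obtain the $\eta$-free bounds
$$
(2\pi\sigma_2^2)^{-1/2}\exp\bigl(-(|x|+M)^2/(2\sigma_1^2)\bigr)\leq p_{\theta_0,\eta}(x)\leq (2\pi\sigma_1^2)^{-1/2},
$$
from which $|\log(p_{\theta_0,\eta}/p_{\theta_0,\eta_0})(x)|\leq C(1+x^2)$, while Lemma~\ref{lem:density}(ii) already supplies $\sup_{\eta\in\cH}|\ell_{\theta_0,\eta}^\prime(x)|\leq(|x|+M)/\sigma_1^2$ and hence $|\ell_{\theta_0,\eta}^\prime(x)-\ell_{\theta_0,\eta_0}^\prime(x)|\leq 2(|x|+M)/\sigma_1^2$. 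Both $(1+x^2)^2$ and $(|x|+M)^2$ are $P_{\theta_0,\eta_0}$-integrable because $p_{\theta_0,\eta_0}$ has Gaussian tails, so dominated convergence delivers \eqref{eq:L2_conv_ll} and \eqref{eq:L2_conv_score} along any $d_H$-null sequence, and a standard subsequence argument converts sequential convergence into convergence with respect to $d_H$ itself. The main obstacle is the identifiability step linking Hellinger convergence of the mixtures to weak convergence of the mixing measures; once that is secured, the remaining tail bounds and the dominated convergence application are routine.
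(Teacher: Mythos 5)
Your argument is correct, but it follows a genuinely different route from the paper. The paper never passes through the mixing measures: for \eqref{eq:L2_conv_ll} it verifies a uniform moment bound $\sup_{\eta}\int(p_{\theta_0,\eta_0}/p_{\theta_0,\eta})^{\delta}\,dP_{\theta_0,\eta_0}<\infty$ and invokes Theorem 5 of Wong and Shen, which converts Hellinger convergence directly into convergence of the Kullback--Leibler divergence and of the second moment of the log-ratio; for \eqref{eq:L2_conv_score} it controls the difference quotients $(\ell_{\theta_0,\eta}(x+h)-\ell_{\theta_0,\eta}(x))/h$ uniformly in $\eta$ and interchanges the limits $h\downarrow 0$ and $d_H(\eta,\eta_0)\to 0$ via Moore--Osgood, feeding in the $L_2$ convergence of $\ell_{\theta_0,\eta}$ obtained in the first part. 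You instead exploit the compactness of $[-M,M]\times[\sigma_1,\sigma_2]$ to extract weakly convergent subsequences of mixing measures, deduce pointwise convergence of $p_{\eta}$, $p_{\eta}'$ and hence of $\ell_{\eta}'$ and the log-ratio, and finish with the uniform polynomial envelopes and dominated convergence. Your route is more elementary and self-contained (no Wong--Shen, no double-limit interchange), at the price of being tied to the compactly supported mixture structure; the paper's route for \eqref{eq:L2_conv_ll} is more generic and is in fact reused verbatim for the regression and random-effects models.

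One caveat on your "main obstacle": the appeal to Teicher-type identifiability to conclude $\eta^{*}=\eta_0$ is both the shakiest citation (general location--scale mixtures of normals are not identifiable without support restrictions, since a normal is itself a normal location mixture of narrower normals) and unnecessary. From $h(p_{\theta_0,\eta^{*}},p_{\theta_0,\eta_0})=0$ you already get $p_{\eta^{*}}=p_{\eta_0}$ as continuous functions, hence $p_{\eta^{*}}'=p_{\eta_0}'$ and $\ell_{\eta^{*}}'=\ell_{\eta_0}'$, which is all the subsequence argument requires; you never need the mixing measures themselves to agree. Dropping that step makes the proof airtight.
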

\begin{proof}
Without loss of generality, we may assume that $\theta_0=0$.
Then, for \eqref{eq:L2_conv_ll}, by Theorem 5 in \citet{wong1995probability},
it is sufficient to show that
\be 
	\sup_{d_H(\eta, \eta_0) < \epsilon} \int \left(\frac{p_{\theta_0,\eta_0}}{p_{\theta_0, \eta}}
	\right)^\delta dP_{\theta_0,\eta_0} ~<~ \infty \label{eq:integral_bound}
\ee
for some $\delta \in (0,1]$ and $\epsilon > 0$.
Note that
\bean
	p_{\theta_0,\eta_0}(x) &\leq& (2\pi\sigma_1^2)^{-1/2} \\
	\inf_{\eta \in \cH} p_{\theta_0, \eta}(x) &=&
	\inf_{\eta \in \cH} \int \phi_\sigma(x-z) d\eta(z,\sigma) \geq \inf_{z,\sigma} \phi_\sigma(x-z)
	\geq \sigma_1\sigma_2^{-1}\phi_{\sigma_1}(2M)
\eean
for $x \in [0, M]$.
Also, we have
\bean
	p_{\theta_0,\eta_0}(x) &=& \int \phi_\sigma(x-z)d\eta_0(z,\sigma) \leq \sup_{z,\sigma} \phi_\sigma(x-z) 
	\leq \sigma_1^{-1}\sigma_2\phi_{\sigma_2}(x-M)\\
	\inf_{\eta \in \cH} p_{\theta_0, \eta} (x) &=& \inf_{\eta \in \cH} \int \phi_\sigma(x-z)d\eta(z,\sigma)
	\geq \sigma_1\sigma_2^{-1}\phi_{\sigma_1}(x+M)
\eean
for $x \geq M$.
Therefore, for sufficiently small $\delta \in (0,1]$,
\bean
	\sup_{\eta \in \cH} \int \left(\frac{p_{\theta_0,\eta_0}}{p_{\theta_0, \eta}}\right)^\delta dP_{\theta_0,\eta_0}
	&\leq& 2\int_0^M \left(\frac{\sigma_2(2\pi)^{-1/2}}{\sigma_1^2\phi(2M)}\right)^\delta dP_{\theta_0,\eta_0} 
	\\
	&& ~ + ~ 2\left(\frac{\sigma_2}{\sigma_1}\right)^{2(1+\delta)} \int_M^\infty 
	\frac{\big(\phi_{\sigma_2}(x-M)\big)^{1+\delta}}{\big(\phi_{\sigma_1}(x+M)\big)^\delta} dx \\ &<& \infty
\eean
and \eqref{eq:integral_bound} is satisfied for every $\epsilon > 0$.

For \eqref{eq:L2_conv_score}, note that
\bean
	&& \lim_{h \downarrow 0} \sup_{\eta \in \cH} \left| \int \left[
	\frac{\ell_{\theta_0,\eta}(x+h) - \ell_{\theta_0,\eta}(x)}{h} - 
	\ell^\prime_{\theta_0,\eta_0}(x) \right]^2  - 
	\Big(\ell_{\theta_0,\eta}^\prime(x) - \ell^\prime_{\theta_0,\eta_0}(x)\Big)^2 dP_{\theta_0,\eta_0}(x) \right|
	\\
	&=& \lim_{h \downarrow 0} \sup_{\eta \in \cH} \bigg| 
	\int \int_0^1 \Big[\ell^\prime_{\theta_0, \eta}(x+th) - 
	\ell^\prime_{\theta_0, \eta}(x)\Big] dt
	\\
	&& ~~~~~~~~~~~~~~~ \times \left[\frac{\ell_{\theta_0,\eta}(x+h) - \ell_{\theta_0,\eta}(x)}{h} 
	+ \ell^\prime_{\theta_0,\eta}(x) -
	2\ell^\prime_{\theta_0,\eta_0}(x) \right] dP_{\theta_0,\eta_0}(x) \bigg|
	\\
	&\leq& \lim_{h \downarrow 0} \sup_{\eta \in \cH} h \bigg| \int O(x^2) \times
	\left[\frac{\ell_{\theta_0,\eta}(x+h) - \ell_{\theta_0,\eta}(x)}{h} + 
	\ell^\prime_{\theta_0,\eta}(x) - 2\ell^\prime_{\theta_0,\eta_0}(x) \right] dP_{\theta_0,\eta_0}(x) \bigg|
	\\
	&=& 0,
\eean
where the inequality holds by (vi) of Lemma \ref{lem:density}.
This enables the interchange of two limits, by Moore-Osgood theorem, in the following equality:
\bean
	&& \lim_{d_H(\eta, \eta_0) \rightarrow 0} P_{\theta_0,\eta_0}(\ell^\prime_{\theta_0, \eta} 
	- \ell^\prime_{\theta_0,\eta_0})^2
	\\
	&=& \lim_{d_H(\eta, \eta_0) \rightarrow 0} \lim_{h \downarrow 0} \int \left[
	\frac{\ell_{\theta_0,\eta}(x+h) - \ell_{\theta_0,\eta}(x)}{h} - 
	\ell^\prime_{\theta_0,\eta_0}(x) \right]^2 dP_{\theta_0,\eta_0}(x)
	\\
	&=& \lim_{h \downarrow 0} \lim_{d_H(\eta, \eta_0) \rightarrow 0} \int \left[
	\frac{\ell_{\theta_0,\eta}(x+h) - \ell_{\theta_0,\eta}(x)}{h} - 
	\ell^\prime_{\theta_0,\eta_0}(x) \right]^2 dP_{\theta_0,\eta_0}(x)
	\\
	&=& 0,
\eean
where the first and third equalities hold by the dominated convergence theorem and $L_2$ 
convergence of $\ell_{\theta_0, \eta}$ to $\ell_{\theta_0,\eta_0}$.
\end{proof}

\bigskip

\begin{lemma} \label{lem:Donsker}
The class
\bean
	\cF &=& \Big\{\score_{\theta, \eta}: |\theta-\theta_0| < \epsilon, \eta \in \cH \Big\}
\eean
are $P_{\theta_0,\eta_0}$-Donsker for every $\epsilon > 0$.
\end{lemma}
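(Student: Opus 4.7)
The plan is to derive a bracketing-number bound for $\cF$ by viewing it as a class of uniformly smooth functions on a partition of $\bbR$, and then to invoke Theorem \ref{thm:donsker_bracketing}. Since $p_{\theta,\eta}(x)=p_\eta(x-\theta)$, the score $\score_{\theta,\eta}(x)=-\ell'_{\theta,\eta}(x)$ is simply a translate of $\ell'_{\theta_0,\eta}$ by the bounded amount $\theta-\theta_0$, and likewise $\score'_{\theta,\eta}(x)=-\ell''_{\theta,\eta}(x)$ is a translate of $\ell''_{\theta_0,\eta}$. Parts (ii) and (iii) of Lemma \ref{lem:density} therefore yield the uniform polynomial bounds
\[
	\sup_{|\theta-\theta_0|<\epsilon}\sup_{\eta\in\cH} |\score_{\theta,\eta}(x)| \leq C(|x|+1), \qquad
	\sup_{|\theta-\theta_0|<\epsilon}\sup_{\eta\in\cH} |\score'_{\theta,\eta}(x)| \leq C(|x|+1)^2,
\]
for a constant $C$ depending only on $\theta_0,\epsilon,M,\sigma_1,\sigma_2$.

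Next, I would partition $\bbR=\bigcup_{j\in\bbZ} I_j$ into unit intervals $I_j=[j,j+1)$ and fix the \Holder{} exponent $\alpha=1$ (which satisfies $\alpha>d/2=1/2$ since $d=1$). The uniform bounds above show that the restriction of every $\score_{\theta,\eta}\in\cF$ to $I_j$ lies in the \Holder{} ball $C^1_{M_j}(I_j)$ with $M_j=C(|j|+1)^2$. Applying Theorem \ref{thm:bracketing_bound} with $d=1,\;\alpha=1,\;r=2,\;V=1$ (so that $V\geq d/\alpha$), I obtain
\[
	\log N_{[]}(\epsilon,\cF,L_2(P_0)) \leq \frac{K}{\epsilon}\left(\sum_{j\in\bbZ} M_j^{2/3}\, P_0(I_j)^{1/3}\right)^{3/2}.
\]
The weighted tail sum converges: the upper bound $p_{\theta_0,\eta_0}(x)\leq \sigma_1^{-1}\sigma_2\,\phi_{\sigma_2}(x-\theta_0-M)$ for $|x-\theta_0|\geq M$, already derived in the proof of Lemma \ref{lem:L2_conv}, implies Gaussian tails $P_0(I_j)=O(\exp(-j^2/(3\sigma_2^2)))$ as $|j|\to\infty$, which easily dominates the polynomial factor $M_j^{2/3}=O(|j|^{4/3})$.

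Consequently $\log N_{[]}(\epsilon,\cF,L_2(P_0))\leq K_0/\epsilon$ for some constant $K_0$, so that
\[
	J_{[]}(1,\cF,L_2(P_0)) = \int_0^1 \sqrt{\log N_{[]}(\epsilon,\cF,L_2(P_0))}\,d\epsilon \leq \int_0^1\sqrt{K_0/\epsilon}\,d\epsilon < \infty,
\]
and Theorem \ref{thm:donsker_bracketing} delivers the $P_0$-Donsker property. The only real difficulty is the bookkeeping of exponents: one must choose $\alpha$, $V$, and $r$ in Theorem \ref{thm:bracketing_bound} so that $V\geq d/\alpha$ is compatible with convergence of the weighted tail sum $\sum_j M_j^{Vr/(V+r)}P_0(I_j)^{V/(V+r)}$. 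The choice $\alpha=1,\;V=1,\;r=2$ works here precisely because the Gaussian tail of $P_0$ comfortably beats any polynomial growth of $M_j$; no structural obstacle remains.
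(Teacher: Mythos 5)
Your proposal is correct and follows exactly the paper's route: the paper's own proof applies Theorem \ref{thm:bracketing_bound} with $\alpha=V=d=1$, $r=2$ and a partition of $\bbR$ into unit intervals to get $\log N_{[]}(\delta,\cF,L_2(P_0))=O(\delta^{-1})$, then concludes via Theorem \ref{thm:donsker_bracketing}. You have merely spelled out the details the paper leaves as ``easily checked'' --- the uniform bounds on $\score_{\theta,\eta}$ and its derivative from Lemma \ref{lem:density}(ii)--(iii) giving $M_j=O(|j|^2)$, and the Gaussian tails of $P_0$ making the weighted sum finite --- and these are all accurate.
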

\begin{proof}
Without loss of generality, we may assume that $\theta_0=0$.
By Theorem \ref{thm:bracketing_bound} with $\alpha=V=d=1, r=2$
and a partition $\bbR = \cup_{j=-\infty}^\infty [j-1, j)$,
we can easily check that
$N_{[]}(\delta, \cF, L_2(P_{\theta_0,\eta_0}))$ is bounded by $O(\delta^{-1})$ as $\delta \rightarrow 0$.
Therefore, it is a Donsker class by Theorem \ref{thm:donsker_bracketing}.
\end{proof}

\bigskip

Note that the total variation is bounded by the Kullback-Leibler divergence as
$$d_V^2(P, Q) \leq 2 K_P(Q)$$
by Pinsker's inequality.\footnote{See, for example, \citet{massart2007concentration}.}

\bigskip

\begin{lemma} \label{lem:KL_lower_bound}
For every $\epsilon > 0$ there exist $\delta > 0$ and
a universal constant $C > 0$ (does not depend on $\epsilon$) such that
$$
	\inf_{d_H(\eta, \eta_0) < \delta} \left(-P_{\theta_0,\eta_0} \log \frac{p_{\theta, \eta}}{p_{\theta_0, \eta}}\right)
	~\geq~ C \big(|\theta-\theta_0|^2 \wedge 1\big) - \epsilon.
$$
\end{lemma}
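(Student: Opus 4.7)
The plan is to split the infimum by $|\theta-\theta_0|$ into a local and a global regime, separated by some $\alpha\in(0,1]$ that is chosen independently of $\epsilon$. Throughout I exploit the identity
\begin{equation*}
-P_{\theta_0,\eta_0}\log\frac{p_{\theta,\eta}}{p_{\theta_0,\eta}} \;=\; K_{P_{\theta_0,\eta_0}}(P_{\theta,\eta}) \;-\; K_{P_{\theta_0,\eta_0}}(P_{\theta_0,\eta}),
\end{equation*}
so that the subtracted pure-nuisance Kullback--Leibler term tends to $0$ uniformly on $\{d_H(\eta,\eta_0)<\delta\}$ by \eqref{eq:L2_conv_ll}; given $\epsilon>0$ I pick $\delta=\delta(\epsilon)$ small enough that this piece is at most $\epsilon$.

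In the local regime, Example \ref{ex:mixnorm} furnishes $\alpha\in(0,1]$ (depending only on $\eta_0$) with
\begin{equation*}
\left|-P_{\theta_0,\eta_0}\log\frac{p_{\theta,\eta}}{p_{\theta_0,\eta}} - \frac{1}{2}V_\eta(\theta-\theta_0)^2\right| \;\leq\; \frac{I_{\theta_0,\eta_0}}{8}(\theta-\theta_0)^2
\end{equation*}
uniformly in $\eta\in\cH$ and $|\theta-\theta_0|\leq\alpha$. Since \eqref{eq:L2_conv_score} combined with Cauchy--Schwarz gives $V_\eta\to V_{\eta_0}=I_{\theta_0,\eta_0}$ as $d_H(\eta,\eta_0)\to 0$, shrinking $\delta$ further yields $V_\eta\geq \frac{3}{4}I_{\theta_0,\eta_0}$ on $\{d_H(\eta,\eta_0)<\delta\}$, so $-P_{\theta_0,\eta_0}\log(p_{\theta,\eta}/p_{\theta_0,\eta})\geq \frac{I_{\theta_0,\eta_0}}{4}(\theta-\theta_0)^2=\frac{I_{\theta_0,\eta_0}}{4}(|\theta-\theta_0|^2\wedge 1)$ on this regime.

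In the global regime $|\theta-\theta_0|\geq\alpha$, I use $K_{P_1}(P_2)\geq h^2(P_1,P_2)$ (a consequence of $\log x\leq 2(\sqrt{x}-1)$ under this paper's Hellinger normalization) together with the Hellinger triangle inequality and the translation invariance $h(P_{\theta,\eta_1},P_{\theta,\eta_2})=d_H(\eta_1,\eta_2)$ to obtain
\begin{equation*}
h(P_{\theta_0,\eta_0},P_{\theta,\eta}) \;\geq\; h(P_{\theta_0,\eta_0},P_{\theta,\eta_0}) - d_H(\eta,\eta_0).
\end{equation*}
Put $c_0:=\inf_{\theta\in\Theta,\,|\theta-\theta_0|\geq\alpha}h(P_{\theta_0,\eta_0},P_{\theta,\eta_0})$. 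Continuity of $\theta\mapsto h(P_{\theta_0,\eta_0},P_{\theta,\eta_0})$, identifiability of the location under $\eta_0$, and the fact that the compactly-supported, Gaussian-tailed $p_{\eta_0}$ has vanishing overlap with its large translates (hence $h\to\sqrt2$ as $|\theta|\to\infty$ when $\Theta$ is unbounded) together force $c_0>0$. Imposing in addition $\delta\leq c_0/2$ gives $K_{P_{\theta_0,\eta_0}}(P_{\theta,\eta})\geq c_0^2/4$, and therefore $-P_{\theta_0,\eta_0}\log(p_{\theta,\eta}/p_{\theta_0,\eta})\geq c_0^2/4-\epsilon$ in this regime.

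Finally, take $C:=\min\{I_{\theta_0,\eta_0}/4,\;c_0^2/4\}$, which is independent of $\epsilon$; since $|\theta-\theta_0|^2\wedge 1\leq 1$ absorbs the global regime into the constant $C$, the two cases combine to give the claimed bound $C(|\theta-\theta_0|^2\wedge 1)-\epsilon$. The main obstacle is verifying $c_0>0$ uniformly over $\theta\in\Theta$ with $|\theta-\theta_0|\geq\alpha$: for unbounded $\Theta$ this is handled by the Gaussian-tail decay of $p_{\eta_0}$, while for bounded $\Theta$ by continuous extension of $\theta\mapsto h(P_{\theta_0,\eta_0},P_{\theta,\eta_0})$ to $\bar\Theta$ together with compactness.
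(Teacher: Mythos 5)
Your proof is correct, but it takes a genuinely different route from the paper's for the key step of bounding $K_{P_{\theta_0,\eta_0}}(P_{\theta,\eta})$ from below. Both arguments start from the same identity $-P_{\theta_0,\eta_0}\log(p_{\theta,\eta}/p_{\theta_0,\eta}) = K_{P_{\theta_0,\eta_0}}(P_{\theta,\eta}) - K_{P_{\theta_0,\eta_0}}(P_{\theta_0,\eta})$ and absorb the pure-nuisance term into $\epsilon$ via Lemma \ref{lem:L2_conv}. But where you split into a local regime (uniform quadratic expansion from Example \ref{ex:mixnorm} plus $V_\eta\to V_{\eta_0}$) and a global regime (Hellinger triangle inequality, translation invariance of $d_H$, and a compactness/identifiability argument showing $c_0>0$), the paper handles all $\theta$ at once with a single observation that exploits symmetry directly: since $p_\eta$ is symmetric about $0$, the median of $P_{\theta,\eta}$ is exactly $\theta$ for \emph{every} $\eta\in\cH$, i.e.\ $P_{\theta,\eta}([\theta,\infty))=1/2$, while $P_{\theta_0,\eta_0}([\theta,\infty))\leq 1/2 - \tilde C(|\theta-\theta_0|\wedge 1)$ by positivity and continuity of $p_{\eta_0}$ at the origin; this gives $d_V(P_{\theta_0,\eta_0},P_{\theta,\eta})\geq 2\tilde C(|\theta-\theta_0|\wedge 1)$ uniformly over all of $\cH$ (no $\delta$-restriction needed for this part), and Pinsker's inequality converts it to the KL bound with $C=2\tilde C^2$. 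The paper's median trick is shorter and avoids both the quadratic expansion and the compactness argument for $c_0$; your version is more generic in that it would survive in models without an explicit median identity, at the cost of more machinery. Two minor points in your write-up: $p_{\eta_0}$ is not compactly supported (only its mixing distribution is), though the Gaussian-tail decay you actually invoke does hold and suffices for $h\to\sqrt{2}$; and your local-regime bound needs no $-\epsilon$ at all, which is consistent with, and slightly stronger than, what the statement requires there.
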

\begin{proof}
Without loss of generality, we assume that $\theta_0 = 0$.
Note that $P_{\theta, \eta}\big([\theta, \infty)\big) = 1/2$.
Since $p_{\theta_0,\eta_0}(\theta_0) > 0$ and $x \mapsto p_{\theta_0,\eta_0}(x)$ is continuous,
there exists $\tau \in (0,1)$ such that $p_{\theta_0,\eta_0}(x) > p_{\theta_0,\eta_0}(\theta_0)/2$ for $|x| < \tau$.
Then, for $\theta > \theta_0$, 
$$
	P_{\theta_0,\eta_0} \big([\theta, \infty)\big) 
	\leq 1/2 - (\theta \wedge \tau) p_{\theta_0,\eta_0}(\theta_0)/2,
$$
and therefore,
$$
	P_{\theta, \eta}\big([\theta, \infty)\big) - P_{\theta_0,\eta_0}\big([\theta, \infty)\big) ~\geq~ 
	(\theta \wedge \tau) p_{\theta_0,\eta_0}(\theta_0)/2 ~\geq~ 
	\tilde C \big(\theta \wedge 1\big),
$$
where $\tilde C = \tau p_{\theta_0,\eta_0}(\theta_0)/2$.
The same argument can be applied for $\theta < 0$, and as a result, we have
$d_V(P_{\theta_0,\eta_0}, P_{\theta, \eta}) ~\geq~ 2\tilde C \big(|\theta| \wedge 1\big)$.
For given $\epsilon > 0$, we can choose $\delta > 0$, by Lemma \ref{lem:L2_conv},
such that $d_H(\eta, \eta_0) < \delta$ implies $K_{P_{\theta_0,\eta_0}}(P_{\theta_0, \eta}) < \epsilon$.
Since 
$$
	-P_{\theta_0,\eta_0} \log \frac{p_{\theta, \eta}}{p_{\theta_0, \eta}} = 
	K_{P_{\theta_0,\eta_0}}(P_{\theta, \eta}) - K_{P_{\theta_0,\eta_0}}(P_{\theta_0, \eta})
$$
and $K_{P_{\theta_0,\eta_0}}(P_{\theta, \eta}) \geq d_V^2 (P_{\theta_0,\eta_0}, P_{\theta, \eta})/2$
by Pinsker's inequality, we have the conclusion with $C = 2\tilde C^2$.
\end{proof}

\bigskip

Let $\theta^*(\eta)$ as a maximizer of the map $\theta \mapsto P_{\theta_0,\eta_0} \ell_{\theta, \eta}$ if it exists.
Example \ref{ex:mixnorm} says that the expectation of log density can be
approximated by a quadratic function near $\theta_0$ for every fixed $\eta$.
Since $V_{\eta_0} = I_{\theta_0,\eta_0}$ is strictly positive, if $\eta$ is sufficiently close to $\eta_0$,
then $\theta_0$ will be a local maximizer of the function $\theta \mapsto P_{\theta_0,\eta_0} \ell_{\theta,\eta}(x)$.
If $\theta$ is not sufficiently close to $\theta_0$, then it can never be 
a maximizer of $\theta \mapsto P_{\theta_0,\eta_0} \ell_{\theta,\eta}(x)$ by Lemma \ref{lem:KL_lower_bound},
so $\theta_0$ is expected to the global maximizer.
That is, even when the nuisance parameter $\eta$ is slightly misspecified,
the Kullback-Leibler divergence of the misspecified model is minimized at $\theta_0$.
Here, the distance between $\eta_1$ and $\eta_2$ in $\cH$ is measured by $d_H$ defined by 
$d_H(\eta_1, \eta_2) = h(p_{\eta_1}, p_{\eta_2})$.
This is summarized in the following lemma.

\bigskip

\begin{lemma} \label{lem:tht_star}
There exists an $\epsilon > 0$ such that 
$\theta_0$ is the unique maximizer 
of the map $\theta \mapsto P_{\theta_0,\eta_0} \ell_{\theta, \eta}$ for all $\eta$ with $d_H(\eta,\eta_0) < \epsilon$.
\end{lemma}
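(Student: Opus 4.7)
The plan is to split the parameter space $\Theta$ into a neighborhood of $\theta_0$, on which we use the uniform quadratic expansion of Example \ref{ex:mixnorm}, and the complement, on which we use the Kullback--Leibler lower bound of Lemma \ref{lem:KL_lower_bound}. In both regions we will show that $P_{\theta_0,\eta_0}(\ell_{\theta,\eta}-\ell_{\theta_0,\eta})<0$ for $\theta\neq\theta_0$, provided $\eta$ is close enough to $\eta_0$ in $d_H$.

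First I would invoke Example \ref{ex:mixnorm}, which gives
$$
\sup_{\eta\in\cH}\Big|P_{\theta_0,\eta_0}\log\frac{p_{\theta,\eta}}{p_{\theta_0,\eta}}+\tfrac12(\theta-\theta_0)^2 V_\eta\Big|=o\big((\theta-\theta_0)^2\big)
$$
as $\theta\to\theta_0$. Combined with Lemma \ref{lem:L2_conv}, which implies $V_\eta\to V_{\eta_0}=I_{\theta_0,\eta_0}>0$ as $d_H(\eta,\eta_0)\to 0$, I can choose $\epsilon_1>0$ and $\rho>0$ so that $V_\eta\geq V_{\eta_0}/2$ and the $o(\cdot)$ remainder is bounded by $V_{\eta_0}(\theta-\theta_0)^2/8$ whenever $d_H(\eta,\eta_0)<\epsilon_1$ and $0<|\theta-\theta_0|<\rho$. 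On this set,
$$
P_{\theta_0,\eta_0}\log\frac{p_{\theta,\eta}}{p_{\theta_0,\eta}}\leq -\tfrac18 V_{\eta_0}(\theta-\theta_0)^2 <0,
$$
so $\theta_0$ is the unique local maximizer on $\{|\theta-\theta_0|<\rho\}$.

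Next, to rule out maximizers outside this neighborhood, I would apply Lemma \ref{lem:KL_lower_bound} with the fixed tolerance $\epsilon':=C(\rho^2\wedge 1)/2$, which yields some $\delta>0$ such that $d_H(\eta,\eta_0)<\delta$ implies
$$
-P_{\theta_0,\eta_0}\log\frac{p_{\theta,\eta}}{p_{\theta_0,\eta}}\geq C(|\theta-\theta_0|^2\wedge 1)-\epsilon' \geq \tfrac12 C(\rho^2\wedge 1)>0
$$
for every $\theta\in\Theta$ with $|\theta-\theta_0|\geq\rho$. Setting $\epsilon=\min(\epsilon_1,\delta)$, both regions are covered: for any $\eta$ with $d_H(\eta,\eta_0)<\epsilon$ and any $\theta\neq\theta_0$, the expected log likelihood ratio is strictly negative, so $\theta_0$ is the unique maximizer of $\theta\mapsto P_{\theta_0,\eta_0}\ell_{\theta,\eta}$.

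The only subtle point is that the quadratic expansion holds only as $\theta\to\theta_0$, so it alone cannot exclude distant maximizers; this is precisely what Lemma \ref{lem:KL_lower_bound} supplies. No serious obstacle remains beyond combining these two ingredients with the uniform positivity of $V_\eta$ near $\eta_0$.
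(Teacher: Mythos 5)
Your proposal is correct and follows essentially the same route as the paper: a small neighborhood of $\theta_0$ handled by the uniform quadratic expansion of Example \ref{ex:mixnorm} together with the convergence $V_\eta\to V_{\eta_0}$ from Lemma \ref{lem:L2_conv}, and the complement handled by Lemma \ref{lem:KL_lower_bound} with a tolerance chosen using the fact that its constant $C$ is independent of the tolerance. The constants differ slightly from the paper's but the argument is the same.
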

\begin{proof}
Without loss of generality, we may assume that $\theta_0$ = 0.
Since $V_{\eta_0} > 0$ we can choose $\delta \in (0,1)$ such that
$$
	\sup_{\eta \in \cH} \Big| P_{\theta_0,\eta_0} \log \frac{p_{\theta, \eta}}{p_{\theta_0, \eta}} + \frac{\theta^2}{2}  
	V_\eta \Big| < \frac{\theta^2}{8}V_{\eta_0}
$$
for every $|\theta| \leq \delta$ by Example \ref{ex:mixnorm}.
Since $\lim_{d_H(\eta, \eta_0) \rightarrow 0} V_\eta = V_{\eta_0}$ by \eqref{eq:L2_conv_score},
we can also choose an $\epsilon > 0$ such that $|V_\eta - V_{\eta_0}| < V_{\eta_0}/2$
for $d_H(\eta,\eta_0) < \epsilon$.
Therefore, for $d_H(\eta,\eta_0) < \epsilon$ and $|\theta| \leq \delta$,
$$
	P_{\theta_0,\eta_0} \log \frac{p_{\theta, \eta}}{p_{\theta_0, \eta}} < -\frac{\theta^2}{2} V_\eta + \frac{\theta^2}{8}V_{\eta_0}
	< -\frac{\theta^2}{4} V_{\eta_0} + \frac{\theta^2}{8}V_{\eta_0} < 0,
$$
and as a result, we have $\argmax_{|\theta| \leq \delta} P_{\theta_0,\eta_0} \log P_{\theta, \eta} = \theta_0$.
By Lemma \ref{lem:KL_lower_bound}, $\epsilon > 0$ can be chosen sufficiently small so that
$$-P_{\theta_0,\eta_0} \log\frac{p_{\theta,\eta}}{p_{\theta_0,\eta}} \geq C (\theta^2\wedge 1) - C\delta^2$$
for all $d_H(\eta,\eta_0) < \epsilon$, where $C > 0$ is a constant in Lemma \ref{lem:KL_lower_bound}.
Therefore, 
$$\sup_{|\theta| > \delta} P_{\theta_0,\eta_0} \log P_{\theta, \eta} < \log P_{\theta_0, \eta}$$
and this yields $\theta^*(\eta) = \theta_0$.
\end{proof}

\bigskip

\subsection{Linear regression model}
\label{sec:lm}

This section considers the linear regression model
\be \label{eq:reg_model}
	Y_i = \beta^T X_i + \epsilon_i, ~~~~~~ i=1,\ldots,n,
\ee
for independent observations $(X_1, Y_1), \ldots, (X_n, Y_n)$,
where the error distribution follows a mixture of normal densities as in the previous section.
The parameter space for $\beta$ is denoted by $\cB$ which is an open subset of $\bbR^p$.
The parameter space $\cH$ for $\eta$ and the corresponding density $p_\eta$
are defined as in Example \ref{ex:mixnorm}.
The $p$-dimensional covariate vectors $X_i$'s are non-random and their norms are assumed to be
uniformly bounded by a constant $L > 0$.
Additionally, we denote $P_{\beta,\eta,i}$ be the probability measure of $Y_i$ 
in the model \eqref{eq:reg_model} conditional on $X_i$, and
$$p_{\beta,\eta,i} = \int \phi_\sigma(Y_i -\beta^T X_i - z) d\eta(z,\sigma)$$
be its density evaluated at $Y_i$.
The corresponding log density and its derivative evaluated at $Y_i$ are denoted by
$\ell_{\beta,\eta,i}$ and $\ell^\prime_{\beta,\eta,i}$, respectively.
Let $\score_{\beta,\eta,i} = - \ell^\prime_{\beta,\eta,i} \cdot X_i$ 
which is equal to the partial derivative of the map $\beta \mapsto \ell_{\beta,\eta,i}$.
$P_{\beta,\eta}^n$ represents the product measure 
$P_{\beta,\eta,1} \times \cdots \times P_{\beta,\eta,n}$.
Let $\bbX_n = n^{-1} \sum_{i=1}^n X_i X_i^T$ be the design matrix
and $V_{n, \eta} = V_\eta \cdot \bbX_n$, where 
$$
	V_\eta = P_{\beta_0, \eta_0,i} \ell^\prime_{\beta_0,\eta,i}\ell^\prime_{\beta_0,\eta_0,i}
$$
which is the same to the location model.\footnote{Note that the definition of $V_\eta$ does not depend on $i$.}
We assume that the minimum eigenvalue $\rho_{\rm min}(\bbX_n)$ of $\bbX_n$ is bounded away from 0
in the sense $\liminf_{n\rightarrow\infty} \rho_{\rm min}(\bbX_n) > 0$
which is required for the identifiability of $\beta$.
Now, we state the main theorem of this section.

\bigskip

\begin{theorem}\label{thm:main_lm}
Suppose that there is the true parameter $(\beta_0, \eta_0) \in \cB\times\cH$ 
which generates the observation from the model \eqref{eq:reg_model}
with $\liminf_{n\rightarrow\infty}\rho_{\rm min}(\bbX_n) > 0$ and $\sup_i |X_i| \leq L$ for some $L > 0$.
If the model is endowed with the product prior $\Pi=\Pi_\cB \times \Pi_\cH$,
where $\Pi_\cB$ is thick at $\beta_0$ and $\Pi_\cH(U) > 0$ 
for every weak neighborhood $U$ of $\eta_0$, then
\bean
	\sup_B \left|\Pi(\sqrt{n}(\beta-\beta_0) \in B | X_1, \ldots, X_n) 
	- N_{\Delta_n, V^{-1}_{n,\eta_0}}(B) \right| \rightarrow 0,
\eean
in $P_0$-probability, where 
$$ 
  \Delta_n = \frac{1}{\sqrt{n}} \sum_{i=1}^n V_{n, \eta_0}^{-1} \score_{\beta_0, \eta_0,i}.
$$
\end{theorem}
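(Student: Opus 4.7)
The plan is to verify the conditions of Theorems \ref{thm:BvM_general}, \ref{thm:ILAN_general}, and \ref{thm:sqrt_n_general}, in direct parallel to the proof of Theorem \ref{thm:main_location}, modifying each step to accommodate the non-iid structure imposed by the fixed covariates. I would first take $\score^{(n)}_{\beta,\eta} = \sum_{i=1}^n \score_{\beta,\eta,i}$, $V_{n,\eta} = V_\eta\,\bbX_n$, and $\cH_n = \{\eta \in \cH : d_H(\eta,\eta_0) < \epsilon_n\}$ for a sufficiently slowly decaying $\epsilon_n \downarrow 0$. Condition \eqref{eq:V_positive} is immediate from $\liminf_n \rho_{\min}(\bbX_n) > 0$ together with $0 < V_{\eta_0} < \infty$; condition \eqref{eq:V_conti_condition} follows from the $L_2$-continuity of $\eta \mapsto V_\eta$ given by Lemma \ref{lem:L2_conv}. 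Condition \eqref{eq:zero_score_condition} reduces, via the decomposition $\beta \mapsto \beta^T X_i$, to a componentwise application of Lemma \ref{lem:tht_star}: for $\eta \in \cH_n$ with $n$ large, $\beta_0$ remains the unique maximizer of $\beta \mapsto P_0^{(n)} \ell^{(n)}_{\beta,\eta}$ and hence the expected score vanishes.

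For the uniform quadratic expansion \eqref{eq:quadratic_condition}, the key idea is to apply Theorem \ref{thm:quad_genaral} to each observation individually. Replacing ``$\theta$'' and ``$\theta_0$'' there by $\beta^T X_i$ and $\beta_0^T X_i$, I obtain
$$
P_{\beta_0,\eta_0,i} \log \frac{p_{\beta,\eta,i}}{p_{\beta_0,\eta,i}} = -\tfrac{1}{2} V_\eta \bigl((\beta-\beta_0)^T X_i\bigr)^2 + r_i(\beta,\eta),
$$
uniformly in $\eta \in \cH_n$. Using the hypothesis $|X_i| \leq L$, the remainder satisfies $|r_i(\beta,\eta)| \leq L^2 |\beta-\beta_0|^2 \cdot \epsilon(L|\beta-\beta_0|)$ for some function $\epsilon(\delta) \downarrow 0$ independent of $i$ and $\eta$. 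Averaging over $i$ and using $n^{-1}\sum_i X_i X_i^T = \bbX_n$ yields precisely \eqref{eq:quadratic_condition} with $V_{n,\eta} = V_\eta \bbX_n$.

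The empirical-process conditions \eqref{eq:score_conti_condition}--\eqref{eq:donsker_condition} and \eqref{eq:lr_apprx_small_condition}--\eqref{eq:lr_apprx_large_condition} I would handle via Theorem \ref{thm:jain_marcus} applied to the triangular array $Z_{ni}(\beta,\eta) = \ell_{\beta,\eta,i} - P_0 \ell_{\beta,\eta,i}$ (and its $\beta$-derivatives), indexed by $(\beta,\eta)$ in a neighborhood of $(\beta_0,\eta_0)$. The envelopes $M_{ni}$ required by Jain--Marcus are supplied by the Lipschitz bound of Lemma \ref{lem:quad_bound} multiplied by $|X_i| \leq L$, while the entropy control in the $\eta$-direction is identical to that used in the location model. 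The uniform boundedness $|X_i| \leq L$ then transfers the integrability and smoothness of $\ell'_{\beta_0,\eta}$ from Lemma \ref{lem:density} to $\score_{\beta,\eta,i} = -\ell'_{\beta,\eta,i}\,X_i$ with constants independent of $i$. Conditions \eqref{eq:lr_apprx_small_condition}--\eqref{eq:lr_apprx_large_condition} then follow by Taylor expansion of $\beta \mapsto \ell_{\beta,\eta,i}$ combined with this uniform empirical-process control, exactly as in the location proof. Posterior consistency \eqref{eq:consist} and consistency under $n^{-1/2}$-perturbation \eqref{eq:conv_perturb_general} follow from the non-iid versions Theorem \ref{thm:consist_general} and Theorem \ref{thm:consist_perturb_general} of the appendix, using the same prior-positivity and entropy ingredients as in the location case, together with the existence of an $n^{-1/2}$-consistent preliminary estimator of $\beta$ such as a least absolute deviations estimator.

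The main obstacle will be the empirical-process step: verifying the Jain--Marcus hypotheses with constants independent of $i$ depends crucially on both $|X_i| \leq L$ and the uniform positivity $\liminf_n \rho_{\min}(\bbX_n) > 0$, and the sieve $\cH_n$ has to be calibrated so that the entropy integral, the prior-positivity condition, and the perturbation-consistency statement all hold simultaneously. Once that step is carried through, everything else is essentially a bookkeeping translation of the proof of Theorem \ref{thm:main_location}.
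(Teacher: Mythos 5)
Your proposal follows essentially the same route as the paper: the same choices of $\score^{(n)}_{\beta,\eta}$, $V_{n,\eta}=V_\eta\,\bbX_n$ and $\cH_n$, the per-observation application of the quadratic expansion with $\theta=\beta^TX_i$ made uniform via $|X_i|\leq L$, the Jain--Marcus theorem for the triangular-array empirical-process conditions, and the appendix consistency theorems for \eqref{eq:consist} and \eqref{eq:conv_perturb_general}. The only (harmless) deviations are cosmetic: you derive \eqref{eq:zero_score_condition} by applying Lemma \ref{lem:tht_star} componentwise rather than the paper's local-maximizer argument from \eqref{eq:quadratic_condition}, and you propose a least absolute deviations preliminary estimator where the paper uses the least squares estimator.
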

\begin{proof}
The proof is similar to that of Theorem \ref{thm:main_location}.
As in the proof of Theorem \ref{thm:main_location} we will show that 
\eqref{eq:zero_score_condition}--\eqref{eq:conv_perturb_general},
and \eqref{eq:consist} hold with $\theta$ and $X$ replaced by $\beta$ and $Y$, respectively.
The definition of $(\cH_n)$, proofs of \eqref{eq:conv_perturb_general} and \eqref{eq:consist}
are the same to those of Theorem \ref{thm:main_location} replacing the median $\hat\theta_n$
by the least square estimator $\hat\beta_n$.

Let $\score_{\beta,\eta}^{(n)}(Y^{(n)}) = \sum_{i=1}^n \score_{\beta,\eta,i}$, and
$V_{n,\eta}$ be defined as in the beginning of this section.
Then, \eqref{eq:V_conti_condition} holds by Lemma \ref{lem:L2_conv} and 
\eqref{eq:V_positive} holds by the condition on the design matrix.
Since $\sup_i |X_i|$ is bounded, we have, by Example \ref{ex:mixnorm},
$$
	\sup_{i}\sup_{\eta \in \cH} \Big| P_0 \log \frac{p_{\beta, \eta,i}}{p_{\beta_0, \eta,i}} + 
	\frac{V_\eta}{2} (\beta-\beta_0)^T X_i X_i^T (\beta-\beta_0) \Big| = o(|\beta-\beta_0|^2)
$$
which directly implies \eqref{eq:quadratic_condition}.
Also, by \eqref{eq:V_conti_condition}, \eqref{eq:V_positive}, \eqref{eq:quadratic_condition}
and differentiability, $\beta_0$ is a local maximizer of 
$\beta \mapsto \sum_{i=1}^n P_0 \ell_{\beta,\eta,i}$
for large enough $n$ and $\eta$ sufficiently close to $\eta_0$ with respect to $d_H$.
As a result \eqref{eq:zero_score_condition} is satisfied.
The result of Lemma \ref{lem:donsker_lm} implies \eqref{eq:donsker_condition}.

For a given nonzero vector $a \in \bbR^p$, the stochastic process
$$
	\eta \mapsto \frac{(a^T \bbX_n a)^{-1/2}}{\sqrt{n}}  \sum_{i=1}^n a^T \score_{\beta_0,\eta,i}
$$
indexed by $T=\{\eta: d_H(\eta,\eta_0) < \epsilon\}$ for sufficiently small $\epsilon > 0$
is asymptotically tight by Theorem \ref{lem:donsker_lm} and the condition on $\bbX_n$.
Furthermore, it converges marginally to a Gaussian distribution by the Lindberg-Feller's theorem,
so weakly converges to a Gaussian process.
As a result, for the semimetric\footnote{The definition of $d_2$ is the same to \eqref{eq:L2_metric}.}
$$
	d_2(\eta_1, \eta_2) = \Big(P_{\eta_0} (\ell^\prime_{\eta_1} - \ell^\prime_{\eta_2})^2\Big)^{1/2},
$$
the process is asymptotically uniformly $d_2$-equicontinuous in probability
because $T$ is totally bounded\footnote{See the proof of Theorem \ref{thm:main_location}.}
with respect to $d_2$.
Since $d_H(\eta,\eta_0) \rightarrow 0$ implies $d_2(\eta,\eta_0) \rightarrow 0$,
\eqref{eq:score_conti_condition} holds.

For \eqref{eq:lr_apprx_small_condition}, Tayler expansion implies
\bean
	&&\ell^{(n)}_{\beta_n(h),\eta}(Y^{(n)}) - \ell^{(n)}_{\beta_0,\eta}(Y^{(n)})
	- \frac{h^T}{\sqrt{n}} \score_{\beta_0,\eta}^{(n)}(Y^{(n)})
	\\
	&=& \frac{h^T}{\sqrt{n}} \int_0^1 \sum_{i=1}^n \Big( \score_{\beta_0+th/\sqrt{n},\eta,i}
	- \score_{\beta_0,\eta,i} \Big) dt
\eean
and by the Jensen's inequality and Fubini's theorem,
the variance of the right hand side is bounded by
\bean
	\frac{1}{n} \int_0^1 \sum_{i=1}^n \bigg[P_0 \Big( \ell^\prime_{\beta_0+th/\sqrt{n},\eta,i}
	- \ell^\prime_{\beta_0,\eta,i} \Big)^2 \cdot h^T X_iX_i^T h \bigg]dt = O(|h|^4/n)
\eean
by (vi) of Lemma \ref{lem:density}.
Therefore, for each $(h, \eta)$ the term in the left hand side of \eqref{eq:lr_apprx_small_condition}
converges in probability to 0, and it converges uniformly by Lemma \ref{lem:donsker_lm}.
Finally, write
\bean
	\ell^{(n)}_{\beta_n(h),\eta}(Y^{(n)}) - \ell^{(n)}_{\beta_0,\eta}(Y^{(n)})
	= \frac{h^T}{\sqrt{n}} \int_0^1 \sum_{i=1}^n \score_{\beta_0 + th/\sqrt{n}, \eta,i} dt
\eean
and apply Lemma \ref{lem:donsker_lm} to prove \eqref{eq:lr_apprx_large_condition}.
\end{proof}

\bigskip

The following lemma corresponds to the Donsker's theorem for \iid~models.

\bigskip

\begin{lemma} \label{lem:donsker_lm}
If $\sup_i |X_i| \leq L$ for some constant $L > 0$, then for any $a \in \bbR^p$,
there exists $\epsilon > 0$ such that the sequence of stochastic processes
$$
	(\beta,\eta) \mapsto \frac{a^T}{\sqrt{n}} \sum_{i=1}^n \left( \score_{\beta,\eta,i}
	- P_0 \score_{\beta,\eta,i} \right)
$$
is asymptotically tight in $\ell^\infty(B_\epsilon \times \cH)$,
where $B_\epsilon$ is an open ball of $\beta_0$ with radius $\epsilon$.
\end{lemma}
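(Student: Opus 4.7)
The plan is to verify the hypotheses of the Jain--Marcus-type Theorem \ref{thm:jain_marcus} for the triangular array
$$
Z_{ni}(\beta,\eta) \;:=\; \frac{1}{\sqrt{n}}\,a^T\score_{\beta,\eta,i} \;=\; -\frac{a^T X_i}{\sqrt{n}}\,\ell^\prime_\eta\bigl(Y_i-\beta^T X_i\bigr),
$$
indexed by $\cF := B_\epsilon\times\cH$, choosing $\epsilon>0$ small enough that $|\beta^T X_i|$ stays uniformly bounded on $B_\epsilon$. Under $P_0$ the processes $(Z_{ni})_{i=1}^n$ are independent, so the Jain--Marcus conclusion delivers asymptotic uniform $\rho$-equicontinuity of $\sum_i(Z_{ni}-P_0 Z_{ni})$ in $\ell^\infty(\cF)$, which is the asymptotic tightness claimed.

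I would equip $\cF$ with the semimetric $\rho((\beta_1,\eta_1),(\beta_2,\eta_2)) := |\beta_1-\beta_2| + d_H(\eta_1,\eta_2)$. The Lipschitz bound splits into a $\beta$-increment and an $\eta$-increment. For the $\beta$-increment, use $|X_i|\leq L$ together with the bound $\sup_\eta|\ell^{\prime\prime}_\eta(y)|=O(1+y^2)$ from Lemma \ref{lem:density}(iii) to get a constant of order $|a|L^2(1+|Y_i|)^2/\sqrt n$. For the $\eta$-increment, one needs a pointwise bound of the form $|\ell^\prime_{\eta_1}(z)-\ell^\prime_{\eta_2}(z)|\leq \tilde Q(z)\,d_H(\eta_1,\eta_2)$ with $P_0\tilde Q^2<\infty$; this is the derivative analogue of the $L_2$-continuity of $\eta\mapsto \ell_{\theta_0,\eta}$ proved in Lemma \ref{lem:L2_conv}, and it can be obtained by exploiting the uniform lower bound $\inf_\eta p_\eta(x)\geq \sigma_1\sigma_2^{-1}\phi_{\sigma_1}(x+M)$ that is valid for $\cH$. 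Consolidating, set $M_{ni} := C|a|\bigl(L^2(1+|Y_i|)^2 + L\tilde Q(Y_i-\beta_0^T X_i)\bigr)/\sqrt n$ to dominate both increments.

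The moment condition $\sum_i P_0 M_{ni}^2=O(1)$ is routine: $Y_i = \beta_0^T X_i+\epsilon_i$ with $|\beta_0^T X_i|\leq|\beta_0|L$ and $\epsilon_i$ drawn from a mixture of normals with parameters in the compact set $[-M,M]\times[\sigma_1,\sigma_2]$, so every moment of $Y_i$ is uniformly bounded in $i$ and the sum reduces to a Cesàro average of uniformly bounded quantities. The Lindeberg-type condition $\sum_i P_0\|Z_{ni}\|_\cF^2 1\{\|Z_{ni}\|_\cF>\delta\}\to 0$ follows from the envelope $\|Z_{ni}\|_\cF\leq C|a|L(1+|Y_i|)/\sqrt n$ delivered by Lemma \ref{lem:density}(ii), together with uniform integrability of $(1+|Y_i|)^2$. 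Finally, for the entropy integral $B_\epsilon$ contributes a term of order $\log(1/\delta)$, while $\cH$ under $d_H$ has polynomial bracketing entropy (the bound used in Lemma \ref{lem:entropy_bound} in the location-model proof applies verbatim, since $\cH$ is the same compactly supported class of mixtures), so $\int_0^\infty\sqrt{\log N(\delta,\cF,\rho)}\,d\delta<\infty$.

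The hard part is producing the $\eta$-Lipschitz bound $|\ell^\prime_{\eta_1}(z)-\ell^\prime_{\eta_2}(z)|\leq \tilde Q(z)\,d_H(\eta_1,\eta_2)$ with integrable envelope: the Hellinger distance only controls $L_2$-closeness of the densities, not pointwise or even $L_2$-closeness of their logarithmic derivatives. One would likely either sharpen $d_H$ to a weighted $L_\infty$ distance on the mixture densities (exploiting the aforementioned uniform lower bound on $p_\eta$) and verify that the polynomial bracketing entropy of $\cH$ survives this exchange, or else argue directly from the Gaussian-mixture representation $p_\eta'(x)=\int \phi_\sigma'(x-z)\,d\eta(z,\sigma)$ that closeness of mixing measures propagates to derivatives of log-densities up to a polynomial weight, using that $\sigma\in[\sigma_1,\sigma_2]$ keeps $\phi_\sigma'/\phi_\sigma$ uniformly controlled.
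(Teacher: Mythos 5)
Your overall strategy (verify the hypotheses of Theorem \ref{thm:jain_marcus} for the triangular array $Z_{ni}(\beta,\eta)=a^T\score_{\beta,\eta,i}/\sqrt{n}$, with the $\beta$-increment controlled via Lemma \ref{lem:density} and the tails via a square-integrable envelope) matches the paper. But there is a genuine gap, and you have located it yourself: by indexing $\cH$ with the Hellinger metric $d_H$ you are forced to produce a pointwise Lipschitz bound $|\ell^\prime_{\eta_1}(z)-\ell^\prime_{\eta_2}(z)|\leq \tilde Q(z)\,d_H(\eta_1,\eta_2)$ with $P_0\tilde Q^2<\infty$, and you never establish it. This is not a routine omission: $d_H$ is an $L_2$-type distance between root densities, and there is no reason for it to dominate pointwise (or even $L_2$) differences of the \emph{derivatives} of the log densities with a uniform constant; for this compact class of Gaussian mixtures one gets uniform continuity of $\eta\mapsto\ell^\prime_\eta$ in $d_H$ (which is what Lemma \ref{lem:L2_conv} actually proves, and all that is needed elsewhere), but a Lipschitz estimate of the asserted form is not available. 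Your fallback suggestions (sharpen $d_H$ to a weighted sup-distance, or argue through the mixing measures) are sketches of a different argument, not a proof.

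The paper avoids the problem by \emph{choosing} the semimetric on $\cH$ to be $d_2^2(\eta_1,\eta_2)=\int e^{y}\bigl(\ell^\prime_{\eta_1}(y)-\ell^\prime_{\eta_2}(y)\bigr)^2 dP_{\eta_0}(y)$, i.e.\ a weighted $L_2$ distance between the score functions themselves. With this choice the $\eta$-increment condition is essentially automatic (the exponential weight absorbs the change of measure from $P_{\beta,\eta_0,i}$ back to $P_{\eta_0}$, using the symmetry of $p_{\eta_0}$ and $\ell^\prime_{\eta_0}(y)=O(y)$), and the entropy integral is computed for the class $\cF=\{y\mapsto e^{y/2}\ell^\prime_\eta(y):\eta\in\cH\}$ directly via the \Holder-class bracketing bound of Theorem \ref{thm:bracketing_bound}, yielding $\log N_{[]}(\delta,\cF,L_2(P_{\eta_0}))=O(\delta^{-1})$. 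This also exposes a second weak point in your write-up: you invoke ``polynomial bracketing entropy'' of $\cH$ under $d_H$ from Lemma \ref{lem:entropy_bound}, but that lemma only gives $N(\epsilon,\cH,d_H)<\infty$ for each $\epsilon$, with no rate, which is not enough for the Dudley integral. The relation between $d_H$ and $d_2$ is needed only later, in the proof of Theorem \ref{thm:main_lm}, where the implication $d_H(\eta,\eta_0)\to 0\Rightarrow d_2(\eta,\eta_0)\to 0$ converts the $d_2$-equicontinuity into condition \eqref{eq:score_conti_condition}; it plays no role in the tightness lemma itself.
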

\begin{proof}
For given $a \in \bbR^p$ we will prove the assertion using Theorem \ref{thm:jain_marcus}.
Without loss of generality, we may assume that $\beta_0=0$.
If $\epsilon > 0$ is sufficiently small, there exists a square-integrable $Q_0$ by Lemma \ref{lem:quad_bound}
such that $\sqrt{n} |Z_{ni}(\beta,\eta)| \leq Q_0(Y_i)$,
where $Z_{ni}(\beta,\eta) = a^T\score_{\beta,\eta,i}/\sqrt{n}$,
a stochastic process indexed by $B_\epsilon \times \cH$.
Let $d_2$ be a metric on $\cH$ defined as
$d^2_2(\eta_1,\eta_2) = \int e^y\big(\ell^\prime_{\eta_1}(y) - \ell^\prime_{\eta_2}(y)\big)^2 dP_{\eta_0}(y)$.
Also, let $\rho$ be the product metric $|\cdot|\times d_2$ on $B_\epsilon \times \cH$
defined as $\rho\big((\beta_1,\eta_1),(\beta_2,\eta_2)\big) = \max \{|\beta_1-\beta_2|, d_2(\eta_1,\eta_2)\}$.

Since $\sqrt{n}|Z_{ni}(\beta,\eta)| < Q_0(Y_i)$ for every $(\beta,\eta) \in B_\epsilon\times\cH$
and $Q_0$ is square-integrable, the triangular array of random variables
$\big(\sup_{\beta \in B_\epsilon}\sup_{\eta\in\cH} |Z_{ni}(\beta,\eta)|\big)$
satisfies the Lindberg's condition.
By the triangular inequality,
$$
	|Z_{ni}(\beta_1,\eta_1) - Z_{ni}(\beta_2,\eta_2)| \leq
	|Z_{ni}(\beta_1,\eta_1) - Z_{ni}(\beta_2,\eta_1)|
	+ |Z_{ni}(\beta_2,\eta_1) - Z_{ni}(\beta_2,\eta_2)|,
$$
and the first term of the right-hand-side is bounded by
\bean
	\sup_{\eta\in\cH}|Z_{ni}(\beta_1,\eta) - Z_{ni}(\beta_2,\eta)|
	&\leq& \sup_{\eta \in \cH} \frac{|a|}{\sqrt{n}} 
	\big| \score_{\beta_1,\eta,i} - \score_{\beta_2,\eta,i} \big| \\
	&\leq& \frac{|a| \cdot |X_i|}{\sqrt{n}} \sup_{\eta \in \cH} 
	\big|\ell^\prime_{\beta_1,\eta,i} - \ell^\prime_{\beta_2,\eta,i}\big| \\
	&\leq& \frac{K_1}{\sqrt{n}} \cdot |\beta_1-\beta_2| \cdot (Y_i+K_2)^2
\eean
for some constants $K_1, K_2 > 0$ independent of $i$, where the last inequality holds by (vi) of Lemma \ref{lem:density}.
The expectation of the square of the second term can be bounded by
$$
	P_0 \big(Z_{ni}(\beta_2,\eta_1) - Z_{ni}(\beta_2,\eta_2)\big)^2
	\leq \frac{|a|^2 \cdot |L|^2}{n} 
	P_0\big(\ell^\prime_{\beta_2,\eta_1,i} - \ell^\prime_{\beta_2,\eta_2,i}\big)^2,
$$
and if $\epsilon >0$ is sufficiently small,
\bean
	P_0\big(\ell^\prime_{\beta,\eta_1,i} - \ell^\prime_{\beta,\eta_2,i}\big)^2
	&=& \int \Big(\ell^\prime_{\beta,\eta_1,i}(y) - \ell^\prime_{\beta,\eta_2,i}(y)\Big)^2
	dP_{\beta_0,\eta_0,i}(y) \\
	&=& \int \Big(\ell^\prime_{\beta_0,\eta_1,i}(y) - \ell^\prime_{\beta_0,\eta_2,i}(y)\Big)^2
	dP_{-\beta,\eta_0,i}(y) \\
	&=& \int \Big(\ell^\prime_{\beta_0,\eta_1,i}(y) - \ell^\prime_{\beta_0,\eta_2,i}(y)\Big)^2
	\frac{dP_{\beta,\eta_0,i}}{dP_{\beta_0,\eta_0,i}}(y) dP_{\beta_0,\eta_0,i}(y)\\
	&\leq& K_3 d^2_2(\eta_1,\eta_2)
\eean
for some constant $K_3>0$ independent of $i$ and $\beta$,
where the third equality is due to the symmetricity and
the last inequality holds because
$$
	\frac{dP_{\beta,\eta_0,i}}{dP_{\beta_0,\eta_0,i}}(y)
	= \exp\big( \ell_{\beta,\eta_0,i}(y) - \ell_{\beta_0,\eta_0,i}(y) \big)
	= \exp\big( \ell^\prime_{\eta_0} (y-t)\cdot \beta^T X_i  \big)
$$
for some $t$ between $\beta_0^T X_i$ and $\beta^T X_i$ by the mean value theorem,
and $\ell^\prime_{\eta_0}(y) = O(y)$ as $y \rightarrow \infty$ by (ii) of Lemma \ref{lem:density}.
Since $\sup_i P_0 Y_i^4 < \infty$, there exist a global constant $K$ such that
$$
	\sum_{i=1}^n P_0 \left[\frac{Z_{ni}(\beta_1,\eta_1) - Z_{ni}(\beta_2,\eta_2)}
	{\rho\big((\beta_1,\eta_1),(\beta_2,\eta_2)\big)} \right]^2 \leq K
$$
for every $(\beta_1,\eta_1), (\beta_2,\eta_2) \in B_\epsilon\times\cH$ and $n \geq 1$.

It only remains to prove 
$$ \int_0^\infty \sqrt{\log N(\delta, B_\epsilon\times\cH, \rho)} d\delta < \infty.$$
Note that $\log N(\delta, B_\epsilon\times\cH, \rho) \leq 
\log N(\delta, B_\epsilon, |\cdot|) + \log N(\delta, \cH, d_2)$ and
$N(\delta, B_\epsilon, |\cdot|) = O(\delta^{-p})$ as $\delta \rightarrow 0$.
If we define $\cF = \{ y \mapsto e^{y/2} \ell^\prime_\eta(y) : \eta \in \cH\}$,
then $N(\delta,\cH, d_2)$ is equal to $N(\delta, \cF, L_2(P_{\eta_0}))$
and this is bounded above by the bracketing number $N_{[]}(\delta, \cF, L_2(P_{\eta_0}))$.
The bracketing entropy $\log N_{[]}(\delta, \cF, L_2(P_{\eta_0}))$ is of order $O(\delta^{-1})$ 
as $\delta \rightarrow 0$ by applying Theorem \ref{thm:bracketing_bound} with $\alpha=V=d=1$, 
$r=2$ and a partition $\bbR = \cup_{j=-\infty}^\infty [j-1,j)$.
Therefore, we complete the proof.
\end{proof}

\bigskip

\subsection{Random intercept model}
\label{sec:lme}

For the independent data $\{(\bY_i, \bX_i)\}_{i=1}^n$,
we consider the random intercept model
\be\label{eq:lin_mix_eff_model}
	Y_{ij} = \beta^T X_{ij} + b_i + \epsilon_{ij}, ~~~~~ i=1,\ldots,n, ~ j=1, \ldots, m_i
\ee
where $\bY_i = (Y_{i1}, \ldots, Y_{im_i})^T \in \bbR^{m_i}$.
$\bX_i = (X_{i1}, \ldots, X_{im_i})^T \in \bbR^{m_i\times p}$.
For the given random effect $b_i$, 
the errors $\epsilon_{ij}, j=1, \ldots, m_i$ are conditionally independent and follow $P_F$,
where $P_F$ is the probability measure on $\bbR$
whose Lebesgue density is given by $p_F(y) = \int \phi_\sigma(y-z) dF(z,\sigma)$.
The random effects $b_i$'s are \iid~from distribution $G$.
Since there are two unknown distributions $F$ and $G$,
we need different notations from those used in previous sections.

For $\bx \in \bbR^{m\times p}$ and $\beta \in \bbR^p$, let 
$\score_{\beta, \eta}(\by|\bx) = -\bx^T \cdot  \nabla \ell_{\eta}(\by-\bx\beta)$,
$\score_{\beta, \eta,i} = \score_{\beta, \eta}(\bY_i|\bX_i)$
and $\nabla \ell_{\beta,\eta,i} = \nabla \ell_\eta(\bY_i - \bX_i \beta)$.
$P_{\beta,\eta,i}$ denotes the probability for $\bY_i$ in the model \eqref{eq:lin_mix_eff_model}.
Define the metric $d_H$ on $\cH$ by $d_H(\eta_1, \eta_2) = h(p_{\eta_1}, p_{\eta_2})$.
We assume that there exists the true parameter $(\beta_0, \eta_0) \in \cB\times\cH$
generating the data and let $V_\eta = P_{\eta_0}\big[ \nabla\ell_\eta \cdot \nabla\ell_{\eta_0}^{\;T} \big]$.
Also, define
\be \label{eq:V_n_def_lme}
	V_{n,\eta} = \frac{1}{n} \sum_{i=1}^n P_0
	\big[ \score_{\beta, \eta,i} \score_{\beta, \eta_0,i}^T \big]
	= \frac{1}{n} \sum_{i=1}^n \bX_i^T V_\eta \bX_i.
\ee
Since $V_{\eta_0}$ is positive definite matrix,
so is $V_{n,\eta}$ for large enough $n$ provided $\liminf_{n\rightarrow\infty} \rho_{\rm min}(\bbX_n) > 0$,
where $\bbX_n = n^{-1} \bX_i^T \bX_i$ is the design matrix.

\bigskip

\begin{theorem}\label{thm:main_lme}
Suppose that there is the true parameter $(\beta_0, \eta_0) \in \cB\times\cH$ 
which generates the observation from the model \eqref{eq:lin_mix_eff_model}
with $\liminf_{n\rightarrow\infty}\rho_{\rm min}(\bbX_n) > 0$ and $\sup_i \|\bX_i\| \leq L$ for some $L > 0$.
If the model is endowed with the product prior $\Pi=\Pi_\cB \times \Pi_\cH$,
where $\Pi_\cB$ is thick at $\beta_0$ and $\Pi_\cH(U) > 0$ 
for every weak neighborhood $U$ of $\eta_0$, then
\bean
	\sup_B \left|\Pi(\sqrt{n}(\beta-\beta_0) \in B | X_1, \ldots, X_n) 
	- N_{\Delta_n, V^{-1}_{n,\eta_0}}(B) \right| \rightarrow 0,
\eean
in $P_0$-probability, where 
$$ 
  \Delta_n = \frac{1}{\sqrt{n}} \sum_{i=1}^n V_{n, \eta_0}^{-1} \score_{\beta_0, \eta_0,i}.
$$
\end{theorem}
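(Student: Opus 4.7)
The plan is to follow closely the strategy already used in the proofs of Theorems \ref{thm:main_location} and \ref{thm:main_lm}: I will verify conditions \eqref{eq:zero_score_condition}--\eqref{eq:conv_perturb_general} together with \eqref{eq:consist} (now with $\theta$, $X$ replaced by $\beta$ and the vector observations $\bY_i$), and then invoke Theorems \ref{thm:BvM_general}, \ref{thm:ILAN_general} and \ref{thm:sqrt_n_general}. The choice of sieve is $\cH_n = \{\eta\in\cH: d_H(\eta,\eta_0) < \epsilon_n\}$ for a suitable $\epsilon_n\downarrow 0$, and the posterior consistency conditions \eqref{eq:conv_perturb_general} and \eqref{eq:consist} are obtained from the general results in Section \ref{sec:consistency} applied to the $n$-th product structure, using the least squares estimator of $\beta$ as the $\sqrt{n}$-consistent starting estimator. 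These steps mirror the corresponding step in the proof of Theorem \ref{thm:main_lm} verbatim once one notes that the multivariate mixture densities in Example \ref{ex:mixmnorm} satisfy the required bracketing entropy and Kullback--Leibler positivity conditions; the necessary entropy bound follows by applying Theorem \ref{thm:bracketing_bound} componentwise to the factors $p_F$ and then integrating against $G$ on the compact set $[-M,M]$.

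The analytic heart of the proof is the verification of \eqref{eq:V_conti_condition}--\eqref{eq:quadratic_condition}. Condition \eqref{eq:V_positive} is immediate from $\liminf_n \rho_{\min}(\bbX_n)>0$, the boundedness $\sup_i\|\bX_i\|\leq L$, and the positive definiteness of $V_{\eta_0}$. Condition \eqref{eq:V_conti_condition} is obtained by establishing a multivariate analogue of Lemma \ref{lem:L2_conv}: one shows $L_2$-convergence of $\nabla\ell_\eta$ to $\nabla\ell_{\eta_0}$ as $d_H(\eta,\eta_0)\to 0$ by combining the uniform ratio bound \eqref{eq:h_ratio}, the gradient bound \eqref{eq:grad}, and an exponential integrability argument as in \eqref{eq:integral_bound}. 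The crucial quadratic expansion \eqref{eq:quadratic_condition} is then supplied directly by Example \ref{ex:mixmnorm}: since $\sup_i\|\bX_i\|\leq L$, writing $\btheta_i = \bX_i(\beta-\beta_0)$ we obtain
\[
\sup_{i}\sup_{\eta\in\cH}\left| P_0 \log\frac{p_{\beta,\eta,i}}{p_{\beta_0,\eta,i}}
+ \frac{1}{2}\btheta_i^T V_\eta \btheta_i\right| = o(|\beta-\beta_0|^2),
\]
which summed over $i$ yields \eqref{eq:quadratic_condition} with $V_{n,\eta}$ defined in \eqref{eq:V_n_def_lme}. As in the linear regression case, \eqref{eq:zero_score_condition} then follows from \eqref{eq:V_conti_condition}--\eqref{eq:quadratic_condition} by observing that $\beta_0$ is a local maximizer of $\beta\mapsto\sum_i P_0\ell_{\beta,\eta,i}$ for large $n$ and $\eta$ close to $\eta_0$.

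What remains is the Donsker-type tightness required for \eqref{eq:donsker_condition}, \eqref{eq:score_conti_condition}, \eqref{eq:lr_apprx_small_condition} and \eqref{eq:lr_apprx_large_condition}. I plan to prove the direct analogue of Lemma \ref{lem:donsker_lm}: for any $a\in\bbR^p$ and sufficiently small $\epsilon>0$, the triangular-array process
\[
(\beta,\eta)\mapsto \frac{a^T}{\sqrt{n}}\sum_{i=1}^n\bigl(\score_{\beta,\eta,i} - P_0\score_{\beta,\eta,i}\bigr)
\]
is asymptotically tight on $B_\epsilon\times\cH$, via Theorem \ref{thm:jain_marcus}. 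The envelope condition uses the multivariate Lipschitz bound on $\ell_\eta$ from Example \ref{ex:mixmnorm} (which gives $|\ell_\eta(\by+\btheta)-\ell_\eta(\by)| = O(|\by|)|\btheta|$), while the uniform equicontinuity in $\beta$ is controlled via Lemma \ref{lem:den_property}(i), exactly as $x$ was replaced by $\by$ in the scalar case. The bracketing entropy bound for the $\eta$-direction is obtained from Theorem \ref{thm:bracketing_bound} applied to the class $\{\nabla\ell_\eta\}$, where the $\Holder$ smoothness is inherited from the smoothness of $\phi_\sigma$ on the compact support $[-M,M]\times[\sigma_1,\sigma_2]$. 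Once this Donsker result is in hand, \eqref{eq:score_conti_condition}, \eqref{eq:lr_apprx_small_condition} and \eqref{eq:lr_apprx_large_condition} follow from Taylor expansion of $\beta\mapsto\ell_{\beta,\eta,i}$ combined with the asymptotic equicontinuity of the empirical process, exactly as in the last paragraph of the proof of Theorem \ref{thm:main_lm}.

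The main obstacle I anticipate is the symmetry argument needed to control $P_0(\nabla\ell_{\beta,\eta_1,i}-\nabla\ell_{\beta,\eta_2,i})^2$ uniformly in $\beta$ near $\beta_0$: in the linear regression case this was handled by a neat change of variable using scalar symmetry of $p_\eta$, and the analogous step in the multivariate random intercept model requires using the multivariate symmetry $p_\eta(\bx)=p_\eta(-\bx)$ jointly in $F$ and $G$, together with the uniform bound on the likelihood ratio $dP_{\beta,\eta_0,i}/dP_{\beta_0,\eta_0,i}$, which in turn needs the boundedness of $\|\bX_i\|$ and control of $|\nabla\ell_{\eta_0}(\by)|$ for large $|\by|$. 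Once this uniform ratio bound is established, the remainder of the argument is a mechanical extension of the proof of Theorem \ref{thm:main_lm} to the vector-valued setting.
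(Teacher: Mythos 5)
Your proposal follows essentially the same route as the paper's proof: verifying \eqref{eq:zero_score_condition}--\eqref{eq:conv_perturb_general} and \eqref{eq:consist}, obtaining consistency via the results of Section \ref{sec:consistency} with the least squares estimator, drawing the quadratic expansion from Example \ref{ex:mixmnorm}, and proving the multivariate Donsker lemma (the paper's Lemma \ref{lem:donsker_lme}) via Theorem \ref{thm:jain_marcus} with exactly the symmetry/likelihood-ratio argument you anticipate as the main obstacle. The only minor slip is attributing the finiteness of $N(\delta,\cH,d_H)$ needed for the consistency theorems to the bracketing bound of Theorem \ref{thm:bracketing_bound} rather than to Lemma \ref{lem:entropy_bound} (treating the random effects as additional mixture components), but this does not affect the validity of the argument.
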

\begin{proof}
The proof is similar to that of Theorems \ref{thm:main_location} and \ref{thm:main_lm}.
We will show that \eqref{eq:zero_score_condition}--\eqref{eq:conv_perturb_general},
and \eqref{eq:consist} hold with $\theta$ and $X$ replaced by $\beta$ and $Y$, respectively.
The proofs of \eqref{eq:conv_perturb_general} and \eqref{eq:consist} are slightly different
from previous two models because of the presence of random effects.
Note first that by Lemma \ref{lem:Hellinger_ineq},
for $\btheta = (\theta_1, \ldots, \theta_m)^T$
and $\btheta_0 = (\theta_{0;1}, \ldots, \theta_{0;m})^T$,
\bea
	\sup_{\eta\in\cH} h^2\big(P_{\btheta, \eta}, P_{\btheta_0,\eta}\big)
	&\leq& \sup_{F\in\cF} \sup_{b} h^2\Big(P_{\theta_1+b,F}\times\cdots\times P_{\theta_m+b,F},
	P_{\theta_{0;1}+b,F} \times \cdots \times P_{\theta_{0;m}+b,F}\Big)
	\nonumber \\
	&=& \sup_{F\in\cF} \left\{2 - 2\cdot \prod_{j=1}^m
	\left[1-\frac{1}{2} h^2 \big(P_{\theta_j,F},P_{\theta_{0;j},F} \big)\right]\right\}
	\nonumber \\
	&\leq& 2 - 2\cdot \Big(1-O(|\btheta-\btheta_0|)\Big)^m 
	\nonumber \\
	&\leq& m \times O(|\btheta-\btheta_0|^2)
\eea
as $|\btheta - \btheta_0| \rightarrow 0$.
Therefore, \eqref{eq:h_unif_libschitz} is satisfied by the boundedness of covariate.
Condition \eqref{eq:d_H_def} is trivial, and 
\eqref{eq:quad_bdd_general} holds by \eqref{eq:grad}.
Note also that the least square estimator $\hat\beta_n$ satisfies \eqref{eq:sqrtn_estimator}.
To apply Theorems \ref{thm:consist_perturb_general} and \ref{thm:consist_general}
it is sufficient to show that $N(\delta, \cH, d_H) < \infty$ and $\Pi_\cH (K(\delta)) > 0$
for every $\delta > 0$ by Lemma \ref{lem:Lipschitz}.
Both facts hold by Lemmas \ref{lem:entropy_bound} and \ref{lem:KL_bound}, respectively,
treating random effects also as mixture components.
We conclude that there exists a sequence $\epsilon_n \downarrow 0$ such that
the sequence $(\cH_n)$ defined by $\cH_n = \{\eta\in\cH: d_H(\eta,\eta_0) < \epsilon_n\}$
satisfies \eqref{eq:conv_perturb_general} and \eqref{eq:consist}.

Let $\score_{\beta,\eta}^{(n)}(Y^{(n)}) = \sum_{i=1}^n \score_{\beta,\eta,i}$, and
$V_{n,\eta}$ be defined as \eqref{eq:V_n_def_lme}.
Then, \eqref{eq:V_conti_condition} holds by Lemma \ref{lem:V_conti_lme} and 
\eqref{eq:V_positive} holds by the condition on $\bbX_n$.
Since $\sup_i \|\bX_i\|$ is bounded, we have, by Example \ref{ex:mixmnorm},
\be \label{eq:uql_reg_lme}
	\sup_{i}\sup_{\eta \in \cH} \Big| P_0 \log \frac{p_{\beta, \eta,i}}{p_{\beta_0, \eta,i}} + 
	\frac{1}{2} (\beta-\beta_0)^T \bX_i^T V_\eta \bX_i (\beta-\beta_0) \Big| = o(|\beta-\beta_0|^2)
\ee
as $\beta\rightarrow\beta_0$
which directly implies \eqref{eq:quadratic_condition}.
Also, by \eqref{eq:V_conti_condition}, \eqref{eq:V_positive} and \eqref{eq:quadratic_condition},
$\beta_0$ is a local maximizer of $\beta \mapsto \sum_{i=1}^n P_0 \ell_{\beta,\eta,i}$
for large enough $n$ and $\eta$ sufficiently close to $\eta_0$ with respect to $d_H$.
As a result \eqref{eq:zero_score_condition} is satisfied.
The result of Theorem \ref{lem:donsker_lme} implies \eqref{eq:donsker_condition}.

To prove \eqref{eq:score_conti_condition}, note that, for a given vector $a \in \bbR^p$,
the stochastic process 
$$
	\eta \mapsto \frac{a^T}{\sqrt{n}}  \sum_{i=1}^n \score_{\beta_0,\eta,i}
$$
indexed by $T=\{\eta: d_H(\eta,\eta_0) < \epsilon\}$ for sufficiently small $\epsilon > 0$,
is asymptotically uniformly $d$-equicontinuous in probability, 
where $d$ is the semimetric defined by \eqref{eq:d_def_lme}.
Since $d_H(\eta,\eta_0) \rightarrow 0$ implies $d(\eta,\eta_0) \rightarrow 0$ by Lemma \ref{lem:V_conti_lme}, 
we have \eqref{eq:score_conti_condition}.

For \eqref{eq:lr_apprx_small_condition}, by the Tayler expansion
\bean
	&&\ell^{(n)}_{\beta_n(h),\eta}(Y^{(n)}) - \ell^{(n)}_{\beta_0,\eta}(Y^{(n)})
	- \frac{h^T}{\sqrt{n}} \score_{\beta_0,\eta}^{(n)}(Y^{(n)})
	\\
	&=& \frac{h^T}{\sqrt{n}} \int_0^1 \sum_{i=1}^n \Big( \score_{\beta_0+th/\sqrt{n},\eta,i}
	- \score_{\beta_0,\eta,i} \Big) dt
\eean
and by the Jensen's inequality and Fubini's theorem,
the variance of the right hand side is bounded by $O(|h|^4/n)$.
Therefore, for each $(h, \eta)$ the term in the left hand side of \eqref{eq:lr_apprx_small_condition}
converges in probability to 0, and it converges uniformly by Lemma \ref{lem:donsker_lm}.
Finally, write
\bean
	\ell^{(n)}_{\beta_n(h),\eta}(Y^{(n)}) - \ell^{(n)}_{\beta_0,\eta}(Y^{(n)})
	= \frac{h^T}{\sqrt{n}} \int_0^1 \sum_{i=1}^n \score_{\beta_0 + th/\sqrt{n}, \eta,i} dt
\eean
and apply Lemma \ref{lem:donsker_lm}, then \eqref{eq:lr_apprx_large_condition} holds.
\end{proof}

\bigskip

\bigskip

\begin{lemma} \label{lem:V_conti_lme}
As $d_H(\eta,\eta_0) \rightarrow 0$,
$\ell_\eta \rightarrow \ell_{\eta_0}$ and $\nabla \ell_\eta \rightarrow \nabla \ell_{\eta_0}$
in $L_2(P_{\eta_0})$.
Furthermore, $d(\eta, \eta_0) \rightarrow 0$ as $d_H(\eta,\eta_0) \rightarrow 0$,
where $d$ is the semimetric defined by \eqref{eq:d_def_lme}.
\end{lemma}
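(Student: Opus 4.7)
The plan is to follow the strategy of Lemma~\ref{lem:L2_conv} essentially verbatim, but with the univariate density $p_\eta$ replaced by the multivariate random-intercept mixture $p_\eta(\bx)=\int\prod_j p_F(x_j-b)\,dG(b)$ and the scalar derivative $\ell_\eta'$ replaced by the gradient $\nabla\ell_\eta$. The three conclusions will be obtained in sequence: (a) $\ell_\eta\to\ell_{\eta_0}$ in $L_2(P_{\eta_0})$; (b) $\nabla\ell_\eta\to\nabla\ell_{\eta_0}$ in $L_2(P_{\eta_0})$; (c) $d(\eta,\eta_0)\to0$, which will fall out of (b) directly from the form of the semimetric in \eqref{eq:d_def_lme}.

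For (a), I would invoke Theorem~5 of \citet{wong1995probability} as in the proof of Lemma~\ref{lem:L2_conv}, which reduces the claim to the single integrability bound
\[
\sup_{d_H(\eta,\eta_0)<\epsilon}\int\!\Bigl(\tfrac{p_{\eta_0}}{p_\eta}\Bigr)^{\!\delta}\,dP_{\eta_0}<\infty
\]
for some $\delta\in(0,1]$. Because the random intercept $b$ lies in $[-M,M]$ and each $(z,\sigma)\in[-M,M]\times[\sigma_1,\sigma_2]$, I get uniform envelopes
\[
\inf_{|b|\le M}\prod_{j=1}^m p_F(x_j-b)\ \le\ p_\eta(\bx)\ \le\ \sup_{|b|\le M}\prod_{j=1}^m p_F(x_j-b),
\]
and the inner univariate factors are in turn bounded above by $(2\pi\sigma_1^2)^{-1/2}$ and below by a multiple of $\phi_{\sigma_1}(|x_j-b|+M)$. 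Choosing $\delta$ sufficiently small makes the resulting Gaussian-ratio integral finite, exactly as in the location model. The compactness of the supports of $F$ and $G$ is what keeps this calculation elementary.

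For (b), I would reuse the Moore–Osgood interchange-of-limits trick. Fix a coordinate direction $\bfe_k$, write
\[
\nabla_k\ell_\eta(\by)=\lim_{h\downarrow 0}\frac{\ell_\eta(\by+h\bfe_k)-\ell_\eta(\by)}{h},
\]
and bound, uniformly in $\eta\in\cH$, the $L_2(P_{\eta_0})$-distance between the difference quotient and the true derivative using Lemma~\ref{lem:den_property}(i), which gives a uniform Lipschitz control of order $O(|\by|^2)$ on $\nabla\ell_\eta$. Combining this uniform control with the $L_2$-convergence of $\ell_\eta$ to $\ell_{\eta_0}$ (from (a)) via an algebraic identity analogous to the one used in Lemma~\ref{lem:L2_conv}, the two limits $h\downarrow 0$ and $d_H(\eta,\eta_0)\to 0$ may be interchanged, yielding $\nabla_k\ell_\eta\to\nabla_k\ell_{\eta_0}$ in $L_2(P_{\eta_0})$ for each $k$. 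Since $P_{\eta_0}|\nabla\ell_{\eta_0}|^2<\infty$ by \eqref{eq:grad}, the polynomial envelopes supply the dominating functions needed to apply the dominated convergence theorem at each step.

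The final conclusion (c) will be immediate once \eqref{eq:d_def_lme} is recognized as the $L_2(P_{\eta_0})$-type semimetric on score gradients: then $d(\eta,\eta_0)\to 0$ is just a restatement of (b), possibly weighted by a bounded covariate factor that carries through trivially. The main obstacle, as in the univariate case, is (a): producing a uniform-in-$\eta$ likelihood-ratio bound on a Hellinger ball. Here the obstacle is mildly amplified because $p_\eta$ is an $m$-fold product averaged against $G$, so one must be careful to route the product through the suprema/infima in $b$ before appealing to the Gaussian tail estimates — but the compactness of the supports of $F$ and $G$ keeps the bookkeeping completely parallel to the one-dimensional argument.
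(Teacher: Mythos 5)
Your parts (a) and (b) follow the paper's proof essentially verbatim: Theorem 5 of \citet{wong1995probability} for the $L_2$ convergence of $\ell_\eta$ (the paper simply cites it here, deferring the integrability check to the analogue in Lemma \ref{lem:L2_conv}, while you sketch the envelope bound explicitly), and then the Moore--Osgood interchange of the limits $h\downarrow 0$ and $d_H(\eta,\eta_0)\to 0$ applied to the difference quotients of $\ell_\eta$, with the uniform control coming from Lemma \ref{lem:den_property}(i).

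There is, however, a genuine slip in your step (c). The semimetric in \eqref{eq:d_def_lme} is
$$
	d^2(\eta_1,\eta_2) = \int e^{|\by|}\big|\nabla\ell_{\eta_1}(\by)
	- \nabla\ell_{\eta_2}(\by)\big|^2 \, dP_{\eta_0}(\by),
$$
so the weight is $e^{|\by|}$ --- it is not ``a bounded covariate factor that carries through trivially,'' and convergence of $\nabla\ell_\eta$ in plain $L_2(P_{\eta_0})$ does not by itself imply $d(\eta,\eta_0)\to 0$. (In general, $L_2(\mu)$ convergence does not give convergence in $L_2(e^{|\by|}d\mu)$.) The correct route, which is what the paper does, is to observe that the uniform-in-$\eta$ bound \eqref{eq:unif_conv_grad_lme} survives multiplication of the integrand by $e^{|\by|}$, because the increments of $\nabla\ell_\eta$ are controlled by polynomials in $|\by|$ and $p_{\eta_0}$ has Gaussian-type tails (compactly supported mixing measures), so that $e^{|\by|}\times(\text{polynomial})$ remains $P_{\eta_0}$-integrable. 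One then reruns the entire Moore--Osgood argument in the exponentially weighted $L_2$ space, together with the observation that the $d_H$-to-weighted-$L_2$ convergence of the difference quotients also holds with the weight. Your proof as written stops one step short of this; the conclusion is true, but the justification you give for (c) would not survive scrutiny without adding the weighted version of the dominating-function argument.
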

\begin{proof}
First, $\ell_\eta \rightarrow \ell_{\eta_0}$ in $L_2(P_{\eta_0})$ as $d_H(\eta, \eta_0) \rightarrow 0$
by Theorem 5 of \citet{wong1995probability}.
Note that
\bean
	\nabla_j \ell_{\eta}(\by) = \lim_{h \rightarrow 0} \frac{\ell_{\eta}(\by+h\bfe_j) - \ell_\eta(\by)}{h},
\eean
where $\bfe_j$ is the $j$th unit vector.
Also,
\bea
	&& \sup_{\eta\in\cH} \left|
	\int \left(\frac{\ell_\eta(\by+h\bfe_j) - \ell_\eta(\by)}{h} 
	- \nabla_j \ell_\eta(\by) \right)^2 dP_{\eta_0}(\by) \right|
	\nonumber\\
	&& \leq \int \sup_{\eta\in\cH} \sup_{t\in[0,1]} \big| \nabla_j
	\ell_{\eta}(\by+th\bfe_j) - \nabla_j \ell_{\eta}(\by) \big|^2 dP_{\eta_0}(\by)
	\nonumber \\
	&& \rightarrow 0 \label{eq:unif_conv_grad_lme}
\eea
as $h\rightarrow 0$.
Therefore, by the dominated convergence theorem and Moore-Osgood theorem,
\bean
	&&\lim_{\eta \rightarrow \eta_0}
	\int \Big( \nabla_j \ell_\eta(\by) - \nabla_j \ell_{\eta_0}(\by) \Big)^2 dP_{\eta_0}(\by)
	\\
	&&= \lim_{\eta \rightarrow \eta_0} \lim_{h\rightarrow 0}
	\int \left(\frac{\ell_{\eta}(\by+h\bfe_j) - \ell_{\eta}(y)}{h} 
	- \nabla_j \ell_{\eta_0}(\by) \right)^2 dP_{\eta_0}(\by)
	\\
	&& = \lim_{h\rightarrow 0} \lim_{\eta \rightarrow \eta_0}
	\int \left(\frac{\ell_{\eta}(\by+h\bfe_j) - \ell_{\eta}(\by)}{h} 
	- \nabla_j \ell_{\eta_0}(\by) \right)^2 dP_{\eta_0}(\by) 
	 \\
	&& = 0,
\eean
where the convergence of the limit of $\eta$ is taken by the metric $d_H$.
Therefore, $\nabla \ell_\eta \rightarrow \nabla \ell_{\eta_0}$ in $L_2(P_{\eta_0})$
as $d_H(\eta,\eta_0) \rightarrow 0$.

Finally, note that the uniform convergence \eqref{eq:unif_conv_grad_lme} still holds
when the integrand is multiplied by $e^{|\by|}$.
Therefore, the proof above can be applied for the convergence under the semimetric $d$.
\end{proof}

\bigskip

Note that for every $L >0$, there exist $K_1, K_2 > 0$ and an open neighborhood $U$ of $\beta_0$ such that
\be \label{eq:libschitz_bdd}
	\sup_{|\bx|\leq L} \sup_{\eta\in\cH} 
	\big| \score_{\beta_1, \eta}(\by|\bx) - \score_{\beta_2, \eta}(\by|\bx) \big|
	\leq K_1\cdot|\beta_1 - \beta_2| \cdot (|\by|+K_2)^2
\ee
for every $\beta_1, \beta_2 \in U$.
The following Lemma corresponds to the Donsker's theorem for \iid~models and
Lemma \ref{lem:donsker_lm} for non-\iid~regression model.

\bigskip

\begin{lemma} \label{lem:donsker_lme}
If $\sup_i \|\bX_i\| < L$ for some constant $L > 0$, then 
for any $a \in \bbR^p$, there exists $\epsilon > 0$ such that the sequence of stochastic processes
$$
	(\beta,\eta) \mapsto \frac{a^T}{\sqrt{n}} \sum_{i=1}^n \left( \score_{\beta,\eta,i}
	- P_0 \score_{\beta,\eta,i} \right)
$$
is asymptotically tight in $\ell^\infty(B_\epsilon \times \cH)$,
where $B_\epsilon$ is the open ball of $\beta_0$ with radius $\epsilon$.
\end{lemma}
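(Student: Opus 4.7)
The plan is to apply Theorem~\ref{thm:jain_marcus} to the triangular array $Z_{ni}(\beta,\eta) = a^T\score_{\beta,\eta,i}/\sqrt{n}$, $i=1,\ldots,n$, indexed by $B_\epsilon \times \cH$ with the product semimetric $\rho((\beta_1,\eta_1),(\beta_2,\eta_2)) = |\beta_1 - \beta_2| \vee d(\eta_1,\eta_2)$, where $d$ is the weighted $L_2$ semimetric on $\cH$ employed in the proof of Theorem~\ref{thm:main_lme} (the one built from the $e^{|\by|}$-weighted $L_2(P_{\eta_0})$ norm of $\nabla\ell_{\eta_1} - \nabla\ell_{\eta_2}$). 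This mirrors Lemma~\ref{lem:donsker_lm}, but with the univariate mixtures replaced by the multivariate mixtures of Example~\ref{ex:mixmnorm}.

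For the envelope and moment conditions, the bound $|\score_{\beta,\eta,i}| \leq \|\bX_i\|\cdot|\nabla\ell_\eta(\bY_i - \bX_i\beta)|$ together with $\sup_i\|\bX_i\|\leq L$ and \eqref{eq:grad} produces a square-integrable envelope $Q_0(\bY_i)$ dominating $\sqrt{n}|Z_{ni}|$ uniformly over $B_\epsilon \times \cH$, yielding the Lindeberg condition. For the Lipschitz property I would split
$$
|Z_{ni}(\beta_1,\eta_1) - Z_{ni}(\beta_2,\eta_2)| \leq |Z_{ni}(\beta_1,\eta_1) - Z_{ni}(\beta_2,\eta_1)| + |Z_{ni}(\beta_2,\eta_1) - Z_{ni}(\beta_2,\eta_2)|.
$$
The $\beta$-increment is controlled pointwise by \eqref{eq:libschitz_bdd}, giving a random Lipschitz constant of order $(|\bY_i|+K)^2/\sqrt{n}$. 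For the $\eta$-increment I would use a change of measure: under $P_{\beta_2,\eta_0,i}$ the variable $\bY_i - \bX_i\beta_2$ has the law of $\bepsilon + \bX_i(\beta_0 - \beta_2)$, so symmetry of $p_{\eta_0}$ together with $|\nabla\ell_{\eta_0}(\by)|=O(|\by|)$ from \eqref{eq:grad} yields $dP_{\beta_2,\eta_0,i}/dP_{\beta_0,\eta_0,i}(\by) \leq \exp(C|\by|)$ uniformly over $\beta_2 \in B_\epsilon$. This exponential factor is precisely the weight defining $d$, so the squared $L_2$ norm of the $\eta$-increment is bounded by a constant multiple of $d^2(\eta_1,\eta_2)/n$, uniformly in $i$ and $\beta_2$. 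Combining the two estimates as in Lemma~\ref{lem:donsker_lm} and using $\sup_i P_0|\bY_i|^4 < \infty$ gives $\sum_i P_0 M_{ni}^2 = O(1)$ for a suitable effective Lipschitz constant.

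The main obstacle is the entropy integral $\int_0^\infty \sqrt{\log N(\delta, B_\epsilon\times\cH, \rho)}\,d\delta < \infty$. The Euclidean factor contributes only $O(\log(1/\delta))$, and for the $\cH$ factor I would bound $N(\delta,\cH,d) \leq N_{[]}(\delta,\cG,L_2(P_{\eta_0}))$ for $\cG = \{\by\mapsto e^{|\by|/2}\nabla\ell_\eta(\by) : \eta\in\cH\}$, then apply Theorem~\ref{thm:bracketing_bound} to a partition of $\bbR^m$ into unit cubes. Because the mixing distributions are supported on the compact set $[-M,M]\times[\sigma_1,\sigma_2]$, mixtures of normals admit uniformly bounded derivatives of every order on each cube, so one can choose $\alpha > m/2$ and $V \geq m/\alpha$ in \eqref{eq:bracket_bound}. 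The Hölder norms $M_j$ grow only polynomially in the cube centre, in the spirit of Lemma~\ref{lem:den_property}, while the Gaussian tail of $P_{\eta_0}$ combined with the weight $e^{|\by|/2}$ makes $P(I_j)$ decay super-exponentially, so the series in \eqref{eq:bracket_bound} is finite. Carrying out this multivariate Hölder bookkeeping, rather than the one-dimensional version used in Lemma~\ref{lem:donsker_lm}, is the main technical task.
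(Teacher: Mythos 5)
Your proposal follows the same architecture as the paper's proof: both apply Theorem \ref{thm:jain_marcus} to $Z_{ni}(\beta,\eta)=a^T\score_{\beta,\eta,i}/\sqrt{n}$ under the product semimetric $\rho = |\cdot|\vee d$ with $d$ as in \eqref{eq:d_def_lme}, obtain the envelope/Lindeberg condition from $\sup_i\|\bX_i\|\leq L$ and \eqref{eq:grad}, split the increment into a $\beta$-part controlled by \eqref{eq:libschitz_bdd} and an $\eta$-part controlled by the symmetry-based change of measure whose Radon--Nikodym derivative is absorbed by the $e^{|\by|}$ weight, and finish with a bracketing-entropy bound for the weighted gradient class. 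The one genuine divergence is the entropy step, and there your version is the more careful one. The paper partitions $\bbR^m$ into annuli and invokes Theorem \ref{thm:bracketing_bound} with $\alpha=1$, $V=d=m$, obtaining $\log N_{[]}(\delta)=O(\delta^{-m})$; but the hypothesis $\alpha>d/2$ of that theorem fails for $m\geq 2$ with $\alpha=1$, and even granting the bound, $\int_0\delta^{-m/2}\,d\delta$ diverges for $m\geq 2$, so the entropy integral required by Theorem \ref{thm:jain_marcus} would not be finite as stated. Your choice of exploiting the full smoothness of normal mixtures to take $\alpha>m/2$ and $V\geq m/\alpha$ with $V<2$ in \eqref{eq:bracket_bound} makes the integral converge for every $m$, at the price of the multivariate H\"older bookkeeping you identify (polynomial growth of the norms $M_j$ against the Gaussian decay of $P_{\eta_0}(I_j)$, which does control the series). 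Whether you bracket the vector-valued class $\cG$ directly or coordinate-wise as the paper does with the classes $\cF_j$ is immaterial. In short: same route, but your handling of the entropy integral repairs the only step of the paper's argument that does not go through verbatim for $m\geq 2$.
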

\begin{proof}
We will prove the assertion using Theorem \ref{thm:jain_marcus}.
Without loss of generality, we may assume that $\beta_0=0$.
Since $\sup_i |X_i|$ is bounded, if $\epsilon > 0$ is sufficiently small,
then there exists a function $Q$
such that $\sqrt{n} |Z_{ni}(\beta,\eta)| \leq Q(Y_i)$ 
for every $(\beta,\eta) \in B_\epsilon\times \cH$,
and $\int Q^2(y) dP_{\eta_0}(y) < \infty$,
where $Z_{ni}(\beta,\eta) = a^T\score_{\beta,\eta,i}/\sqrt{n}$,
a stochastic process indexed by $B_\epsilon \times \cH$.
Let $d$ be a metric on $\cH$ defined as
\be \label{eq:d_def_lme}
	d^2(\eta_1,\eta_2) = \int e^{|\by|}\big|\nabla\ell_{\eta_1}(\by)
	- \nabla\ell_{\eta_2}(\by)\big|^2 dP_{\eta_0}(\by).
\ee
Also, let $\rho$ be the product metric $|\cdot|\times d$ on $B_\epsilon \times \cH$
defined as 
$$
	\rho\big((\beta_1,\eta_1),(\beta_2,\eta_2)\big) = \max \{|\beta_1-\beta_2|, d(\eta_1,\eta_2)\}.
$$

Since $\sqrt{n}|Z_{ni}(\beta,\eta)| < Q(Y_i)$ for every $(\beta,\eta) \in B_\epsilon\times\cH$
and $Q$ is square-integrable, the triangular array of random variables
$\big(\sup_{\beta \in B_\epsilon}\sup_{\eta\in\cH} |Z_{ni}(\beta,\eta)|\big)$
satisfies the Lindberg's condition.
By the triangular inequality,
$$
	|Z_{ni}(\beta_1,\eta_1) - Z_{ni}(\beta_2,\eta_2)| \leq
	|Z_{ni}(\beta_1,\eta_1) - Z_{ni}(\beta_2,\eta_1)|
	+ |Z_{ni}(\beta_2,\eta_1) - Z_{ni}(\beta_2,\eta_2)|,
$$
and the first term of the right-hand-side is bounded by
\bean
	\sup_{\eta\in\cH}|Z_{ni}(\beta_1,\eta) - Z_{ni}(\beta_2,\eta)|
	&\leq& \frac{K_1}{\sqrt{n}} \cdot |\beta_1-\beta_2| \cdot (|Y_i|+K_2)^2
\eean
for some constants $K_1, K_2 > 0$ and $\beta_1,\beta_2 \in B_\epsilon$
by \eqref{eq:libschitz_bdd} provided $\epsilon$ is sufficiently small.
The expectation of the square of the second term can be bounded by
\bean
	&&P_0 \big(Z_{ni}(\beta_2,\eta_1) - Z_{ni}(\beta_2,\eta_2)\big)^2 \\
	&&\leq \frac{|a|^2}{n}\cdot
	P_0\Big|\big(\nabla\ell_{\beta_2,\eta_1,i} - \nabla\ell_{\beta_2,\eta_2,i}\big)^T
	\bX_i \bX_i^T \big(\nabla\ell_{\beta_2,\eta_1,i} - \nabla\ell_{\beta_2,\eta_2,i}\big)\Big|\\
	&&\leq \frac{K_3}{n} \cdot P_0\big|\nabla\ell_{\beta_2,\eta_1,i} - \nabla\ell_{\beta_2,\eta_2,i}\big|^2,
\eean
for some $K_3 > 0$.
If $\epsilon >0$ is sufficiently small,
\bean
	P_0\big|\nabla\ell_{\beta,\eta_1,i} - \nabla\ell_{\beta,\eta_2,i}\big|^2
	&=& \int \Big|\nabla\ell_{\beta,\eta_1,i}(\by) - \nabla\ell_{\beta,\eta_2,i}(\by)\Big|^2
	dP_{\eta_0}(\by) \\
	&=& \int \Big|\nabla\ell_{\eta_1}(\by) - \nabla\ell_{\eta_2}(\by)\Big|^2
	dP_{-\beta,\eta_0,i}(\by) \\
	&=& \int \Big|\nabla\ell_{\eta_1}(\by) - \nabla\ell_{\eta_2}(\by)\Big|^2
	\frac{dP_{-\beta,\eta_0,i}}{dP_{\eta_0}}(\by) dP_{\eta_0}(\by)\\
	&\leq& K_4 \cdot d^2_2(\eta_1,\eta_2)
\eean
where the last inequality holds because
$$
	\frac{dP_{\beta,\eta_0,i}}{dP_{\eta_0}}(\by)
	= \exp\big( \ell_{\eta_0}(\by - \bX_i\beta) - \ell_{\eta_0}(\by) \big)
	\leq K_4 \cdot \exp(|\by|)
$$
for some $K_4 > 0$ and $\beta$ sufficiently close to $\beta_0=0$.
Since $\sup_i P_0 |\bY_i|^4 < \infty$, there exist $\epsilon > 0$ and a constant $K$ such that
$$
	\sum_{i=1}^n P_0 \left[\frac{Z_{ni}(\beta_1,\eta_1) - Z_{ni}(\beta_2,\eta_2)}
	{\rho\big((\beta_1,\eta_1),(\beta_2,\eta_2)\big)} \right]^2 \leq K
$$
for every $(\beta_1,\eta_1), (\beta_2,\eta_2) \in B_\epsilon\times\cH$ and $n \geq 1$.

It only remains to prove 
$$ \int_0^\infty \sqrt{\log N(\delta, B_\epsilon\times\cH, \rho)} d\delta < \infty.$$
Note that $\log N(\delta, B_\epsilon\times\cH, \rho) \leq 
\log N(\delta, B_\epsilon, |\cdot|) + \log N(\delta, \cH, d)$ and
$N(\delta, B_\epsilon, |\cdot|) = O(\delta^{-p})$ as $\delta \rightarrow 0$.
If we define $\cF = \{ \by \mapsto e^{|\by|/2} \nabla\ell_\eta(\by) : \eta \in \cH\}$,
then $N(\delta,\cH, d)$ is equal to $N(\delta, \cF, d_2)$,
where $d^2_2(f_1, f_2) = \int |f_1(\by) - f_2(\by)|^2 dP_{\eta_0}(\by)$
for $f_1, f_2 \in \cF$.
Let $\cF_j$ be the set of all $j$th coordinate functions of $f \in \cF$ for $j=1, \ldots, m$.
Then, we have
$$
	\log N(\delta, \cF, d_2) 
	\leq \sum_{j=1}^m \log N(\delta/\sqrt{m}, \cF_j, L_2(P_{\eta_0}))
	\leq \sum_{j=1}^m \log N_{[]}(\delta/\sqrt{m}, \cF_j, L_2(P_{\eta_0}))
$$
for every $\delta > 0$.
The bracketing entropy $\log N_{[]}(\delta, \cF_j, L_2(P_{\eta_0}))$ is of order $O(\delta^{-m})$ 
as $\delta \rightarrow 0$ by applying Theorem \ref{thm:bracketing_bound} with 
$\alpha=1$, $V=m$, $d=m$, $r=2$ and a partition 
$$
	\bbR^m = \cup_{j=1}^\infty \Big\{\by \in \bbR^m: |\by| \in [j-1,j)\Big\}.
$$
Therefore, the proof is complete.
\end{proof}

\bigskip

\bigskip

\chapter{Numerical studies}
\label{chap:numerical}

\section{Gibbs sampler algorithm}
\label{sec:gibbs}

Consider the data $(\bY_1, \bX_1), \ldots, (\bY_n, \bX_n)$ generated from the random intercept model
\be \label{eq:lme}
	Y_{ij} = \beta^T X_{ij} + b_i + \epsilon_{ij},
\ee
for $i=1, \ldots, n$ and $j=1, \ldots, m_i$.
In Section \ref{sec:lme}, we proved the BvM theorem 
when the error distribution is modeled as a location-scale mixture of normal densities.
In this section, we only consider location mixtures of normal densities and
the scale parameter of the normal distribution is endowed with a prior.
Then the density of the error distribution can be written by
$\int \phi_\sigma(y-z) dF(z)$ for some $\sigma > 0$ and mixing distribution $F$.
Also, we assume that the distribution of random effects is $N(0, \sigma_b^2)$.
Therefore, the unknown parameters are $(\beta, \sigma, F, \sigma_b)$ and we endow
$N(0, \tau_0^2 I_p)$, $IG(\alpha_0, \lambda_0)$,
$DP_S(\alpha_1, N(0,\tau_1^2))$ and $IG(\alpha_1, \lambda_1)$ prior
for $\beta, \sigma^2, F$ and $\sigma_b$, respectively,
where $IG(\alpha, \lambda)$ is the inverse gamma distribution which has
$$
	x \mapsto \frac{\lambda^\alpha}{\Gamma(\alpha)} x^{-\alpha-1} \exp\Big( -\frac{\lambda}{x}\Big)
$$
as the density, $DP_S$ is the symmetrized Dirichlet process
defined in Section \ref{sec:sdp}, and $I_p$ is the $p\times p$ identity matrix.

We introduce a Gibbs sampler algorithm based on Algorithm \ref{alg:a2} in Section \ref{sec:sdp}.
For this we denote the latent class variable $c_{ij}$ associated with observation $(X_{ij}, Y_{ij})$
and the corresponding location parameter $z_{ij}$ and sign indicator $s_{ij}$ as in Section \ref{sec:sdp}.
Also, let $\vartheta_c$ be the location parameter for the class $c$, that is $z_{ij} = s_{ij} \vartheta_{c_{ij}}$.
Note that for given $\beta, X_{ij}, b_i, \sigma, \sigma_b$ and $z_{ij}$, the response variable $Y_{ij}$ follows
a normal distribution with mean $\beta^T X_{ij} + b_i + z_{ij}$ and variance $\sigma^2$.
The conditional posterior distribution of unobservable quantities are given below.
The conditional posterior distributions (i)--(iv) can be easily calculated by conjugacy.
For notational convenience, the observations are abbreviated in each conditional probability.
The boldface $\bz$ represents all $z_{ij}$ for $i=1, \ldots, n$ and $j=1, \ldots, m_i$
and it is denoted by $\bz_{-ij}$ when $z_{ji}$ is excluded.
Also, ${\bf b}$ and ${\bf b}_{-i}$ are similarly defined.

\bigskip

\ben
	\item[(i)] Generating $\beta$ for given $\sigma, \bz, {\bf b}$
\een
The conditional posterior distribution of $\beta$ is given by
$$
	\beta \;|\; \sigma, \bz, {\bf b} \sim N(\mu_n, \Sigma_n),
$$
where 
\bean
	\mu_n &=& \tau_0^2 \Big(\sigma^2 I_p + \tau_0^2 \sum_{i,j} X_{ij} X_{ij}^T \Big)^{-1}
	\sum_{i,j} \big(Y_{ij} - z_{ij} - b_i\big) X_{ij}
	\\
	\Sigma_n &=& \tau_0^2 \sigma^2 \Big( \sigma^2 I_p + \tau_0^2 \sum_{i,j} X_{ij} X_{ij}^T \Big)^{-1}.
\eean

\bigskip

\ben
	\item[(ii)] Generating $\sigma$ for given $\beta, \bz, {\bf b}$
\een
By the conjugacy, the conditional posterior distribution of  $\sigma^2$ is given by
$$
	\sigma^2 \;|\; \beta, \bz, {\bf b} \sim IG \bigg( \alpha_0 + \frac{1}{2} \sum_i m_i, ~
	\lambda_0 + \frac{1}{2} \sum_{i,j} \big(
	Y_{ij} - \beta^TX_{ij} - z_{ij} - b_i \big)^2 \bigg).
$$

\bigskip

\ben
	\item[(iii)] Generating $b_i$ for given $\beta, \sigma, \sigma_b, \bz$
\een
The conditional posterior distribution of $b_i$ is also normal given by
\bean
	b_i \;|\; \beta, \sigma, \sigma_b, \bz ~\sim~
	N\bigg(\frac{\sigma_b^2}{\sigma^2 + m_i\sigma_b^2} \sum_{j=1}^{m_i} \big(Y_{ij} - \beta^T X_{ij} - z_{ij}\big), ~
	\frac{\sigma^2 \sigma_b^2}{\sigma^2 + m_i\sigma_b^2}\bigg)
\eean
for each $i=1, \ldots, n$.

\bigskip

\ben
	\item[(iv)] Generating $\sigma_b \;|\; {\bf b}$
\een
The conditional posterior distribution of $\sigma_b$ can be similarly calculated by
$$
	\sigma_b^2 \;|\; {\bf b} \sim IG\bigg(\alpha_1 + \frac{n}{2}, ~
	\lambda_1 + \frac{1}{2} \sum_{i=1}^n b_i^2 \bigg).
$$

\bigskip

\ben
	\item[(v)] Generating $z_{ij} \;|\; \beta, b_i, \sigma, \bz_{-ij}$
\een
Instead of generating $z_{ij}$ directly, we sample $c_{ij}, s_{ij}$ and $\vartheta_c$ iteratively
for $i=1,\ldots, n$, $j=1, \ldots, m_i$ and $c=1, 2, \ldots$.
Then, $z_{ij}$ can be determined by $s_{ij} \vartheta_{c_{ij}}$.
Let $e_{ij} = Y_{ij} - \beta^T X_{ij} - b_i$ and $n_{-ij, c}^+$ be the number of observation with
$(i^\prime,j^\prime) \neq (i,j)$, $c_{i^\prime j^\prime}=c$ and $s_{i^\prime j^\prime}=1$.
Similarly we can define $n_{-ij, c}^-$ by replacing $s_{i^\prime j^\prime}=1$ to $s_{i^\prime j^\prime}=-1$.
The conditional distribution of $c_{ij}$ is given by
\bean
	&& p(c_{ij}=c \;|\; \sigma, \bfe, \bfs, \bfc_{-ij}) \\ 
	&&\propto \left\{ \begin{array}{cl}
	n_{-ij, c}^+ \phi_\sigma(e_{ij} - \vartheta_{c_{ij}})
	+ n_{-ij, c}^- \phi_\sigma(e_{ij} + \vartheta_{c_{ij}})
	& \textrm{if $c_{ij} = c_{i^\prime j^\prime}$} \textrm{~for some~} (i^\prime, j^\prime) \neq (i,j)
	\\
	2\alpha \phi_\sigma (e_{ij} - \vartheta_{\rm new}) & 
	\textrm{if $c_{ij} \neq c_{i^\prime j^\prime}$} \textrm{~for all~} (i^\prime, j^\prime) \neq (i,j)
	\end{array}\right.
\eean
where $\vartheta_{\rm new}$ is a random variable from $N(0, \tau_1^2)$.
If a new class is generated, then draw a random variable from
\bean
	N\bigg(\frac{\tau_1^2}{\tau_1^2 + \sigma^2}s_{ij} e_{ij}, ~ \frac{\tau_1^2 \sigma^2}{\tau_1^2 + \sigma^2} \bigg)
\eean
and set $\vartheta_{c_{ij}}$ to this value.
Next, the conditional distribution of $s_{ij}=1$ and $s_{ij}=-1$ are proportional to
$\phi_\sigma(e_{ij} - \vartheta_{c_{ij}})$ and $\phi_\sigma(e_{ij} + \vartheta_{c_{ij}})$, respectively.
Finally, the conditional posterior distribution of $\vartheta_{c}$ is given by
$$
	\vartheta_c \;|\; \sigma, \bfe, \bfs, \bfc ~\sim~
	N\bigg( \frac{\tau_1^2}{n_c\tau_1^2 + \sigma^2} \sum_{c_{ij}=c} s_{ij}e_{ij}, ~
	\frac{\tau_1^2 \sigma^2}{n_c\tau_1^2 + \sigma^2} \bigg)
$$
where $n_c$ is the number of observations with $c_{ij}=c$.

\bigskip

The whole Gibbs sampler algorithm repeat (i)--(v) until the generated Markov chain converges.
The algorithm converges after few dozens of iterations.
Algorithm 3 of \citet{neal2000markov} also can be used to construct a Gibbs sampler algorithm
with the help of a partially collapsed Gibbs sampler algorithm introduced by \citet{van2008partially}.
This algorithm integrates $z_{ij}$ out when it generates the latent class $c_{ij}$,
so the convergence speed can be improved.

\section{Simulation}

In numerical experiments, a dataset is generated from model \eqref{eq:lme} with various error distributions.
Then, the regression parameters $\beta$ are estimated by various methods
including both frequentist's and Bayesian point estimators.
We repeat this procedure $N$ times and
the performance of each method is evaluated by the mean squared error 
$N^{-1} \sum_{k=1}^N |\hat\beta^{(k)}_n - \beta_0|^2$,
where $\hat\beta^{(k)}_n$ is a point estimator in $k$th repetition.
We compare the performance of 5 estimators (two frequentist's ones F1--F2
and three Bayesians B1--B3) with 9 error distributions E1--E9.
In all experiments, we use 2 covariates which follow independent Bernoulli distributions
with success probability 1/2, and the true parameter $\beta_0$ is set to be $(-1,1)^T$.
With regard to the error distribution, Student's t-distributions with 
1, 2, 4, 8, 16 degrees of freedom for E1--E5, the standard normal distribution for E6,
uniform(-3,3) distribution for E7, and mixtures of normal densities for E8 and E9.
More specifically, mixture densities are of the form
\bean
	p(x) = \sum_{k=1}^K \pi_k\Big(\phi_1 (x-z_k) + \phi_1 (x+z_k)\Big)
\eean
with $K=4, (z_1, z_2, z_3, z_4)=(0, 1.5, 2.5, 3.5), (\pi_1, \pi_2, \pi_3, \pi_4) = (0.1, 0.2, 0.15, 0.05)$ for E8,
and $K=4, (z_1, z_2, z_3, z_4)=(0, 1, 2, 4), (\pi_1, \pi_2, \pi_3, \pi_4) = (0.05, 0.15, 0.1, 0.2)$ for E9.
These two densities are depicted in Figure \ref{fig:density}.

For the estimators, F1 is the least square estimator, F2
\footnote{The maximum likelihood estimate can be calculated by solving the 
generalized estimating equation (\citet{liang1986longitudinal}).}
is the maximum likelihood estimator based on normal random effects and normal errors,
B1 and B2 are posterior means under the assumption of normal errors 
without and with normal random effects, respectively.
Note that B1 and B2 are Bayesian correspondences of F1 and F2.
B3 is the posterior mean under the assumption of normal random effects and location mixtures of
normal densities as error density.

\begin{figure}
\begin{center}
\includegraphics[width=100 mm, height=50 mm]{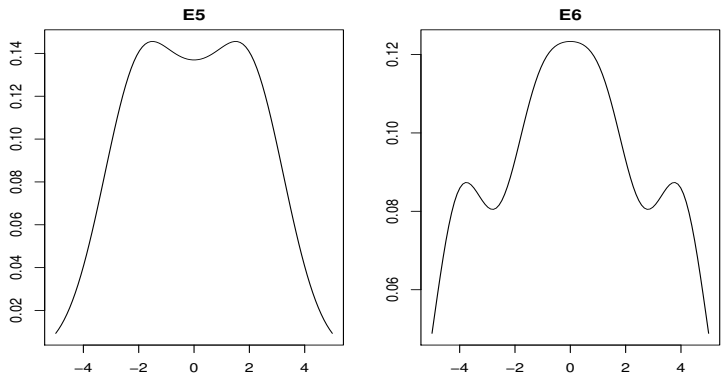}
\end{center}
\caption{Density plots of error distribution in E8 and E9 \label{fig:density}}
\end{figure}

For each experiment, $N=300$ datasets, with $n=20$ and $m_i=5$ for each $i$, are simulated.
The mean squared errors and relative efficiencies comparing with E9 
are summarized in Table \ref{tab:sim_res}.
As we can see, the results for F1 and F2 are similar to their Bayesian counterparts B1 and B2.
Since the Student's t-distribution converges to the standard normal distribution as the degree of freedom increases,
F2 and B2 performs slightly better than B3 in E4--E6.
For the case of Cauchy error E1, B3 also does not work well like other estimators,
because we only considered the location mixture of normal densities.
In other cases, B3 dominates the other estimators as expected.

\begin{table}
\footnotesize
\caption{Mean squared error (and relative efficiency) of each methods F1--F2 and B1--B3 
		 among $N=300$ repetitions for each experiment E1--E9 \label{tab:sim_res}}
\begin{center}
\begin{tabular}{c|rrrrr}
	\hline
    \multicolumn{2}{r}{F1}   & F2    & B1    & B2    & B3 \\ \hline
    E1    & 244.16 & 244.62 & 238.77 & 243.48 & 235.27 \\
          & (1.04)  & (1.04)  & (1.01)  & (1.03)  & (1.00) \\
    E2    & 0.27  & 0.27  & 0.27  & 0.26  & 0.09 \\
          & (3.07)  & (3.06)  & (3.03)  & (2.99)  & (1.00) \\
    E3    & 0.06  & 0.05  & 0.06  & 0.05  & 0.05 \\
          & (1.35)  & (1.12)  & (1.34)  & (1.11)  & (1.00) \\
    E4    & 0.04  & 0.03  & 0.04  & 0.03  & 0.03 \\
          & (1.35)  & (1.00)  & (1.35)  & (1.00)  & (1.00) \\
    E5    & 0.04  & 0.03  & 0.04  & 0.03  & 0.03 \\
          & (1.50)  & (0.98)  & (1.50)  & (0.98)  & (1.00) \\
    E6    & 0.04  & 0.03  & 0.04  & 0.03  & 0.03 \\
          & (1.50)  & (0.98)  & (1.50)  & (0.98)  & (1.00) \\
    E7    & 0.08  & 0.07  & 0.08  & 0.07  & 0.05 \\
          & (1.62)  & (1.40)  & (1.62)  & (1.39)  & (1.00) \\
    E8    & 0.14  & 0.13  & 0.14  & 0.12  & 0.11 \\
          & (1.28)  & (1.18)  & (1.27)  & (1.16)  & (1.00) \\
    E9    & 0.20  & 0.19  & 0.20  & 0.19  & 0.17 \\
          & (1.17)  & (1.13)  & (1.17)  & (1.12)  & (1.00) \\
	\hline
\end{tabular}
\end{center}
\end{table}

\bigskip

\section{Analysis of orthodontic distance growth data}
\label{sec:realdata}

In this section, we analyze the orthodontic distance growth data 
considered previously by \citet{pinheiro2001efficient} and \citet{song2007maximum}.
These data were originally reported in an orthodontic study by \citet{potthoff1964generalized}.
The measurements of the response variable is the distance (in millimeters) from the pituitary gland
to the pterygomaxillary fissure taken repeatedly at 8, 10, 12 and 14 years of age on a sample of 27 children,
comprised of 16 boys and 11 girls.
It is known from the previous studies that there are two outliers in these data.
For analyzing these data, we consider the following linear mixed effects model
\be \label{eq:real_model}
	Y_{ij} = \beta_0 + \beta_1 X_i + \beta_2 T_j + \beta_3 X_i T_j + 
	b_1 + b_2 T_j + \epsilon_{ij},
\ee
where $Y_{ij}$ is the orthodontic distance for the $i$th subject at age $T_j$,
and $X_i$ represents the sex of $i$th subject, coded $X_i=0$ for boys and $X_i=1$ for girls.
The error $\epsilon_{ij}$ follows a symmetric density and
there are two independent random effects $b_1$ and $b_2$ which are assumed to follow
the normal distributions $N(0, \sigma_{b1}^2)$ and $N(0, \sigma_{b2}^2)$, respectively.

To compare the results from the model \eqref{eq:real_model},
we consider four submodels M1--M4: 
$\sigma_{b1}=\sigma_{b2}=0$ and $\epsilon_{ij} \sim N(0, \sigma^2)$ for M1,
$\sigma_{b2}=0$ and $\epsilon_{ij} \sim N(0, \sigma^2)$ for M2,
$\sigma_{b2}=0$ and $\epsilon_{ij}$ follows an unknown symmetric density for M3,
$\epsilon_{ij} \sim N(0, \sigma^2)$ for M4.
The saturated model \eqref{eq:real_model} is denoted by M5.
For the priors of error distributions of M3 and M5, Dirichlet processes mixtures of normal densities,
explained in Section \ref{sec:gibbs}, are used.

The posterior mean and standard deviation of each parameter is summarized in Table \ref{tab:real_res}.
The estimated regression coefficients from the models M3 and M5 are different from the others
because their results do not heavily depend on outliers.
Using the Student's $t$-distribution, \citet{song2007maximum} found the maximum likelihood estimators
from four different models of type \eqref{eq:real_model}:
normal-normal, $t$-normal, $t$-$t$ and normal-$t$ for the distributions of errors and random effects.
The estimated regression coefficient from the model M5 is closest to the result from the third model, 
$t$-$t$, although we only considered the normal random effects.

\begin{table}
\small
\caption{The results of orthodontic distance growth data analysis under five different methods \label{tab:real_res}}
\begin{center}
\begin{tabular}{c|rr|rr|rr|rr|rr}
	\hline
	& \multicolumn{2}{c|}{M1} & \multicolumn{2}{c|}{M2} & \multicolumn{2}{c|}{M3} & \multicolumn{2}{c|}{M4} 
	& \multicolumn{2}{c}{M5}
	\\
	& \multicolumn{1}{c}{Mean} & \multicolumn{1}{c|}{Sd} & \multicolumn{1}{c}{Mean} & \multicolumn{1}{c|}{Sd} 
	& \multicolumn{1}{c}{Mean} & \multicolumn{1}{c|}{Sd} & \multicolumn{1}{c}{Mean} & \multicolumn{1}{c|}{Sd}
	& \multicolumn{1}{c}{Mean} & \multicolumn{1}{c}{Sd}
	\\
	\hline
    $\beta_0$ & 16.031 & 1.412 & 16.184 & 0.986 & 16.651 & 1.021 & 16.208 & 0.904 & 16.621 & 0.958
    \\
    $\beta_1$ & 1.267 & 2.156 & 1.181 & 1.544 & 0.700 & 1.504 & 1.136 & 1.416 & 0.748 & 1.386
    \\
    $\beta_2$ & 0.812 & 0.126 & 0.795 & 0.078 & 0.756 & 0.082 & 0.803 & 0.117 & 0.761 & 0.115
    \\
    $\beta_3$ & -0.326 & 0.192 & -0.314 & 0.122 & -0.278 & 0.118 & -0.319 & 0.175 & -0.272 & 0.170
    \\
    $\sigma$ & 2.253 & 0.156 & 1.395 & 0.111 & 1.163 & 0.203 & 1.376 & 0.115 & 1.162 & 0.203
    \\
    $\sigma_{b_1}$ & \multicolumn{1}{c}{-} & \multicolumn{1}{c|}{-} & 1.795 & 0.294 & 1.820 & 0.292 & 1.147 & 0.483 & 1.067 & 0.406 
	\\
    $\sigma_{b_2}$ & \multicolumn{1}{c}{-} & \multicolumn{1}{c|}{-} & \multicolumn{1}{c}{-} &
    \multicolumn{1}{c|}{-} & \multicolumn{1}{c}{-} & \multicolumn{1}{c|}{-} & 0.335 & 0.049 & 0.332 & 0.048
    \\
    \hline
\end{tabular}
\end{center}
\end{table}

\chapter{Conclusion}
\label{chap:conclusion}

We have shown that the semiparametric Bernstein-von Mises theorem holds
in the location, the linear regression and random intercept models
if the unknown symmetric error distribution is endowed with a Dirichlet process mixture of normal densities.
Our results can be applied to more general models such as linear mixed effects models which have symmetric errors.
The only non-trivial requirement is the consistency of the posterior distribution.

As an extension of our results, we are interested in two Bayesian problems.
The first one is the regression with unknown error distribution when the number of covariates diverges.
In the linear regression problems with increasing regressors,
\citet{bontemps2011bernstein, johnstone2010high, ghosal1999asymptotic}
proved the asymptotic normality of the posterior distributions, but they considered only Gaussian error distributions.
Frequentists also assume the Gaussian error when they consider high-dimensional data
because, otherwise, it is very difficult to find an efficient algorithm and nice asymptotic properties.
Since Bayesian computation is often more convenient than frequentist's one,
we believe that Bayesian method will be a promising tools to analyze high-dimensional data in the near future.

The second problem we consider is to prove the Bernstein-von Mises theorem in semiparametric mixture models
in which there is loss of information.
The generalized linear mixed effects models and the frailty model are important examples.
Before working this paper, our original interest was to prove the Bernstein-von Mises theorem,
or at least the consistency of the posterior distribution, in frailty models.
There are general semiparametric Bernstein-von Mises theorems, 
but it is difficult to apply them for mixture models
because they require the change of parameters.
Since a collection of probability measures is not closed under the subtraction,
it is not easy to apply general approach.
In our main results, we considered only adaptive models, so this change of parameters is not needed.

\appendix

\chapter{Miscellanies}

\section{Posterior consistency under independent observations}
\label{sec:consistency}

In this section, we consider posterior consistency 
under the independent observation $X^{(n)} = (X_1, \ldots, X_n)^T$
which follows
$$
	P_{\theta,\eta}^{(n)} \big( X^{(n)} \in A_1\times\cdots\times A_n \big) 
	= \prod_{i=1}^n P_{\theta,\eta,i}\big(X_i \in A_i \big)
$$
for any product set $A_1\times\cdots\times A_n$.
The final goal of this subsection is to prove Therems \ref{thm:consist_perturb_general} and \ref{thm:consist_general}
which can be used as tools for proving \eqref{eq:conv_perturb_general} and \eqref{eq:consist}.
When $X_1, \ldots, X_n$ are identically distributed,
it is well-known (\citet{ghosal2000convergence}) that 
the posterior convergence rate depends on the Hellinger metric entropy
and the prior concentration rate to Kullback-Leibler type neighborhoods
of the true parameter $(\theta_0, \eta_0)$.
This general result can be extended to non-\iid~cases (\citet{ghosal2007convergence})
and misspecified models (\citet{kleijn2006misspecification}).
The convergence rate of conditional posterior of $\eta$ is also well established
in \citet{bickel2012semiparametric} under a slight misspecification of $\theta$.
All of these results, however, cannot be directly applied to our examples
because we should consider both non-\iid~observation and misspecification.
The aim of this subsection is to prove the consistency of joint posterior $(\theta,\eta)$,
\eqref{eq:consist}, and the consistency of conditional posterior of $\eta$
under $\sqrt{n}$-perturbation of $\theta$, condition \eqref{eq:conv_perturb_general},
when the observation is independent and a Hellinger type metric is used.

Assume that for every $\eta_1, \eta_2 \in \cH$, the Hellinger distance
$h(P_{\theta_0, \eta_1,i}, P_{\theta_0, \eta_2,i})$ does not depend on $i$,
so we can define a metric $d_H$ on $\cH$ by
\be \label{eq:d_H_def} 
	d_H(\eta_1, \eta_2) = h(P_{\theta_0, \eta_1,i}, P_{\theta_0, \eta_2,i}).
\ee
Assume also that
\be \label{eq:h_unif_libschitz}
	\sup_i \sup_{\eta\in\cH} h(P_{\theta,\eta,i}, P_{\theta_0,\eta,i}) = O(|\theta-\theta_0|)
\ee
as $\theta \rightarrow \theta_0$.
When the model $\theta \mapsto P_{\theta,\eta,i}$ is smooth for every $\eta$ and $i$,
\eqref{eq:h_unif_libschitz} is not difficult to prove.
Let
$$
	K_n(\epsilon, M) = \bigcap_{i=1}^\infty
	\left\{ \eta\in\cH: P_{0} \bigg(\sup_{|h| \leq M} 
	-\log \frac{p_{\theta_n(h),\eta,i}}{p_{\theta_0,\eta_0,i}}\bigg) \leq \epsilon^2,
	P_{0} \bigg(\sup_{|h| \leq M} 
	-\log \frac{p_{\theta_n(h),\eta,i}}{p_{\theta_0,\eta_0,i}}\bigg)^2 \leq \epsilon^2
	 \right\}
$$
and $K(\epsilon) = K_n(\epsilon,0)$.
Finally, assume that there exists a neighborhood $U$ of $\theta_0$
and maps $Q_i$ such that
$\sup_i \int Q_i^2(x) P_{\theta_0,\eta_0,i}(x) < \infty$ and
\be\label{eq:quad_bdd_general}
	\sup_{\eta\in\cH}|\ell_{\theta_1, \eta, i}(x) - \ell_{\theta_2, \eta, i}(x)| 
	\leq |\theta_1-\theta_2| \cdot Q_i(x)
\ee
for every $i$ and $\theta_1,\theta_2\in U$.
If \eqref{eq:quad_bdd_general} holds, then there is a universal constant $L > 0$
such that for sufficiently small $\epsilon > 0$,
\bea 
	\big\{\theta: |\theta-\theta_0| < \epsilon^2\big\} \times
	K(\epsilon) &\subset& \bigcap_{i=1}^\infty
	\bigg\{ (\theta,\eta): -P_{0} \Big(
	\log \frac{p_{\theta,\eta,i}}{p_{\theta_0,\eta_0,i}}\Big) \leq L\epsilon^2,
	\nonumber \\
	&&~~~~~~~~~~~~~~~~~~ P_{0} \Big(
	\log \frac{p_{\theta,\eta,i}}{p_{\theta_0,\eta_0,i}}\Big)^2 \leq L\epsilon^2
	 \bigg\} \label{eq:KL_subset}
\eea
as in the proof of Lemma \ref{lem:Lipschitz}.
The following lemma is a generalization of Lemma 8.1 in \citet{ghosal2000convergence}
and Lemma 3.4 in \citet{bickel2012semiparametric}.

\bigskip

\begin{lemma} \label{lem:lower_bdd_general}
Let $(h_n)$ be stochastic and bounded by some $M > 0$.
Then,
\be \label{eq:lower_bdd_general}
	P_0^n \left( \int_\cH \prod_{i=1}^n 
	\frac{p_{\theta_n(h_n),\eta,i}}{p_{\theta_0,\eta_0,i}} d\Pi_\cH(\eta)
	< e^{-(1+C) n\epsilon^2} \Pi_\cH(K_n(\epsilon,M)) \right)
	\leq \frac{1}{C^2 n\epsilon^2}
\ee
for all $C >0, \epsilon > 0$ and $n \geq 1$.
\end{lemma}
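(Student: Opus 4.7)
The plan is to adapt the classical Jensen--Chebyshev argument of Ghosal, Ghosh, and van der Vaart (2000) to the present non-\iid~setting with a bounded random perturbation $h_n$ of $\theta_0$. The only real novelty is the treatment of $h_n$, which is handled by exploiting the $\sup_{|h|\leq M}$ built into the definition of $K_n(\epsilon, M)$.

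First I would dispose of the trivial case $\Pi_\cH(K_n(\epsilon,M)) = 0$, in which the right-hand side of the event vanishes while the integral is non-negative. Assuming $\Pi_\cH(K_n(\epsilon,M)) > 0$, restrict the integral to $K_n(\epsilon, M)$, normalize the prior there to obtain a probability measure $\pi$ supported on $K_n(\epsilon, M)$, and then apply Jensen's inequality for the concave logarithm. This gives
$$
\log \int_\cH \prod_{i=1}^n \frac{p_{\theta_n(h_n),\eta,i}}{p_{\theta_0,\eta_0,i}}(X_i)\, d\Pi_\cH(\eta)
\;\geq\; \log \Pi_\cH(K_n(\epsilon,M)) + Z,
$$
where $Z = \int \sum_{i=1}^n \log (p_{\theta_n(h_n),\eta,i}/p_{\theta_0,\eta_0,i})(X_i)\, d\pi(\eta)$. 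Hence the event in the lemma is contained in $\{ Z < -(1+C)n\epsilon^2 \}$, and it suffices to bound the probability of the latter.

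The key observation is that for $\eta\in K_n(\epsilon,M)$ and any random $h_n$ with $|h_n|\leq M$, the integrand at index $i$ is bounded below by $-W_i(\eta)$, where $W_i(\eta) = \sup_{|h|\leq M}\big(-\log(p_{\theta_n(h),\eta,i}/p_{\theta_0,\eta_0,i})(X_i)\big)$ depends on $X_i$ alone and not on $h_n$. Setting $V = \int \sum_i W_i(\eta)\,d\pi(\eta)$, I therefore have $-Z \leq V$ deterministically, so $\{Z < -(1+C)n\epsilon^2\} \subset \{V > (1+C)n\epsilon^2\}$. Now $V = \sum_i V_i$ with $V_i = \int W_i(\eta)\,d\pi(\eta)$ a function of $X_i$ only, and by independence of the $X_i$'s, $\mathrm{Var}(V) = \sum_i \mathrm{Var}(V_i)$. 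Using Fubini, Jensen's inequality (for $x\mapsto x^2$) and the two defining inequalities of $K_n(\epsilon,M)$, I get $P_0^{(n)} V \leq n\epsilon^2$ and $\mathrm{Var}(V) \leq \sum_i \int P_0 W_i^2(\eta)\,d\pi(\eta) \leq n\epsilon^2$.

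Finally, Chebyshev's inequality yields
$$
P_0^{(n)}\!\bigl( V > (1+C)n\epsilon^2 \bigr) \;\leq\; P_0^{(n)}\!\bigl( V - P_0^{(n)} V > C n\epsilon^2 \bigr) \;\leq\; \frac{\mathrm{Var}(V)}{C^2 n^2\epsilon^4} \;\leq\; \frac{1}{C^2 n\epsilon^2},
$$
which is the desired bound. The step I expect to require the most care is the variance estimate for $V$: one has to justify that the $\pi$-integration commutes with the variance computation and that Jensen's inequality applied to $(\int W_i\,d\pi)^2 \leq \int W_i^2\,d\pi$ preserves the uniform bound $\epsilon^2$ coming from the second condition in the definition of $K_n(\epsilon,M)$. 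Once the sup has been used to decouple $W_i$ from $h_n$, the rest of the argument is a routine repetition of the \iid~case.
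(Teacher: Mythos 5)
Your argument is correct and is essentially the paper's own proof: restrict and renormalize the prior to $K_n(\epsilon,M)$, apply Jensen's inequality for the logarithm, use the supremum over $|h|\leq M$ built into $K_n(\epsilon,M)$ to eliminate the dependence on the random $h_n$, and then conclude by Chebyshev's inequality with the variance of the independent sum controlled via Jensen and the second defining condition of $K_n(\epsilon,M)$. No substantive differences from the paper's proof.
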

\begin{proof}
Let $\tilde\Pi_\cH$ be the probability measure obtained by restricting $\Pi_\cH$ to
$K_n(\epsilon, M)$ and next renormalizing.
Then, by Jensen's inequality, the left hand side of \eqref{eq:lower_bdd_general} can be bounded by
\bean
	&&P_0^n \left( \int \sum_{i=1}^n \log\frac{p_{\theta_n(h_n),\eta,i}}{p_{\theta_0,\eta_0,i}} 
	d\tilde\Pi_\cH(\eta) < -(1+C)n\epsilon^2\right)
	\\
	&\leq& P_0^n \left( \int \sum_{i=1}^n \left(\sup_{|h| \leq M} - 
	\log\frac{p_{\theta_n(h),\eta,i}}{p_{\theta_0,\eta_0,i}} \right)
	d\tilde\Pi_\cH(\eta) > (1+C)n\epsilon^2\right)
	\\
	&=& P_0^n \Bigg( \frac{1}{\sqrt{n}} \sum_{i=1}^n \int
	\bigg(\sup_{|h| \leq M} - \log\frac{p_{\theta_n(h),\eta,i}}{p_{\theta_0,\eta_0,i}} \bigg)^o
	 d\tilde\Pi_\cH(\eta)
	 \\
	&& ~~~~~~~~~~~~~~~~~ 
	> (1+C)\sqrt{n}\epsilon^2 - \frac{1}{\sqrt{n}} \sum_{i=1}^n \int P_0^n
	\bigg( \sup_{|h| \leq M} -\log\frac{p_{\theta_n(h),\eta,i}}{p_{\theta_0,\eta_0,i}}\bigg)
	d\tilde\Pi_\cH(\eta) \Bigg)
	\\
	&\leq& P_0^n \Bigg( \frac{1}{\sqrt{n}} \sum_{i=1}^n \int
	\bigg(\sup_{|h| \leq M} - \log\frac{p_{\theta_n(h),\eta,i}}{p_{\theta_0,\eta_0,i}} \bigg)^o
	 d\tilde\Pi_\cH(\eta)
	> C\sqrt{n}\epsilon^2\Bigg)
	\\
	&\leq& \frac{\sum_{i=1}^n \int P_0^n \big[
	\sup_{|h| \leq M}-\log\big(p_{\theta_n(h),\eta,i}/p_{\theta_0,\eta_0,i}\big)\big]^2 d\tilde\Pi_\cH(\eta)}
	{C^2 n^2\epsilon^4}
	\\
	&\leq& \frac{1}{C^2 n\epsilon^2},
\eean
where the third inequality holds by Markov's and Jensen's inequalities.
\end{proof}

\bigskip

Typically, a certain type of consistent tests is required for posterior consistency,
and in \iid~cases, the Hellinger metric entropy bound of a given model ensures the existence of such tests.
Lemma 3.2 of \citet{bickel2012semiparametric} is an extension of this result to semiparametric models
when the finite dimensional parameter $\theta$ is misspecified.
Lemma \ref{lem:exp_test_general} generalize this to non-\iid~models.

\bigskip

\begin{lemma} \label{lem:exp_test_general}
Suppose that \eqref{eq:d_H_def} is well-defined and \eqref{eq:h_unif_libschitz} holds.
Also, assume that $N(\delta,\cH,d_H) < \infty$ for every $\delta > 0$.
Then, for every $\epsilon > 0$ and $(M_n)$ with $M_n\rightarrow \infty$ and $M_n/\sqrt{n}\rightarrow 0$,
there exist a sequence of tests $(\varphi_n)$ 
and a universal constant $D > 0$ (does not depend on $\epsilon$) such that
$$
	P_0^n \varphi_n \leq e^{-Dn\epsilon^2} ~~~~~~~ \sup_{|h| \leq M_n}
	\sup_{d_H(\eta,\eta_0) > \epsilon} P^n_{\theta_n(h),\eta} (1-\varphi_n) \leq e^{-Dn\epsilon^2}
$$
for large enough $n$.
\end{lemma}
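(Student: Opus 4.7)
The plan is to combine the classical Le Cam--Birg\'{e} testing construction for independent (not necessarily identically distributed) observations with a finite covering of $\mathcal{H}$ in $d_H$, using condition \eqref{eq:h_unif_libschitz} to absorb the $n^{-1/2}$-perturbation of $\theta$. First, by the triangle inequality for Hellinger distance applied coordinatewise in $i$, together with \eqref{eq:d_H_def} and \eqref{eq:h_unif_libschitz}, there is a constant $C$ such that for every $i$, every $|h|\leq M_n$, and every $\eta \in \mathcal{H}$,
$$
h(P_{\theta_n(h),\eta,i},P_{\theta_0,\eta_0,i}) \;\geq\; d_H(\eta,\eta_0) \;-\; h(P_{\theta_n(h),\eta,i},P_{\theta_0,\eta,i}) \;\geq\; d_H(\eta,\eta_0) - C\,M_n/\sqrt{n}.
$$
Since $M_n/\sqrt{n}\to 0$, this is at least $7\epsilon/8$ uniformly in $i$ whenever $d_H(\eta,\eta_0)>\epsilon$ and $n$ is large, giving in particular the averaged lower bound $n^{-1}\sum_i h^2(P_{\theta_n(h),\eta,i},P_{\theta_0,\eta_0,i})\geq(7\epsilon/8)^2$.

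Next I would fix $\delta=\epsilon/8$ and cover $\{\eta\in\mathcal{H}:d_H(\eta,\eta_0)>\epsilon\}$ by $N=N(\delta,\mathcal{H},d_H)<\infty$ balls of $d_H$-radius $\delta$, with centers $\eta_1,\dots,\eta_N$. For $\eta$ in the $k$th ball and $|h|\leq M_n$, another application of the triangle inequality and \eqref{eq:h_unif_libschitz} yields $h(P_{\theta_n(h),\eta,i},P_{\theta_0,\eta_k,i}) \leq \delta + CM_n/\sqrt{n} < \epsilon/4$ for every $i$ and all sufficiently large $n$. For each $k$ I would then invoke the non-iid Le Cam--Birg\'{e} existence theorem (cf.\ Lemma~2 of \citet{ghosal2007convergence}): because the product centre $\prod_i P_{\theta_0,\eta_k,i}$ lies at averaged single-observation Hellinger distance at least $7\epsilon/8$ from $P_0^{(n)}$, there is a test $\varphi_{n,k}$ satisfying $P_0^{(n)}\varphi_{n,k}\leq e^{-c n\epsilon^2}$ and $Q(1-\varphi_{n,k})\leq e^{-cn\epsilon^2}$ uniformly over all product measures $Q=\prod_iQ_i$ with $n^{-1}\sum_i h^2(Q_i,P_{\theta_0,\eta_k,i})<(\epsilon/4)^2$, for some universal $c>0$.

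Finally I would set $\varphi_n=\max_k\varphi_{n,k}$. Union bound gives $P_0^{(n)}\varphi_n\leq N e^{-cn\epsilon^2}$, and for any $(h,\eta)$ with $|h|\leq M_n$ and $d_H(\eta,\eta_0)>\epsilon$, choosing $k$ with $\eta$ in the $k$th ball places $P_{\theta_n(h),\eta}^{(n)}$ inside the Hellinger neighbourhood against which $\varphi_{n,k}$ is powerful, so $P_{\theta_n(h),\eta}^{(n)}(1-\varphi_n)\leq e^{-cn\epsilon^2}$. Since $\log N$ is a constant (depending on $\epsilon$ but not on $n$), picking any $D\in(0,c)$ absorbs the factor $N$ for large $n$, yielding both claimed bounds.

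The main obstacle is the correct Le Cam--Birg\'{e} bound in step two: one needs an exponential test whose Type~II error is controlled not against a single product alternative but uniformly over the entire Hellinger neighbourhood of product measures surrounding each $\prod_i P_{\theta_0,\eta_k,i}$, so that a single test handles \emph{all} $(h,\eta)$ with $\eta$ in the $k$th ball. The standard construction via likelihood ratios on convex hulls of product alternatives supplies exactly this, but the bookkeeping requires \eqref{eq:h_unif_libschitz} to remain uniform in $i$, which is the reason for stating the assumption in that form rather than only on the marginal $d_H$-entropy of $\mathcal{H}$.
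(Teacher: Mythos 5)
Your proposal is correct and follows essentially the same route as the paper's proof: use \eqref{eq:h_unif_libschitz} to absorb the $n^{-1/2}$-perturbation of $\theta$ uniformly in $i$, cover $\{\eta : d_H(\eta,\eta_0)>\epsilon\}$ by finitely many $d_H$-balls, build one Le Cam--Birg\'{e} test per ball against the resulting (convex) Hellinger neighbourhood of product measures separated from $P_0^{(n)}$, and take the maximum, absorbing the finite covering number into the exponent. The only differences are cosmetic (radii $\epsilon/8$ and $7\epsilon/8$ versus the paper's $\epsilon/3$ and $2\epsilon/3$, and citing the Ghosal--van der Vaart non-iid testing lemma rather than Birg\'{e} (1984) and Le Cam (1986) directly).
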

\begin{proof}
Let $\epsilon > 0$ and $(M_n)$, $M_n\rightarrow\infty$ and $M_n/\sqrt{n}\rightarrow 0$, be given.
Then, 
$$
	\sup_i \sup_{|h| \leq M_n}\sup_{\eta\in\cH} 
	h\big(P_{\theta_n(h),\eta,i}, P_{\theta_0,\eta,i}\big)
	= o(1)
$$
by \eqref{eq:h_unif_libschitz}.
Choose $\eta_1\in\cH$ such that $d_H(\eta_1,\eta_0) > \epsilon$.
Then,
\be\label{eq:subset}
	\bigg\{(h,\eta): |h| \leq M_n, ~ d_H(\eta,\eta_1) < \frac{\epsilon}{3} \bigg\}
	\subset
	\bigcap_{i=1}^\infty \bigg\{(h,\eta): h\big(P_{\theta_n(h),\eta,i}, P_{\theta_0,\eta_1,i}\big)
	< \frac{2\epsilon}{3}\bigg\}
\ee
for large enough $n$.
Also,
$$\inf_i h\big(P_{\theta_n(h),\eta,i}, P_{\theta_0,\eta_0,i}\big) > \frac{\epsilon}{3}$$
for every $(h,\eta)$ that is contained in the right hand side of \eqref{eq:subset}.
Note that each set in the right hand side of \eqref{eq:subset} is convex.
Therefore, by the general result known from \citet{birge1984sur}
and \citet{le1986asymptotic} (Lemma 4 on page 478)
and the inequality $1 - x \leq e^{-x}$,
there exists a sequence of tests $\tilde\varphi_n$ such that 
$$
	P_0^n \tilde\varphi_n \leq e^{-n\epsilon^2/18} ~~~~~~~~
	\sup_{|h| \leq M_n}\sup_{d_H(\eta,\eta_1) < \epsilon/3}
	P^n_{\theta_n(h),\eta} (1-\tilde\varphi_n) \leq e^{-n\epsilon^2/18}
$$
for large enough $n$.
Since $N(\epsilon/3, \cH, d_H) < \infty$,
the assertion holds by taking maximum of such tests.
\end{proof}

\bigskip

In the following two theorems, assume that $\Pi_\cH$ does not depend on $n$.

\bigskip

\begin{theorem} \label{thm:consist_perturb_general}
Suppose that \eqref{eq:d_H_def} is well-defined and \eqref{eq:h_unif_libschitz}, \eqref{eq:quad_bdd_general} hold.
Furthermore, assume that $N(\delta,\cH,d_H) < \infty$
and $\Pi_\cH(K(\delta)) > 0$ for every $\delta > 0$ and $M > 0$.
Then, for every $\epsilon > 0$ and bounded stochastic $(h_n)$
$$
	\Pi\big(d_H(\eta,\eta_0) > \epsilon \big| \theta=\theta_n(h_n), X_1, \ldots, X_n\big) 
	\rightarrow 0
$$
in $P_0^n$-probability.
\end{theorem}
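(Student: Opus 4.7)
The plan is to adapt the classical Ghosal--Ghosh--van der Vaart tests-plus-KL-ball argument, in the $\theta$-perturbed form of \citet{bickel2012semiparametric}, to the non-\iid\;setting considered here. Write
$$\Pi\big(d_H(\eta,\eta_0) > \epsilon \,\big|\, \theta=\theta_n(h_n), X^{(n)}\big) = \frac{N_n}{D_n}, \qquad N_n = \int_{\{d_H(\eta,\eta_0)>\epsilon\}} U_n(h_n,\eta)\,d\Pi_\cH(\eta),$$
with $U_n(h,\eta) = \prod_{i=1}^n (p_{\theta_n(h),\eta,i}/p_{\theta_0,\eta_0,i})(X_i)$ and $D_n$ the same integral over all of $\cH$. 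I would bound $D_n$ from below and $N_n$ from above on events of $P_0^n$-probability tending to one, then combine.

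For the denominator, fix a small $\epsilon_0 > 0$ and a constant $M$ with $|h_n|\le M$ a.s. Lemma~\ref{lem:lower_bdd_general} (with $C=1$) gives
$$D_n \ge e^{-2n\epsilon_0^2}\,\Pi_\cH\big(K_n(\epsilon_0,M)\big)$$
outside an event of probability $1/(n\epsilon_0^2)$. The inclusion that produces \eqref{eq:KL_subset}, combined with \eqref{eq:quad_bdd_general} and the uniform square-integrability of $Q_i$ (Cauchy--Schwarz bounds $P_0|Q_i|$), yields $K(\epsilon_0/2) \subset K_n(\epsilon_0,M)$ for all $n$ sufficiently large. Hence the prior-mass hypothesis transfers: $\Pi_\cH(K_n(\epsilon_0,M)) \ge \Pi_\cH(K(\epsilon_0/2)) > 0$.

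For the numerator I would pick a sequence $M_n\to\infty$ with $M_n/\sqrt{n}\to 0$ (so $M_n>M$ eventually) and apply Lemma~\ref{lem:exp_test_general} to get tests $\varphi_n$ with $P_0^n\varphi_n\le e^{-Dn\epsilon^2}$ and $\sup_{|h|\le M_n}\sup_{d_H(\eta,\eta_0)>\epsilon} P^n_{\theta_n(h),\eta}(1-\varphi_n)\le e^{-Dn\epsilon^2}$. The main obstacle is that $h_n$ is random, so Fubini does not immediately tie $P_0^n[(1-\varphi_n)U_n(h_n,\eta)]$ to the test bound at $\theta_n(h_n)$; and the naive deterministic bound $U_n(h_n,\eta)/U_n(0,\eta) \le \exp((M/\sqrt{n})\sum_i Q_i(X_i)) = e^{O_{P_0}(\sqrt{n})}$ is too large to be absorbed by the exponential test rate. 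I would resolve this by discretization. Cover $\{|h|\le M\}$ by a net $\{h_1,\ldots,h_{J_n}\}$ of radius $r_n = n^{-3/4}$, so $J_n = O(n^{3p/4})$. For any realization of $h_n$ choose $j(n)$ with $|h_n - h_{j(n)}|\le r_n$; then \eqref{eq:quad_bdd_general} yields
$$U_n(h_n,\eta) \;\le\; \exp\!\Big(\tfrac{r_n}{\sqrt{n}}\sum_{i=1}^n Q_i(X_i)\Big)\, U_n(h_{j(n)},\eta),$$
and the prefactor is $\exp(r_n\sqrt{n}\cdot O_{P_0}(1)) = e^{o_{P_0}(1)}$ since $n^{-1}\sum_i Q_i(X_i) = O_{P_0}(1)$ by Markov. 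Therefore
$$(1-\varphi_n)\,N_n \;\le\; e^{o_{P_0}(1)} \sum_{j=1}^{J_n} (1-\varphi_n)\!\int_{\{d_H(\eta,\eta_0)>\epsilon\}}\! U_n(h_j,\eta)\,d\Pi_\cH(\eta),$$
and Fubini plus the uniform test bound gives $P_0^n$-expectation at most $J_n e^{-Dn\epsilon^2}$ for the sum. Markov and $\log J_n = O(\log n)$ imply $(1-\varphi_n)N_n \le e^{-Dn\epsilon^2/2}$ with $P_0^n$-probability tending to one.

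Finally, choose $\epsilon_0$ so that $2\epsilon_0^2 < D\epsilon^2/4$, so that $N_n/D_n$ is bounded above by $e^{-Dn\epsilon^2/4}\cdot\Pi_\cH(K(\epsilon_0/2))^{-1} \to 0$ on the intersection of the two good events; on the complement (which includes $\{\varphi_n=1\}$) the posterior probability is at most one and the complement itself has vanishing $P_0^n$-probability, so the expected posterior mass goes to zero, which gives the convergence in probability. The discretization step is the genuinely delicate point; the rate $r_n = n^{-3/4}$ is tuned so that the cover is cheap enough ($\log J_n$ only logarithmic) while the ratio prefactor $e^{r_n\sqrt{n}\cdot O_{P_0}(1)}$ is $e^{o_{P_0}(1)}$, comfortably absorbed into the exponential test bound.
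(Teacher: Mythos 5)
Your proof is correct and follows the same overall route as the paper's: bound the denominator from below via Lemma \ref{lem:lower_bdd_general} together with the inclusion $K(\delta)\subset K_n(L\delta,M)$ from Lemma \ref{lem:Lipschitz}, bound the numerator via the uniformly exponentially consistent tests of Lemma \ref{lem:exp_test_general}, and combine on an event of probability tending to one. The one substantive difference is the discretization of $\{|h|\le M\}$: you correctly identify that the paper's step equating $P_0^n\big[(1-\varphi_n)\int_{\{d_H(\eta,\eta_0)>K\epsilon\}}\prod_i(p_{\theta_n(h_n),\eta,i}/p_{\theta_0,\eta_0,i})\,d\Pi_\cH(\eta)\big]$ with the supremum of the type-II error over $|h|\le M$ is not a legitimate application of Fubini when $h_n$ is data-dependent, and your net argument (with $\log J_n=O(\log n)$ swallowed by the test rate and the Lipschitz prefactor $e^{o_{P_0}(1)}$ controlled by \eqref{eq:quad_bdd_general}) repairs this cleanly; the paper's own proof simply asserts the identity. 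One small inaccuracy in your motivation: the ``naive'' comparison to a single fixed $h$ is not actually too large --- the prefactor $\exp\big((M/\sqrt{n})\sum_i Q_i(X_i)\big)$ is $e^{O_{P_0}(\sqrt{n})}$, which is still beaten by $e^{-Dn\epsilon^2}$ once you truncate to the event $\{n^{-1}\sum_i Q_i(X_i)\le C\}$ (probability $\ge 1-\delta$ by Markov), so a one-point ``net'' would already suffice; your $n^{-3/4}$ net is more than is needed, but it is certainly valid. The remaining bookkeeping (choice of $\epsilon_0$ with $2\epsilon_0^2<D\epsilon^2/4$ versus the paper's $1+C<DK^2$ with $\epsilon$ arbitrary) is an immaterial reparametrization.
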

\begin{proof}
Let $\epsilon > 0$ be given and $(h_n)$ be a stochastic sequence bounded by $M > 0$.
Let $A_n$ be the set in the left hand side of \eqref{eq:lower_bdd_general}.
By Lemma \ref{lem:exp_test_general}, there exists a test sequence $(\varphi_n)$ and $D > 0$ such that
$$
	P_0^n \varphi_n \rightarrow 0, ~~~~~~~ \sup_{|h| \leq M}
	\sup_{d_H(\eta,\eta_0) > K\epsilon} P^n_{\theta_n(h), \eta} (1-\varphi_n) \leq e^{-DK^2n\epsilon^2}
$$
for every $K, \epsilon > 0$ and large enough $n$.
Then,
\bean
	&&P_0^n \left[\Pi(d_H(\eta,\eta_0) > K\epsilon | \theta=\theta_n(h_n), X_1, \ldots, X_n)\right] \\
	&=& P_0^n \left[ \int_{\{d_H(\eta,\eta_0) > K\epsilon\}} \prod_{i=1}^n 
	\frac{p_{\theta_n(h_n),\eta,i}}{p_{\theta_0,\eta_0,i}} d\Pi_\cH(\eta) \Big/
	\int_\cH \prod_{i=1}^n \frac{p_{\theta_n(h_n),\eta,i}}{p_{\theta_0,\eta_0,i}} d\Pi_\cH(\eta) \right]\\
	&\leq& \frac{e^{(1+C)n\epsilon^2}}{\Pi_\cH(K_n(\epsilon,M))}\cdot P_0^n \left[
	\int_{\{d_H(\eta,\eta_0)> K\epsilon\}} \prod_{i=1}^n 
	\frac{p_{\theta_n(h_n),\eta,i}}{p_{\theta_0,\eta_0,i}} d\Pi_\cH(\eta) \cdot 
	1_{A_n^c}\cdot(1-\varphi_n)\right] + o(1)\\
	&=& \frac{e^{(1+C)n\epsilon^2}}{\Pi_\cH(K_n(\epsilon,M))}
	\cdot \sup_{|h| \leq M} \sup_{d(\eta,\eta_0) > K\epsilon}P_{\theta_n(h_n),\eta}^n (1-\varphi_n) + o(1)\\
	&\leq& \frac{e^{(1+C)n\epsilon^2 -DK^2n\epsilon^2}}{\Pi_\cH(K_n(\epsilon,M))} + o(1)
\eean
for every $C > 0$.
If we choose $C, K > 0$ satisfying $1+C < DK^2$, then
the last term converges to 0 because $\Pi_\cH\big( K_n(\epsilon,M)\big) \geq \Pi_\cH(\epsilon/L) > 0$
for some $L > 0$ and large enough $n$.
Since $\epsilon > 0$ can be chosen arbitrarily small, we have the desired result.
\end{proof}

\bigskip

\begin{theorem} \label{thm:consist_general}
Suppose that \eqref{eq:d_H_def} is well-defined,
\eqref{eq:h_unif_libschitz} and \eqref{eq:quad_bdd_general} hold, and
$N(\delta,\cH,d_H) < \infty$, $\Pi_\cH(K(\delta)) > 0$ for every $\delta > 0$.
Furthermore, assume that $\Pi_\Theta$ is thick at $\theta_0$ and
there exists an estimator $\hat\theta_n$ for $\theta$ satisfying 
\be \label{eq:sqrtn_estimator}
	\sup_{\eta\in\cH}P_{\theta,\eta}^n\big(\sqrt{n}|\hat\theta_n - \theta| > M_n \big) \rightarrow 0
\ee
for every $M_n \rightarrow \infty$.
Then, for every $\epsilon > 0$,
$$
	\Pi \big( |\theta-\theta_0| < \epsilon, d_H(\eta,\eta_0) < \epsilon | X_1, \ldots, X_n\big)
	\rightarrow 1
$$
in $P_0^n$-probability.
\end{theorem}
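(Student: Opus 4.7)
The plan is to adapt the test-and-denominator scheme used in Theorem~\ref{thm:consist_perturb_general} to the joint consistency statement, splitting the complement of $N_\epsilon = \{|\theta-\theta_0|<\epsilon\}\cap\{d_H(\eta,\eta_0)<\epsilon\}$ into a ``$\theta$-far'' piece controlled by the estimator $\hat\theta_n$ and an ``$\eta$-far but $\theta$-close'' piece controlled by Lemma~\ref{lem:exp_test_general}.

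First I would establish a joint denominator lower bound. By \eqref{eq:KL_subset}, the product set $\{|\theta-\theta_0|<\delta^2\}\times K(\delta)$ lies inside a Kullback--Leibler-type neighborhood of $(\theta_0,\eta_0)$ at level $L\delta^2$, so the Chebyshev argument underlying Lemma~\ref{lem:lower_bdd_general}, carried out with the joint prior $\Pi = \Pi_\Theta\times\Pi_\cH$ in place of $\Pi_\cH$, yields
$$P_0^n\!\left(\int\!\int \prod_{i=1}^n R_i\, d\Pi < e^{-(1+C)Ln\delta^2}\,\Pi_\Theta(|\theta-\theta_0|<\delta^2)\,\Pi_\cH(K(\delta))\right) \leq \frac{1}{C^2 n\delta^2}$$
for every $C,\delta>0$, where $R_i = p_{\theta,\eta,i}/p_{\theta_0,\eta_0,i}$. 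Thickness of $\Pi_\Theta$ forces $\Pi_\Theta(|\theta-\theta_0|<\delta^2)\geq c_1\delta^{2p}$ for small $\delta$, and $\Pi_\cH(K(\delta))>0$ by hypothesis.

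Next I would fix a small $\delta_0 \in (0,\epsilon)$ and write $N_\epsilon^c \subseteq A_1 \cup A_2$ with $A_1 = \{|\theta-\theta_0|>\delta_0\}$ and $A_2 = \{|\theta-\theta_0|\leq \delta_0,\, d_H(\eta,\eta_0)>\epsilon\}$, and then build a test for each piece. For $A_2$, a mild extension of Lemma~\ref{lem:exp_test_general} works: its proof goes through verbatim when the shrinking $|h|$-window is replaced by a fixed ball $|\theta-\theta_0|\leq\delta_0$, provided $\delta_0$ is small enough that \eqref{eq:h_unif_libschitz} still leaves a Hellinger margin of at least $\epsilon/2$ to $P_{\theta_0,\eta_0,i}$; combined with the finite $d_H$-covering of $\cH$ and the convex-alternative testing theorem of Le Cam and Birg\'e, this produces tests $\varphi_n^{(2)}$ whose two error probabilities are both of order $e^{-Dn\epsilon^2}$ uniformly over $A_2$. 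For $A_1$, the data-driven test $\varphi_n^{(1)} = \mathbf{1}\{|\hat\theta_n-\theta_0|>\delta_0/2\}$, used with \eqref{eq:sqrtn_estimator} at $M_n = \sqrt{n}\delta_0/2 \to\infty$, gives $P_0^n\varphi_n^{(1)}\to 0$ and $\sup_{A_1,\eta} P_{\theta,\eta}^n(1-\varphi_n^{(1)})\to 0$, albeit at an unspecified rate.

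Finally I would combine the two via
$$\Pi(N_\epsilon^c\mid X^{(n)})\;\leq\; (\varphi_n^{(1)}\vee \varphi_n^{(2)}) \;+\; \frac{\int_{N_\epsilon^c}\prod_i R_i \,(1-\varphi_n^{(1)})(1-\varphi_n^{(2)})\, d\Pi}{\int\!\int\prod_i R_i\, d\Pi},$$
taking $P_0^n$-expectations and applying Fubini so that the numerator on $A_j$ contributes at most $\sup_{A_j,\eta}P_{\theta,\eta}^n(1-\varphi_n^{(j)})\cdot \Pi(A_j)$. The $A_2$ contribution vanishes by the usual exponential cancellation with the denominator once $\delta$ is chosen so that $(1+C)L\delta^2 < D\epsilon^2$, exactly as in Theorem~\ref{thm:consist_perturb_general}. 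The hard part, and the main obstacle, is the $A_1$ contribution, because the estimator-based test is only uniformly consistent and cannot directly be matched against a fixed-$\delta$ exponentially small denominator bound. The resolution is to let $\delta = \delta_n \to 0$ slowly while keeping $n\delta_n^2\to\infty$, so the Chebyshev step still succeeds; then $e^{-(1+C)Ln\delta_n^2}\to 1$ and only the polynomial factor $\delta_n^{2p}$ remains in the denominator lower bound, so it suffices that $\sup_{A_1,\eta} P_{\theta,\eta}^n(\sqrt{n}|\hat\theta_n-\theta|>\sqrt{n}\delta_0/2)$ decay faster than $\delta_n^{2p}$. This freedom is afforded by \eqref{eq:sqrtn_estimator}, which guarantees such a rate by choosing $M_n$ to diverge arbitrarily slowly and then coupling it to $\delta_n$, and this two-parameter balancing is the delicate technical heart of the proof.
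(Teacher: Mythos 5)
Your overall architecture (denominator lower bound via the Chebyshev argument, a split of the complement into a $\theta$-far piece and an $\eta$-far/$\theta$-close piece, Lemma~\ref{lem:exp_test_general} for the latter and the estimator-based indicator for the former) matches the paper's strategy, and you correctly identify the crux: the estimator test built from \eqref{eq:sqrtn_estimator} is only \emph{uniformly} consistent, with no rate, while the denominator is only bounded below by an exponentially small quantity. Your proposed resolution of that mismatch, however, does not work. You ask for $\delta_n\to 0$ with $n\delta_n^2\to\infty$ (needed for the Chebyshev failure probability $1/(C^2 n\delta_n^2)$ to vanish) and simultaneously claim $e^{-(1+C)Ln\delta_n^2}\to 1$; these are contradictory, since $n\delta_n^2\to\infty$ forces that exponential factor to $0$. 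And even if the denominator could somehow be made only polynomially small in $\delta_n$, condition \eqref{eq:sqrtn_estimator} supplies \emph{no} decay rate for $\sup_{\eta}P^n_{\theta,\eta}(\sqrt n|\hat\theta_n-\theta|>M_n)$ — it only asserts convergence to zero for every diverging $M_n$ — so no coupling of $M_n$ to $\delta_n$ can guarantee that the type-II error of your $A_1$ test beats the denominator. The "two-parameter balancing" you describe as the technical heart of the proof cannot be carried out from the stated hypotheses.

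The missing ingredient is the classical fact that a \emph{uniformly} consistent sequence of tests against a fixed alternative can always be upgraded to an \emph{exponentially} consistent one (Le Cam 1973; Lemma 7.2 of Ghosal, Ghosh and van der Vaart 2000, proved by replicating and aggregating the tests). This is exactly what the paper invokes: it builds the test $\varphi_n=1_{\{\sqrt n|\hat\theta_n-\theta_0|>M_n/2\}}$ for the region $\{|h|>M_n\}$ (note the paper splits at the shrinking radius $M_n/\sqrt n$ rather than at a fixed $\delta_0$, so Lemma~\ref{lem:exp_test_general} applies verbatim to the complementary piece), verifies via the triangle inequality that both error probabilities tend to zero uniformly over $\eta\in\cH$ and $|h|>M_n$, and then appeals to the uniform-to-exponential upgrade together with \eqref{eq:KL_subset} and a Schwartz-type consistency theorem (Theorem 2.2 of Wu and Ghosal 2008) to conclude. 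With that lemma in hand your decomposition closes as well; without it, the $A_1$ contribution remains an unfilled gap.
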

\begin{proof}
For given $(M_n)$, $M_n\rightarrow \infty$ and $M_n/\sqrt{n}\rightarrow 0$, 
there exists a sequence of tests $(\tilde\varphi_n)$ satisfying
the assertion of Lemma \ref{lem:exp_test_general}.
Therefore, by combining Theorem 2.2 in \citet{wu2008posterior}
and \eqref{eq:KL_subset},
it is sufficient to show that 
there exists a sequence of tests $(\varphi_n)$ satisfying
$$
	P_0^n \varphi_n \rightarrow 0 ~~~~~~~ \sup_{|h| > M_n}
	\sup_{\eta \in \cH} P^n_{\theta_n(h), \eta} (1-\varphi_n) \rightarrow 0
$$
because the existence of uniformly consistent tests implies 
the existence of exponentially consistent tests 
(see \citet{lecam1973convergence} or Lemma 7.2 of \citet{ghosal2000convergence}).
Since $\hat\theta_n$ is a $n^{-1/2}$-consistent estimator for $\theta$,
if we define $\varphi_n = 1_{\{\sqrt{n}|\hat\theta_n-\theta_0| > M_n/2\}}$,
then, $P_0^n \varphi_n \rightarrow 0$ and
\bean
	\sup_{|h| > M_n}\sup_{\eta \in \cH} P^n_{\theta_n(h), \eta} (1-\varphi_n)
	&\leq& \sup_{|h| > M_n}\sup_{\eta \in \cH} P^n_{\theta_n(h), \eta}
	\left( \sqrt{n}|\theta_n(h)-\theta_0| - \sqrt{n}|\hat\theta_n-\theta_n(h)| \leq M_n/2 \right)\\
	&\leq& \sup_{|h| > M_n}\sup_{\eta \in \cH} P^n_{\theta_n(h), \eta}
	\left(\sqrt{n}|\hat\theta_n-\theta_n(h)| \geq M_n/2\right)\\
	&\rightarrow& 0
\eean
as $n \rightarrow \infty$.
This completes the proof.
\end{proof}

\bigskip

\section{Semiparametric mixtures}
\label{sec:smixture}

In this section, we prove some technical lemmas for semiparametric mixtures.
Let $\Theta$ be an arbitrary parameter space and
$\cH$ is the set of all probability measures
whose supports are contained in a compact subset $[-M,M]^d$ of $\bbR^d$.
For a given family of kernel densities 
$$\left\{x \mapsto p_\theta(x|z): \theta\in\Theta, z \in[-M,M]^d\right\}$$
and $(\theta,\eta) \in \Theta\times\cH$,
let $p_{\theta, \eta}(x) = \int p_\theta(x|z) d\eta(z)$ be the density
of the probability measure $P_{\theta,\eta}$ with respect to the Lebesgue measure $\mu$ on $\bbR^k$.
For given $\theta_0 \in \Theta$ and $\eta_0\in\cH$, let
$$
	K(\epsilon) = \left\{\eta\in\cH: P_{\theta_0,\eta_0}
	\bigg(-\log\frac{p_{\theta_0,\eta}}{p_{\theta_0,\eta_0}}\bigg) \leq \epsilon^2, ~
	\bigg(-\log\frac{p_{\theta_0,\eta}}{p_{\theta_0,\eta_0}}\bigg)^2 \leq \epsilon^2
	\right\}
$$
and define a metric $d_H$ on $\cH$ by
$d_H(\eta_1,\eta_2) = h(P_{\theta_0,\eta_1}, P_{\theta_0,\eta_2})$.
A prior on $\cH$ is denoted by $\Pi_\cH$.

\bigskip

\begin{lemma} \label{lem:entropy_bound}
Assume that:
\ben
\item[(i)] $\{ P_{\theta_0, \eta} : \eta \in \cH\}$ is uniformly tight.
\item[(ii)] For any compact $K \subset \bbR^k$, $\{z\mapsto p_{\theta_0}(x|z): x \in K\}$
			is an equicontinuous family of functions from $[-M,M]^d$ to $\bbR$.
\een
Then, $N(\epsilon, \cH, d_H) < \infty$ for all $\epsilon > 0$.
\end{lemma}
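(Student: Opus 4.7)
The plan is to reduce the problem in two stages: first replacing each $\eta\in\cH$ by a finitely supported atomic measure, and then covering the resulting finite-dimensional set of weight vectors.

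First I would fix $\epsilon>0$ and use uniform tightness (i) to select a compact $K\subset\bbR^k$ with $\sup_{\eta\in\cH}P_{\theta_0,\eta}(K^c)<\epsilon^2/16$. Next, by the equicontinuity assumption (ii) applied on $K$, I choose $\delta>0$ (to be specified) and an $r>0$ such that $|z_1-z_2|<r$ implies $|p_{\theta_0}(x|z_1)-p_{\theta_0}(x|z_2)|<\delta$ for every $x\in K$. I then partition $[-M,M]^d$ into finitely many Borel cells $A_1,\ldots,A_N$ of diameter less than $r$, pick representatives $z_j\in A_j$, and for each $\eta\in\cH$ define the atomic approximation $\tilde\eta=\sum_{j=1}^N w_j\delta_{z_j}$ with $w_j=\eta(A_j)$. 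Crucially $\tilde\eta\in\cH$, so uniform tightness applies to $\tilde\eta$ as well.

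Next I would estimate $h(P_{\theta_0,\eta},P_{\theta_0,\tilde\eta})$ by splitting the integral $\int(\sqrt{p_{\theta_0,\eta}}-\sqrt{p_{\theta_0,\tilde\eta}})^2 d\mu$ over $K$ and $K^c$. On $K$ I use $(\sqrt a-\sqrt b)^2\leq|a-b|$ together with the uniform bound $|p_{\theta_0,\eta}(x)-p_{\theta_0,\tilde\eta}(x)|\leq\delta$ obtained from the equicontinuity argument, so this contribution is at most $\delta\cdot\mu(K)$. On $K^c$ the integrand is dominated by $p_{\theta_0,\eta}+p_{\theta_0,\tilde\eta}$, whose integral over $K^c$ is at most $P_{\theta_0,\eta}(K^c)+P_{\theta_0,\tilde\eta}(K^c)<\epsilon^2/8$ by the choice of $K$. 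Choosing $\delta<\epsilon^2/(4\mu(K))$ yields $h(P_{\theta_0,\eta},P_{\theta_0,\tilde\eta})<\epsilon/2$ uniformly in $\eta$.

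For the second stage, the atomic approximations are parametrized by weight vectors $w=(w_1,\ldots,w_N)$ in the $N$-simplex $\Delta_N\subset\bbR^N$. For two such weight vectors $w,w'$, the triangle inequality gives
\[
d_V(P_{\theta_0,\tilde\eta},P_{\theta_0,\tilde\eta'})=\int\Big|\sum_{j}(w_j-w_j')p_{\theta_0}(x|z_j)\Big|d\mu(x)\leq \|w-w'\|_1,
\]
since each $p_{\theta_0}(\cdot|z_j)$ integrates to one, and together with $h^2\leq d_V$ this gives $h\leq\|w-w'\|_1^{1/2}$. Covering the compact simplex $\Delta_N$ by finitely many $\ell^1$-balls of radius $\epsilon^2/4$ therefore produces finitely many atomic measures $\tilde\eta^{(1)},\ldots,\tilde\eta^{(L)}$ with $h(P_{\theta_0,\tilde\eta},P_{\theta_0,\tilde\eta^{(l)}})<\epsilon/2$ for an appropriate $l$. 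The triangle inequality then yields an $\epsilon$-cover of $\cH$ of cardinality at most $L$, proving $N(\epsilon,\cH,d_H)<\infty$.

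The main obstacle is the control outside the compact set $K$: one needs tightness not only of the original $P_{\theta_0,\eta}$ but also of the atomic approximations $P_{\theta_0,\tilde\eta}$. This is handled essentially for free by the observation that $\tilde\eta\in\cH$, so that hypothesis (i) covers both. The equicontinuity hypothesis (ii) is only needed pointwise on compact subsets of $\bbR^k$, which is exactly what enables the uniform approximation on $K$ without any integrability condition on $\sup_z p_{\theta_0}(\cdot|z)$.
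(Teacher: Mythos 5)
Your proof is correct and follows essentially the same route as the paper's: tightness to localize to a compact $K$, equicontinuity to discretize the mixing measure over a finite partition of $[-M,M]^d$ with the observation that the atomic approximations remain in $\cH$, and a finite net of weight vectors combined with $h^2\leq d_V$ to close the argument. The only quibble is arithmetic: with $\delta<\epsilon^2/(4\mu(K))$ the two contributions sum to $3\epsilon^2/8$ rather than $\epsilon^2/4$, so you should take $\delta<\epsilon^2/(8\mu(K))$ to actually obtain $h<\epsilon/2$.
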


\begin{proof}
For a given $\epsilon > 0$, we have a compact set $K$ satisfying 
$\sup_\eta  P_{\theta_0, \eta}(K^c) < \epsilon$, by (i).
Since equicontinuouity on a compact set implies uniform equicontinuity,
we have a finite partition $\{B_l\}_{l=1}^L$ of $[-M,M]^d$ such that $z, \tilde z \in B_l$
for some $l$ implies $\sup_{x \in K} |p_{\theta_0}(x|z) - p_{\theta_0}(x|\tilde z)| < \epsilon/\mu(K)$.
Pick $z_l \in B_l$ for each $l=1,\ldots,L$ and choose a large integer $N$ such that $1/N \leq \epsilon$.
Let 
$$
	\cH_d = \left\{\eta\in\cH: \eta(\cdot) = \sum_{l=1}^L q_l \delta_{z_l}(\cdot),
	~~q_l = \frac{j_l}{NL}~\text{for some}~j_l\in \bbZ^+\right\},
$$
where $\delta_z(\cdot)$ is the Dirac measure at $z$ and $\bbZ^+$ is the set of nonnegative integers.
Since $|\cH_d| < \infty$ we can define, for any $\eta \in \cH$,
$\eta_d = \argmin_{\zeta \in \cH_d} \sum_{l=1}^L |\eta(B_l) - \zeta(B_l)|$.
Then, it is not difficult to show that 
$\max_l |\eta(B_l) - \eta_d(B_l)| \leq (NL)^{-1} \leq \epsilon/L$.
Since $d_H (\eta_1, \eta_2) = d_H(P_{\theta_0, \eta_1}, P_{\theta_0, \eta_2})
\leq \sqrt{d_V(P_{\theta_0, \eta_1}, P_{\theta_0, \eta_2})}$ and
\bean
	d_V(P_{\theta_0, \eta}, P_{\theta_0, \eta_d}) &=& 2\epsilon + \int_K \left| \int p_{\theta_0}(x|z) d\eta(z) -
	\int p_{\theta_0}(x|z) d\eta_d(z) \right| d\mu(x)\\
	&\leq& 2\epsilon + \int_K \sum_{l=1}^L \Big[ p_{\theta_0}(x|z_l) \cdot \left|\eta(B_l) - \eta_d(B_l)\right| \\
	&& ~~~~~ + ~ \int_{B_l} \left|p_{\theta_0} (x|z) - p_{\theta_0} (x|z_l)\right| ~ d(\eta + \eta_d)(z) \Big] d\mu(x) \\
	&\leq& 5 \epsilon
\eean
we get the desired result because $\epsilon > 0$ is arbitrary.
\end{proof}

\bigskip

\begin{lemma} \label{lem:KL_bound}
Assume that:
\ben
\item[(i)] $\{ P_{\theta_0, \eta} : \eta \in \cH\}$ is uniformly tight.
\item[(ii)] For any compact set $K \subset \bbR^k$, 
			$\{x\mapsto p_{\theta_0}(x|z) : z \in [-M,M]^d\}$ is an equicontinuous family of functions 
			from $K$ to $\bbR$.
\item[(iii)] For all $x$, $z \mapsto p_{\theta_0}(x|z)$ is bounded and continuous.
\item[(iv)] $\Pi_\cH(U) > 0$ for all weak neighborhood $U$ of $\eta_0$
\item[(v)] $\sup_{h(\eta,\eta_0)<\delta}\int_{\{p_{\theta_0, \eta_0}/p_{\theta_0, \eta} \geq e^{1/\gamma}\}} 
	  (p_{\theta_0, \eta_0}/p_{\theta_0, \eta})^\gamma d P_{\theta_0, \eta_0} < \infty$
	  for some $\gamma \in (0,1]$ and $\delta > 0$.
\een
Then, $\Pi_\cH\bigl( K(\epsilon) \bigr) > 0$ for every $\epsilon > 0$.
\end{lemma}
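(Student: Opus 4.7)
The plan is to show that $K(\epsilon)$ contains a weak neighborhood of $\eta_0$; combined with assumption (iv) this immediately gives $\Pi_\cH(K(\epsilon)) > 0$. Since probability measures supported on the compact set $[-M,M]^d$ form a metrizable space under the weak topology (e.g.\ via the L\'evy--Prokhorov metric), it suffices to verify that whenever $\eta_n \to \eta_0$ weakly one has $\eta_n \in K(\epsilon)$ for all sufficiently large $n$.

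First I would show that weak convergence $\eta_n \to \eta_0$ forces Hellinger convergence $d_H(\eta_n,\eta_0) \to 0$. By (iii) the kernel $z \mapsto p_{\theta_0}(x|z)$ is bounded and continuous for each fixed $x$, so weak convergence yields pointwise convergence of the mixed densities $p_{\theta_0,\eta_n}(x) \to p_{\theta_0,\eta_0}(x)$. Since each $p_{\theta_0,\eta_n}$ integrates to one, Scheff\'e's theorem upgrades this to $L^1$ convergence, hence to total-variation and thence Hellinger convergence through the standard inequality $h^2(P_1,P_2) \leq d_V(P_1,P_2)$.

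Second, I would transfer Hellinger convergence into convergence of the two Kullback--Leibler-type moments that define $K(\epsilon)$. This is exactly the content of Theorem 5 of \citet{wong1995probability}, whose hypothesis is supplied by (v): the uniform integrability of $(p_{\theta_0,\eta_0}/p_{\theta_0,\eta})^\gamma$ over small Hellinger balls of $\eta_0$ guarantees
\[
	-P_{\theta_0,\eta_0}\log\frac{p_{\theta_0,\eta_n}}{p_{\theta_0,\eta_0}} \longrightarrow 0
	\quad\text{and}\quad
	P_{\theta_0,\eta_0}\Bigl(\log\frac{p_{\theta_0,\eta_n}}{p_{\theta_0,\eta_0}}\Bigr)^2 \longrightarrow 0.
\]
The same device is already invoked in the proof of Lemma \ref{lem:L2_conv}, so no new work is required.

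Putting the two steps together, any weakly convergent sequence $\eta_n \to \eta_0$ eventually satisfies both inequalities defining $K(\epsilon)$, so $K(\epsilon)$ contains a weak neighborhood of $\eta_0$, and (iv) yields the conclusion. The one delicate point is Step 2: turning the one-sided tail control provided by (v) into genuine two-sided control on the \emph{second} moment of the log-ratio is exactly what the Wong--Shen argument does, and it is the only nontrivial input. Assumptions (i) and (ii) appear not to be needed for this particular lemma; they are the hypotheses required for the companion entropy bound of Lemma \ref{lem:entropy_bound}.
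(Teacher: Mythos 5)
Your proof is correct, but it reaches the conclusion by a genuinely different route from the paper. The paper works constructively: it uses (i) to confine the mixture densities to a compact set $K$ in the sample space up to mass $\epsilon$, uses (ii) to discretize $K$ into finitely many cells, and thereby exhibits an explicit finite collection of bounded continuous functions $f_i(z)=\mu_i\,p_{\theta_0}(x_i|z)$ whose associated weak neighborhood of $\eta_0$ sits inside a total-variation ball; it then passes from total variation to Hellinger and invokes (v) with Theorem 5 of \citet{wong1995probability} exactly as you do. You instead exploit the metrizability of the weak topology on probability measures supported on the compact set $[-M,M]^d$ and replace the tightness-plus-equicontinuity discretization by Scheff\'e's theorem applied to the pointwise convergence $p_{\theta_0,\eta_n}(x)\to p_{\theta_0,\eta_0}(x)$ furnished by (iii). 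Your final step coincides with the paper's (and with the argument already used in Lemma \ref{lem:L2_conv}). What your route buys is economy and the observation --- which is accurate --- that hypotheses (i) and (ii) are dispensable for this lemma and are really only needed for the entropy bound of Lemma \ref{lem:entropy_bound}; what the paper's route buys is an explicit description of the weak neighborhood (which test functions, which tolerance), which would be needed if one wanted a quantitative lower bound on the prior mass rather than bare positivity. One small point worth making explicit in your write-up: the sequential characterization of interior points is what licenses the reduction to convergent sequences, and it is valid precisely because the weak topology here is metrizable; as stated your argument is fine.
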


\begin{proof}
For a given $\epsilon > 0$, we have a compact set $K$ satisfying
$\sup_\eta  P_{\theta_0, \eta}(K^c) < \epsilon$, by (i).
Using (ii), choose $\delta > 0$ so that $x_1, x_2 \in K$ and $|x_1 - x_2| < \delta$
implies $\sup_z | p_{\theta_0}(x_1|z) - p_{\theta_0}(x_2|z)| < \epsilon/\mu(K)$.
Let $U_1, \ldots, U_m$ be a partition of $K$ with $diam(U_i) < \delta$.
Pick $x_i \in U_i$ for each $i$ and let $\mu_i = \mu(U_i)$ and $f_i(z) = \mu_i ~p_{\theta_0}(x_i|z)$.
Then,
\bean
	d_V (p_{\theta_0, \eta}, p_{\theta_0, \eta_0}) &\leq&
	 \int_K \left|\int p_{\theta_0}(x|z) d\eta(z) - \int p_{\theta_0}(x|z) d\eta_0(z) \right| d\mu(x) + 2\epsilon\\
	&=& ~ \sum_{i=1}^m \int_{U_i} \left|\int p_{\theta_0}(x|z)d\eta(z) - \int p_{\theta_0}(x|z)d\eta_0(z)\right| d\mu(x) + 2\epsilon\\
	&\leq& ~ \sum_{i=1}^m \left|\int f_i(z) d\eta(z) - \int f_i(z) d\eta_0(z)\right| + 4\epsilon
\eean
Therefore,
$$\left\{F: d_V (p_{\theta_0, \eta}, p_{\theta_0, \eta_d}) < 5\epsilon \right\} \supset
\left\{ F : \bigcap_{i=1}^m \Big| \int f_i(z) d\eta(z) - \int f_i(z) d\eta_0(z) \Big| < \frac{\epsilon}{m}\right\}$$
and (iii) and (iv) implies every total variation neighborhood of $\eta_0$ has positive prior mass.
Since the total variation and Hellinger metrics are topologically equivalent,
(v) and Theorem 5 in \citet{wong1995probability} yield the desired result.
\end{proof}

\bigskip

\begin{lemma} \label{lem:Hellinger_ineq}
For any $\theta \in \Theta$,
\[
  \sup_{\eta\in\cH} h (P_{\theta,\eta},P_{\theta_0,\eta}) \leq 
  \sup_{z\in[-M,M]^d} h\bigl(P_{\theta}(\cdot|z), P_{\theta_0}(\cdot|z)\bigr),
\]
where $P_\theta(\cdot|z)$ is the probability measure with density $x\mapsto p_\theta(x|z)$.
\end{lemma}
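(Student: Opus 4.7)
The plan is to work with the Hellinger affinity $A(P,Q) = \int \sqrt{pq}\, d\mu$, since $h^2(P,Q) = 2 - 2A(P,Q)$, and reduce the inequality to showing
\[
A(P_{\theta,\eta}, P_{\theta_0,\eta}) \;\geq\; \inf_{z\in[-M,M]^d} A\bigl(P_\theta(\cdot|z), P_{\theta_0}(\cdot|z)\bigr).
\]
Once this is established, the claim follows by rearranging: the Hellinger distance of the mixtures is at most $\sup_z h(P_\theta(\cdot|z), P_{\theta_0}(\cdot|z))$, uniformly in $\eta\in\cH$.

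First I would apply the Cauchy--Schwarz inequality pointwise in $x$ to the probability measure $\eta$. Writing $f(z) = p_\theta(x|z)$ and $g(z) = p_{\theta_0}(x|z)$, Cauchy--Schwarz in $L_2(\eta)$ yields
\[
\int \sqrt{f(z) g(z)}\, d\eta(z) \;\leq\; \sqrt{\int f(z)\, d\eta(z) \cdot \int g(z)\, d\eta(z)} \;=\; \sqrt{p_{\theta,\eta}(x)\, p_{\theta_0,\eta}(x)}.
\]
This is the key pointwise bound; it says that the mixing operation increases Hellinger affinity. Next I would integrate both sides over $x$ with respect to the dominating measure $\mu$ and use Fubini's theorem (all integrands are nonnegative, so no integrability issue arises) to obtain
\[
A(P_{\theta,\eta}, P_{\theta_0,\eta}) \;\geq\; \int \biggl[\int \sqrt{p_\theta(x|z) p_{\theta_0}(x|z)}\, d\mu(x)\biggr] d\eta(z) = \int A\bigl(P_\theta(\cdot|z),P_{\theta_0}(\cdot|z)\bigr) d\eta(z).
\]
Bounding the inner affinity below by its infimum over $z \in [-M,M]^d$ and using that $\eta$ is a probability measure gives $A(P_{\theta,\eta},P_{\theta_0,\eta}) \geq \inf_z A(P_\theta(\cdot|z), P_{\theta_0}(\cdot|z))$.

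Translating back via $h^2 = 2 - 2A$, the inequality becomes $h^2(P_{\theta,\eta},P_{\theta_0,\eta}) \leq \sup_z h^2(P_\theta(\cdot|z),P_{\theta_0}(\cdot|z))$; taking square roots and then the supremum over $\eta\in\cH$ on the left completes the proof. There is really no obstacle here: the argument is just Cauchy--Schwarz plus Fubini, and the fact that $\eta$ is supported on $[-M,M]^d$ is only used so that the supremum over $z$ of the kernel Hellinger distances is taken over the same compact set. The same inequality would hold for any probability measure $\eta$ and any measurable family of kernel densities.
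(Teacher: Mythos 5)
Your proof is correct and follows essentially the same route as the paper: the pointwise Cauchy--Schwarz bound $\int\sqrt{fg}\,d\eta \le \sqrt{\int f\,d\eta\int g\,d\eta}$, followed by Fubini, is exactly the paper's computation written in terms of the Hellinger affinity rather than directly with $h^2 = 2(1-\int[\,\cdot\,]^{1/2}d\mu)$. No gaps.
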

\begin{proof}
By the Cauchy-Scwartz ineqaulity, we have
\bean
	h^2(P_{\theta, \eta}, P_{\theta_0, \eta}) 
	&=& 2\left(1 - \int \left[\int p_{\theta} (x|z) d\eta(z) 
	\int p_{\theta_0} (x|z) d\eta(z) \right]^{1/2} d\mu(x)\right) \\
	&\leq& 2\left(1 - \int \int \left[p_{\theta} (x|z)p_{\theta_0} (x|z)\right]^{1/2} d\mu(x) d\eta(z) \right)\\
	&=& \int 2\left[ 1- \int \left[p_{\theta} (x|z)p_{\theta_0} (x|z)\right]^{1/2} d\mu(x) \right] d\eta(z)\\
	&=& \int h^2\big(P_{\theta}(\cdot|z), P_{\theta_0}(\cdot|z)\big) d\eta(z) \\
	&\leq& \sup_z h^2\big(P_{\theta}(\cdot|z), P_{\theta_0}(\cdot|z)\big)
\eean
which is the desired result.
\end{proof}

\bigskip

Now, assume that $\Theta$ is an open subset of Euclidean space
and let
$$
	K_n(\epsilon,M) = \left\{\eta\in\cH: P_{\theta_0,\eta_0}
	\left(\sup_{|h|\leq M}-\log\frac{p_{\theta_n(h),\eta}}{p_{\theta_0,\eta_0}}\right) \leq \epsilon^2, ~
	\left(\sup_{|h|\leq M}-\log\frac{p_{\theta_n(h),\eta}}{p_{\theta_0,\eta_0}}\right)^2 \leq \epsilon^2
	\right\}
$$
for every $\epsilon > 0$ and $M \geq 0$.

\bigskip

\begin{lemma} \label{lem:Lipschitz}
Assume that there exist a function $x\mapsto Q(x)$ with $\int Q^2(x) dP_{\theta_0,\eta_0}(x)< \infty$
and an open neighborhood $U$ of $\theta_0$ such that
\be \label{eq:kernel_libschitz}
	\sup_z \left| \log \frac{p_{\theta_1}}{p_{\theta_2}}(x|z) \right| \leq Q(x)\cdot|\theta_1 - \theta_2|
\ee
for all $\theta_1, \theta_2 \in U$.
Then, there is a universal constant $L > 0$ such that 
for all $M > 0$ and $\epsilon > 0$,
$K(\epsilon) \subset K_n(L\epsilon,M)$ for large enough $n$.
\end{lemma}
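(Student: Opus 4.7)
The plan is to decompose the log ratio $-\log(p_{\theta_n(h),\eta}/p_{\theta_0,\eta_0})$ into two pieces, one controlled by the hypothesis $\eta\in K(\epsilon)$ and one controlled by the Lipschitz assumption \eqref{eq:kernel_libschitz}, then take expectations.

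First, for any $\eta\in\cH$, $|h|\le M$, and all $x$, I would write
$$
-\log\frac{p_{\theta_n(h),\eta}}{p_{\theta_0,\eta_0}}(x)
= -\log\frac{p_{\theta_0,\eta}}{p_{\theta_0,\eta_0}}(x)
 - \log\frac{p_{\theta_n(h),\eta}}{p_{\theta_0,\eta}}(x).
$$
For the second summand, apply the fractional bound \eqref{eq:fractional_bound} to the two mixtures:
$$
\left|\log\frac{p_{\theta_n(h),\eta}}{p_{\theta_0,\eta}}(x)\right|
= \left|\log\frac{\int p_{\theta_n(h)}(x|z)\,d\eta(z)}{\int p_{\theta_0}(x|z)\,d\eta(z)}\right|
\le \sup_{z}\left|\log\frac{p_{\theta_n(h)}(x|z)}{p_{\theta_0}(x|z)}\right|
\le Q(x)\cdot\frac{|h|}{\sqrt{n}},
$$
where the last step uses \eqref{eq:kernel_libschitz}, valid once $n$ is large enough that $\theta_n(h)\in U$ for all $|h|\le M$.

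Taking the supremum over $|h|\le M$ then gives the pointwise bound
$$
\sup_{|h|\le M}\left(-\log\frac{p_{\theta_n(h),\eta}}{p_{\theta_0,\eta_0}}(x)\right)
\le -\log\frac{p_{\theta_0,\eta}}{p_{\theta_0,\eta_0}}(x) + \frac{M}{\sqrt{n}}Q(x).
$$
Integrating against $P_{\theta_0,\eta_0}$ and using $\eta\in K(\epsilon)$, the first moment is at most $\epsilon^2+(M/\sqrt{n})\,\|Q\|_{L^1(P_{\theta_0,\eta_0})}$. Squaring the same pointwise inequality (via $(a+b)^2\le 2a^2+2b^2$) and integrating, the second moment is at most $2\epsilon^2+2(M^2/n)\,\|Q\|_{L^2(P_{\theta_0,\eta_0})}^2$.

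Fix a universal constant, say $L=2$. Since $\|Q\|_{L^1(P_{\theta_0,\eta_0})}\le \|Q\|_{L^2(P_{\theta_0,\eta_0})}<\infty$, both remainder terms are of order $M/\sqrt{n}$ or $M^2/n$ and hence, for any fixed $M,\epsilon>0$, can be made smaller than $(L^2-1)\epsilon^2$ and $(L^2-2)\epsilon^2$ respectively by choosing $n$ sufficiently large. This yields $\eta\in K_n(L\epsilon,M)$ uniformly in $\eta\in K(\epsilon)$, completing the proof. There is really no hard step; the only point that needs a touch of care is checking that the fractional inequality \eqref{eq:fractional_bound} converts the kernel-level Lipschitz bound \eqref{eq:kernel_libschitz} into a bound at the mixture level with the same dominating function $Q$, and that $n$ may be chosen large enough (depending on $M$ and $\epsilon$) while the constant $L$ stays universal.
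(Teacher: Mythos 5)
Your proposal is correct and follows essentially the same route as the paper's proof: the same decomposition of $-\log(p_{\theta_n(h),\eta}/p_{\theta_0,\eta_0})$ into the $\theta_0$-term plus a perturbation term, the same use of the fractional inequality \eqref{eq:fractional_bound} to transfer the kernel-level Lipschitz bound \eqref{eq:kernel_libschitz} to the mixture level, and the same pointwise bounds $\le -\log(p_{\theta_0,\eta}/p_{\theta_0,\eta_0}) + (M/\sqrt{n})Q$ and its squared analogue via $(a+b)^2\le 2a^2+2b^2$. The only difference is that you spell out the final integration and the explicit choice $L=2$, which the paper leaves implicit.
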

\begin{proof}
Let $M > 0$ be given.
By condition \eqref{eq:kernel_libschitz},
\be \nonumber
	\left|\log\frac{p_{\theta_1,\eta}}{p_{\theta_2,\eta}}(x)\right| 
	\leq \sup_z\left|\log\frac{p_{\theta_1}}{p_{\theta_2}}(x|z)\right|
	\leq Q(x)\cdot|\theta_1-\theta_2|
\ee
for $\theta_1, \theta_2 \in U$.
For $|h| \leq M$, we have
\bean
    \sup_{|h|\leq M}-\log\frac{p_{\theta_n(h),\eta}}{p_{\theta_0,\eta_0}}(x)
    &\leq& -\log\frac{p_{\theta_0,\eta}}{p_{\theta_0,\eta_0}}(x) +
    \sup_{|h|\leq M}\left|\log\frac{p_{\theta_n(h),\eta}}{p_{\theta_0,\eta}}(x)\right| \\
    &\leq& -\log\frac{p_{\theta_0,\eta}}{p_{\theta_0,\eta_0}}(x) + \frac{M}{\sqrt{n}}\cdot Q(x)
\eean
and
\bean
	\left(\sup_{|h|\leq M}-\log\frac{p_{\theta_n(h),\eta}}{p_{\theta_0,\eta_0}}(x)\right)^2 
	&\leq& 2\left(\log\frac{p_{\theta_0,\eta}}{p_{\theta_0,\eta_0}}(x)\right)^2 +
	2\left(\sup_{|h|\leq M}\left|\log\frac{p_{\theta_n(h),\eta}}{p_{\theta_0,\eta}}(x)\right|\right)^2 \\
	&\leq& 2\left(\log\frac{p_{\theta_0,\eta}}{p_{\theta_0,\eta_0}}(x)\right)^2 +
	\frac{2M^2}{n}\cdot Q^2(x).
\eean
This completes the proof.
\end{proof}

\bigskip

\section{Symmetrized Dirichlet processes}
\label{sec:sdp}

In this section, we address the properties of symmetrized Dirichlet processes.
Let $(\Omega, \cF, \nu)$ be a probability space and $\cR$ be the Borel $\sigma$-field on $\bbR$.
Let $M(\bbR)$ be the set of all probability measures on $(\bbR, \cR)$,
equipped with a metric induced by the weak convergence of probability measures,
and $\cM$ be the Borel $\sigma$-field of $M(\bbR)$ with respect to this metric.

For given $\alpha > 0$ and a probability measure $P_0$ on $(\bbR, \cR)$, the law of a measurable map
$P: (\Omega, \cF) \rightarrow (M(\bbR), \cM)$ is called the \emph{Dirichlet process} with parameter $(\alpha, P_0)$,
denoted by $DP(\alpha, P_0)$, if
$$\big(P(A_1), \ldots, P(A_k)\big) \sim \cD\big(\alpha P(A_1), \ldots, \alpha P(A_k)\big)$$
for every finite partition $A_1, \ldots, A_k \in \cR$ of $\bbR$,
where $\cD$ denotes the Dirichlet distribution.
Then, the \emph{symmetrized Dirichlet process}, denoted by $DP_S(\alpha, P_0)$,
is defined by the law of
$$\frac{1}{2} (P + P^-),$$
where $P^-(A) = P(-A)$ for all $A \in \cR$.

\bigskip

\begin{lemma} \label{lem:dir_sym}
If two probability measures $P_1$ and $P_2$ on $(\bbR, \cR)$ satisfies $P_1 + P_1^- = P_2 + P_2^-$,
then for $\alpha > 0$, $DP_S(\alpha, P_1)$ and $DP_S(\alpha, P_2)$ have the same distribution.
\end{lemma}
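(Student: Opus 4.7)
The plan is to prove the stronger statement that $DP_S(\alpha, P) = DP_S(\alpha, S(P))$ for every base measure $P$, where $S(\mu) := \tfrac{1}{2}(\mu + \mu^-)$ denotes symmetrization. This immediately gives the lemma, since the hypothesis $P_1 + P_1^- = P_2 + P_2^-$ is exactly $S(P_1) = S(P_2)$, whence $DP_S(\alpha, P_1) = DP_S(\alpha, S(P_1)) = DP_S(\alpha, S(P_2)) = DP_S(\alpha, P_2)$.

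To show $DP_S(\alpha, P) = DP_S(\alpha, S(P))$, I will exhibit an explicit almost-sure coupling using Sethuraman's stick-breaking representation. Realize $Q \sim DP(\alpha, P)$ as $Q = \sum_{k=1}^\infty \pi_k \delta_{\theta_k}$, where $(\pi_k)$ are the usual $\mathrm{Beta}(1,\alpha)$ stick-breaking weights, $\theta_k \stackrel{iid}{\sim} P$, and the two families are independent. On the same probability space, draw iid Rademacher signs $\sigma_k \in \{-1,+1\}$ that are independent of $(\pi_k, \theta_k)$, and set $\tilde\theta_k = \sigma_k \theta_k$ and $\tilde Q = \sum_{k=1}^\infty \pi_k \delta_{\tilde\theta_k}$. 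Since $\tilde\theta_k \stackrel{iid}{\sim} \tfrac{1}{2}(P + P^-) = S(P)$ and are independent of $(\pi_k)$, Sethuraman's theorem gives $\tilde Q \sim DP(\alpha, S(P))$.

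The key algebraic observation is that, for every fixed $\theta \in \bbR$ and every $\sigma \in \{-1, +1\}$, the unordered pair $\{\sigma\theta, -\sigma\theta\}$ equals $\{\theta, -\theta\}$, so $\delta_{\sigma_k\theta_k} + \delta_{-\sigma_k\theta_k} = \delta_{\theta_k} + \delta_{-\theta_k}$ for each $k$. Multiplying by $\pi_k/2$ and summing yields $S(\tilde Q) = S(Q)$ almost surely. Taking laws, $DP_S(\alpha, P) = \mathrm{Law}(S(Q)) = \mathrm{Law}(S(\tilde Q)) = DP_S(\alpha, S(P))$, which is the desired reduction.

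The hard part is purely conceptual: seeing that randomly flipping signs in Sethuraman's representation simultaneously (i) converts a sample from $DP(\alpha, P)$ into a sample from $DP(\alpha, S(P))$, and (ii) leaves the symmetrization of the random measure pointwise unchanged. Once the Rademacher coupling is in place, no analytic estimates are required — the identity $S(\tilde Q) = S(Q)$ holds sample-path-wise, and independence of $(\sigma_k)$ from $(\pi_k, \theta_k)$ is what turns this pathwise identity into a statement about the induced laws.
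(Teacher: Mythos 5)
Your proof is correct and rests on the same two ingredients as the paper's: the Sethuraman stick-breaking representation and the observation that $\delta_{\theta}+\delta_{-\theta}$ is invariant under $\theta\mapsto-\theta$. The only cosmetic difference is that you route through the intermediate identity $DP_S(\alpha,P)=DP_S(\alpha,\tfrac12(P+P^-))$ via an explicit Rademacher coupling, whereas the paper reduces directly to the statement that $X_1\sim P_1$, $X_2\sim P_2$ implies $|X_1|\stackrel{d}{=}|X_2|$ --- these are the same argument in slightly different packaging.
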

\begin{proof}
By the construction of \citet{sethuraman1994constructive},
if $p_i \stackrel{\iid}{\sim} beta(1, \alpha)$ and
$\theta_i \stackrel{\iid}{\sim} P_0$, then the law of
$$\frac{1}{2} \sum_{i=1}^\infty p_i \big(\delta_{\theta_i} + \delta_{-\theta_i}\big)$$
is equal to $DP_S(\alpha, P_0)$ for any probability measure $P_0$.
Therefore, it is sufficient to show that
$X_1 \sim P_1$ and $X_2 \sim P_2$
implies $|X_1| \stackrel{d}{=} |X_2|$.
This follows from the condition $P_1 + P_1^- = P_2 + P_2^-$.
\end{proof}

\bigskip

\begin{lemma} \label{lem:dir_s_conjugacy}
Assume that $P$ is endowed with a $DP_S(\alpha, P_0)$ prior, and for given $P$, $\theta_1, \ldots, \theta_n$ 
are independent and identically distributed by $P$.
Then, the posterior distribution of $P$ given $\theta_1, \ldots, \theta_n$ is
$$DP_S\bigg(\alpha + n, \frac{\alpha P_0 + \sum_{i=1}^n \delta_{\theta_i}}{\alpha+n}\bigg),$$
that is, $DP_S$ is conjugate.
\end{lemma}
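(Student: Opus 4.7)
The plan is to exploit Sethuraman's representation together with conjugacy of the ordinary Dirichlet process. By the construction used in the proof of Lemma \ref{lem:dir_sym}, we may write $P = \tfrac{1}{2}(Q + Q^{-})$ with $Q \sim DP(\alpha, P_0)$. The natural augmentation is to introduce latent signs $s_1, \dots, s_n$ that are i.i.d.\ uniform on $\{-1, +1\}$ and independent of $Q$, together with $\xi_1, \dots, \xi_n$ which, conditionally on $Q$, are i.i.d.\ from $Q$, and then to set $\theta_i = s_i \xi_i$. A quick check shows that $\theta_i \mid Q, s_i = 1 \sim Q$ while $\theta_i \mid Q, s_i = -1 \sim Q^{-}$, so that marginalizing over $s_i$ gives $\theta_i \mid Q \sim \tfrac{1}{2}(Q + Q^{-}) = P$, matching the original model.

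Next, I would condition on the full augmented data $(\theta_1, \dots, \theta_n, s_1, \dots, s_n)$. Since $\xi_i = s_i \theta_i$ is then observed and $\xi_i \mid Q$ are i.i.d.\ from $Q$, the standard conjugacy of $DP(\alpha, P_0)$ yields
\[
Q \mid (\theta_1, \dots, \theta_n, s_1, \dots, s_n) \;\sim\; DP\!\left(\alpha + n, \; P^{*} \right), \qquad P^{*} = \frac{\alpha P_0 + \sum_{i=1}^{n} \delta_{s_i \theta_i}}{\alpha + n}.
\]
Consequently $P = \tfrac{1}{2}(Q + Q^{-}) \mid (\theta, s) \sim DP_S(\alpha + n, P^{*})$.

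The key observation is that $P^{*}$ depends on the unobserved signs, but the \emph{symmetrized} Dirichlet process does not. Indeed,
\[
P^{*} + (P^{*})^{-} = \frac{\alpha(P_0 + P_0^{-}) + \sum_{i=1}^{n}\bigl(\delta_{s_i \theta_i} + \delta_{-s_i \theta_i}\bigr)}{\alpha + n} = \frac{\alpha(P_0 + P_0^{-}) + \sum_{i=1}^{n}\bigl(\delta_{\theta_i} + \delta_{-\theta_i}\bigr)}{\alpha + n},
\]
because $\{\delta_{s_i \theta_i}, \delta_{-s_i \theta_i}\} = \{\delta_{\theta_i}, \delta_{-\theta_i}\}$ for $s_i \in \{-1, +1\}$. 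By Lemma \ref{lem:dir_sym}, the law $DP_S(\alpha + n, P^{*})$ depends on $P^{*}$ only through $P^{*} + (P^{*})^{-}$, so this conditional distribution does not depend on $s_1, \dots, s_n$ at all.

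Therefore marginalizing over the signs leaves the conditional law unchanged, and taking for instance $s_i \equiv 1$ identifies it with $DP_S\!\left(\alpha + n, (\alpha P_0 + \sum_{i} \delta_{\theta_i})/(\alpha + n)\right)$, as claimed. The only delicate point is verifying that the augmented model truly reproduces the original joint distribution of $(P, \theta_1, \dots, \theta_n)$; I expect this to be the main bookkeeping step, but it follows directly from the mixture construction of $P$ and the independence of the $s_i$ from $Q$. All remaining steps are routine applications of ordinary Dirichlet conjugacy and Lemma \ref{lem:dir_sym}.
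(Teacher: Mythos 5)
Your proof is correct and follows essentially the same route as the paper's: both augment with latent signs so that $P=\tfrac{1}{2}(Q+Q^{-})$ reduces the problem to ordinary Dirichlet-process conjugacy, and both invoke Lemma \ref{lem:dir_sym} to show the resulting symmetrized posterior does not depend on the unobserved signs. The only difference is presentational --- you treat general $n$ in one pass with $n$ sign variables, while the paper handles $n=1$ and iterates by conjugacy --- so no further comparison is needed.
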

\begin{proof}
It is sufficient by conjugacy to prove the assertion in the case of $n=1$.
Let $\tilde P \sim DP(\alpha, P_0)$ and $S$ be an independent binary random variable
with $Pr(S=1) = Pr(S=0) = 1/2$.
For given $\tilde P$ and $S$, the conditional distribution of $\theta$ is
$\tilde P$ or $\tilde P^-$ according as $S=1$ or $S=0$.
It is sufficient to show that for given $\theta$ the conditional distribution of $(\tilde P + \tilde P^-)/2$
is equal to $$DP_S\bigg(\alpha + 1, \frac{\alpha P_0 + \delta_\theta}{\alpha+1}\bigg).$$

Note first that $\tilde P^-$ follows $DP(\alpha, P^-)$.
Therefore, conditional on $\theta$ and $S$, the law of $\tilde P$ is
\bea
	DP\bigg(\alpha+1, \frac{\alpha P_0 + \delta_\theta}{\alpha+1}\bigg) ~~~~~ \textrm{if $S=1$}
	\label{eq:dp_cond1}\\
	DP\bigg(\alpha+1, \frac{\alpha P_0^- + \delta_\theta}{\alpha+1}\bigg)
	~~~~~ \textrm{if $S=0$} \label{eq:dp_cond2}
\eea
by the conjugacy of the Dirichlet process.
Therefore, conditional on $\theta$, the law of $\tilde P$ is \eqref{eq:dp_cond1}
with probability $P(S=1|\theta)$,
or the law of $\tilde P^-$ is \eqref{eq:dp_cond2} with probability $P(S=0|\theta)$.
In both cases, the law of $(\tilde P + \tilde P^-)/2$ is
$$DP_S\bigg(\alpha+1, \frac{\alpha P_0+\delta_\theta}{\alpha+1}\bigg)$$
by Lemma \ref{lem:dir_sym}.
\end{proof}

\bigskip

\begin{lemma}
Assume that $P$ is endowed with a $DP_S(\alpha, P_0)$ prior, and for given $P$, $\theta_1, \theta_2, \ldots$ 
are independent and identically distributed by $P$.
Then, conditional on $\theta_1, \ldots, \theta_n$, the predictive distribution of $\theta_{n+1}$ is given by
\be \label{eq:sdp_polya}
	\frac{\alpha}{2(\alpha+n)} \big(P_0+P_0^-\big)
	+ \frac{1}{2(\alpha+n)} \sum_{i=1}^n \big(\delta_{\theta_i} + \delta_{-\theta_i}\big)
\ee
for every $n \geq 0$.
(When $n=0$, the predictive distribution is the marginal distribution of $\theta_1$ and
the summation is defined by zero.)
\end{lemma}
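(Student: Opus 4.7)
The plan is to compute the predictive distribution as the conditional expectation $E[P(\cdot) \mid \theta_1,\ldots,\theta_n]$ and to reduce this to the posterior mean of a symmetrized Dirichlet process, which in turn reduces to the mean of an ordinary Dirichlet process.

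First I would recall that for any random probability measure $P$ and any i.i.d. sample from $P$, the predictive distribution of $\theta_{n+1}$ given $\theta_1,\ldots,\theta_n$ coincides with the posterior mean measure $A \mapsto E[P(A) \mid \theta_1,\ldots,\theta_n]$. By Lemma \ref{lem:dir_s_conjugacy}, this posterior is
\[
DP_S\Bigl(\alpha+n,\; Q_n\Bigr), \qquad Q_n = \frac{\alpha P_0 + \sum_{i=1}^n \delta_{\theta_i}}{\alpha+n}.
\]
So the task reduces to computing the mean of a generic $DP_S(\beta,Q)$.

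Next I would unpack the definition: if $\tilde P \sim DP(\beta,Q)$ and $P = \tfrac{1}{2}(\tilde P + \tilde P^-)$ is the corresponding symmetrized Dirichlet, then for any $A \in \cR$,
\[
E[P(A)] = \tfrac{1}{2} E[\tilde P(A)] + \tfrac{1}{2} E[\tilde P(-A)] = \tfrac{1}{2}\bigl(Q(A) + Q(-A)\bigr) = \tfrac{1}{2}(Q + Q^-)(A),
\]
using the well-known fact that $E[\tilde P(A)] = Q(A)$ for a Dirichlet process with base measure $\beta Q$ (a one-line consequence of $\tilde P(A) \sim \mathrm{Beta}(\beta Q(A), \beta(1-Q(A)))$, whose mean is $Q(A)$). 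Applying this with $\beta = \alpha+n$ and $Q = Q_n$, and observing that
\[
Q_n^- = \frac{\alpha P_0^- + \sum_{i=1}^n \delta_{-\theta_i}}{\alpha+n},
\]
I get
\[
E[P \mid \theta_1,\ldots,\theta_n] = \frac{Q_n + Q_n^-}{2} = \frac{\alpha(P_0 + P_0^-) + \sum_{i=1}^n(\delta_{\theta_i}+\delta_{-\theta_i})}{2(\alpha+n)},
\]
which is the claimed formula.

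Finally, the $n=0$ case is the marginal distribution of $\theta_1$, which by the same computation applied to the prior $DP_S(\alpha,P_0)$ equals $(P_0+P_0^-)/2$, matching the statement with the convention that the empty sum is zero. There is no real obstacle here: the proof is almost entirely a bookkeeping consequence of conjugacy (Lemma \ref{lem:dir_s_conjugacy}) plus the elementary mean formula for a Dirichlet process; the only point requiring mild care is the measure-theoretic identity $E[\tilde P^-(A)] = Q^-(A)$, which follows immediately from $\tilde P^-(A) = \tilde P(-A)$.
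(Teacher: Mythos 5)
Your proof is correct and takes essentially the same route as the paper's: reduce via the conjugacy lemma to computing the one-draw marginal (equivalently, the mean measure) of a symmetrized Dirichlet process. The only cosmetic difference is that you derive that mean from the Beta finite-dimensional marginals of the underlying Dirichlet process, whereas the paper invokes the Sethuraman stick-breaking construction for the same fact.
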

\begin{proof}
The marginal distribution of $\theta_1$ follows by the Sethuraman's construction of the Dirichlet process.
The remainder follows by Lemma \ref{lem:dir_s_conjugacy}.
\end{proof}

\bigskip

Now, consider the observation $X_1, \ldots, X_n$ 
which is generated from a symmetrized Dirichlet process mixture model, that is,
\bea
	P &\sim& DP_S(\alpha, P_0)
	\nonumber \\
	\theta_1, \ldots, \theta_n | P &\stackrel{\iid}{\sim}& P
	\nonumber \\
	X_i | \theta_i &\sim& f_{\theta_i} \label{eq:sdp_mixture}
\eea
for a class of densities $\{f_\theta: \theta \in \Theta\}$.
This is a direct extension of a Dirichlet process mixture model
which is popularly used in nonparametric Bayesian data analysis.
There are many interesting Markov chain Monte Carlo algorithms to infer Dirichlet process mixture models
and they can be naturally extended to symmetrized Dirichlet process models.
We refer to \citet{neal2000markov} for a nice review on these algorithms.
We only consider conjugate algorithms, where conjugacy means that 
$P_0$ in \eqref{eq:sdp_mixture} is a conjugate prior for model $\{f_\theta: \theta \in \Theta\}$.

The first algorithm, which samples $\theta_1, \ldots, \theta_n$ iteratively,
is an extension of algorithms proposed by \citet{escobar1994estimating} and \citet{escobar1995bayesian}.
We can derive the conditional posterior of $\theta_i$ from the marginal distribution \eqref{eq:sdp_polya} by
\be \label{eq:theta_gen}
	\theta_i | \btheta_{-i}, X_i ~ \propto ~ r_i H_i 
	+ \sum_{j\neq i} \Big( f_{\theta_j}(X_i) \cdot \delta_{\theta_j} 
	+ f_{-\theta_j}(X_i) \cdot \delta_{-\theta_j} \Big),
\ee
where $\btheta_{-i} = (\theta_1, \ldots, \theta_{i-1}, \theta_{i+1}, \ldots, \theta_n)$
and
$$
	r_i = \alpha \cdot \int f_\theta(X_i) \;d\big(P_0 + P_0^-\big)(\theta).
$$
Here, $H_i$ is the posterior distribution for $\theta$ based on the prior $(P_0 + P_0^-)/2$
and the single observation $X_i$, with likelihood $f_\theta$.
The algorithm is summarized in Algorithm \ref{alg:a1}.
This algorithm is simple and intuitive but the convergence to the stationary distribution
may be rather slow, so inefficient as noted in \citet{neal2000markov}.

\bigskip

\alglanguage{pseudocode}
\begin{algorithm}
\caption{An Gibbs sampler algorithm generating $\theta_1, \ldots, \theta_n$ directly \label{alg:a1}}
\begin{algorithmic}
\State Initialize $\theta_1, \ldots, \theta_n$
\Repeat
\For {i=1, \ldots, n}
\State Sample $\theta_i$ from \eqref{eq:theta_gen}
\EndFor
\Until{convergence}
\end{algorithmic}
\end{algorithm}

\bigskip

The next algorithm is an extension of algorithms proposed 
in \citet{bush1996semiparametric} and \citet{west1993hierarchical}.
If $X_1, \ldots, X_n$ are observations from the symmetrized Dirichlet process mixture,
the generative model \eqref{eq:sdp_mixture} can be written by
\bean
	p(c_i = c | c_1, \ldots, c_{i-1}) &=& \left\{ \begin{array}{cl}
	\alpha / (\alpha + i -1) & \textrm{if $c \neq c_j$ for all $j=1, \ldots, i-1$} \\
	1 / (\alpha + i -1) & \textrm{if $c = c_j$ for some $j=1, \ldots, i-1$} \end{array} \right.
	\\
	\vartheta_1, \vartheta_2, \cdots &\stackrel{\iid}{\sim}& (P_0 + P_0^-)/2
	\\
	p(s_i=1) = p(s_i=-1) &=& 1/2 ~~~ \textrm{for $i=1, \ldots, n$}
	\\
	X_i | \vartheta_{c_i}, s_i &\sim& f_{s_i \vartheta_{c_i}},
\eean
where $c_i$ indicates which latent class is associated with observation $X_i$.
For each class, $c$, the parameters $\vartheta_c$ determine the distribution of observations from that class.
To build a Gibbs sampler algorithm we include sign indicators $s_i$ for each observation
in the generative model.
The conditional distribution of $c_i$ is given by
\bea
	P(c_i = c | \bfc_{-i}, \bm{\vartheta}, s_i, X_i) 
	\propto \left\{ \begin{array}{cl}
	\frac{\alpha}{2} \int f_\theta(X_i) d(P_0+P_0^-)(\theta) 
	& ~~ \textrm{if $c \neq c_j$ for all $j\neq i$}\\
	n_{-i,c} f_{s_i \vartheta_c}(X_i) & ~~ \textrm{if $c = c_j$ for some $j\neq i$},
	\end{array}\right. \label{eq:c_gen}
\eea
where $\bfc_{-i} = (c_1, \ldots, c_{i-1}, c_{i+1}, \ldots, c_n)$ and 
$n_{-i,c}$ is the number of $j$'s with $j\neq i$ and $c_i=c$.
If generated $c_i$ is different from $c_j$ for all $j\neq i$, draw a value for $\vartheta_{c_i}$
from $H_i$, where $H_i$ is the posterior distribution for $\theta$ based on
the prior $(P_0 + P_0^-)/2$ and the single observation $X_i$ with likelihood
$f_{s_i \vartheta_c}(X_i)$.
Next, the conditional distribution of $\vartheta_c$ and $s_i$ are given by
\bea
	dP(\vartheta_c| \bfc, {\bf s}, X_1, \ldots, X_n) &\propto&
	\prod_{c_i=c} f_{s_i \vartheta_c}(X_i) \cdot d(P_0 + P_0^-)(\vartheta_c)
	\label{eq:vartheta_gen} \\
	p(s_i |c_i, s_i, \vartheta_{c_i}, X_i) &\propto& f_{s_i \vartheta_{c_i}}(X_i), \label{eq:sgn_gen}
\eea
respectively.
The algorithm is summarized in Algorithm \ref{alg:a2}.

\alglanguage{pseudocode}
\begin{algorithm}
\caption{An alternative Gibbs sampler algorithm \label{alg:a2}}
\begin{algorithmic}
\State Initialize $c_1, \ldots, c_n, s_1, \ldots, s_n$ and $\vartheta_1, \vartheta_2, \ldots$
\Repeat
\For {i=1, \ldots, n}
\State Sample $c_i$ from \eqref{eq:c_gen}, and if $c_i\neq c_j$ for all $j \neq i$, then sample $\vartheta_{c_i}$ from $H_i$
\State Sample $s_i$ from \eqref{eq:sgn_gen}
\EndFor
\For {c=1, 2, \ldots}
\State Sample $\vartheta_c$ from \eqref{eq:vartheta_gen}
\EndFor
\Until{convergence}
\end{algorithmic}
\end{algorithm}

\newpage
\addcontentsline{toc}{chapter}{Bibliography}
\bibliographystyle{apalike}
\bibliography{Chae_thesis_arxiv}


\end{document}